\providecommand{\tabularnewline}{\\}
\numberwithin{equation}{section}
\numberwithin{figure}{section}
\theoremstyle{plain}
\newtheorem{thm}{\protect\theoremname}[section]
  \theoremstyle{definition}
  \newtheorem{defn}[thm]{\protect\definitionname}
  \theoremstyle{definition}
  \newtheorem{example}[thm]{\protect\examplename}
  \theoremstyle{plain}
  \newtheorem{prop}[thm]{\protect\propositionname}
  \theoremstyle{remark}
  \newtheorem{rem}[thm]{\protect\remarkname}
  \theoremstyle{plain}
  \newtheorem{cor}[thm]{\protect\corollaryname}
  \theoremstyle{plain}
  \newtheorem{lem}[thm]{\protect\lemmaname}
\newcommand{\Div}{\mathop{\mathrm{div}}\nolimits}
\newcommand{\Hom}{\mathop{\mathrm{Hom}}\nolimits}
\newcommand{\Id}{\mathop{\mathrm{Id}}\nolimits}
\newcommand{\pr}{\mathop{\mathrm{pr}}\nolimits}
\newcommand{\rank}{\mathop{\mathrm{rank}}\nolimits}
\newcommand{\tr}{\mathop{\mathrm{tr}}\nolimits}
\newcommand{\nablacirc}{\;\;\makebox[0pt]{$\nabla$}\makebox[0.4pt]{\raisebox{1.7pt}{$\circ$}}\;\;}
\newcommand{\nablashortmid}{\;\;\makebox[0pt]{$\nabla$}\makebox[0.4pt]{\raisebox{1.7pt}{$\shortmid$}}\;\;}
\def \nnabla {\nablacirc}
\def \lnabla {\nablashortmid}
\newcommand{\shortrightarrowtimes}{\;\;\makebox[0pt]{$\times$}\makebox[0.4pt]{$\shortrightarrow$}\;\;}
\newcommand{\shortleftarrowtimes}{\;\;\makebox[0pt]{$\times$}\makebox[0.4pt]{$\shortleftarrow$}\;\;}
\def \rtriv {\shortrightarrowtimes}
\def \ltriv {\shortleftarrowtimes}
  \providecommand{\corollaryname}{Corollary}
  \providecommand{\definitionname}{Definition}
  \providecommand{\examplename}{Example}
  \providecommand{\lemmaname}{Lemma}
  \providecommand{\propositionname}{Proposition}
  \providecommand{\remarkname}{Remark}
\providecommand{\theoremname}{Theorem}
\begin{document}
\begin{comment}
2011.10.17, 2011.11.14, 2012.01.02, 2012.01.10, 2012.05.27, 2012.06.30,
2012.07.14, 2012.09.23, 2012.09.25, 2012.10.09, 2012.10.28, 2012.11.06,
2012.12.03, 2012.12.10
\end{comment}

\title{Riemannian Calculus of Variations\\
using Strongly Typed Tensor Calculus}

\author{Victor Dods}

\date{2012.12.11}
\maketitle
\begin{abstract}
In this paper, the notion of strongly typed language will be borrowed
from the field of computer programming to introduce a calculational
framework for linear algebra and tensor calculus for the purpose of
detecting errors resulting from inherent misuse of objects and for
finding natural formulations of various objects. A tensor bundle formalism,
crucially relying on the notion of pullback bundle, will be used to
create a rich type system with which to distinguish objects. The type
system and relevant notation is designed to ``telescope'' to accomodate
a level of detail appropriate to a set of calculations. Various techniques
using this formalism will be developed and demonstrated with the goal
of providing a relatively complete and uniform method of coordinate-free
computation.

The calculus of variations pertaining to maps between Riemannian manifolds
will be formulated using the strongly typed tensor formalism and associated
techniques. Energy functionals defined in terms of first order Lagrangians
are the focus of the second half of this paper, in which the first
variation, the Euler-Lagrange equations, and the second variation
of such functionals will be derived.
\end{abstract}

\section*{Introduction}

Many important differential equations have a variational origin, being
derived as the Euler-Lagrange equations for a particular functional
on some space of functions. The variational approach lends itself
particularly to physics, in which conservation of energy or minimization
of action is a central concept. The naturality of such formulations
can't be understated, as solutions to such problems often depend critically
on the inherent geometry of the underlying objects. For example, solutions
to Laplace's equation for a real valued function (e.g. modeling steady-state
heat flow) on a Riemannian manifold depend qualitatively on the topology
of the manifold (e.g. harmonic functions on a closed Riemannian manifold
are necessarily constant, which makes sense geometrically because
there is no boundary through which heat can escape).

A central concept in the field of software design is that of \textbf{information
hiding} \citep{Parnas}, in which a computer program is organized
into modules, each presenting an abstract public interface. Other
parts of the program can interact only through the presented interface,
and the details of how each module works are hidden, thereby preventing
interference in the implementation details which are not required
by the inherent structure of the module. This concept has clear usefulness
in the field of mathematics as well. For example, there are several
formulations of the real numbers (e.g. equivalence classes of Cauchy
sequences of rational numbers, Dedekind cuts, decimal expansions,
etc), but their particulars are instances of what are known as \textbf{implementation
details}, and the details of each particular implementation are irrelevant
in most areas of mathematics, which only use the inherent properties
of the real numbers as a complete, totally ordered field. Of course,
at certain levels, it is useful or necessary to ``open up the box''
{[}go past the public interface{]} and work with a particular representation
of the real numbers.

Information hiding is characteristic of abstract mathematics, in which
general results are proved about abstract mathematical objects without
using any particular implementation of said objects. These results
can then be used modularly in other proofs, just as the functionality
of a computer program is organized into modularized objects and functions.
For example, a fixed point theorem for contractive mappings on closed
sets in Banach spaces, but a particular application of this theorem
renders an existence and uniqueness theorem for first order ODEs \citep[pgs. 59, 62]{WalterODEs}.

A loose conceptual analogy for modularity is that of diagonalizing
a linear operator. A basis of eigenvectors are chosen so that the
action of the linear operator on each eigenspace has a particularly
simple expression, and distinct eigenspaces do not interact with respect
to the operator's action. In this analogy, the eigenvectors then correspond
to individual lemmas, and the linear operator corresponds to a large
theorem which uses each lemma. Decomposing the proof of the main result
in terms of non-interacting lemmas simplifies the proof considerably,
just as it simplifies the quantification of the linear operator. The
term ``orthogonal'' has been borrowed by software design to describe
two program modules whose functionality is independent \citep[Chapter 4, Section 2]{Raymond}.
Orthogonality in software design is highly desirable as it generally
eases program implementation and program correctness verification,
as the human designers are only capable of keeping track of a certain
finite number of details simultaneously  \citep{Miller}. The scope
of each detail level of the design is limited in complexity, making
the overall design easier to comprehend.

This technique in software design carries over directly to proof design,
where it is desirable (elegant) to write proofs and do calculations
without introducing extraneous details, such as choice of bases in
vector spaces or local coordinates in manifolds. Because such choices
are generally non-unique, they can often obscure the inherent structure
of the relevant objects by introducing artifacts arising from properties
of the particular details used to implement said objects. For example,
the choice of a particular local coordinate chart on a manifold artificially
imposes an additive structure on a neighborhood of the manifold, but
such a structure has nothing to do with the inherent geometry of the
manifold. Furthermore, the descent to this ``lower level'' of calculation
discards some type information, representing points in a manifold
as Euclidean vectors, thereby losing the ability to distinguish points
from different manifolds, or even different localities in the same
manifold.

This paper makes a particular emphasis on natural formulations and
calculations in order to expose the underlying geometric structures
rather than relying on coordinate-based expressions. The construction
of the ``full'' direct sum and ``full'' tensor product bundles
are used in combination with induced covariant derivatives to this
end.

\section*{Notation and Conventions}

Let all vector spaces, manifolds and {[}fiber{]} bundles be real and
finite-dimensional unless otherwise noted (this allows the canonical
identification $V^{**}\cong V$ for a vector space or vector bundle
$V$), and let all tensor products be over $\mathbb{R}$. The unqualified
term ``bundle'' will mean ``fiber bundle''. The Einstein summation
convention will be assumed in situations when indexed tensors are
used for computation.

Unary operators are understood to have higher binding precedence than
binary operators, and super and subscripts are understood to have
the highest binding precedence. For example, the expression $\nabla X_{,M}\circ\phi$
would be parenthesized as $\left(\nabla\left(X_{,M}\right)\right)\circ\phi$.

Apart from the obvious purpose of providing a concise and central
reference for the notation in this paper, the following notation index
serves to illustrate the use of telescoping notation (see \prettyref{sec:Telescoping-Notation}).
The high-level (terse notation which requires the reader to do more
work in type inference but is more agile), mid-level, and low-level
(completely type-specified, requiring little work on the part of the
reader) notations are presented side-by-side with their definitions. 

Let $I\subseteq\mathbb{R}$ be a neighborhood of $0$, let $\epsilon,i$
each be coordinates on $I$, let $A,A_{1},\dots,A_{n},B$ be sets,
let $M,N$ be manifolds, let $\phi\in C^{\infty}\left(M,N\right)$,
let $\pi_{M}^{A}\colon A\to M$ and $\pi_{N}^{H}\colon H\to N$ and
be vector bundles, where $A=E,F,F_{1},\dots,F_{n},G$, let $U,V,V_{1},\dots,V_{n},W$
be vector spaces, and let $c_{i}\in\Gamma\left(F_{i}\otimes_{M}T^{*}M\right)$
such that 
\[
c_{1}\oplus_{M}\dots\oplus_{M}c_{n}\in\Gamma\left(\left(F_{1}\oplus_{M}\dots\oplus_{M}F_{n}\right)\otimes_{M}T^{*}M\right)
\]
is a vector bundle isomorphism.

\noindent \begin{center}
{\small }%
\begin{tabular*}{1\textwidth}{@{\extracolsep{\fill}}|c|c|c|c|}
\hline 
{\small High-} & {\small Mid-} & {\small Low-level} & {\small Description}\tabularnewline
\hline 
\hline 
\multicolumn{4}{|c|}{{\small Variations; variational derivatives; tangent vectors.}}\tabularnewline
\hline 
\multicolumn{2}{|c|}{{\small $m_{\epsilon}$}} & {\small $m$} & {\small Variation of a point in $M$; $I\ni\epsilon\mapsto m_{\epsilon}\in M$;
$m\colon I\to M$.}\tabularnewline
\multicolumn{2}{|c|}{{\small $\delta$}} & {\small $\delta_{\epsilon}$} & {\small Variational derivative; $\delta_{\epsilon}:=\frac{\partial}{\partial\epsilon}\mid_{\epsilon=0}$.}\tabularnewline
\multicolumn{2}{|c|}{{\small $\delta m_{\epsilon}$}} & {\small $\delta_{\epsilon}m$} & {\small Tangent vector; linearization of a variation;}\tabularnewline
\multicolumn{2}{|c|}{} &  & {\small $\delta m_{\epsilon}\in T_{m_{0}}M$; $\delta_{\epsilon}m\in T_{m\left(0\right)}M$.}\tabularnewline
\hline 
\hline 
\multicolumn{4}{|c|}{{\small Projection maps; canonical isomorphisms; bundle-related maps
and spaces.}}\tabularnewline
\hline 
{\small $\pr$} & {\small $\pr_{i}$} & {\small $\pr_{i}^{A_{1}\times\dots\times A_{n}}$} & {\small Set-theoretic projection onto $i$th factor or named factor;}\tabularnewline
 & {\small $\pr_{A_{i}}$} & {\small $\pr_{A_{i}}^{A_{1}\times\dots\times A_{n}}$} & {\small $\pr_{i}^{A_{1}\times\dots\times A_{n}}\colon A_{1}\times\dots\times A_{n}\to A_{i}$.}\tabularnewline
\hline 
{\small $\iota$} & {\small $\iota_{B}$, $\iota^{A}$} & {\small $\iota_{B}^{A}$} & {\small Canonical isomorphism; $\iota_{B}^{A}\colon A\to B$; $\iota_{A}^{B}:=\left(\iota_{B}^{A}\right)^{-1}$.}\tabularnewline
{\small $\pi$} & {\small $\pi_{M}$, $\pi^{F}$} & {\small $\pi_{M}^{F}$} & {\small Bundle projection map; $\pi_{M}^{F}\colon F\to M$.}\tabularnewline
{\small $\rho$} & {\small $\rho_{H}$, $\rho^{\phi^{*}H}$} & {\small $\rho_{H}^{\phi^{*}H}$} & {\small Pullback bundle fiber projection map; $\rho_{H}^{\phi^{*}H}\colon\phi^{*}H\to H$.}\tabularnewline
\hline 
\hline 
\multicolumn{4}{|c|}{{\small Trivial bundle constructions and projection maps.}}\tabularnewline
\hline 
\multicolumn{2}{|c|}{{\small $M\times N\to N$}} & {\small $M\rtriv N\to N$} & {\small Trivial bundle over $N$; $M\rtriv N:=M\times N$;}\tabularnewline
\multicolumn{2}{|c|}{} &  & {\small $\pi_{N}^{M\rtriv N}\colon M\rtriv N\to N,\,\left(m,n\right)\mapsto n$.}\tabularnewline
\multicolumn{2}{|c|}{{\small $M\times N\to M$}} & {\small $M\ltriv N\to M$} & {\small Trivial bundle over $M$; $M\ltriv N:=M\times N$;}\tabularnewline
\multicolumn{2}{|c|}{} &  & {\small $\pi_{M}^{M\ltriv N}\colon M\ltriv N\to M,\,\left(m,n\right)\mapsto m$.}\tabularnewline
\hline 
\hline 
\multicolumn{4}{|c|}{{\small Shared base-space bundle constructions and projection maps.}}\tabularnewline
\hline 
\multicolumn{2}{|c|}{{\small $E\times F\to M$}} & {\small $E\times_{M}F\to M$} & {\small Direct product; $E\times_{M}F:=\coprod_{m\in M}E_{m}\times F_{m}$;}\tabularnewline
\multicolumn{2}{|c|}{} &  & {\small $\pi_{M}^{E\times_{M}F}\left(e,f\right):=\pi_{M}^{E}\left(e\right)\equiv\pi_{M}^{F}\left(f\right)$.}\tabularnewline
\multicolumn{2}{|c|}{{\small $E\oplus F\to M$}} & {\small $E\oplus_{M}F\to M$} & {\small Whitney sum; $E\oplus_{M}F:=\coprod_{m\in M}E_{m}\oplus F_{m}$;}\tabularnewline
\multicolumn{2}{|c|}{} &  & {\small $\pi_{M}^{E\oplus_{M}F}\left(e\oplus f\right):=\pi_{M}^{E}\left(e\right)\equiv\pi_{M}^{F}\left(f\right)$.}\tabularnewline
\multicolumn{2}{|c|}{{\small $E\otimes F\to M$}} & {\small $E\otimes_{M}F\to M$} & {\small Tensor product; $E\otimes_{M}F:=\coprod_{m\in M}E_{m}\otimes F_{m}$;}\tabularnewline
\multicolumn{2}{|c|}{} &  & {\small $\pi_{M}^{E\otimes_{M}F}\left(c^{ij}e_{i}\otimes f_{j}\right):=\pi_{M}^{E}\left(e_{k}\right)\equiv\pi_{M}^{F}\left(f_{\ell}\right)$
(for any $k,\ell$).}\tabularnewline
\hline 
\hline 
\multicolumn{4}{|c|}{{\small Separate base-space bundle constructions and projection maps.}}\tabularnewline
\hline 
\multicolumn{2}{|c|}{{\small $E\times H\to M\times N$}} & {\small $E\times_{M\times N}H\to M\times N$} & {\small Direct product; $E\times_{M\times N}H:=\coprod_{\left(m,n\right)\in M\times N}E_{m}\times H_{n}$.}\tabularnewline
\multicolumn{2}{|c|}{} &  & {\small $\pi_{M\times N}^{E\times_{M\times N}H}\left(e,h\right):=\left(\pi_{M}^{E}\left(e\right),\pi_{N}^{H}\left(h\right)\right)$.}\tabularnewline
\multicolumn{2}{|c|}{{\small $E\oplus H\to M\times N$}} & {\small $E\oplus_{M\times N}H\to M\times N$} & {\small Whitney sum; $E\oplus_{M\times N}H:=\coprod_{\left(m,n\right)\in M\times N}E_{m}\oplus H_{n}$.}\tabularnewline
\multicolumn{2}{|c|}{} &  & {\small $\pi_{M\times N}^{E\oplus_{M\times N}H}\left(e\oplus h\right):=\left(\pi_{M}^{E}\left(e\right),\pi_{N}^{H}\left(h\right)\right)$.}\tabularnewline
\multicolumn{2}{|c|}{{\small $E\otimes H\to M\times N$}} & {\small $E\otimes_{M\times N}H\to M\times N$} & {\small Tensor product; $E\otimes_{M\times N}H:=\coprod_{\left(m,n\right)\in M\times N}E_{m}\otimes H_{n}$.}\tabularnewline
\multicolumn{2}{|c|}{} &  & {\small $\pi_{M\times N}^{E\otimes_{M\times N}H}\left(c^{ij}e_{i}\otimes h_{j}\right):=\left(\pi_{M}^{E}\left(e_{k}\right),\pi_{N}^{H}\left(h_{\ell}\right)\right)$
(for any $k,\ell$).}\tabularnewline
\hline 
\hline 
\multicolumn{4}{|c|}{{\small Trace; natural pairing; tensor/tensor field contraction. Simple
tensor expressions are extended linearly.}}\tabularnewline
\hline 
\multicolumn{2}{|c|}{{\small $\tr$}} & {\small $\tr_{V}$} & {\small Trace on $V$; $\tr_{V}\colon V^{*}\otimes V\to\mathbb{R},\,\alpha\otimes v\mapsto\alpha\left(v\right)$.}\tabularnewline
\multicolumn{2}{|c|}{{\small $\alpha\cdot v$}} & {\small $\alpha\cdot_{V}v$} & {\small Natural pairing; $\cdot_{V}\colon V^{*}\times V\to\mathbb{R},\,\left(\alpha,v\right)\mapsto\alpha\left(v\right)$.}\tabularnewline
\multicolumn{2}{|c|}{{\small $A\cdot B$}} & {\small $A\cdot_{V}B$} & {\small Tensor contraction; $\cdot_{V}\colon\left(U\otimes V^{*}\right)\times\left(V\otimes W\right)\to U\otimes W,$}\tabularnewline
\multicolumn{2}{|c|}{} &  & {\small $\left(u\otimes\alpha\right)\cdot_{V}\left(v\otimes w\right):=u\otimes\left(\alpha\cdot_{V}v\right)\otimes w\equiv\alpha\left(v\right)\, u\otimes w$.}\tabularnewline
\multicolumn{2}{|c|}{{\small $S\cdot^{n}T$}} & {\small $S\cdot_{V_{1}\otimes\dots\otimes V_{n}}T$} & {\small Alternate for $\cdot_{V}$, where $V=V_{1}\otimes\dots\otimes V_{n}$.}\tabularnewline
\multicolumn{2}{|c|}{{\small $S:T$, $S\cdot^{2}T$}} & {\small $S\cdot_{V_{1}\otimes V_{2}}T$} & {\small Special notation for $n=2$.}\tabularnewline
\hline 
\multicolumn{2}{|c|}{{\small $\tr$}} & {\small $\tr_{F}$} & {\small Trace on $F\to M$; $\tr_{F}\colon\Gamma\left(F^{*}\otimes_{M}F\right)\to C^{\infty}\left(M,\mathbb{R}\right),$}\tabularnewline
\multicolumn{2}{|c|}{} &  & {\small $\left[\tr_{F}\left(\sigma\otimes_{M}f\right)\right]\left(m\right):=\sigma\left(m\right)\cdot_{F_{m}}f\left(m\right)$
for $m\in M$.}\tabularnewline
\multicolumn{2}{|c|}{{\small $\sigma\cdot f$}} & {\small $\sigma\cdot_{F}f$} & {\small Natural pairing; $\cdot_{F}\colon\Gamma\left(F^{*}\right)\times\Gamma\left(F\right)\to C^{\infty}\left(M,\mathbb{R}\right)$,}\tabularnewline
\multicolumn{2}{|c|}{} &  & {\small $\left(\sigma\cdot_{F}f\right)\left(m\right):=\sigma\left(m\right)\cdot_{F_{m}}f\left(m\right)$
for $m\in M$.}\tabularnewline
\multicolumn{2}{|c|}{{\small $A\cdot f$}} & {\small $A\cdot_{F}f$} & {\small Natural pairing; $\cdot_{F}\colon\Gamma\left(E\otimes_{M}F^{*}\right)\times F\to E,$}\tabularnewline
\multicolumn{2}{|c|}{} &  & {\small $\left(e\otimes_{M}\sigma\right)\cdot_{F}f:=e\left(m\right)\,\left(\sigma\left(m\right)\cdot_{F_{m}}f\right)\in E_{m}$;
$m:=\pi_{M}^{F}\left(f\right)$.}\tabularnewline
\multicolumn{2}{|c|}{{\small $S\cdot T$}} & {\small $S\cdot_{F}T$} & {\small Tensor field contraction; pointwise tensor contraction;}\tabularnewline
\multicolumn{2}{|c|}{} &  & {\small $\cdot_{F}\colon\Gamma\left(E\otimes_{M}F^{*}\right)\times\Gamma\left(F\otimes_{M}G\right)\to\Gamma\left(E\otimes_{M}G\right),$}\tabularnewline
\multicolumn{2}{|c|}{} &  & {\small $\left[\left(e\otimes\sigma\right)\cdot_{F}\left(f\otimes g\right)\right]\left(m\right):=\left(\sigma\left(m\right)\cdot_{F_{m}}f\left(m\right)\right)\, e\left(m\right)\otimes g\left(m\right)$.}\tabularnewline
\multicolumn{2}{|c|}{{\small $S\cdot^{n}T$}} & {\small $S\cdot_{F_{1}\otimes_{M}\dots\otimes_{M}F_{n}}T$} & {\small Alternate for $\cdot_{F}$, where $F=F_{1}\otimes_{M}\dots\otimes_{M}F_{n}$.}\tabularnewline
\multicolumn{2}{|c|}{{\small $S:T$, $S\cdot^{2}T$}} & {\small $S\cdot_{F_{1}\otimes_{M}F_{2}}T$} & {\small Special notation for $n=2$.}\tabularnewline
\hline 
\end{tabular*}{\small \smallskip{}
}%
\begin{tabular*}{1\textwidth}{@{\extracolsep{\fill}}|c|c|c|c|}
\hline 
{\small High-} & {\small Mid-} & {\small Low-level} & {\small Description}\tabularnewline
\hline 
\hline 
\multicolumn{4}{|c|}{{\small Permutations of tensors and tensor fields.}}\tabularnewline
\hline 
\multicolumn{2}{|c|}{{\small $A^{\sigma}$, $A\cdot^{n}\sigma$}} & {\small $A\cdot_{V_{1}^{*}\otimes\dots\otimes V_{n}^{*}}\sigma$} & {\small Right-action of permutations on $n$-tensors/$n$-tensor fields;}\tabularnewline
\multicolumn{2}{|c|}{} &  & {\small{} $\left(v_{1}\otimes\dots\otimes v_{n}\right)^{\sigma}:=v_{\sigma^{-1}\left(1\right)}\otimes\dots\otimes v_{\sigma^{-1}\left(n\right)}$;
$\left(A^{\sigma}\right)^{\tau}=A^{\sigma\tau}$.}\tabularnewline
\hline 
\multicolumn{4}{|c|}{{\small Spaces of sections of bundles.}}\tabularnewline
\hline 
\multicolumn{3}{|c|}{{\small $\Gamma\left(H\right)$, $\Gamma\left(\pi_{N}^{H}\right)$}} & {\small Space of smooth sections of the bundle $\pi_{N}^{H}$;}\tabularnewline
\hline 
\multicolumn{3}{|c|}{} & {\small $\Gamma\left(H\right):=\left\{ h\in C^{\infty}\left(N,H\right)\mid\pi_{N}^{H}\circ h=\Id_{N}\right\} $.}\tabularnewline
\hline 
\multicolumn{3}{|c|}{{\small $\Gamma_{\phi}\left(H\right)$, $\Gamma_{\phi}\left(\pi_{N}^{H}\right)$}} & {\small Space of smooth sections of $\pi_{N}^{H}$ along $\phi$;}\tabularnewline
\hline 
\multicolumn{3}{|c|}{} & {\small $\Gamma_{\phi}\left(H\right):=\left\{ h\in C^{\infty}\left(M,H\right)\mid\pi_{N}^{H}\circ h=\phi\right\} $.}\tabularnewline
\hline 
\hline 
\multicolumn{4}{|c|}{{\small Vertical bundle, pullback bundle, projection maps, pullback
of sections.}}\tabularnewline
\hline 
\multicolumn{3}{|c|}{{\small $VE\to E$}} & {\small Vertical bundle over $E\to M$; $VE:=\ker T\pi_{M}^{E}\leq TE$.}\tabularnewline
\multicolumn{3}{|c|}{} & {\small projection map $\pi_{E}^{VE}:=\pi_{E}^{TE}\mid_{VE}$.}\tabularnewline
\hline 
\multicolumn{3}{|c|}{{\small $\phi^{*}H\to M$}} & {\small Pullback bundle; $\phi^{*}H:=\left\{ \left(m,h\right)\in M\times H\mid\phi\left(m\right)=\pi\left(h\right)\right\} $.}\tabularnewline
\multicolumn{3}{|c|}{} & {\small $\pi_{M}^{\phi^{*}H}\left(m,h\right):=m$; $\rho_{H}^{\phi^{*}H}\left(m,h\right):=h$.}\tabularnewline
\multicolumn{3}{|c|}{{\small $\phi^{*}h$}} & {\small Pullback of section $h\in\Gamma\left(H\right)$; $\phi^{*}h\in\Gamma\left(\phi^{*}H\right)$ }\tabularnewline
\multicolumn{3}{|c|}{} & {\small defined by $\rho_{H}^{\phi^{*}H}\circ\phi^{*}h=h$; $h\in\Gamma\left(H\right)$.}\tabularnewline
\hline 
\hline 
\multicolumn{4}{|c|}{{\small Covariant derivatives; partial covariant derivatives.}}\tabularnewline
\hline 
{\small $\nabla L$} & {\small $\lnabla L$} & {\small $\lnabla^{M\to\mathbb{R}}L$} & {\small Natural linear covariant derivative; differential of functions;}\tabularnewline
 & {\small $\nabla^{M\to\mathbb{R}}L$} &  & {\small $\lnabla^{M\to\mathbb{R}}L:=dL\in\Gamma\left(T^{*}M\right)$,
where $L\in C^{\infty}\left(M,\mathbb{R}\right)$.}\tabularnewline
\hline 
{\small $\nabla X$} & {\small $\lnabla X$} & {\small $\lnabla^{E}X$} & {\small Linear covariant derivative on vector bundle $E\to M$;}\tabularnewline
 & {\small $\nabla^{E}X$} &  & {\small $\nabla^{E}X\in\Gamma\left(E\otimes_{M}T^{*}M\right)$, where
$X\in\Gamma\left(E\right)$.}\tabularnewline
\hline 
{\small $\nabla\phi$} & {\small $\nnabla\phi$} & {\small $\nnabla^{M\to N}\phi$} & {\small Tangent map as tensor field;}\tabularnewline
 & {\small $\nabla^{M\to N}\phi$} &  & {\small $\nnabla^{M\to N}\phi\in\Gamma\left(\phi^{*}TN\otimes_{M}T^{*}M\right)$,
where $\phi\in C^{\infty}\left(M,N\right)$.}\tabularnewline
\hline 
{\small $\nabla\sigma$} & {\small $\nabla^{\phi}\sigma$} & {\small $\nabla^{\phi^{*}H}\sigma$} & {\small Pullback covariant derivative; $\sigma\in\Gamma\left(\phi^{*}H\right)$;}\tabularnewline
 &  &  & {\small defined by $\nabla^{\phi^{*}H}\phi^{*}h=\phi^{*}\nabla^{H}h\cdot_{\phi^{*}TN}\nnabla^{M\to N}\phi$;
$h\in\Gamma\left(H\right)$.}\tabularnewline
\hline 
\multicolumn{3}{|c|}{{\small $L_{,c_{1}},\dots,L_{,c_{n}}$}} & {\small Partial differential of functions;}\tabularnewline
\multicolumn{3}{|c|}{} & {\small $L_{,c_{i}}\in\Gamma\left(F_{i}^{*}\right)$, defined by $\lnabla^{M\to\mathbb{R}}L=\sum_{i=1}^{n}L_{,c_{i}}\cdot_{F_{i}}c_{i}$.}\tabularnewline
\multicolumn{3}{|c|}{{\small $X_{,c_{1}},\dots,X_{,c_{n}}$}} & {\small Partial linear covariant derivative;}\tabularnewline
\multicolumn{3}{|c|}{} & {\small $X_{,c_{i}}\in\Gamma\left(E\otimes_{M}F_{i}^{*}\right)$,
defined by $\lnabla^{E}X=\sum_{i=1}^{n}X_{,c_{i}}\cdot_{F_{i}}c_{i}$.}\tabularnewline
\multicolumn{3}{|c|}{{\small $\phi_{,M_{1}},\dots,\phi_{,M_{n}}$}} & {\small Partial derivative decomposition of tangent map;}\tabularnewline
\multicolumn{3}{|c|}{} & {\small $\phi_{,M_{i}}\in\Gamma\left(\phi^{*}TN\otimes_{M}\pr_{i}^{*}T^{*}M_{i}\right)$, }\tabularnewline
\multicolumn{3}{|c|}{} & {\small where $M=M_{1}\times\dots\times M_{n}$, $\pr_{i}:=\pr_{i}^{M}$,
and}\tabularnewline
\multicolumn{3}{|c|}{} & {\small $\nnabla^{M\to N}\phi=\sum_{i=1}^{n}\phi_{,M_{i}}\cdot_{\pr_{i}^{*}TM_{i}}\nnabla^{M\to M_{i}}\pr_{i}$.}\tabularnewline
\hline 
\hline 
\multicolumn{4}{|c|}{{\small Covariant Hessians.}}\tabularnewline
\hline 
{\small $\nabla^{2}L$} & \multicolumn{2}{c|}{{\small $\nabla^{T^{*}M}\nabla^{M\to\mathbb{R}}L$}} & {\small Covariant Hessian of functions;}\tabularnewline
{\small $\lnabla\lnabla L$} & \multicolumn{2}{c|}{{\small $\lnabla^{T^{*}M}\lnabla^{M\to\mathbb{R}}L$}} & {\small $\nabla^{2}L\in\Gamma\left(T^{*}M\otimes T^{*}M\right)$;
$L\in C^{\infty}\left(M,\mathbb{R}\right)$.}\tabularnewline
{\small $\nabla^{2}X$} & \multicolumn{2}{c|}{{\small $\nabla^{E\otimes T^{*}M}\nabla^{E}X$}} & {\small Covariant Hessian on vector bundle $E\to M$;}\tabularnewline
{\small $\lnabla\lnabla X$} & \multicolumn{2}{c|}{{\small $\lnabla^{E\otimes_{M}T^{*}M}\lnabla^{E}X$}} & {\small{} $\nabla^{2}X\in\Gamma\left(E\otimes T^{*}M\otimes T^{*}M\right)$;
$X\in\Gamma\left(E\right)$.}\tabularnewline
{\small $\nabla^{2}\phi$} & \multicolumn{2}{c|}{{\small $\nabla^{\phi^{*}TN\otimes T^{*}M}\nabla^{M\to N}\phi$}} & {\small Covariant Hessian of maps;}\tabularnewline
{\small $\lnabla\nnabla\phi$} & \multicolumn{2}{c|}{{\small $\lnabla^{\phi^{*}TN\otimes_{M}T^{*}M}\nnabla^{M\to N}\phi$}} & {\small $\nabla^{2}\phi\in\Gamma\left(\phi^{*}TN\otimes T^{*}M\otimes T^{*}M\right)$.
$\phi\in C^{\infty}\left(M,N\right)$.}\tabularnewline
\hline 
\hline 
\multicolumn{4}{|c|}{{\small Derivative conventions.}}\tabularnewline
\hline 
\multicolumn{3}{|c|}{{\small $\nabla_{X}e$}} & {\small Directional derivative notation; $\nabla_{X}e:=\nabla e\cdot_{TM}X$.}\tabularnewline
\multicolumn{3}{|c|}{{\small $\nabla^{n}e\cdot^{n}\left(X_{1}\otimes\dots\otimes X_{n-1}\otimes X_{n}\right)$}} & {\small Iterated covariant derivative convention;}\tabularnewline
\multicolumn{3}{|c|}{} & {\small defined by $\left(\nabla_{X_{n}}\nabla^{n-1}e\right)\cdot^{n-1}\left(X_{1}\otimes\dots\otimes X_{n-1}\right)$.}\tabularnewline
\multicolumn{3}{|c|}{{\small $R\left(X,Y\right):=-\nabla_{X}\nabla_{Y}+\nabla_{Y}\nabla_{X}+\nabla_{\left[X,Y\right]}$}} & {\small Curvature operator; $R\left(X,Y\right)e=\nabla^{2}e:\left(X\otimes Y-Y\otimes X\right)$.}\tabularnewline
\hline 
\multicolumn{3}{|c|}{{\small $z\colon M\to M\times I,\, m\mapsto\left(m,0\right)$}} & {\small Evaluation-at-zero map.}\tabularnewline
\multicolumn{3}{|c|}{{\small $z^{*}\partial_{i}=\delta_{i}$}} & {\small Pullback formulation of derivative-at-zero.}\tabularnewline
\hline 
\end{tabular*}
\par\end{center}{\small \par}

For more on relevant introductory theory on manifolds, bundles and
Riemannian geometry, see \citep{IntroLee}, \citep{JeffMLee}, \citep{Michor},
\citep{RiemannianLee}.

\part{Mathematical Setting \label{part:Mathematical-Setting}}

\section{Using Strong Typing to Error-Check Calculations}

Linear algebra is an excellent setting for discussion of the \textbf{strong
typing} \citep{Cardelli} of a language, a concept used in the design
of computer programming languages. The idea is that when the human-readable
source code of a program is compiled (translated into machine-readable
instructions), the compiler (the program which performs this translation)
or runtime (the software which executes the code) verifies that the
program objects are being used in a well-defined way, producing an
error for each operation that is not well-defined. For example, a
vector-type value would not be allowed to be added to a permutation-type
value, even though tuples of unsigned integers (i.e. bytes) are used
by the computer to represent both, and the computer's processing unit
could add together their byte-valued representations. However, such
an operation would be meaningless with respect to the types of the
operands. The result of the operation would depend on the non-canonical
choice of representation for each object. Strong type checking has
the advantage of catching many programming errors, including most
importantly those resulting from an inherent misuse of the program's
objects. Within this paper, certain type-explicit notations will be
used to provide forms of type awareness conducive to error-checking.\\

An important example of semi-strong typing in math is Penrose's abstract
index notation \citep{Penrose}, modeled on Einstein's summation convention,
in which linear algebra and tensor calculus are implemented using
indexed objects (tensors) having a certain number and order of ``up''
and ``down'' indices (an abstraction of the genuine basis/coordinate
expressions in which the indexed objects are arrays of scalars/functions).
A non-indexed tensor is a scalar value, a tensor having a single up
or down index is a vector or covector value respectively, a tensor
having an up and a down index is an endomorphism, and so forth. The
tensors are contracted by pairing a certain number of up indices with
the same number of down indices, resulting in an object having as
indices the uncontracted indices.

For example, given a finite-dimensional inner product space $\left(V,g\right)$,
where $g$ is a ${0 \choose 2}$-tensor (having the form $g_{ij}$,
i.e. two down indices), a vector $v\in V$ is a ${1 \choose 0}$-tensor,
and the length of $v$ is $\sqrt{v^{i}g_{ij}v^{j}}$. If $\dim V>1$,
then $\bigwedge^{2}V$ has positive dimension, its vectors each being
${2 \choose 0}$-tensors, and $G_{ijk\ell}:=g_{ik}g_{j\ell}-g_{i\ell}g_{jk}$
is an inner product on $\bigwedge^{2}V$ (which must be a ${0 \choose 4}$-tensor
in order to contract with two ${2 \choose 0}$-tensors).

Certain type errors are detected by use of abstract index notation
in the form of index mismatch. For example, with $\left(V,g\right)$
as above, if $\alpha\in V^{*}$, then $\alpha$ is a ${0 \choose 1}$-tensor.
Because of the repeated $j$ down indices, the expression $g_{ij}\alpha_{j}$
typically indicates a type error; $g_{ij}$ can't contract with $\alpha_{j}$
because of incompatible valence (valence being the number of up and
down indices). Furthermore, multiplying a ${0 \choose 2}$-tensor
with a ${0 \choose 1}$-tensor without contraction should result in
a ${0 \choose 3}$-tensor, which should be denoted using three indices,
as in $g_{ij}\alpha_{k}$.

The only explicit type information provided by abstract index notation
is that of valence. The ``semi'' qualifier mentioned earlier is
earned by the lack of distinction between the different spaces in
which the tensors reside. For example, if $U,V,W$ are finite-dimensional
vector spaces, then linear maps $A\colon U\to V$ and $B\colon V\to W$
can be written as ${1 \choose 1}$-tensors, and their composition
$B\circ A\colon U\to W$ is written as the tensor contraction $\left(B\circ A\right)_{j}^{i}=B_{k}^{i}A_{j}^{k}$.
However, while the expression $A_{k}^{i}B_{j}^{k}$ makes sense in
terms of valence compatibility (i.e. grammatically), the composition
``$A\circ B$'' that it should represent is not well-defined. Thus
this form of type error is not caught by abstract index notation,
since the domains/codomains of the linear maps must be checked separately.\\

The use of dimensional analysis (the abstract use of units such as
kilograms, seconds, etc) in Physics is an important precedent of strong
typing. Each quantity has an associated ``dimension'' (this is a
different meaning from the ``dimension'' of linear algebra) which
is expressed as a fraction of powers of formal symbols. The ordinary
algebraic rules for fractions and formal symbols are used for the
dimensional components, with the further requirement that addition
and equality may only occur between quantities having the same dimension.

For example, if $\mbox{E}$, $\mbox{M}$ and $\mbox{C}$ represent
the dimensions of energy, mass and cost, respectively, and if the
energy storage density $\rho\,\mbox{E}/\mbox{M}$ of a battery manufacturing
process is known (having dimensions energy per mass) and the manufacturing
weight yield $w\,\mbox{M}/\mbox{C}$ of the battery is known (having
dimensions mass per cost), then under the algebraic convensions of
dimensional analysis, calculating the energy storage per cost (which
should have dimensions energy per cost) is simple; 
\[
\left(\rho\,\frac{\mbox{E}}{\mbox{M}}\right)\left(w\,\frac{\mbox{M}}{\mbox{C}}\right)=\rho w\,\frac{\mbox{EM}}{\mbox{MC}}=\rho w\,\frac{\mbox{E}}{\mbox{C}}
\]
(the $\mbox{M}$ symbols cancel in the fraction). Here, both $\rho$
and $w$ are real numbers, and besides using the well-definedness
of real multiplication, no type-checking is done in the expression
$\rho w$. 

A contrasting example is the quantity $\rho/w$, having dimensions
$\mbox{EC}/\mbox{M}^{2}$. However, these dimensions may be considered
to be meaningless in the given context. The quantity's type adds meaning
to the real-valued quantity, and while the quantity is well-defined
as a real number, the uselessness of the type may indicate that an
error has been made in the calculations. For example, a type mismatch
between the two sides of an equation is a strong indication of error.

This is also a convenient way to think about the chain rule of calculus.
If $z\left(y\right)$, $y\left(x\right)$, and $x$ measure real-valued
quantities, then $z\left(y\left(x\right)\right)$ measures the quantity
$z$ with respect to quantity $x$. Using $\mbox{Z}$, $\mbox{Y}$
and $\mbox{X}$ for the dimensions of the quantities $z$, $y$ and
$x$ respectively, the derivative $\frac{dz}{dx}$ has units $\mbox{Z}/\mbox{X}$.
When worked out, the dimensions for the quantities on either side
of the equation $\frac{dz}{dx}=\frac{dz}{dy}\frac{dy}{dx}$ will match
exactly, having a non-coincidental similarity to the calculation in
the battery product example.

\section{Telescoping Notation (aka Don't Fear the Verbosity) \label{sec:Telescoping-Notation}}

Many of the computations developed in this paper will appear to be
overly pedantic, owing to the decoration-heavy notation that will
be introduced in \prettyref{sec:Strongly-Typed-Linear-Algebra}. This
decoration is largely for the purpose of tracking the myriad of types
in the type system and to assist the human reader or writer in making
sense of  and error-checking the expressions involved. The pedantry
in this paper plays the role of introducing the technique. The notation
is designed to \textbf{telescope}%
\footnote{Credit for the notion of telescoping notation is due in part to David
DeConde, during one of many enjoyable and insightful conversations.%
}, meaning that there is a spectrum of notational decoration; from
\begin{itemize}
\item pedantically type-specified, verbose, and decoration-heavy, where
{[}almost{]} no types must be inferred from context and there is little
work or expertise required on the part of the reader, to
\item somewhat decorated but more compact, where the reader must do a little
bit of thinking to infer some types, all the way to
\item tersely notated with minimal type decoration, where {[}almost{]} all
types must be inferred from context and the reader must either do
a lot of thinking or be relatively experienced.
\end{itemize}
Additionally, some of the chosen symbols are meant to obey the same
telescoping range of specifity. For example, compare $n$-fold tensor
contraction $\cdot^{n}$ with type-specified $\cdot_{V_{1}\otimes\dots\otimes V_{n}}$
as discussed in \prettyref{sec:Strongly-Typed-Linear-Algebra}, or
the symbols $\nabla$, $\nnabla$, and $\lnabla$ as discussed in
\prettyref{sec:Curvature-and-Commutation-of-Derivatives}. Tersely
notated computations can be seen in \prettyref{sec:Curvature-and-Commutation-of-Derivatives},
while fully-verbose computations abound in the careful exposition
of Part \prettyref{part:Riemannian-Calculus-of-Variations}.

\section{Strongly-Typed Linear Algebra via Tensor Products\label{sec:Strongly-Typed-Linear-Algebra}}

A fully strongly typed formulation of linear algebra will now be developed
which enjoys a level of abstraction and flexibility similar to that
of Penrose's abstract index notation. Emphasis will be placed on notational
and conceptual regularity via a tensor formalism, coupled with a notion
of ``untangled'' expression which exploits and notationally depicts
the associativity of linear composition.\\

If $V$ denotes a finite-dimensional vector space, then let
\[
\cdot_{V}\colon V^{*}\times V\to\mathbb{R},\,\left(\alpha,v\right)\mapsto\alpha\left(v\right)
\]
denote the \textbf{natural pairing} on $V$, and denote $\cdot_{V}\left(\alpha,v\right)$
using the infix notation $\alpha\cdot_{V}v$. The natural pairing
is a nondegenerate bilinear form and its bilinearity gives the expression
$\alpha\cdot_{V}v$ multiplicative semantics (distributivity and commutativity
with scalar multiplication), thereby justifying the use of the infix
$\cdot$ operator normally reserved for multiplication. The natural
pairing subscript $V$ is seemingly pedantic, but will prove to be
an invaluable tool for articulating and navigating the rich type system
of the linear algebraic and vector bundle constructions used in this
paper. When clear from context, the subscript $V$ may be omitted.

Because $V$ is finite-dimensional, it is reflexive (i.e. the canonical
injection $V\to V^{**},\, v\mapsto\left(\alpha\mapsto\alpha\left(v\right)\right)$
is a linear isomorphism). Thus the natural pairing $\cdot_{V^{*}}$
on $V^{*}$ can be written naturally as
\[
\cdot_{V^{*}}\colon V\times V^{*}\to\mathbb{R},\,\left(v,\alpha\right)\mapsto\alpha\left(v\right).
\]
Note that $\alpha\cdot_{V}v=v\cdot_{V^{*}}\alpha$. Though subtle,
the distinction between $\cdot_{V}$ and $\cdot_{V^{*}}$ is important
within the type system used in this paper.

Through a universal mapping property of multilinear maps, the bilinear
forms $\cdot_{V}$ and $\cdot_{V^{*}}$ descend to the \textbf{natural
trace} maps
\begin{align*}
\tr_{V}\colon V^{*}\otimes V & \to\mathbb{R},\,\alpha\otimes v\mapsto\alpha\left(v\right),\mbox{ and}\\
\tr_{V^{*}}\colon V\otimes V^{*} & \to\mathbb{R},\, v\otimes\alpha\mapsto\alpha\left(v\right),
\end{align*}
each extended linearly to non-simple tensors. These operations can
also be called \textbf{tensor contraction}. Noting that $\left(V^{*}\otimes V\right)^{*}$
and $\left(V\otimes V^{*}\right)^{*}$ are canonically isomorphic
to $V\otimes V^{*}$ and $V^{*}\otimes V$ respectively, then for
each $A\in V^{*}\otimes V$ and $B\in V\otimes V^{*}$, it follows
that $\tr_{V}\left(A\right)=\Id_{V^{*}}\cdot_{V^{*}\otimes V}A$ and
$\tr_{V^{*}}\left(B\right)=\Id_{V}\cdot_{V\otimes V^{*}}B$.
\begin{defn}[Linear maps as tensors]
 \label{def:linear_maps_as_tensors} Let $V$ and $W$ be finite-dimensional
vector spaces, and let $\Hom\left(V,W\right)$ denote the space of
vector space morphisms from $V$ to $W$ (i.e. linear maps). The linear
isomorphism 
\begin{eqnarray*}
W\otimes V^{*} & \to & \Hom\left(V,W\right),\\
w\otimes\alpha & \mapsto & \left(V\to W,\, v\mapsto w\left(\alpha\cdot_{V}v\right)\right)
\end{eqnarray*}
(extended linearly to general tensors) will play a central conceptual
role in the calculations employed in this paper, as it will facilitate
constructions which would otherwise be awkward or difficult to express.
Linear maps and appropriately typed tensor products will be identified
via this isomorphism.
\end{defn}
Given bases $v_{1},\dots,v_{m}\in V$ and $w_{1},\dots,w_{n}\in W$,
and dual bases $v^{1},\dots,v^{m}\in V^{*}$ and $w^{1},\dots,w^{n}\in W^{*}$,
a linear map $A\colon V\to W$ can be written under the identification
in (\ref{def:linear_maps_as_tensors}) as
\[
A=A_{j}^{i}\, w_{i}\otimes v^{j},
\]
where $A_{j}^{i}=w^{i}\cdot_{W}A\cdot_{V}v_{j}\in\mathbb{R}$, and
in fact $\left[A_{j}^{i}\right]\in M_{n\times m}\left(\mathbb{R}\right)$
is the matrix representation of $A$ with respect to the bases $v_{1},\dots,v_{m}\in V$
and $w_{1},\dots,w_{n}\in W$, noting that the $i$ and $j$ indices
denote the ``output'' and ``input'' components of $A$ respectively.
Tensors are therefore the strongly typed analog of matrices, where
the $W\otimes V^{*}$ type information is carried by the $w_{i}\otimes v^{j}$
component.

One clarifying example of the tensor formulation is the adjoint operation
of the natural pairing, also known as forming the dual of a linear
map. It is straightforward to show that 
\begin{eqnarray*}
*\colon W\otimes V^{*} & \to & V^{*}\otimes W,\\
w\otimes\alpha & \mapsto & \alpha\otimes w,
\end{eqnarray*}
(where the map is extended linearly to general tensors). This is literally
the tensor abstraction of the matrix transpose operation; if $A=A_{j}^{i}\, w_{i}\otimes\alpha^{j}$,
then the dual $A$ is $A^{*}=A_{i}^{j}\,\alpha^{i}\otimes w_{j}$.
The matrix of $A^{*}$ is precisely the transpose of the matrix of
$A$ with respect to the relevant bases. The map $*$ itself can be
written as a 4-tensor $*\in V^{*}\otimes W\otimes W^{*}\otimes V$,
where $A^{*}=*\cdot_{W\otimes V^{*}}A$. 

There is a notion of the natural pairing of tensor products, which
implements composition and evaluation of linear maps, and can be thought
of as a natural generalization of scalar multiplication in a field.
If $U,$ $V,$ and $W$ are each finite-dimensional vector spaces,
then the bilinear form
\begin{eqnarray*}
\left(U\otimes V^{*}\right)\times\left(V\otimes W\right) & \to & U\otimes\mathbb{R}\otimes W\cong U\otimes W,\\
\left(u\otimes\alpha,v\otimes w\right) & \mapsto & u\otimes\left(\alpha\cdot_{V}v\right)\otimes w=\left(\alpha\cdot_{V}v\right)u\otimes w
\end{eqnarray*}
will be denoted also by the infix notation $\cdot_{V}$ (i.e. $\left(u\otimes\alpha\right)\cdot_{V}\left(v\otimes w\right)=\left(\alpha\cdot_{V}v\right)u\otimes w$).
If $V$ itself is a tensor product of $n$ factors which are clear
from context, then $\cdot_{V}$ may be denoted by $\cdot^{n}$ (think
an $n$-fold tensor contraction). If $n=2$, then typically $:$ is
used in place of $\cdot^{2}$. For example, from above, $A^{*}=*\cdot_{W\otimes V^{*}}A=*:A$.\\

Given a permutation $\sigma\in S_{n}$, define a right-action by $\sigma\colon V_{1}\otimes\dots\otimes V_{n}\to V_{\sigma^{-1}\left(1\right)}\otimes\dots\otimes V_{\sigma^{-1}\left(n\right)}$,
mapping elements in the obvious way. For example, $\left(2\,3\,4\right)$
acting on $v_{1}\otimes v_{2}\otimes v_{3}\otimes v_{4}$ puts the
second factor in the third position, the third factor in the fourth
position, and the fourth factor in the second, giving $v_{1}\otimes v_{4}\otimes v_{2}\otimes v_{3}$.
This permutation is itself a linear map and of course can be written
as a tensor. However, because it is defined in terms of a right action,
the ``domain factors'' will come on the left. Thus $\sigma$ is
written as a tensor of the form $V_{1}^{*}\otimes\dots\otimes V_{n}^{*}\otimes V_{\sigma^{-1}\left(1\right)}\otimes\dots\otimes V_{\sigma^{-1}\left(n\right)}$
(i.e. as a $2n$-tensor). Certain tensor constructions are conducive
to using such permutations. In the above example, $*$ can be written
as $\left(1\,2\right)\in W^{*}\otimes V\otimes V^{*}\otimes W$. 

The permutation right-action also works naturally when notated using
superscripts. For example, if $B\in U\otimes V\otimes W$, then 
\[
B^{\left(1\,2\right)}:=B\cdot_{U^{*}\otimes V^{*}\otimes W^{*}}\left(1\,2\right)\in V\otimes U\otimes W
\]
and so
\begin{align*}
\left(B^{\left(1\,2\right)}\right)^{\left(2\,3\right)} & =\left(B\cdot_{U^{*}\otimes V^{*}\otimes W^{*}}\left(1\,2\right)\right)\cdot_{V^{*}\otimes U^{*}\otimes W^{*}}\left(2\,3\right)\\
 & =B\cdot_{U^{*}\otimes V^{*}\otimes W^{*}}\left(\left(1\,2\right)\cdot_{V^{*}\otimes U^{*}\otimes W^{*}}\left(2\,3\right)\right)\\
 & =B\cdot_{U^{*}\otimes V^{*}\otimes W^{*}}\left(1\,2\right)\left(2\,3\right)\\
 & =B\cdot_{U^{*}\otimes V^{*}\otimes W^{*}}\left(1\,3\,2\right)\in V\otimes W\otimes U.
\end{align*}
When multiplying the permutations $\left(1\,2\right)$ and $\left(2\,3\right)$
in the third line, it is important to note that they are read left-to-right,
since they are acting on $B$ on the right. 

The inline cycle notation is somewhat ambiguous in isolation because
the number of factors in the domain/codomain is not specified, let
alone their types. This information can sometimes be inferred from
context, such as from the natural pairing subscripts, as in the following
examples. 
\begin{example}[Linearizing the inversion map]
Let $i\colon GL\left(V\right)\to GL\left(V\right),\, A\mapsto A^{-1}$,
i.e. the linear map inversion operator, where $GL\left(V\right)$
is an open submanifold of $V\otimes V^{*}$ via the isomorphism $V\otimes V^{*}\cong\Hom\left(V,V\right)$.
Its linearization (derivative) $Di\colon GL\left(V\right)\to V\otimes V^{*}\otimes\left(V\otimes V^{*}\right)^{*}\cong V\otimes V^{*}\otimes V^{*}\otimes V$
at $A\in GL\left(V\right)$ in the direction $B\in T_{A}\left(GL\left(V\right)\right)\cong V\otimes V^{*}$
is 
\begin{align*}
Di\left(A\right)\cdot_{V\otimes V^{*}}B={} & Di\cdot_{V\otimes V^{*}}\delta\left(A+\epsilon B\right)\\
={} & \delta\left(i\left(A+\epsilon B\right)\right)\\
={} & \delta\left(\left(A+\epsilon B\right)^{-1}\right)\\
={} & \delta\left(\left(\left(1+\epsilon BA^{-1}\right)A\right)^{-1}\right)\\
={} & \delta\left(A^{-1}\left(1+\epsilon BA^{-1}\right)^{-1}\right)\\
={} & \delta\left(A^{-1}\sum_{n=0}^{\infty}\left(-\epsilon BA^{-1}\right)^{n}\right)\\
 & \mbox{(\ensuremath{\left|-\epsilon BA^{-1}\right|}is taken arbitrarily small due to the derivative \ensuremath{\delta:=\frac{d}{d\epsilon}\mid_{\epsilon=0}}}\\
 & \mbox{being evaluated in an arbitrarily small neighborhood of \ensuremath{\epsilon=0})}\\
={} & \delta\left(A^{-1}-\epsilon A^{-1}BA^{-1}+O\left(\epsilon^{2}\right)\right)\\
={} & -A^{-1}\cdot_{V}B\cdot_{V}A^{-1}.
\end{align*}
In order to ``move'' the $B$ parameter out so that it plays the
same syntactical role as in the original expression $Di\left(A\right)\cdot B$,
via adjacent natural pairing, some simple tensor manipulations can
be done. The process is easily and accurately expressed via diagram.
The following sequence of diagrams is a sequence of equalities. The
diagram should be self-explanatory, but for reference, the number
of boxes for a particular label denotes the rank of the tensor, with
each box labeled with its type. The lines connecting various boxes
are natural pairings, and the circles represent the unpaired ``slots'',
which comprise the type of the resulting expression.

\tikzstyle{labelnode}=[draw, on chain, minimum width=0.8cm, minimum height=0.8cm]
\tikzstyle{opennode}=[circle, on chain, draw=black, fill=white]
\begin{center}
\begin{tikzpicture}[start chain, node distance=-0.15mm, every join/.style=->]
	\node[opennode]  (out)  {};
	\node[on chain]  ()     {};
	\node[on chain]  ()     {};
	\node[opennode]  (in)   {};

	\node[on chain]  ()     {};
	\node[on chain]  ()     {};
	\node[labelnode] (box1) {$V$};
	\node[labelnode] (box2) {$V^*$};
	\node[labelnode] (box3) {$V^*$};
	\node[labelnode] (box4) {$V$};
	\node[above] at ($0.5*($(box2)+(box3)$)+(0,0.5)$) {$Di(A)$};

	\node[on chain]  ()     {};
	\node[on chain]  ()     {};
	\node[labelnode] (box5) {$V$};
	\node[labelnode] (box6) {$V^*$};
	\node[above] at ($0.5*($(box5)+(box6)$)+(0,0.5)$) {$B$};

	\path[draw] ($(out)-(0,0.17)$) edge [bend right] ($(box1)-(0,0.4)$);
	\path[draw] ($(in)-(0,0.17)$) edge [bend right] ($(box2)-(0,0.4)$);
	\path[draw] ($(box3)-(0,0.4)$) edge [bend right] ($(box5)-(0,0.4)$);
	\path[draw] ($(box4)-(0,0.4)$) edge [bend right] ($(box6)-(0,0.4)$);
\end{tikzpicture}
\end{center}

\begin{center}
\begin{tikzpicture}[start chain, node distance=-0.15mm, every join/.style=->]
	\node[opennode]  (out)  {};

	\node[on chain]  ()     {};
	\node[on chain]  ()     {};
	\node[labelnode] (box1) {$V$};
	\node[labelnode] (box2) {$V^*$};
	\node[above] at ($0.5*($(box1)+(box2)$)+(0,0.5)$) {$-A^{-1}$};

	\node[on chain]  ()     {};
	\node[on chain]  ()     {};
	\node[labelnode] (box3) {$V$};
	\node[labelnode] (box4) {$V^*$};
	\node[above] at ($0.5*($(box3)+(box4)$)+(0,0.5)$) {$B$};

	\node[on chain]  ()     {};
	\node[on chain]  ()     {};
	\node[labelnode] (box5) {$V$};
	\node[labelnode] (box6) {$V^*$};
	\node[above] at ($0.5*($(box5)+(box6)$)+(0,0.5)$) {$A^{-1}$};

	\node[on chain]  ()     {};
	\node[on chain]  ()     {};
	\node[opennode]  (in)   {};

	\path[draw] (out)--(box1);
	\path[draw] (box2)--(box3);
	\path[draw] (box4)--(box5);
	\path[draw] (box6)--(in);
\end{tikzpicture}
\end{center}

The following step is nothing but moving the boxes for $B$ out; the
natural pairings still apply to the same slots, hence the cables dangling
below.

\begin{center}
\begin{tikzpicture}[start chain, node distance=-0.15mm, every join/.style=->]
	\node[opennode]  (out)  {};

	\node[on chain]  ()     {};
	\node[on chain]  ()     {};
	\node[labelnode] (box1) {$V$};
	\node[labelnode] (box2) {$V^*$};
	\node[above] at ($0.5*($(box1)+(box2)$)+(0,0.5)$) {$-A^{-1}$};

	\node[on chain]  ()     {};
	\node[on chain]  ()     {};
	\node[labelnode] (box5) {$V$};
	\node[labelnode] (box6) {$V^*$};
	\node[above] at ($0.5*($(box5)+(box6)$)+(0,0.5)$) {$A^{-1}$};

	\node[on chain]  ()     {};
	\node[on chain]  ()     {};
	\node[opennode]  (in)   {};

	\node[on chain]  ()     {};
	\node[on chain]  ()     {};
	\node[labelnode] (box3) {$V$};
	\node[labelnode] (box4) {$V^*$};
	\node[above] at ($0.5*($(box3)+(box4)$)+(0,0.5)$) {$B$};

    \path[draw] (out)--(box1);
	\path[draw] ($(box2)-(0,0.4)$) edge [bend right] ($(box3)-(0,0.4)$);
	\path[draw] ($(box5)-(0,0.4)$) edge [bend right] ($(box4)-(0,0.4)$);
	\path[draw] (box6)--(in);
\end{tikzpicture}
\end{center}

In this setting, a tensor product amounts to flippantly gluing boxes
together.

\begin{center}
\begin{tikzpicture}[start chain, node distance=-0.15mm, every join/.style=->]
	\node[opennode]  (out)  {};

	\node[on chain]  ()     {};
	\node[on chain]  ()     {};
	\node[labelnode] (box1) {$V$};
	\node[labelnode] (box2) {$V^*$};
	\node[labelnode] (box5) {$V$};
	\node[labelnode] (box6) {$V^*$};
	\node[above] at ($0.5*($(box2)+(box5)$)+(0,0.5)$) {$-A^{-1} \otimes A^{-1}$};

	\node[on chain]  ()     {};
	\node[on chain]  ()     {};
	\node[opennode]  (in)   {};

	\node[on chain]  ()     {};
	\node[on chain]  ()     {};
	\node[labelnode] (box3) {$V$};
	\node[labelnode] (box4) {$V^*$};
	\node[above] at ($0.5*($(box3)+(box4)$)+(0,0.5)$) {$B$};

    \path[draw] (out)--(box1);
	\path[draw] ($(box2)-(0,0.4)$) edge [bend right] ($(box3)-(0,0.4)$);
	\path[draw] ($(box5)-(0,0.4)$) edge [bend right] ($(box4)-(0,0.4)$);
	\path[draw] (box6)--(in);
\end{tikzpicture}
\end{center}

In order for $B$ to be naturally paired in the same adjacent manner
as in the original expression $Di\left(A\right)\cdot B$, the slots
of $-A^{-1}\otimes A^{-1}$ must be permuted; the second moves to
the third, the third to the fourth, and the fourth to the second.

\begin{center}
\begin{tikzpicture}[start chain, node distance=-0.15mm, every join/.style=->]
	\node[opennode]  (out)  {};
	\node[on chain]  ()     {};
	\node[on chain]  ()     {};
	\node[opennode]  (in)   {};

	\node[on chain]  ()     {};
	\node[on chain]  ()     {};
	\node[labelnode] (box1) {$V$};
	\node[labelnode] (box2) {$V^*$};
	\node[labelnode] (box3) {$V^*$};
	\node[labelnode] (box4) {$V$};
	\node[above] at ($0.5*($(box2)+(box3)$)+(0,0.5)$) {$-\left(A^{-1} \otimes A^{-1}\right)^{(2\,3\,4)}$};

	\node[on chain]  ()     {};
	\node[on chain]  ()     {};
	\node[labelnode] (box5) {$V$};
	\node[labelnode] (box6) {$V^*$};
	\node[above] at ($0.5*($(box5)+(box6)$)+(0,0.5)$) {$B$};

	\path[draw] ($(out)-(0,0.17)$) edge [bend right] ($(box1)-(0,0.4)$);
	\path[draw] ($(in)-(0,0.17)$) edge [bend right] ($(box2)-(0,0.4)$);
	\path[draw] ($(box3)-(0,0.4)$) edge [bend right] ($(box5)-(0,0.4)$);
	\path[draw] ($(box4)-(0,0.4)$) edge [bend right] ($(box6)-(0,0.4)$);
\end{tikzpicture}
\end{center}

The first diagram equals the last one, thus $Di\left(A\right)\cdot_{V\otimes V^{*}}B=-\left(A^{-1}\otimes A^{-1}\right)^{\left(2\,3\,4\right)}\cdot_{V\otimes V^{*}}B$,
and by the nondegeneracy of the natural pairing on $V\otimes V^{*}$,
this implies that $Di\left(A\right)=-\left(A^{-1}\otimes A^{-1}\right)^{\left(2\,3\,4\right)}$,
noting that the statement of this expression does not require the
direction vector $B$. The permutation exponent $\left(2\,3\,4\right)$
can be calculated easily using simple tensors, if not by the above
diagrammatic manipulations;
\[
\left(a_{1}\otimes a_{2}\right)\cdot\left(b_{1}\otimes b_{2}\right)\cdot\left(a_{3}\otimes a_{4}\right)=\left(a_{1}\otimes a_{4}\otimes a_{2}\otimes a_{3}\right):\left(b_{1}\otimes b_{2}\right)=\left(a_{1}\otimes a_{2}\otimes a_{3}\otimes a_{4}\right)^{\left(2\,3\,4\right)}:\left(b_{1}\otimes b_{2}\right).
\]
Here, the expression $\left(a_{1}\otimes a_{2}\right)\cdot\left(b_{1}\otimes b_{2}\right)\cdot\left(a_{3}\otimes a_{4}\right)$
represents the expression $A^{-1}\cdot B\cdot A^{-1}$. 
\end{example}
The next example will later be extended to the setting of Riemannian
manifolds and their metric tensor fields, and put to use to formulate
what are known as harmonic maps (see (\ref{example_harmonic_maps})).
But first, a new tensor operation must be defined.
\begin{defn}[Parallel tensor product]
 \label{def:parallel_tensor_product} If $U,V,W,X$ are vector spaces
and $A\in U\otimes V$ and $B\in W\otimes X$, then define their \textbf{parallel
tensor product} $A\boxtimes B$ by
\[
A\boxtimes B:=\left(A\otimes B\right)^{\left(2\,3\right)}\in\left(U\otimes W\right)\otimes\left(V\otimes X\right).
\]
The parentheses in the type specification are unnecessary, but hint
at what the tensor decomposition for the quantity $A\boxtimes B$
should be, if used as an operand to $\boxtimes$ again (see below).
\end{defn}
If $A$ and $B$ represent linear maps, then $A\boxtimes B\in\left(U\otimes W\right)\otimes\left(V\otimes X\right)$
represents their tensor product as linear maps (the parentheses are
unnecessary but hint at what the domain and codomain are, and for
use of $A\boxtimes B$ as an operand in another parallel tensor product),
which is a ``parallel'' composition; if $\alpha\in V^{*}$ and $\beta\in X^{*}$,
then $\left(A\boxtimes B\right)\cdot_{V^{*}\otimes X^{*}}\left(\alpha\otimes\beta\right)=\left(A\cdot_{V^{*}}\alpha\right)\otimes\left(B\cdot_{X^{*}}\beta\right)$.

There is a slight ambiguity in the notation coming from a lack of
specification on how the tensor product of the operands is decomposed
in the case when there is more than one such decomposition. Notation
explicitly resolving this ambiguity will not be needed in this paper
as the relevant tensor product is usually clear from context.

The parallel tensor product is associative; if $Y$ and $Z$ are also
vector spaces and $C\in Y\otimes Z$, then 
\[
\left(A\boxtimes B\right)\boxtimes C=A\boxtimes\left(B\boxtimes C\right)\in\left(U\otimes W\otimes Y\right)\otimes\left(V\otimes X\otimes Z\right),
\]
allowing multiply-parallel tensor products. 
\begin{example}[Tensor product of inner product spaces]
\label{example_tensor_product_of_inner_product_spaces} If $\left(V,g\right)$
and $\left(W,h\right)$ are inner product spaces (noting that $g\in V^{*}\otimes V^{*}$
and $h\in W^{*}\otimes W^{*}$ are symmetric, i.e. literally invariant
under $\left(1\,2\right)$), then $W\otimes V^{*}$ is an inner product
space having induced inner product $k\left(A,B\right):=\tr_{V}\left(g^{-1}\cdot_{V^{*}}A^{*}\cdot_{W^{*}}h\cdot_{W}B\right)$.
Here, the ``inputs'' of $A$ and $B$ (the $V^{*}$ factors) are
being paired using $g^{-1}\in V\otimes V$, while the ``outputs''
(the $W$ factors) are being paired using $h\in W^{*}\otimes W^{*}$,
and the trace is used to ``complete the cycle'' by plugging the
output into the input, thereby producing a real number. The expression
$k\left(A,B\right)$ can be written in a more natural way, which takes
advantage of the linear composition, as $A:k:B$ (or, pedantically,
$A\cdot_{W^{*}\otimes V}k\cdot_{W\otimes V^{*}}B$)$ $, instead of
the more common but awkward trace expression mentioned earlier. In
the tensor formalism, the inner product $k$ should have type $W^{*}\otimes V\otimes W^{*}\otimes V$.
Permuting the middle two components of the 4-tensor $h\otimes g^{-1}\in W^{*}\otimes W^{*}\otimes V\otimes V$
gives the correct type. In fact, $k=h\boxtimes g^{-1}$. A further
advantage to this formulation is that if any or all of $A,k,B$ are
functions, there is a clear product rule for derivatives of the expression
$A:k:B$. This is something that is used critically in Riemannian
geometry in the form of covariant derivatives of tensor fields (see
(\ref{definition_linear_covariant_derivative})).
\end{example}
In this paper, the main use of the tensor formulation of linear maps
is twofold: to facilitate linear algebraic constructions which would
otherwise be difficult or awkward (this includes the ability to express
derivatives of {[}possibly vector or manifold-valued{]} maps without
needing to ``plug in'' the derivative's directional argument), and
to make clear the product-rule behavior of many important differentiable
constructions.

\section{Bundle Constructions \label{sec:Bundle-Constructions}}

In order to use the calculus of variations involving Lagrangians depending
tangent maps of maps between smooth manifolds, it suffices to consider
Lagrangians defined on smooth vector bundle morphisms. Continuing
in the style of the previous section, a ``full'' tensor product
of smooth vector bundles (\ref{proposition_tensor_product_bundle})
will be formulated which will then allow expression of smooth vector
bundle morphisms as tensor fields, sometimes called two-point tensor
fields \citep[pg. 70]{Marsden&Hughes}. The full arsenal of tensor
calculus can then be used to considerable advantage.\\

First, some definitions and simpler bundle constructions will be introduced.
A \textbf{smooth {[}fiber{]} bundle} (hereafter refered to simply
as a smooth bundle) is a 4-tuple $\left(\mathcal{E},E,\pi,N\right)$
where $\mathcal{E}$, $E$ and $N$ are smooth manifolds and $\pi\colon E\to N$
is locally trivial, i.e. $N$ is covered by open sets $\left\{ U_{\alpha}\right\} $
such that $\pi^{-1}\left(U_{\alpha}\right)\cong U_{\alpha}\times\mathcal{E}$
as smooth manifolds. The manifolds $\mathcal{E}$, $E$ and $N$ are
called the \textbf{typical fiber}, the \textbf{total space}, and the
\textbf{base space} respectively. The map $\pi$ is called the \textbf{bundle
projection}. The full 4-tuple specifying a bundle can be recovered
from the bundle projection map, so a locally trivial smooth map can
be said to define a smooth bundle. The dimension of the typical fiber
of a bundle will be called its \textbf{rank}, and will be denoted
by $\rank\pi$ or $\rank E$ when the bundle is understood from context.

The \textbf{space of smooth sections} of a smooth bundle defined by
$\pi\colon E\to N$ is
\[
\Gamma\left(\pi\right):=\left\{ \sigma\in C^{\infty}\left(N,E\right)\mid\pi\circ\sigma=\Id_{N}\right\} ,
\]
and may also be denoted by $\Gamma\left(E\right)$, if the bundle
is clear from context. If nonempty, $\Gamma\left(\pi\right)$ is generally
an infinite-dimensional manifold (the exception being when the base
space $N$ is finite).
\begin{prop}[Trivial bundle]
 \label{prop:trivial_bundle} Let $M$ and $N$ be smooth manifolds.
With $M\rtriv N:=M\times N$ and
\[
\pi^{M\rtriv N}:=\pr_{2}^{M\times N}\colon M\rtriv N\to N
\]
defines a smooth bundle $\left(M,M\rtriv N,\pi^{M\rtriv N},N\right)$,
called a \textbf{trivial bundle}. Similarly, with $M\ltriv N:=M\times N$
and $\pi^{M\ltriv N}:=\pr_{1}^{M\times N}\colon M\ltriv N\to M$,
$\left(N,M\ltriv N,\pi^{M\ltriv N},M\right)$ is a trivial bundle.
\end{prop}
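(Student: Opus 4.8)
The plan is to verify, directly from the definition of a smooth bundle given above, the three required properties: that the three spaces involved are smooth manifolds, that the proposed map is a smooth bundle projection, and that it is locally trivial. The first two points are immediate from standard facts about product manifolds, so the only real content is exhibiting a local trivialization; and since the claim is precisely that this bundle is \emph{trivial}, it suffices to use a single trivializing open set, namely the entire base space.

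For the $M\rtriv N$ case: $M$, $M\times N$ and $N$ are smooth manifolds, the product $M\times N$ carrying its canonical smooth product structure, and $\pr_{2}^{M\times N}\colon M\times N\to N$ is smooth. Take the open cover of $N$ consisting of the single set $U=N$. Then $\left(\pr_{2}^{M\times N}\right)^{-1}(N)=M\times N$, and the swap map $M\times N\to N\times M$, $(m,n)\mapsto(n,m)$, is a diffeomorphism identifying $\left(\pr_{2}^{M\times N}\right)^{-1}(N)$ with $N\times M=U\times\mathcal{E}$, where the typical fiber is $\mathcal{E}=M$. Hence $\pr_{2}^{M\times N}$ is locally trivial and $\left(M,M\rtriv N,\pi^{M\rtriv N},N\right)$ is a smooth bundle with typical fiber $M$. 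The $M\ltriv N$ case is the mirror image, and in fact slightly simpler: take the single trivializing set $U=M$ and note that $\left(\pr_{1}^{M\times N}\right)^{-1}(M)=M\times N$ is already literally of the form $U\times\mathcal{E}$ with $\mathcal{E}=N$, so the identity map serves as the trivialization; thus $\left(N,M\ltriv N,\pi^{M\ltriv N},M\right)$ is a smooth bundle with typical fiber $N$.

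There is no genuine obstacle here; the statement is a routine unwinding of definitions. The only points meriting care are remembering that ``locally trivial'' is witnessed in this case by a \emph{global} trivialization (the cover being the singleton consisting of the base space), and keeping straight which factor plays the role of the typical fiber in each construction — in $M\rtriv N$ the fiber is $M$ and the swap map is needed to bring $\pi^{-1}(N)$ into the form base $\times$ fiber, whereas in $M\ltriv N$ the fiber is $N$ and no rearrangement is needed.
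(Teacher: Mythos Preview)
Your proof is correct and matches the paper's approach: the paper in fact deems no proof necessary, remarking only that each bundle projection trivializes globally in the obvious way, which is precisely what you have spelled out.
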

No proof is deemed necessary for (\ref{prop:trivial_bundle}), as
each bundle projection trivializes globally in the obvious way. The
$\rtriv$ symbol is a composite of $\times$ (indicating direct product)
and $\shortrightarrow$ or $\shortleftarrow$ (indicating the base
space).

If $M$ and $N$ are smooth manifolds as in (\ref{prop:trivial_bundle}),
then there are two particularly useful natural identifications. 
\begin{align*}
C^{\infty}\left(M,N\right) & \cong\Gamma\left(M\ltriv N\right) & C^{\infty}\left(M,N\right) & \cong\Gamma\left(N\rtriv M\right)\\
\phi & \mapsto\Id_{M}\times_{M}\phi & \phi & \mapsto\phi\times_{M}\Id_{M}\\
\pr_{2}^{M\times N}\circ\Phi & \mapsfrom\Phi & \pr_{1}^{N\times M}\circ\Phi & \mapsfrom\Phi
\end{align*}
These identifications can be thought of identifying a map $\phi\in C^{\infty}\left(M,N\right)$
with its graph in $M\times N$ and $N\times M$ respectively. Furthermore,
this allows bundle theory to be applied to reasoning about spaces
of maps. The symbols $M\ltriv N$ and $N\rtriv M$ now carry a significant
amount of meaning. Generally $N\rtriv M$ will be used in this paper,
for consistency with the $\Hom\left(V,W\right)\cong W\otimes V^{*}$
convention discussed in Section \ref{sec:Strongly-Typed-Linear-Algebra}.
The symbols $\rtriv$ and $\ltriv$ are examples of telescoping notation,
as they are built notationally on $\times$, and conceptually on the
direct product, which is what is denoted by $\times$. The arrow portion
of the symbols can be discarded when type-specificity is not needed.
\begin{prop}[Direct product bundle]
\label{proposition_direct_product_bundle} Let $\left(\mathcal{E},E,\pi^{E},M\right)$
and $\left(\mathcal{F},F,\pi^{F},N\right)$ be smooth  bundles. Then
\[
\pi^{E}\times\pi^{F}\colon E\times F\to M\times N,\,\left(e,f\right)\mapsto\left(\pi^{E}\left(e\right),\pi^{F}\left(f\right)\right)
\]
defines a smooth bundle $\left(\mathcal{E}\times\mathcal{F},E\times F,\pi^{E}\times\pi^{F},M\times N\right)$.
This bundle is called the \textbf{direct product of $\pi^{E}$ and
$\pi^{F}$}, and is not necessarily a trivial bundle. \end{prop}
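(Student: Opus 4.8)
The plan is to verify the two defining conditions of a smooth fiber bundle for the map $\pi^{E}\times\pi^{F}$: that it is a smooth surjective submersion onto $M\times N$, and that it is locally trivial with typical fiber $\mathcal{E}\times\mathcal{F}$. The smoothness of $\pi^{E}\times\pi^{F}$ is immediate, since it is the product of the smooth maps $\pi^{E}$ and $\pi^{F}$ and the product of smooth maps between smooth manifolds is smooth (the product $E\times F$ carries the canonical product smooth structure). Surjectivity is likewise immediate from the surjectivity of each factor. So the only substantive point is local triviality.

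For local triviality, I would argue pointwise on the base. Fix $(m,n)\in M\times N$. Since $\pi^{E}$ is locally trivial, there is an open neighborhood $U$ of $m$ in $M$ and a diffeomorphism $\Phi\colon\left(\pi^{E}\right)^{-1}(U)\to U\times\mathcal{E}$ commuting with the projections to $U$; similarly there is an open neighborhood $V$ of $n$ in $N$ and a diffeomorphism $\Psi\colon\left(\pi^{F}\right)^{-1}(V)\to V\times\mathcal{F}$ commuting with the projections to $V$. Then $U\times V$ is an open neighborhood of $(m,n)$ in $M\times N$, and $\left(\pi^{E}\times\pi^{F}\right)^{-1}(U\times V)=\left(\pi^{E}\right)^{-1}(U)\times\left(\pi^{F}\right)^{-1}(V)$. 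The map $\Phi\times\Psi$ sends this to $(U\times\mathcal{E})\times(V\times\mathcal{F})$, and composing with the obvious ``shuffle'' diffeomorphism $(U\times\mathcal{E})\times(V\times\mathcal{F})\cong(U\times V)\times(\mathcal{E}\times\mathcal{F})$ that swaps the middle two factors yields a diffeomorphism onto $(U\times V)\times(\mathcal{E}\times\mathcal{F})$. One then checks that this composite commutes with $\pi^{E}\times\pi^{F}$ on the source and $\pr_{1}$ on the target, which it does because $\Phi$ and $\Psi$ each intertwine their projections and the shuffle map respects the relevant factors. This exhibits $\mathcal{E}\times\mathcal{F}$ as the typical fiber and establishes local triviality, completing the proof that $\left(\mathcal{E}\times\mathcal{F},E\times F,\pi^{E}\times\pi^{F},M\times N\right)$ is a smooth bundle.

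Finally I would remark on the non-triviality claim: even when $\pi^{E}$ and $\pi^{F}$ are individually trivial the product is of course trivial, but in general a product of two nontrivial bundles is nontrivial — a one-line example suffices, e.g. taking $\pi^{E}=\pi^{F}$ to be the Möbius line bundle over $S^{1}$, whose square is a nontrivial rank-two bundle over the torus (for instance because its restriction to each $S^{1}$ factor is nontrivial, or because it is nonorientable). No elaborate argument is needed here since the statement only asserts that triviality is \emph{not} automatic.

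I do not anticipate any real obstacle: the entire content is the bookkeeping in the local triviality step, and the only thing requiring a moment's care is making the shuffle diffeomorphism explicit and confirming it intertwines the projections. Everything else is a direct unwinding of the definition of a smooth fiber bundle recalled just before the proposition.
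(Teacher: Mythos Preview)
Your proof is correct and follows essentially the same approach as the paper: take local trivializations $\Phi$ and $\Psi$ over $U$ and $V$, observe that $(\pi^{E}\times\pi^{F})^{-1}(U\times V)=(\pi^{E})^{-1}(U)\times(\pi^{F})^{-1}(V)$, and compose $\Phi\times\Psi$ with the shuffle diffeomorphism $(U\times\mathcal{E})\times(V\times\mathcal{F})\cong(U\times V)\times(\mathcal{E}\times\mathcal{F})$ to obtain a trivialization over $U\times V$. The paper carries out the verification that $\pr_{1}\circ\Psi^{E\times F}=\pi^{E}\times\pi^{F}$ more explicitly but does not supply a non-triviality example as you do.
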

\begin{proof}
Let $\Psi^{E}\colon\left(\pi^{E}\right)^{-1}\left(U\right)\to U\times\mathcal{E}$
and $\Psi^{F}\colon\left(\pi^{F}\right)^{-1}\left(V\right)\to V\times\mathcal{F}$
trivialize $\pi^{E}$ and $\pi^{F}$ over open sets $U\subseteq M$
and $V\subseteq N$ respectively. Then 
\[
\Psi^{E}\times\Psi^{F}\colon\left(\pi^{E}\right)^{-1}\left(U\right)\times\left(\pi^{F}\right)^{-1}\left(V\right)\to U\times\mathcal{E}\times V\times\mathcal{F}
\]
has inverse $\left(\Psi^{E}\right)^{-1}\times\left(\Psi^{F}\right)^{-1}$.
Note that
\begin{align*}
\left(\pi^{E}\right)^{-1}\left(U\right)\times\left(\pi^{F}\right)^{-1}\left(V\right) & =\left\{ \left(e,f\right)\in E\times F\mid\pi^{E}\left(e\right)\in U,\,\pi^{F}\left(f\right)\in V\right\} \\
 & =\left\{ \left(e,f\right)\in E\times F\mid\left(\pi^{E}\times\pi^{F}\right)\left(e,f\right)\in U\times V\right\} \\
 & =\left(\pi^{E}\times\pi^{F}\right)^{-1}\left(U\times V\right),
\end{align*}
and that
\[
P\colon\left(U\times\mathcal{E}\right)\times\left(V\times\mathcal{F}\right)\to\left(U\times V\right)\times\left(\mathcal{E}\times\mathcal{F}\right),\,\left(\left(u,e\right),\left(v,f\right)\right)\mapsto\left(\left(u,v\right),\left(e,f\right)\right)
\]
defines a diffeomorphism. Then 
\[
\Psi^{E\times F}:=P\circ\left(\Psi^{E}\times\Psi^{F}\right)\colon\left(\pi^{E}\times\pi^{F}\right)^{-1}\left(U\times V\right)\to\left(U\times V\right)\times\left(\mathcal{E}\times\mathcal{F}\right)
\]
defines a diffeomorphism, and
\begin{align*}
 & \pr_{1}^{\left(U\times V\right)\times\left(\mathcal{E}\times\mathcal{F}\right)}\circ\Psi^{E\times F}\left(e,f\right)\\
={} & \pr_{1}^{\left(U\times V\right)\times\left(\mathcal{E}\times\mathcal{F}\right)}\circ P\circ\left(\Psi^{E}\times\Psi^{F}\right)\left(e,f\right)\\
={} & \pr_{1}^{\left(U\times V\right)\times\left(\mathcal{E}\times\mathcal{F}\right)}\circ P\left(\Psi^{E}\left(e\right),\Psi^{F}\left(f\right)\right)\\
={} & \pr_{1}^{\left(U\times V\right)\times\left(\mathcal{E}\times\mathcal{F}\right)}\circ P\left(\Psi^{E}\left(e\right),\Psi^{F}\left(f\right)\right)\\
={} & \pr_{1}^{\left(U\times V\right)\times\left(\mathcal{E}\times\mathcal{F}\right)}\circ P\left(\left(\pr_{1}^{U\times\mathcal{E}}\circ\Psi^{E}\left(e\right),\pr_{2}^{U\times\mathcal{E}}\circ\Psi^{E}\left(e\right)\right),\left(\pr_{1}^{V\times\mathcal{F}}\circ\Psi^{F}\left(f\right),\pr_{2}^{V\times\mathcal{F}}\circ\Psi^{F}\left(f\right)\right)\right)\\
={} & \pr_{1}^{\left(U\times V\right)\times\left(\mathcal{E}\times\mathcal{F}\right)}\left(\left(\pr_{1}^{U\times\mathcal{E}}\circ\Psi^{E}\left(e\right),\pr_{1}^{V\times\mathcal{F}}\circ\Psi^{F}\left(f\right)\right),\left(\pr_{2}^{U\times\mathcal{E}}\circ\Psi^{E}\left(e\right),\pr_{2}^{V\times\mathcal{F}}\circ\Psi^{F}\left(f\right)\right)\right)\\
={} & \left(\pr_{1}^{U\times\mathcal{E}}\circ\Psi^{E}\left(e\right),\pr_{1}^{V\times\mathcal{F}}\circ\Psi^{F}\left(f\right)\right)\\
={} & \left(\pi^{E}\left(e\right),\pi^{F}\left(f\right)\right)\\
={} & \left(\pi^{E}\times\pi^{F}\right)\left(e,f\right),
\end{align*}
showing that $\Psi^{E\times F}$ trivializes $\pi^{E}\times\pi^{F}$
over $U\times V\subseteq M\times N$. Since $M\times N$ can be covered
by such trivializing sets, this establishes that $\pi^{E}\times\pi^{F}$
defines a smooth bundle. The typical fiber of $\pi^{E}\times\pi^{F}$
is $\mathcal{E}\times\mathcal{F}$. 
\end{proof}
A \textbf{smooth vector bundle} is a fiber bundle whose typical fiber
is a vector space and whose local trivializations are linear isomorphisms
when restricted to each fiber. If $\left(\mathcal{E},E,\pi,M\right)$
is a smooth vector bundle, then its \textbf{dual vector bundle} $\left(\mathcal{E}^{*},E^{*},\pi^{*},M\right)$
is a smooth vector bundle defined in the following way.
\[
E^{*}:=\coprod_{p\in M}\left(E_{p}\right)^{*},\qquad\pi^{*}\colon E^{*}\to M,\,\eta_{p}\mapsto p.
\]
Because $\mathcal{E}$ is a vector space, the notation $\mathcal{E}^{*}$
is already defined. In analogy with Section \ref{sec:Strongly-Typed-Linear-Algebra},
there are natural pairings on a vector bundle and its dual, defined
simply by evaluation. If $p\in M$, $\eta\in E_{p}^{*}$ and $e\in E_{p}$,
then $\eta\cdot_{E}e:=\eta\cdot_{E_{p}}e$ and $e\cdot_{E}\eta:=e\cdot_{E_{p}}\eta$.
Both expressions evaluate to $\eta\left(e\right)$. Natural traces
and $n$-fold tensor contraction can be defined analogously. Again,
while seemingly pedantic, the subscripted natural pairing notation
will prove to be a valuable tool in articulating and error-checking
calculations involving vector bundles. To generalize the rest of Section
\ref{sec:Strongly-Typed-Linear-Algebra} will require the definition
of additional structures.\\

For the remainder of this section, let $\left(\mathcal{E},E,\pi^{E},M\right)$
and $\left(\mathcal{F},F,\pi^{F},N\right)$ now be smooth \emph{vector}
bundles. The following construction is essentially an alternate notation
for $\pi^{E}\times\pi^{F}\colon E\times F\to M\times N$, but is one
that takes advantage of the fact that $\pi^{E}$ and $\pi^{F}$ are
vector bundles, and encodes in the notation the fact that the resulting
construction is also a vector bundle. This is analogous to how $V\times W$
is a vector space with a natural structure if $V$ and $W$ are vector
spaces, except that this is usually denoted by $V\oplus W$.
\begin{prop}[``Full'' direct sum vector bundle]
 \label{proposition_direct_sum_vector_bundle} If
\[
E\oplus_{M\times N}F:=E\times F,
\]
Then
\[
\pi^{E}\oplus_{M\times N}\pi^{F}:=\pi^{E}\times\pi^{F}\colon E\oplus_{M\times N}F\to M\times N
\]
defines a smooth vector bundle $\left(\mathcal{E}\oplus\mathcal{F},E\oplus_{M\times N}F,\pi^{E}\oplus_{M\times N}\pi^{F},M\times N\right)$,
called the \textbf{full direct sum of $\pi^{E}$ and $\pi^{F}$}.

For each $\left(p,q\right)\in M\times N$, the vector space structure
on $\left(\pi^{E}\oplus_{M\times N}\pi^{F}\right)^{-1}\left(p,q\right)$
is given in the following way. Let $\alpha\in\mathbb{R}$ and $\left(e_{1},f_{1}\right),\left(e_{2},f_{2}\right)\in\left(\pi^{E}\oplus_{M\times N}\pi^{F}\right)^{-1}\left(p,q\right)$.
Then
\[
\alpha\left(e_{1},f_{1}\right)+\left(e_{2},f_{2}\right)=\left(\alpha e_{1}+e_{2},\alpha f_{1}+f_{2}\right).
\]

It is critical to see (\ref{remark_direct_sum_and_tensor_product_bundle_notation})
for remarks on notation.\end{prop}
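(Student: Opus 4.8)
The plan is to reduce everything to the already-proven direct product bundle construction (\ref{proposition_direct_product_bundle}) and then check that the fiberwise vector space structure meshes with the local trivializations. Since $E\oplus_{M\times N}F$ is \emph{defined} to be $E\times F$ and $\pi^{E}\oplus_{M\times N}\pi^{F}$ is \emph{defined} to be $\pi^{E}\times\pi^{F}$, (\ref{proposition_direct_product_bundle}) immediately gives that $\pi^{E}\oplus_{M\times N}\pi^{F}$ is a smooth fiber bundle with typical fiber $\mathcal{E}\times\mathcal{F}$. So the only genuine content is the promotion of ``fiber bundle'' to ``vector bundle'': I must equip each fiber with a vector space structure and exhibit local trivializations that restrict to linear isomorphisms on each fiber.

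First I would observe that for $(p,q)\in M\times N$ the fiber is $\left(\pi^{E}\oplus_{M\times N}\pi^{F}\right)^{-1}(p,q)=\left(\pi^{E}\right)^{-1}(p)\times\left(\pi^{F}\right)^{-1}(q)=E_{p}\times F_{q}$, which carries the standard product vector space structure $\alpha(e_{1},f_{1})+(e_{2},f_{2})=(\alpha e_{1}+e_{2},\alpha f_{1}+f_{2})$; this is exactly the structure asserted in the statement, and it is canonically the vector space $E_p\oplus F_q$, justifying the typical fiber notation $\mathcal{E}\oplus\mathcal{F}$. Next I would take the trivialization $\Psi^{E\times F}=P\circ(\Psi^{E}\times\Psi^{F})$ constructed in the proof of (\ref{proposition_direct_product_bundle}) over a set $U\times V$, where $\Psi^{E}$ and $\Psi^{F}$ are vector bundle trivializations of $\pi^{E}$ and $\pi^{F}$. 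Restricted to the fiber over $(p,q)$, this map is $E_{p}\times F_{q}\to\{(p,q)\}\times(\mathcal{E}\times\mathcal{F})$, $(e,f)\mapsto\big((p,q),(\Psi^{E}|_{E_p}(e),\Psi^{F}|_{F_q}(f))\big)$ after dropping the point coordinate; since $\Psi^{E}|_{E_p}$ and $\Psi^{F}|_{F_q}$ are linear isomorphisms $E_p\xrightarrow{\sim}\mathcal{E}$, $F_q\xrightarrow{\sim}\mathcal{F}$ by hypothesis, their product is a linear isomorphism $E_p\times F_q\xrightarrow{\sim}\mathcal{E}\times\mathcal{F}$ with respect to the product vector space structures. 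Hence $\Psi^{E\times F}$ is a vector bundle trivialization, and since $M\times N$ is covered by such $U\times V$, this establishes the vector bundle claim.

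The routine bookkeeping to finish is: verify that the overlap transition maps are fiberwise linear (automatic, since a composite of fiberwise-linear isomorphisms is fiberwise linear), and confirm smoothness of the addition and scalar multiplication maps on the total space (inherited directly from those of $E$ and $F$ via the identification $E\oplus_{M\times N}F=E\times F$). I would close by noting that the whole argument is essentially a transcription of (\ref{proposition_direct_product_bundle}) with the word ``linear'' inserted at the fiber level, so the main --- indeed only --- point requiring care is matching the product vector space structure on $E_p\times F_q$ with the linear structure transported by the trivializations; there is no real obstacle, just the need to be explicit that taking products of linear isomorphisms preserves the relevant algebraic structure. Finally I would defer, as the statement instructs, to (\ref{remark_direct_sum_and_tensor_product_bundle_notation}) for the notational subtleties distinguishing $\oplus_{M\times N}$ (separate base spaces) from the Whitney sum $\oplus_{M}$ (shared base space).
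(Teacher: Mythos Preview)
Your proposal is correct and follows essentially the same approach as the paper: both reduce to the direct product bundle construction (\ref{proposition_direct_product_bundle}), take the same trivialization $\Psi^{E\times F}=P\circ(\Psi^{E}\times\Psi^{F})$, and verify fiberwise linearity. The only cosmetic difference is that the paper writes out the linearity check as an explicit chain of equalities on $\Psi^{E\oplus_{M\times N}F}(\alpha e_1+e_2,\alpha f_1+f_2)$, whereas you argue more conceptually that a product of linear isomorphisms is a linear isomorphism; both are fine.
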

\begin{proof}
Let $U$, $V$, $\mathcal{E}$, $\mathcal{F}$, $P$, $\Psi^{E}$,
$\Psi^{F}$ and $\Psi^{E\times F}$ be as in the proof of (\ref{proposition_direct_product_bundle}),
and define $\Psi^{E\oplus_{M\times N}F}:=\Psi^{E\times F}$. Noting
that $\Psi^{E\oplus_{M\times N}F}$ is a smooth bundle isomorphism
over $\Id_{U\times V}$, so to show that $\Psi^{E\oplus_{M\times N}F}$
is a linear isomorphism in each fiber, it suffices to show that it
is linear in each fiber. Let $\alpha\in\mathbb{R}$, $\left(p,q\right)\in U\times V$
and $\left(e_{1},f_{1}\right),\left(e_{2},f_{2}\right)\in\left(\pi^{E}\oplus_{M\times N}\pi^{F}\right)^{-1}\left(p,q\right)$.
Then 
\begin{align*}
\Psi^{E\oplus_{M\times N}F}\left(\alpha e_{1}+e_{2},\alpha f_{1}+f_{2}\right)={} & P\circ\left(\Psi^{E}\times\Psi^{F}\right)\left(\alpha e_{1}+e_{2},\alpha f_{1}+f_{2}\right)\\
={} & P\left(\Psi^{E}\left(\alpha e_{1}+e_{2}\right),\Psi^{F}\left(\alpha f_{1}+f_{2}\right)\right)\\
={} & P\left(\alpha\,\Psi^{E}\left(e_{1}\right)+\Psi^{E}\left(e_{2}\right),\alpha\,\Psi^{F}\left(f_{1}\right)+\Psi^{F}\left(f_{2}\right)\right)\\
 & \mbox{(by trivial vector bundle structures on \ensuremath{U\times\mathcal{E}}and \ensuremath{V\times\mathcal{F}})}\\
={} & \alpha\, P\left(\Psi^{E}\left(e_{1}\right),\Psi^{F}\left(f_{1}\right)\right)+P\left(\Psi^{E}\left(e_{2}\right),\Psi^{F}\left(f_{2}\right)\right)\\
 & \mbox{(by trivial vector bundle structure on \ensuremath{\left(U\times V\right)\times\left(\mathcal{E}\times\mathcal{F}\right)})}\\
={} & \alpha\, P\circ\left(\Psi^{E}\times\Psi^{F}\right)\left(e_{1},f_{1}\right)+P\circ\left(\Psi^{E}\times\Psi^{F}\right)\left(e_{2},f_{2}\right)\\
={} & \alpha\,\Psi^{E\oplus_{M\times N}F}\left(e_{1},f_{1}\right)+\Psi^{E\oplus_{M\times N}F}\left(e_{2},f_{2}\right).
\end{align*}
Thus $\Psi^{E\oplus_{M\times N}F}$ is linear in each fiber, and because
it is invertible, it is a linear isomorphism in each fiber. In particular,
$\Psi^{E\oplus_{M\times N}F}$ is a smooth vector bundle isomorphism
over $\Id_{U\times V}$. Applying $\left(\Psi^{E\oplus_{M\times N}F}\right)^{-1}$
to the above equation gives
\[
\left(\alpha e_{1}+e_{2},\alpha f_{1}+f_{2}\right)=\alpha\left(e_{1},f_{1}\right)+\left(e_{2},f_{2}\right),
\]
as desired. 
\end{proof}
This construction differs from the Whitney sum of two vector bundles,
as the base spaces of the bundles are kept separate, and aren't even
required to be the same. This allows the identification of $T\left(M\times N\right)\to M\times N$
as $TM\oplus_{M\times N}TN\to M\times N$, which may be done without
comment later in this paper. Some important related structures are
$\pr_{1}^{*}\pi_{M}^{TM}\colon\pr_{1}^{*}TM\to M\times N$ and $\pr_{2}^{*}\pi_{N}^{TN}\colon\pr_{2}^{*}TN\to M\times N$,
where $\pr_{i}:=\pr_{i}^{M\times N}$.

The next construction is what will be used in the implementation of
smooth vector bundle morphisms as tensor fields. 
\begin{prop}[``Full'' tensor product bundle]
 \label{proposition_tensor_product_bundle} If 
\[
E\otimes_{M\times N}F:=\coprod_{\left(p,q\right)\in M\times N}E_{p}\otimes F_{q}\mbox{ (disjoint union)},
\]
Then
\[
\pi^{E}\otimes_{M\times N}\pi^{F}\colon E\otimes_{M\times N}F\to M\times N,\,\alpha^{ij}e_{i}\otimes f_{j}\mapsto\left(\pi^{E}\left(e_{1}\right),\pi^{F}\left(f_{1}\right)\right)\mbox{ (here, \ensuremath{\alpha^{ij}\in\mathbb{R}})}
\]
defines a smooth vector bundle $\left(\mathcal{E}\otimes\mathcal{F},E\otimes_{M\times N}F,\pi^{E}\otimes_{M\times N}\pi^{F},M\times N\right)$,
called the \textbf{full tensor product}%
\footnote{This construction is alluded to in \citep[pg. 121]{Kolar&Michor&Slovak},
but is not defined or discussed.%
}\textbf{ of $\pi^{E}$ and $\pi^{F}$}. 

It is critical to see (\ref{remark_direct_sum_and_tensor_product_bundle_notation})
for remarks on notation.\end{prop}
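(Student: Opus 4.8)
The plan is to follow the template of the proofs of (\ref{proposition_direct_product_bundle}) and (\ref{proposition_direct_sum_vector_bundle}), with one essential difference: the set $E\otimes_{M\times N}F=\coprod_{(p,q)}E_{p}\otimes F_{q}$ carries no a priori manifold structure (it is not literally $E\times F$), so the smooth structure on the total space must be manufactured from local trivializations of $\pi^{E}$ and $\pi^{F}$ rather than merely verifying local triviality of a map between given manifolds. Since $\mathcal{E}$ and $\mathcal{F}$ are finite-dimensional, $\mathcal{E}\otimes\mathcal{F}$ is a finite-dimensional vector space and is the natural candidate for the typical fiber.

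First I would construct the candidate trivializations. Let $\Psi^{E}\colon(\pi^{E})^{-1}(U)\to U\times\mathcal{E}$ and $\Psi^{F}\colon(\pi^{F})^{-1}(V)\to V\times\mathcal{F}$ be vector bundle trivializations over open sets $U\subseteq M$ and $V\subseteq N$, and for $p\in U$, $q\in V$ write $\Psi_{p}^{E}:=\pr_{2}^{U\times\mathcal{E}}\circ\Psi^{E}\mid_{E_{p}}\colon E_{p}\to\mathcal{E}$ and $\Psi_{q}^{F}:=\pr_{2}^{V\times\mathcal{F}}\circ\Psi^{F}\mid_{F_{q}}\colon F_{q}\to\mathcal{F}$ for the fiber restrictions, which are linear isomorphisms. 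Because the tensor product of linear isomorphisms is a linear isomorphism, each $\Psi_{p}^{E}\otimes\Psi_{q}^{F}\colon E_{p}\otimes F_{q}\to\mathcal{E}\otimes\mathcal{F}$ is a linear isomorphism, and since $(\pi^{E}\otimes_{M\times N}\pi^{F})^{-1}(U\times V)=\coprod_{(p,q)\in U\times V}E_{p}\otimes F_{q}$, the assignment
\[
\Psi^{E\otimes F}\colon(\pi^{E}\otimes_{M\times N}\pi^{F})^{-1}(U\times V)\to(U\times V)\times(\mathcal{E}\otimes\mathcal{F}),\qquad\xi\mapsto\bigl((p,q),(\Psi_{p}^{E}\otimes\Psi_{q}^{F})(\xi)\bigr)\ \text{ for }\xi\in E_{p}\otimes F_{q},
\]
is a well-defined bijection covering $\Id_{U\times V}$ and linear on each fiber.

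Next I would verify compatibility of two such maps so as to invoke the standard vector bundle chart lemma (e.g. \citep{IntroLee}, \citep{JeffMLee}). Given a second pair of trivializations over $U'\subseteq M$, $V'\subseteq N$ with smooth transition functions $g^{E}\colon U\cap U'\to GL(\mathcal{E})$ and $g^{F}\colon V\cap V'\to GL(\mathcal{F})$, one computes, using $(A\otimes B)(C\otimes D)^{-1}=(AC^{-1})\otimes(BD^{-1})$, that over $((U\cap U')\times(V\cap V'))\times(\mathcal{E}\otimes\mathcal{F})$ the composite $\Psi'^{E\otimes F}\circ(\Psi^{E\otimes F})^{-1}$ has the form $((p,q),w)\mapsto((p,q),(g^{E}(p)\otimes g^{F}(q))\,w)$. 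Since the map $GL(\mathcal{E})\times GL(\mathcal{F})\to GL(\mathcal{E}\otimes\mathcal{F})$, $(A,B)\mapsto A\otimes B$, is smooth (polynomial in the matrix entries of $A$ and $B$ in any bases), this transition map is smooth and fiberwise linear. As $M\times N$ is covered by product open sets $U\times V$ over which $\pi^{E}$ and $\pi^{F}$ both trivialize, the chart lemma then equips $E\otimes_{M\times N}F$ with a unique topology and smooth structure for which $\pi^{E}\otimes_{M\times N}\pi^{F}$ is a smooth vector bundle with typical fiber $\mathcal{E}\otimes\mathcal{F}$ and the $\Psi^{E\otimes F}$ as local trivializations. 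Finally I would note that the stated projection formula is unambiguous: each $\xi$ lies in exactly one summand $E_{p}\otimes F_{q}$, so writing $\xi=\alpha^{ij}e_{i}\otimes f_{j}$ with $e_{i}\in E_{p}$, $f_{j}\in F_{q}$ forces $\pi^{E}(e_{k})=p$ and $\pi^{F}(f_{\ell})=q$ for all $k,\ell$, and the fiberwise vector space structure is the usual one on $E_{p}\otimes F_{q}$, which $\Psi^{E\otimes F}$ transports to $\mathcal{E}\otimes\mathcal{F}$.

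I expect the main obstacle to be the smooth-structure bookkeeping rather than any single deep point: checking that $\Psi_{p}^{E}\otimes\Psi_{q}^{F}$ is well-defined and fiberwise linear, and that the transition maps are smooth — the latter reducing to smoothness of $(A,B)\mapsto A\otimes B$ on general linear groups. Everything else is a direct transcription of the proofs of (\ref{proposition_direct_product_bundle}) and (\ref{proposition_direct_sum_vector_bundle}).
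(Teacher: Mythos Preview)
Your proposal is correct and follows essentially the same approach as the paper: build local trivializations $\Psi^{E\otimes F}$ from the fiberwise tensor products $\Psi_{p}^{E}\otimes\Psi_{q}^{F}$ and use them to define the smooth structure on the disjoint union. The only difference is one of rigor rather than strategy: you explicitly verify that the transition functions $(p,q)\mapsto g^{E}(p)\otimes g^{F}(q)$ are smooth and invoke the vector bundle chart lemma, whereas the paper simply declares each $\Psi^{E\otimes_{M\times N}F}$ to be a diffeomorphism and asserts that the resulting local smooth structures patch together, leaving the transition-function check implicit.
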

\begin{proof}
Since the argument $\alpha^{ij}e_{i}\otimes f_{j}$ in the definition
of $\pi^{E}\otimes_{M\times N}\pi^{F}$ is not necessarily unique,
the well-definedness of $\pi^{E}\otimes_{M\times N}\pi^{F}$ must
be shown. Let $\alpha^{ij}e_{i}^{1}\otimes f_{j}^{1}=\beta^{ij}e_{i}^{2}\otimes f_{j}^{2}$.
Then in particular, $\alpha^{ij}e_{i}^{1}\otimes f_{j}^{1},\beta^{ij}e_{i}^{2}\otimes f_{j}^{2}\in E_{p}\otimes F_{q}$
for some $\left(p,q\right)\in M\times N$, and therefore $e_{i}^{1},e_{i}^{2}\in E_{p}$
and $f_{j}^{1},f_{j}^{2}\in F_{q}$ for each index $i$ and $j$.
Thus $\pi^{E}\left(e_{1}^{1}\right)=p=\pi^{E}\left(e_{1}^{2}\right)$
and $\pi^{F}\left(f_{1}^{1}\right)=q=\pi^{F}\left(f_{1}^{2}\right)$,
so the expression defining $\pi^{E}\otimes_{M\times N}\pi^{F}$ is
well-defined. 

The set $E\otimes_{M\times N}F$ does not have an a priori global
smooth manifold structure, as it is defined as the disjoint union
of vector spaces. A smooth manifold structure compatible with that
of the constituent vector spaces will now be defined. 

Let $\Psi^{E}\colon\left(\pi^{E}\right)^{-1}\left(U\right)\to U\times\mathcal{E}$
and $\Psi^{F}\colon\left(\pi^{F}\right)^{-1}\left(V\right)\to V\times\mathcal{F}$
trivialize $\pi^{E}$ and $\pi^{F}$ over open sets $U\subseteq M$
and $V\subseteq N$ respectively, such that $\Psi^{E}$ and $\Psi^{F}$
are each linear in each fiber. Define 
\begin{eqnarray*}
\Psi^{E\otimes_{M\times N}F}\colon\left(\pi^{E}\otimes_{M\times N}\pi^{F}\right)^{-1}\left(U\times V\right) & \to & \left(U\times V\right)\times\left(\mathcal{E}\otimes\mathcal{F}\right)\\
X & \mapsto & \left(\left(\pi^{E}\otimes_{M\times N}\pi^{F}\right)\left(X\right),\left(\left(\pr_{2}^{U\times\mathcal{E}}\circ\Psi^{E}\right)\otimes\left(\pr_{2}^{V\times\mathcal{F}}\circ\Psi^{F}\right)\right)\left(X\right)\right).
\end{eqnarray*}
The map $\Psi^{E\otimes_{M\times N}F}$ is well-defined and smooth
in each fiber by construction, since for each $\left(p,q\right)\in U\times V$,
\[
\left(\pr_{2}^{U\times\mathcal{E}}\circ\Psi^{E}\right)\otimes\left(\pr_{2}^{V\times\mathcal{F}}\circ\Psi^{F}\right)\mid_{E_{p}\otimes E_{q}}\colon E_{p}\otimes E_{q}\to\mathcal{E}\otimes\mathcal{F}
\]
is a linear isomorphism by construction. Additionally, $\Psi^{E\otimes_{M\times N}F}$
has been constructed so that 
\[
\pr_{1}^{\left(U\times V\right)\times\left(\mathcal{E}\otimes_{M\times N}\mathcal{F}\right)}\circ\Psi^{E\otimes_{M\times N}F}=\pi^{E}\otimes_{M\times N}\pi^{F}
\]
on $\left(\pi^{E}\otimes_{M\times N}\pi^{F}\right)^{-1}\left(U\times V\right)$.
Define the smooth structure on $\left(\pi^{E}\otimes_{M\times N}\pi^{F}\right)^{-1}\left(U\times V\right)\subseteq E\otimes_{M\times N}F$
by declaring $\Psi^{E\otimes_{M\times N}F}$ to be a diffeomorphism.
The map $\pi^{E}\otimes_{M\times N}\pi^{F}$ is trivialized over $U\times V$.
The set $E\otimes_{M\times N}F$ can be covered by such trivializing
open sets. Thus $E\otimes_{M\times N}F$ has been shown to be locally
diffeomorphic to the direct product of smooth manifolds, and therefore
it has been shown to be a smooth manifold. With respect to the smooth
structure on $E\otimes_{M\times N}F$, the map $\pi^{E}\otimes_{M\times N}\pi^{F}$
is smooth, and has therefore been shown to define a smooth vector
bundle.\end{proof}
\begin{rem}[Notation regarding base space]
 \label{remark_direct_sum_and_tensor_product_bundle_notation} The
``full'' direct sum (\ref{proposition_direct_sum_vector_bundle})
and ``full'' tensor product (\ref{proposition_tensor_product_bundle})
bundle constructions allow direct sums and tensor products to be taken
of vector bundles when the base spaces differ. If the base spaces
are the same, then the construction ``joins'' them, producing a
vector bundle over that shared base space. For example, if $E$ and
$F$ are vector bundles over $M$, then $E\otimes_{M\times M}F$ has
base space $M\times M$, while $E\otimes F$ has base space $M$.
The base space can be specified in either case as a notational aide;
the latter example would be written as $E\otimes_{M}F$. If no subscript
is provided on the $\otimes$ symbol, then the base spaces are ``joined''
if possible (if they are the same space), otherwise they are kept
separate, as in the ``full'' tensor product construction. This notational
convention conforms to the standard Whitney sum and tensor product
bundle notation, and uses the notion of telescoping notation to provide
more specificity when necessary. \\

Given a fiber bundle, a natural vector bundle can be constructed ``on
top'' of it, essentially quantifying the variations of bundle elements
along each fiber. This is known as the vertical bundle, and it plays
a critical role in the development of Ehresmann connections, which
provide the ``horizontal complement'' to the vertical bundle. \end{rem}
\begin{prop}[Vertical bundle]
 Let $\pi^{E}\colon E\to M$ define a smooth {[}fiber{]} bundle.
If $VE:=\ker T\pi^{E}\leq TE$, then $\pi^{VE}:=\pi_{E}^{TE}\mid_{VE}\colon VE\to E$
defines a smooth vector bundle subbundle of $\pi_{E}^{TE}\colon TE\to E$,
called the \textbf{vertical bundle} over $E$. Furthermore, the fiber
over $e\in E$ is $V_{e}E=T_{e}E_{\pi^{E}\left(e\right)}\leq T_{e}E$.\end{prop}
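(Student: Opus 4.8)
The plan is to establish everything locally through a trivialization of $\pi^{E}$ and then observe that the local pieces patch together because $VE$ is defined intrinsically. First I would fix $e_{0}\in E$, put $p_{0}:=\pi^{E}\left(e_{0}\right)$, and choose an open set $U\subseteq M$ with $p_{0}\in U$ together with a bundle trivialization $\Psi\colon\left(\pi^{E}\right)^{-1}\left(U\right)\to U\times\mathcal{E}$ of $\pi^{E}$. Under $\Psi$ the map $\pi^{E}$ becomes the first-factor projection $\pr_{1}^{U\times\mathcal{E}}$, so $T\pi^{E}$ becomes, after the canonical identification $T\left(U\times\mathcal{E}\right)\cong TU\times T\mathcal{E}$, the projection $TU\times T\mathcal{E}\to TU$ that discards the $T\mathcal{E}$ component. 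Hence over each point $\left(u,x\right)\in U\times\mathcal{E}$ the kernel of $T\pi^{E}$ is $\left\{ 0_{u}\right\} \times T_{x}\mathcal{E}$, and via $T\Psi$ the set $VE$ restricted to $\left(\pi^{E}\right)^{-1}\left(U\right)$ is identified with the collection of these kernels, which is a smooth vector subbundle of $\pi_{E}^{TE}$ over $\left(\pi^{E}\right)^{-1}\left(U\right)$ of constant rank $\dim\mathcal{E}=\rank\pi^{E}$, with bundle projection the restriction of $\pi_{E}^{TE}$.

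Next I would note that these local trivializations of $VE$ are automatically mutually compatible, since $VE=\ker T\pi^{E}$ is a fixed subset of $TE$ not depending on any choice: on an overlap $\left(\pi^{E}\right)^{-1}\left(U\right)\cap\left(\pi^{E}\right)^{-1}\left(U'\right)$ both local descriptions are literally this same subset, so the transition maps of $VE$ are just the restrictions of the (smooth, fiberwise linear) transition maps of $TE$ to it. Therefore the local pieces glue to a genuine smooth vector bundle $\pi^{VE}\colon VE\to E$ that is a subbundle of $\pi_{E}^{TE}\colon TE\to E$, and $\pi^{VE}=\pi_{E}^{TE}\mid_{VE}$ by construction. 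Equivalently, one can invoke the standard fact that the kernel of a vector bundle morphism of locally constant rank is a smooth subbundle; here $T\pi^{E}$ is fiberwise surjective onto $TM$ of rank $\dim M$ everywhere, because a locally trivial map is a submersion.

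Finally I would identify the fiber over $e_{0}$. Local triviality makes $E_{p_{0}}:=\left(\pi^{E}\right)^{-1}\left(p_{0}\right)$ an embedded submanifold of $E$ diffeomorphic to $\mathcal{E}$ (apply $\Psi$), and the inclusion $j\colon E_{p_{0}}\hookrightarrow E$ satisfies $\pi^{E}\circ j\equiv p_{0}$, so $T\pi^{E}\circ Tj=0$; hence $T_{e_{0}}E_{p_{0}}=\image\left(T_{e_{0}}j\right)\subseteq\ker T_{e_{0}}\pi^{E}=V_{e_{0}}E$. Both sides have dimension $\dim\mathcal{E}$ — the left because $j$ is an embedding with $\dim E_{p_{0}}=\dim\mathcal{E}$, the right because $T_{e_{0}}\pi^{E}$ is surjective onto $T_{p_{0}}M$ — so the inclusion is an equality, giving $V_{e_{0}}E=T_{e_{0}}E_{\pi^{E}\left(e_{0}\right)}$. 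The main obstacle is not any single deep step but the bookkeeping of the first paragraph: one must actually produce the local trivializations of $VE$ (verifying local triviality compatible with the ambient bundle $\pi_{E}^{TE}$), rather than merely noting that each $V_{e}E$ is a linear subspace; once that is in place, the submersion property and a dimension count finish the argument.
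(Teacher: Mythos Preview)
Your proposal is correct and matches the paper's approach: the paper simply invokes that $\pi^{E}$ is a smooth surjective submersion (hence $T\pi^{E}$ has constant rank and its kernel is a subbundle), which is exactly the shortcut you mention at the end of your first block, and then establishes $T_{e}E_{\pi^{E}(e)}\subseteq V_{e}E$ plus a dimension count, just as you do. Your explicit local-trivialization argument is a more detailed unpacking of the same idea rather than a different route.
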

\begin{proof}
Because $\pi^{E}$ is a smooth surjective submersion, $VE\to E$ is
a subbundle of $TE\to E$ having corank $\dim M$ and therefore rank
equal to that of $E$. Furthermore, if $e\in E$ and $\epsilon\mapsto e_{\epsilon}\in E_{\pi^{E}\left(e\right)}$,
then $\delta e_{\epsilon}$ represents an arbitrary element of $T_{e}E_{\pi^{E}\left(e\right)}$,
and $T\pi^{E}\left(\delta e_{\epsilon}\right)=\delta\left(\pi^{E}\left(e_{\epsilon}\right)\right)=\delta\left(\pi\left(e\right)\right)=0$,
showing that $\delta e_{\epsilon}\in\ker T\pi^{E}$, and therefore
that $\delta e_{\epsilon}\in V_{e}E$. This shows that $T_{e}E_{\pi^{E}\left(e\right)}\subseteq V_{e}E$.
Because $\dim T_{e}E_{\pi^{E}\left(e\right)}=\rank E$, this shows
that $T_{e}E_{\pi^{E}\left(e\right)}=V_{e}E$.
\end{proof}
Given the extra structure that a vector bundle provides over a {[}fiber{]}
bundle, there is a canonical smooth vector bundle isomorphism which
adds significant value to the pullback bundle formalism used throughout
this paper. This can be seen put to greatest use in Part \ref{part:Riemannian-Calculus-of-Variations},
for example, in development of the first variation (see (\ref{thm:first_variation_of_L})).
\begin{prop}[Vertical bundle as pullback]
 \label{prop:vertical-bundle-as-pullback} If $\pi\colon E\to M$
defines a smooth vector bundle, then
\begin{eqnarray*}
\iota_{VE}^{\pi^{*}E}\colon\pi^{*}E & \to & VE,\\
\left(x,y\right) & \mapsto & \delta_{\epsilon}\left(x+\epsilon y\right)
\end{eqnarray*}
is a smooth vector bundle isomorphism over $\Id_{E}$, called the
\textbf{vertical lift}, having inverse
\begin{eqnarray*}
\iota_{\pi^{*}E}^{VE}\colon\delta e_{\epsilon} & \mapsto & \left(e_{0},\lim_{\epsilon\to0}\frac{e_{\epsilon}-e_{0}}{\epsilon}\right),
\end{eqnarray*}
where, without loss of generality, $e_{\epsilon}$ is an $E$-valued
variation which lies entirely in a single fiber.\end{prop}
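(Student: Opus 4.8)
The plan is to verify directly that $\iota_{VE}^{\pi^{*}E}$ is a fiberwise linear isomorphism covering $\Id_{E}$ and that it is smooth, and then to check that the displayed formula furnishes a genuine two-sided inverse. First I would confirm well-definedness and the target type: given $(x,y)\in\pi^{*}E$ we have $\pi(x)=\pi(y)$, so $x$ and $y$ lie in the single fiber $E_{\pi(x)}$, which is a genuine vector space; hence $\epsilon\mapsto x+\epsilon y$ is a smooth curve in $E_{\pi(x)}$ based at $x$, and $\delta_{\epsilon}(x+\epsilon y)\in T_{x}E_{\pi(x)}$. Since $\pi(x+\epsilon y)=\pi(x)$ is constant in $\epsilon$, we get $T\pi\bigl(\delta_{\epsilon}(x+\epsilon y)\bigr)=\delta_{\epsilon}\pi(x+\epsilon y)=0$, so the value lands in $\ker T\pi=VE$; by the preceding proposition it in fact lands in $V_{x}E=T_{x}E_{\pi(x)}$, which simultaneously shows that the map covers $\Id_{E}$ (the base point is $x$ on both sides).

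Next I would establish fiberwise linearity and invertibility. Over a fixed $x\in E$ with $p:=\pi(x)$, the fiber of $\pi^{*}E$ is $\{x\}\times E_{p}\cong E_{p}$, and the restriction of $\iota_{VE}^{\pi^{*}E}$ there is $y\mapsto\delta_{\epsilon}(x+\epsilon y)$, which is precisely the canonical isomorphism $E_{p}\cong T_{x}E_{p}$ identifying a vector space with its tangent space at a point; in particular it is a linear isomorphism. For smoothness of the total map I would pass to a vector bundle trivialization $\Psi\colon\pi^{-1}(U)\to U\times\mathcal{E}$: it induces trivializations of $\pi^{*}E$ and of $VE$ over $\pi^{-1}(U)$ in which a point of $\pi^{*}E$ reads as $(p,v,w)$ and $\iota_{VE}^{\pi^{*}E}$ becomes $(p,v,w)\mapsto(p,v,0,w)\in T(U\times\mathcal{E})$, manifestly smooth; since such open sets cover $E$, the map is smooth, and being fiberwise linear it is a smooth vector bundle morphism, hence (by fiberwise invertibility) a vector bundle isomorphism over $\Id_{E}$.

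Finally I would verify the inverse. Any $\xi\in V_{e_{0}}E=T_{e_{0}}E_{\pi(e_{0})}$ is tangent to the fiber $E_{\pi(e_{0})}$, so it is represented by a variation $\epsilon\mapsto e_{\epsilon}$ lying entirely in that fiber; for such a variation the quantity $\lim_{\epsilon\to0}\frac{e_{\epsilon}-e_{0}}{\epsilon}$ is the ordinary derivative in the vector space $E_{\pi(e_{0})}$, and it depends only on $\xi$, not on the representative. One then checks $\iota_{\pi^{*}E}^{VE}\circ\iota_{VE}^{\pi^{*}E}(x,y)=(x,y)$ and $\iota_{VE}^{\pi^{*}E}\circ\iota_{\pi^{*}E}^{VE}(\delta e_{\epsilon})=\delta e_{\epsilon}$ using the elementary facts that in a vector space $W$, under $T_{x}W\cong W$, one has $\delta_{\epsilon}(x+\epsilon y)=y$ and $\lim_{\epsilon\to0}\frac{(x+\epsilon y)-x}{\epsilon}=y$. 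The main obstacle I anticipate is not any single computation but the bookkeeping needed to make the ``without loss of generality'' in the inverse precise — that every vertical vector admits a single-fiber representative and that the limit is representative-independent — together with carefully matching the induced trivializations of $\pi^{*}E$ and $VE$ so that the smoothness step is an honest statement about the identity-type map rather than an unexamined coordinate computation.
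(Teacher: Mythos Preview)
Your proposal is correct and follows essentially the same route as the paper: verify the map lands in $VE$ and covers $\Id_E$, check it is a fiberwise linear isomorphism, establish smoothness, and confirm the inverse formula. The paper's version is considerably terser---it declares fiberwise linearity and injectivity ``clear'', invokes dimension counting for surjectivity, and for smoothness simply observes that $(x,y,\epsilon)\mapsto x+\epsilon y$ is smooth (hence so is its $\epsilon$-derivative at $0$), whereas you pass through explicit trivializations; your extra care with the ``without loss of generality'' clause and representative-independence of the inverse is detail the paper omits, but the underlying argument is the same.
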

\begin{proof}
It is clear that $\iota_{VE}^{\pi^{*}E}$ is linear and injective
on each fiber. By a dimension counting argument, it is therefore an
isomorphism on each fiber. Because it preserves the basepoint, it
is a vector bundle isomorphism over $\Id_{E}$. Because the map $\left(x,y,\epsilon\right)\mapsto x+\epsilon y$
is smooth, so is the defining expression for $\iota_{VE}^{\pi^{*}E}$,
thereby establishing smoothness. That $\iota_{\pi^{*}E}^{VE}$ inverts
$\iota_{VE}^{\pi^{*}E}$ is a trivial calculation.
\end{proof}

\section{Strongly-Typed Tensor Field Operations\label{sec:Strongly-Typed-Tensor-Field}}

Because vector bundles and the related operations can be thought of
conceptually as ``sheaves of linear algebra'', the constructions
in Section \ref{sec:Strongly-Typed-Linear-Algebra}, generalized earlier
in this section, can be further generalized to the setting of sections
of vector bundles.\\

If $E,F,G$ are smooth vector bundles over $M$, then define the natural
pairing of a tensor field with a vector:
\begin{eqnarray*}
\cdot_{F}\colon\Gamma\left(E\otimes_{M}F^{*}\right)\times F & \to & E,\\
\left(e\otimes_{M}\phi,f\right) & \mapsto & e\left(\pi^{F}\left(f\right)\right)\left[\phi\left(\pi^{F}\left(f\right)\right)\cdot_{F}f\right],
\end{eqnarray*}
extending linearly to general tensor fields. Further, define the natural
pairing of tensor fields:
\begin{eqnarray*}
\cdot_{F}\colon\Gamma\left(E\otimes_{M}F^{*}\right)\times\Gamma\left(F\otimes_{M}G\right) & \to & \Gamma\left(E\otimes_{M}G\right),\\
\left(e\otimes_{M}\phi,f\otimes_{M}g\right) & \mapsto & \left(p\mapsto e\left(p\right)\otimes_{M}\left(\phi\left(p\right)\cdot_{F_{p}}f\left(p\right)\right)\otimes_{M}g\left(p\right)\right)\\
 &  & =\left(p\mapsto\left(\phi\left(p\right)\cdot_{F_{p}}f\left(p\right)\right)\left(e\otimes_{M}g\right)\left(p\right)\right),
\end{eqnarray*}
extending linearly to general tensor fields. This multiple use of
the $\cdot_{F}$ symbol is a concept known as \textbf{operator overloading}
in computer programming. No ambiguity is caused by this overloading,
as the particular use can be inferred from the types of the operands.
As before, the subscript $F$ may be optionally omitted when clear
from context.\\

The permutations defined in Section \ref{sec:Strongly-Typed-Linear-Algebra}
are generalized as tensor fields. If $F_{1},\dots,F_{n}$ are smooth
vector bundles over $M$, and $\sigma\in S_{n}$ is a permutation,
then $\sigma$ can act on $F_{1}\otimes_{M}\dots\otimes_{M}F_{n}$
by permuting its factors, and therefore can be identified with a tensor
field
\[
\sigma\in\Gamma\left(F_{1}^{*}\otimes_{M}\dots\otimes_{M}F_{n}^{*}\otimes_{M}F_{\sigma^{-1}\left(1\right)}\otimes_{M}\dots\otimes_{M}F_{\sigma^{-1}\left(n\right)}\right)
\]
defined by
\[
\left(f_{1}\otimes_{M}\dots\otimes_{M}f_{n}\right)\cdot_{F_{1}^{*}\otimes_{M}\dots\otimes_{M}F_{n}^{*}}\sigma:=f_{\sigma^{-1}\left(1\right)}\otimes_{M}\dots\otimes_{M}f_{\sigma^{-1}\left(n\right)}.
\]
An important feature of such permutation tensor fields is that they
are parallel with respect to covariant derivatives on the factors
$F_{1},\dots,F_{n}$ (see (\ref{cor:permutation_tensor_fields_are_parallel})
for more on this).

\section{Pullback Bundles \label{sec:Pullback-Bundles}}

The pullback bundle, defined below, is a crucial building block for
many important bundle constructions, as it enriches the type system
dramatically, and allows the tensor formulation of linear algebra
to be extended to the vector bundle setting. In particular, the abstract,
global formulation of the space of smooth vector bundle morphisms
over a map $\phi\colon M\to N$ is achieved quite cleanly using a
pullback bundle. Furthermore, the use of pullback bundles and pullback
covariant derivatives simplifies what would otherwise be local coordinate
calculations, thereby giving more insight into the geometric structure
of the problem.\\

For the duration of this section, let $\left(\mathcal{F},F,\pi,N\right)$
be a smooth bundle having rank $r$.
\begin{prop}[Pullback bundle]
 \label{proposition_pullback_bundle} Let $M$ and $N$ be smooth
manifolds and let $\phi\colon M\to N$ be smooth. If 
\[
\phi^{*}F:=\left\{ \left(m,f\right)\in M\times F\mid\phi\left(m\right)=\pi\left(f\right)\right\} ,
\]
and 
\[
\pi^{\phi^{*}F}:=\pr_{1}^{M\times F}\mid_{\phi^{*}F}\colon\phi^{*}F\to M,\,\left(m,f\right)\mapsto m,
\]
then $\left(\mathcal{F},\phi^{*}F,\pi^{\phi^{*}F},M\right)$ defines
a smooth bundle. In particular, $\phi^{*}F$ is a smooth manifold
having dimension $\dim M+\rank\pi$. The bundle defined by $\pi^{\phi^{*}F}$
is called the \textbf{pullback of $\pi$ by $\phi$}. \end{prop}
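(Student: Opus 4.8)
The plan is to construct local trivializations of $\pi^{\phi^*F}$ directly by pulling back local trivializations of $\pi$ along $\phi$; this will simultaneously endow $\phi^*F$ with its smooth manifold structure and exhibit the local triviality, in the spirit of the proof of (\ref{proposition_tensor_product_bundle}). An alternative opening, which delivers the manifold structure and the dimension count cheaply, is to observe that $\phi^*F$ is the preimage of the diagonal $\Delta_N\subseteq N\times N$ under the map $M\times F\to N\times N$, $(m,f)\mapsto(\phi(m),\pi(f))$: its differential is $D\phi_m\times D\pi_f$, and since $\pi$ is a submersion, $D\pi_f$ is surjective, which makes the map \emph{transverse} to $\Delta_N$ at every point of $\phi^*F$. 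Hence $\phi^*F$ is an embedded submanifold of $M\times F$ of codimension $\dim N$, i.e. of dimension $\dim M+\dim F-\dim N=\dim M+\rank\pi$. I would still need the trivializations for local triviality, so I take the first route as primary and use the subspace topology from $M\times F$ to supply Hausdorffness and second countability for free.

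First I would cover $N$ by open sets $\{V_\alpha\}$ over which $\pi$ trivializes, say $\Psi_\alpha\colon\pi^{-1}(V_\alpha)\to V_\alpha\times\mathcal F$ with $\pr_1^{V_\alpha\times\mathcal F}\circ\Psi_\alpha=\pi$, and put $U_\alpha:=\phi^{-1}(V_\alpha)$, an open cover of $M$. For $(m,f)\in(\pi^{\phi^*F})^{-1}(U_\alpha)$ one has $\pi(f)=\phi(m)\in V_\alpha$, so $f\in\pi^{-1}(V_\alpha)$ and I may define
\[
\Phi_\alpha\colon(\pi^{\phi^*F})^{-1}(U_\alpha)\to U_\alpha\times\mathcal F,\qquad(m,f)\mapsto\left(m,\pr_2^{V_\alpha\times\mathcal F}\circ\Psi_\alpha(f)\right).
\]
Its inverse is $(m,y)\mapsto\left(m,\Psi_\alpha^{-1}(\phi(m),y)\right)$: this takes values in $\phi^*F$ precisely because $\pi\circ\Psi_\alpha^{-1}=\pr_1^{V_\alpha\times\mathcal F}$ forces $\pi\bigl(\Psi_\alpha^{-1}(\phi(m),y)\bigr)=\phi(m)$, and the two assignments are routinely checked to be mutually inverse. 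Both are continuous for the subspace topology on $\phi^*F\subseteq M\times F$, so each $\Phi_\alpha$ is a homeomorphism of the open set $(\pi^{\phi^*F})^{-1}(U_\alpha)=\phi^*F\cap(U_\alpha\times F)$ onto $U_\alpha\times\mathcal F$.

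Next I would declare each $\Phi_\alpha$ a diffeomorphism and check compatibility: on $(U_\alpha\cap U_\beta)\times\mathcal F$,
\[
\Phi_\alpha\circ\Phi_\beta^{-1}(m,y)=\left(m,\pr_2^{V_\alpha\times\mathcal F}\circ\Psi_\alpha\circ\Psi_\beta^{-1}(\phi(m),y)\right),
\]
which is smooth because $\phi$ is smooth and $\Psi_\alpha\circ\Psi_\beta^{-1}$ is a smooth transition map of the bundle $\pi$. So the $\Phi_\alpha$ form a smooth atlas; since the underlying topology is the subspace topology of $M\times F$, the space $\phi^*F$ is Hausdorff and second countable, hence a genuine smooth manifold, of dimension $\dim M+\rank\pi$. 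By construction $\pr_1^{U_\alpha\times\mathcal F}\circ\Phi_\alpha=\pi^{\phi^*F}$ on $(\pi^{\phi^*F})^{-1}(U_\alpha)$, so $\pi^{\phi^*F}$ is smooth and is trivialized over each $U_\alpha$ with typical fiber $\mathcal F$; as $\{U_\alpha\}$ covers $M$, this shows $\left(\mathcal F,\phi^*F,\pi^{\phi^*F},M\right)$ is a smooth bundle.

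I do not anticipate a genuine obstacle: the content is the bookkeeping of well-definedness of the $\Phi_\alpha$, the verification that the atlas is consistent, and the identification of the chart topology with the subspace topology. The one point deserving care is that $\pi\circ\Psi_\alpha^{-1}=\pr_1$ for a trivialization, which is exactly what guarantees $\Phi_\alpha^{-1}$ lands in $\phi^*F$. Finally I would note, although the proposition only claims a fiber bundle, that if $\pi$ is a vector bundle the $\Psi_\alpha$ are fiberwise linear, so the transition maps $\Phi_\alpha\circ\Phi_\beta^{-1}$ are fiberwise linear in $\mathcal F$ and $\phi^*F$ inherits a vector bundle structure with fiber $F_{\phi(m)}$ over $m$.
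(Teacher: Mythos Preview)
Your proof is correct and follows essentially the same route as the paper: pull back local trivializations $\Psi_\alpha$ of $\pi$ along $\phi$ to obtain trivializations $\Phi_\alpha$ of $\pi^{\phi^*F}$, with the same explicit formulas for $\Phi_\alpha$ and its inverse. You are in fact somewhat more careful than the paper on two points: you explicitly verify the transition maps $\Phi_\alpha\circ\Phi_\beta^{-1}$ are smooth (thereby \emph{defining} the smooth structure via an atlas rather than presupposing one), and you address Hausdorffness and second countability via the subspace topology; the paper glosses over both. Your transversality aside is a nice bonus that the paper omits.
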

\begin{proof}
Recalling that $\mathcal{F}$ denotes the typical fiber of $\pi$,
let $\Psi\colon\pi^{-1}\left(U\right)\to U\times\mathcal{F}$ trivialize
$\pi$ over open set $U\subseteq N$. Define

\[
\Psi_{\phi}\colon\phi^{*}\left(\pi^{-1}\left(U\right)\right)\to\phi^{-1}\left(U\right)\times\mathcal{F},\,\left(m,f\right)\mapsto\left(m,\pr_{2}^{U\times\mathcal{F}}\circ\Psi\left(f\right)\right)
\]
and
\[
\Psi_{\phi}^{-1}\colon\phi^{-1}\left(U\right)\times\mathcal{F}\to\phi^{*}\left(\pi^{-1}\left(U\right)\right),\,\left(m,f\right)\mapsto\left(m,\Psi^{-1}\left(\phi\left(m\right),f\right)\right).
\]

Claim (1): $\Psi_{\phi}$ and $\Psi_{\phi}^{-1}$ are smooth. Proof:
$\phi^{*}\left(\pi^{-1}\left(U\right)\right)\subseteq\phi^{-1}\left(U\right)\times\pi^{-1}\left(U\right)$,
and $\Psi_{\phi}$ is clearly smooth as a map defined on the larger
manifold. Therefore it restricts to a smooth map on $\phi^{*}\left(\pi^{-1}\left(U\right)\right)$.
An analogous argument shows that $\Psi_{\phi}^{-1}$ is smooth. Claim
(1) proved.

Claim (2): $\Psi_{\phi}^{-1}$ inverts $\Psi_{\phi}$. Proof: Let
$\left(m,f\right)\in\phi^{*}\left(\pi^{-1}\left(U\right)\right)$.
Then 
\begin{align*}
\Psi_{\phi}^{-1}\circ\Psi_{\phi}\left(m,f\right) & =\Psi_{\phi}^{-1}\left(m,\pr_{2}^{U\times\mathcal{F}}\circ\Psi\left(f\right)\right)\\
 & =\left(m,\Psi^{-1}\left(\phi\left(m\right),\pr_{2}^{U\times\mathcal{F}}\circ\Psi\left(f\right)\right)\right)\\
 & =\left(m,\Psi^{-1}\left(\pi\left(f\right),\pr_{2}^{U\times\mathcal{F}}\circ\Psi\left(f\right)\right)\right)\mbox{ (since \ensuremath{\phi\left(m\right)=\pi\left(f\right)})}\\
 & =\left(m,\Psi^{-1}\left(\pr_{1}^{U\times\mathcal{F}}\circ\Psi\left(f\right),\pr_{2}^{U\times\mathcal{F}}\circ\Psi\left(f\right)\right)\right)\\
 & =\left(m,\Psi^{-1}\circ\Psi\left(f\right)\right)\\
 & =\left(m,f\right).
\end{align*}
With $g\in\mathcal{F}$,
\begin{align*}
\Psi_{\phi}\circ\Psi_{\phi}^{-1}\left(m,g\right) & =\Psi_{\phi}\left(m,\Psi^{-1}\left(\phi\left(m\right),g\right)\right)\\
 & =\left(m,\pr_{2}^{U\times\mathcal{F}}\circ\Psi\circ\Psi^{-1}\left(\phi\left(m\right),g\right)\right)\\
 & =\left(m,\pr_{2}^{U\times\mathcal{F}}\left(\phi\left(m\right),g\right)\right)\\
 & =\left(m,g\right),
\end{align*}
proving Claim (2).

Claim (3): $\Psi_{\phi}$ trivializes $\pi^{\phi^{*}F}$ over $\phi^{-1}\left(U\right)\subseteq M$.
Proof: Let $\left(m,f\right)\in\phi^{*}\left(\pi^{-1}\left(U\right)\right)$.
Then
\[
\pr_{1}^{\phi^{-1}\left(U\right)\times\mathcal{F}}\circ\Psi_{\phi}\left(m,f\right)=\pr_{1}^{\phi^{-1}\left(U\right)\times\mathcal{F}}\circ\left(m,\pr_{2}^{U\times\mathcal{F}}\circ\Psi\left(f\right)\right)=m=\pi^{\phi^{*}F}\left(m,f\right),
\]
and by claims (1) and (2), $\Psi_{\phi}$ is a diffeomorphism, so
$\Psi_{\phi}$ trivializes $\pi^{\phi^{*}F}$ over $\phi^{-1}\left(U\right)\subseteq M$.
Claim (3) proved.

Since $M$ can be covered with sets as in claim (3) and since the
typical fiber of $\pi^{\phi^{*}F}$ is diffeomorphic to $\mathcal{F}$,
this shows that $\pi^{\phi^{*}F}$ defines a smooth bundle $\left(\mathcal{F},\phi^{*}F,\pi^{\phi^{*}F},M\right)$.
Because $\phi^{*}F$ is locally diffeomorphic to the product of an
open subset of $M$ with $\mathcal{F}$, $\phi^{*}F$ has been shown
to be a smooth manifold having dimension $\dim M+\dim\mathcal{F}=\dim M+\rank\pi$. 
\end{proof}
While the pullback bundle is constructed as a submanifold of a direct
product, there is a natural bundle morphism into the pulled-back bundle,
which serves as an interface to maps defined on the pulled-back bundle.
Usually this morphism is notationally suppressed, just as naturally
isomorphic spaces can be identified without explicit notation. 
\begin{cor}[Pullback fiber projection bundle morphism]
 \label{cor:pullback-fiber-projection} If $\phi\colon M\to N$ is
smooth, then
\begin{eqnarray*}
\rho_{F}^{\phi^{*}F}\colon\phi^{*}F & \to & F,\\
\left(m,f\right) & \mapsto & f
\end{eqnarray*}
is a smooth bundle morphism over $\phi$ which is an isomorphism when
restricted to any fiber of $\phi^{*}F$.
\end{cor}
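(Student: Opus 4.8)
The plan is to obtain all three assertions directly from the construction of $\phi^{*}F$ in \ref{proposition_pullback_bundle}, together with the local trivializations $\Psi_{\phi}$ built there. First I would note that $\rho_{F}^{\phi^{*}F}$ is simply the restriction $\pr_{2}^{M\times F}\mid_{\phi^{*}F}$. Since $\phi^{*}F$ was given (in \ref{proposition_pullback_bundle}) a smooth structure for which each $\Psi_{\phi}$ is a diffeomorphism onto $\phi^{-1}(U)\times\mathcal{F}$, and since, reading through that trivialization, the inclusion $\phi^{*}F\hookrightarrow M\times F$ is smooth, the globally smooth map $\pr_{2}^{M\times F}$ restricts to a smooth map on $\phi^{*}F$. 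This yields smoothness of $\rho_{F}^{\phi^{*}F}$.

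Next, for the property of covering $\phi$, I would simply compute, for $(m,f)\in\phi^{*}F$,
\[
\pi\circ\rho_{F}^{\phi^{*}F}(m,f)=\pi(f)=\phi(m)=\phi\circ\pi^{\phi^{*}F}(m,f),
\]
where the central equality is precisely the membership condition defining $\phi^{*}F$. Hence $\pi\circ\rho_{F}^{\phi^{*}F}=\phi\circ\pi^{\phi^{*}F}$, so $\rho_{F}^{\phi^{*}F}$ is a bundle morphism over $\phi$.

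Finally, for the fiberwise claim, fix $m\in M$; then $\left(\pi^{\phi^{*}F}\right)^{-1}(m)=\{m\}\times F_{\phi(m)}$, and $\rho_{F}^{\phi^{*}F}$ carries this set to $F_{\phi(m)}=\pi^{-1}(\phi(m))$ via $(m,f)\mapsto f$, which is a bijection with inverse $f\mapsto(m,f)$. To see this is a diffeomorphism (and, when $\pi$ is a vector bundle whose trivializations are chosen fiberwise linear, a linear isomorphism), I would pass through trivializations: choosing $U\ni\phi(m)$ with $\Psi\colon\pi^{-1}(U)\to U\times\mathcal{F}$ and the induced $\Psi_{\phi}\colon\phi^{*}\left(\pi^{-1}(U)\right)\to\phi^{-1}(U)\times\mathcal{F}$, the map $\rho_{F}^{\phi^{*}F}$ becomes, on the fiber over $m$, the identity on the $\mathcal{F}$-factor, which is a diffeomorphism (linear in the fiberwise-linear case).

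I do not anticipate a genuine obstacle here; the only point requiring care is purely bookkeeping — namely that $\phi^{*}F$ carries the submanifold structure inherited from $M\times F$ and that the fibers of $\phi^{*}F$ and of $F$ are embedded submanifolds, so that ``smoothness'' of the fiberwise inverse makes sense. This is exactly what the trivializations $\Psi_{\phi}$ from \ref{proposition_pullback_bundle} supply, so invoking them resolves the smoothness bookkeeping cleanly and the corollary follows with no further work.
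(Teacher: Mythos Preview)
Your proposal is correct and follows the natural route: the paper itself provides no proof for this corollary, treating it as immediate from the construction in Proposition~\ref{proposition_pullback_bundle} (the only follow-up remark is that $\rho_{F}^{\phi^{*}F}=\pr_{F}^{M\times F}\mid_{\phi^{*}F}$, hence its tangent map is the corresponding projection). Your argument is precisely the verification one would supply if asked to unpack this, and it is carried out correctly.
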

Because $\rho_{F}^{\phi^{*}F}$ is the projection $\pr_{F}^{M\times F}\mid_{\phi^{*}F}$,
its tangent map is also just the projection $\pr_{TF}^{TM\oplus TF}\mid_{T\phi^{*}F}$. 
\begin{prop}[Bundle pullback is a contravariant functor]
 \label{prop:pullback_is_contravariant_functor} The map of categories
\begin{eqnarray*}
\mbox{Pullback}\colon\mbox{Manifold} & \to & \left\{ \mbox{Bundle}\left(M\right)\mid M\in\mbox{Manifold}\right\} ,\\
M & \mapsto & \mbox{Bundle}\left(M\right),\\
\left(\phi\colon M\to N\right) & \mapsto & \left(\mbox{Bundle}\left(N\right)\to\mbox{Bundle}\left(M\right),\,\left(\mathcal{F},F,\pi,N\right)\mapsto\left(\mathcal{F},\phi^{*}F,\pi^{\phi^{*}F},M\right)\right)
\end{eqnarray*}
is a contravariant functor. Here, naturally isomorphic bundles in
$\mbox{Bundle}\left(M\right)$, for each manifold $M$, are identified
(along with the corresponding morphisms).\end{prop}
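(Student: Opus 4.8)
The plan is to verify the three ingredients of a contravariant functor valued in categories (a pseudofunctor into $\mathbf{Cat}$, strictified by the identifications the statement imposes): that the object assignment $(\mathcal{F},F,\pi,N)\mapsto(\mathcal{F},\phi^{*}F,\pi^{\phi^{*}F},M)$ is well-defined, that for each smooth $\phi\colon M\to N$ the induced assignment $\phi^{*}\colon\mathrm{Bundle}(N)\to\mathrm{Bundle}(M)$ is itself a functor, and that $\phi\mapsto\phi^{*}$ reverses composition and preserves identities up to the natural bundle isomorphisms by which we are told to quotient.

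Well-definedness on objects is precisely Proposition~\ref{proposition_pullback_bundle}. For the action of $\phi^{*}$ on morphisms, given a bundle morphism $f\colon F\to F'$ over $\Id_{N}$ between $\pi\colon F\to N$ and $\pi'\colon F'\to N$ (so $\pi'\circ f=\pi$), I would set
\[
\phi^{*}f\colon\phi^{*}F\to\phi^{*}F',\qquad(m,y)\mapsto(m,f(y)).
\]
This lands in $\phi^{*}F'$ because $\pi'(f(y))=\pi(y)=\phi(m)$, it satisfies $\pi^{\phi^{*}F'}\circ\phi^{*}f=\pi^{\phi^{*}F}$ so that it is a morphism over $\Id_{M}$, and it is smooth because it is the restriction to the submanifold $\phi^{*}F\subseteq M\times F$ of the smooth map $\Id_{M}\times f\colon M\times F\to M\times F'$ — the same ``restrict a map already smooth on the ambient product'' argument used in Proposition~\ref{proposition_pullback_bundle}. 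Functoriality of $\phi^{*}$ for fixed $\phi$ is then immediate from the defining formula, since $\phi^{*}(\Id_{F})(m,y)=(m,y)$ and $\phi^{*}(g\circ f)(m,y)=(m,g(f(y)))=(\phi^{*}g\circ\phi^{*}f)(m,y)$.

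For contravariance in the base map I would exhibit the two comparison isomorphisms. For the identity $\Id_{M}$, the map $\rho_{F}^{\Id_{M}^{*}F}\colon\Id_{M}^{*}F\to F$, $(m,f)\mapsto f$, of Corollary~\ref{cor:pullback-fiber-projection} is a bundle morphism over $\Id_{M}$ which is an isomorphism on each fiber, hence a bundle isomorphism, with inverse $f\mapsto(\pi(f),f)$; since it is patently natural in $F$ (and in bundle morphisms), $\Id_{M}^{*}\cong\Id_{\mathrm{Bundle}(M)}$. For $\phi\colon M\to N$, $\psi\colon N\to P$, and $\pi\colon G\to P$, unwinding the definitions gives
\[
(\psi\circ\phi)^{*}G=\{(m,g)\mid\psi(\phi(m))=\pi(g)\}\quad\text{and}\quad\phi^{*}(\psi^{*}G)=\{(m,(\phi(m),g))\mid\psi(\phi(m))=\pi(g)\},
\]
and the map $\Theta_{G}\colon(m,g)\mapsto(m,(\phi(m),g))$, with inverse $(m,(n,g))\mapsto(m,g)$, is a bundle morphism over $\Id_{M}$ that is a fiberwise isomorphism (smoothness once more by restriction from the ambient products) and is natural in $G$; hence $(\psi\circ\phi)^{*}\cong\phi^{*}\circ\psi^{*}$.

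Since the statement identifies naturally isomorphic bundles together with the corresponding morphisms, these natural isomorphisms become identities, the pentagon and triangle coherence conditions hold trivially, and ``Pullback'' is thereby a contravariant functor. I expect no serious obstacle: the only points that are not purely formal are the smoothness of $\phi^{*}f$ and of the comparison maps $\Theta_{G}$, each disposed of by the submanifold-of-a-product observation already invoked in the proof of Proposition~\ref{proposition_pullback_bundle}, so the real work is just organizing the bookkeeping of what a category-valued contravariant functor requires.
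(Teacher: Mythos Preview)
Your argument is correct and follows essentially the same route as the paper: both verify the identity axiom via $\Id_{M}^{*}F\cong F$ and the composition axiom via the explicit set-level bijection between $\phi^{*}\psi^{*}G$ and $(\psi\circ\phi)^{*}G$, with smoothness handled by the restriction-from-the-ambient-product trick. You go somewhat further than the paper in that you also spell out the action of $\phi^{*}$ on bundle morphisms and mention the coherence conditions; the paper's proof checks only the object-level isomorphisms and relies on the stated identification of naturally isomorphic bundles to absorb the rest.
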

\begin{proof}
Noting that 
\[
\Id_{N}^{*}F=\left\{ \left(n,f\right)\in N\times F\mid\Id_{N}\left(n\right)=\pi\left(f\right)\right\} \cong F
\]
and that
\begin{align*}
\left(\Id_{N}^{*}\pi\right)\left(n,f\right) & =\left(\pr_{1}^{N\times F}\mid_{\Id_{N}^{*}F}\right)\left(n,f\right)=n=\pi\left(f\right)\\
\implies\Id_{N}^{*}\pi & \cong\pi,
\end{align*}
it follows that $\mbox{Pullback}\left(\Id_{N}\right)=\Id_{\mbox{Bundle}\left(N\right)}=\Id_{\mbox{Pullback}\left(N\right)}$,
i.e. $\mbox{Pullback}$ satisfies the identity axiom of functoriality. 

For the contravariance axiom, let $\phi\colon M\to N$ and $\psi\colon L\to M$
be smooth manifold morphisms and let $\left(\mathcal{F},F,\pi,N\right)$
be a smooth bundle. Then
\begin{align*}
\psi^{*}\phi^{*}F & =\left\{ \left(\ell,p\right)\in L\times\phi^{*}F\mid\psi\left(\ell\right)=\pi^{\phi^{*}F}\left(p\right)\right\} \\
 & =\left\{ \left(\ell,\left(m,f\right)\right)\in L\times\left(M\times F\right)\mid\psi\left(\ell\right)=\pi^{\phi^{*}F}\left(m,f\right)\mbox{ and }\phi\left(m\right)=\pi\left(f\right)\right\} \\
 & =\left\{ \left(\ell,\left(m,f\right)\right)\in L\times\left(M\times F\right)\mid\psi\left(\ell\right)=m\mbox{ and }\phi\left(m\right)=\pi\left(f\right)\right\} \\
 & \cong\left\{ \left(\ell,f\right)\in L\times F\mid\phi\circ\psi\left(\ell\right)=\pi\left(f\right)\right\} \\
 & =\left(\phi\circ\psi\right)^{*}F
\end{align*}
and
\begin{align*}
\pi^{\psi^{*}\phi^{*}F}\left(\ell,\left(m,f\right)\right) & =\left(\pr_{1}^{L\times\phi^{*}F}\mid_{\psi^{*}\phi^{*}F}\right)\left(\ell,\left(m,f\right)\right)=\ell\mbox{ and}\\
\pi^{\left(\phi\circ\psi\right)^{*}F}\left(\ell,f\right) & =\left(\pr_{1}^{L\times F}\mid_{\left(\phi\circ\psi\right)^{*}F}\right)\left(\ell,f\right)=\ell,
\end{align*}
showing that $\pi^{\psi^{*}\phi^{*}F}\cong\pi^{\left(\phi\circ\psi\right)^{*}F}$,
and therefore
\[
\mbox{Pullback}\left(\psi\right)\circ\mbox{Pullback}\left(\phi\right)=\mbox{Pullback}\left(\phi\circ\psi\right),
\]
establishing $\mbox{Pullback}$ as a contravariant functor. 
\end{proof}
The space of sections of a pullback bundle is easily quantified.
\[
\Gamma\left(\phi^{*}F\right)=\left\{ \sigma\in C^{\infty}\left(M,\phi^{*}F\right)\mid\pi^{\phi^{*}F}\circ\sigma=\Id_{M}\right\} .
\]
This space will be central in the theory developed in the rest of
this paper. Furthermore, it is naturally identified with the space
of sections along the pullback map;
\[
\Gamma_{\phi}\left(F\right):=\left\{ \Sigma\in C^{\infty}\left(M,F\right)\mid\pi^{F}\circ\Sigma=\phi\right\} .
\]
These spaces are naturally isomorphic to one another, and therefore
an identification can be made when convenient. While the former space
is more correct from a strongly typed standpoint, the latter space
is a convenient and intuitive representational form. The particular
correspondence depends heavily on the fact that $\phi^{*}F$ is a
submanifold of $M\times F$.
\begin{eqnarray*}
\Gamma\left(\phi^{*}F\right) & \cong & \Gamma_{\phi}\left(F\right)\\
\sigma & \mapsto & \pr_{2}^{M\times F}\circ\sigma,\\
\Id_{M}\times_{M}\Sigma & \mapsfrom & \Sigma.
\end{eqnarray*}
Furthermore, if $f\in\Gamma\left(F\right)$, then $f\circ\phi\in\Gamma_{\phi}\left(F\right)$.
Note that it is \emph{not} true that any $\sigma\in\Gamma_{\phi}\left(F\right)$
can be written as $f\circ\phi$ for some $f\in\Gamma\left(F\right)$,
for example when there exists some distinct $p,q\in M$ such that
$\phi\left(p\right)=\phi\left(q\right)$ and $\sigma\left(p\right)\neq\sigma\left(q\right)$.
Furthermore, the representation $f\circ\phi$ is generally non-unique,
for example when $\phi$ is not surjective, sections $f_{1},f_{2}\in\Gamma\left(F\right)$
which differ only away from the image of $\phi$ will still give $f_{1}\circ\phi=f_{2}\circ\phi$.
Before developing the notion of a linear connection on a pullback
bundle, it will be necessary to address these features which, while
inconvenient, provide the strength of the pullback bundle and pullback
covariant derivative (see (\ref{remark:pullback_derivative_captures_fiber_variation})).
\begin{lem}[Local representation of $\Gamma_{\phi}\left(F\right)$ elements]
 \label{lemma_representation_of_sections_of_a_pullback_bundle} Recall
that $r$ denotes the rank of smooth bundle $F$. If $\sigma\in\Gamma_{\phi}\left(F\right)$
then each point $p\in M$ has some neighborhood $U$ in which $\sigma$
can be written locally as $\sigma\mid_{U}=\sigma^{i}\, f_{i}\circ\phi\mid_{U}$,
where $f_{1},\dots,f_{r}\in\Gamma\left(F\mid_{\phi\left(U\right)}\right)$
is a frame for $F\mid_{\phi\left(U\right)}$, and $\sigma^{1},\dots,\sigma^{r}\in C^{\infty}\left(U,\mathbb{R}\right)$
are defined by $\sigma^{i}=\left(f^{i}\circ\phi\mid_{U}\right)\cdot_{F}\sigma\mid_{U}$. \end{lem}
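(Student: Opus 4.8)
The plan is to reduce the statement to the standard local-frame expansion of a vector in a fibre of $F$ and then transport it along $\phi$. First I would set $q := \phi(p) \in N$ and pick an open neighborhood $W \subseteq N$ of $q$ over which the vector bundle $\pi\colon F \to N$ trivializes; this yields a smooth frame $f_1,\dots,f_r \in \Gamma(F|_W)$ together with its dual coframe $f^1,\dots,f^r \in \Gamma(F^*|_W)$, characterized by $f^i \cdot_F f_j = \delta^i_j$ (Kronecker delta) on $W$. Put $U := \phi^{-1}(W)$; by continuity of $\phi$ this is an open neighborhood of $p$ in $M$, and $\phi(U) \subseteq W$, so $f_i|_{\phi(U)}$ and $f^i|_{\phi(U)}$ are defined and the former is a frame for $F|_{\phi(U)}$ in the sense of the statement. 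It is worth noting here that $\phi(U)$ need not itself be open in $N$, which is precisely why the frame must be produced on the genuinely open set $W$ and then restricted.

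Next I would check that the functions $\sigma^i := (f^i \circ \phi|_U) \cdot_F \sigma|_U$ prescribed in the statement are smooth. Pointwise, $\sigma^i(m) = f^i(\phi(m)) \cdot_{F_{\phi(m)}} \sigma(m) \in \mathbb{R}$, which is well-defined because $\sigma \in \Gamma_\phi(F)$ forces $\sigma(m) \in F_{\phi(m)}$ while $f^i(\phi(m)) \in F^*_{\phi(m)}$. Smoothness follows from smoothness of the constituents: $\phi|_U$ is smooth and $f^i$ is a smooth section, so $f^i \circ \phi|_U \in \Gamma_\phi(F^*)$ is smooth; $\sigma|_U$ is smooth by hypothesis; and the fibrewise natural pairing $\cdot_F$ is a smooth operation — in the trivialization of $F$ over $W$ it is just the Euclidean pairing of the component arrays. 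Hence $\sigma^i \in C^\infty(U,\mathbb{R})$.

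Finally I would verify the expansion itself. For each $m \in U$ the vectors $f_1(\phi(m)),\dots,f_r(\phi(m))$ form a basis of $F_{\phi(m)}$ with dual basis $f^1(\phi(m)),\dots,f^r(\phi(m))$, so the usual expansion of $\sigma(m)$ in that basis reads $\sigma(m) = \bigl(f^i(\phi(m)) \cdot_{F_{\phi(m)}} \sigma(m)\bigr)\, f_i(\phi(m)) = \sigma^i(m)\,(f_i \circ \phi|_U)(m)$; since this holds at every $m \in U$ it says exactly $\sigma|_U = \sigma^i\, f_i \circ \phi|_U$. The argument is essentially bookkeeping, and the only genuinely substantive point is the one already flagged: $\phi(U)$ is in general not open, so one cannot speak of a frame ``over $\phi(U)$'' directly but must descend through an honest open set $W \supseteq \phi(U)$ (equivalently, invoke continuity of $\phi$ to produce the open set $U$ in the first place) before the classical linear-algebra expansion can be applied in each fibre and then checked to vary smoothly.
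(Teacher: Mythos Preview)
Your proof is correct and follows essentially the same approach as the paper: choose a trivializing open set $W\subseteq N$ (the paper calls it $V$) containing $\phi(p)$, set $U=\phi^{-1}(W)$, pick a frame and dual coframe there, and verify the expansion. The only cosmetic difference is that the paper packages the final verification as a tensor-field identity---writing $f_i\otimes f^i=\Id_{F|_V}$ and contracting against $\sigma|_U$---whereas you do the same computation pointwise as a basis expansion in each fibre; your added remarks on smoothness of the $\sigma^i$ and on $\phi(U)$ not being open are careful touches the paper leaves implicit.
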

\begin{proof}
Let $p\in M$, let $V\subseteq N$ be a neighborhood of $\phi\left(p\right)$
over which $F\mid_{V}$ is trivial, and let $U=\phi^{-1}\left(V\right)$,
so that $U$ is a neighborhood of $p$. Let $f_{1},\dots,f_{r}\in\Gamma\left(F\mid_{V}\right)$
be a frame for $F\mid_{V}$ (i.e. $F\mid_{\phi\left(U\right)}$),
and let $f^{1},\dots,f^{r}\in\Gamma\left(\left(F\mid_{V}\right)^{*}\right)$
be the corresponding coframe (i.e. the unique $f^{1},\dots,f^{r}$
such that $f^{i}\cdot_{F}f_{j}=\delta_{j}^{i}$ for each $i,j$).
Define $\sigma^{i}\in C^{\infty}\left(M,\mathbb{R}\right)$ by $\sigma^{i}=\left(f^{i}\circ\phi\mid_{U}\right)\cdot_{F}\sigma\mid_{U}$.
Then 
\begin{align*}
\sigma^{i}\, f_{i}\circ\phi\mid_{U} & =\left(f^{i}\circ\phi\mid_{U}\right)\cdot_{F}\sigma\mid_{U}\, f_{i}\circ\phi\mid_{U}\\
 & =\left(\left(f_{i}\circ\phi\mid_{U}\right)\otimes_{U}\left(f^{i}\circ\phi\mid_{U}\right)\right)\cdot_{F}\sigma\mid_{U}\\
 & =\left(\left(f_{i}\otimes_{V}f^{i}\right)\circ\phi\mid_{U}\right)\cdot_{F}\sigma\mid_{U}\\
 & =\left(\Id_{F\mid_{V}}\circ\phi\mid_{U}\right)\cdot_{F}\sigma\mid_{U}\\
 & =\sigma\mid_{U},
\end{align*}
as desired. 
\end{proof}
Some literature uses expressions of the form $f\circ\phi\in\Gamma_{\phi}\left(F\right)$
along with an implicit use of the section-identifying isomorphism
to write down particular sections of pullback bundles. In most cases,
this tacit identification of spaces is harmless, but certain highly
involved calculations may suffer from it. The section that $f\circ\phi$
corresponds to under said isomorphism is $\Id_{M}\times_{M}\left(f\circ\phi\right)\in\Gamma\left(\phi^{*}F\right)$.
However, because this expression is unwieldy and therefore a more
compact and contextually meaningful expression is called for. 
\begin{defn}[Pullback section]
 If $f\in\Gamma\left(F\right)$ and $\phi\colon M\to N$ is smooth,
then define 
\[
\phi^{*}f:=\Id_{M}\times_{M}\left(f\circ\phi\right)\in\Gamma\left(\phi^{*}F\right).
\]
This is known as a \textbf{pullback section}.
\end{defn}
The pullback section is deservedly named. If $\phi\colon M\to N$
and $\psi\colon L\to M$ are smooth, then $\psi^{*}\phi^{*}f\cong\left(\phi\circ\psi\right)^{*}f$
in the sense of the proof of (\ref{prop:pullback_is_contravariant_functor}).
\begin{prop}[Bundle pullback commutes with tensor product]
 If $E$ and $F$ are smooth vector bundles over manifold $N$ and
$\phi\colon M\to N$ is smooth, then the map
\begin{eqnarray*}
\phi^{*}E\otimes_{M}\phi^{*}F & \to & \phi^{*}\left(E\otimes_{N}F\right),\\
\left(m,e\right)\otimes_{M}\left(m,f\right) & \mapsto & \left(m,e\otimes_{N}f\right)
\end{eqnarray*}
(extended linearly to general tensors) is a smooth vector bundle isomorphism.\end{prop}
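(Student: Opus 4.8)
The plan is to verify directly the three properties that make the stated map a vector bundle isomorphism over $\Id_M$: it is fiberwise well-defined and linear, it restricts to a linear isomorphism on each fiber, and it (together with its inverse) is smooth. The conceptual content is entirely the universal property of the tensor product of vector spaces, applied fiber by fiber; everything else is bookkeeping of trivializations.

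First I would treat the fiberwise picture. Fix $m \in M$ and put $n := \phi(m)$. Since $\left(\phi^*E\right)_m = \{m\} \times E_n$ and $\left(\phi^*F\right)_m = \{m\} \times F_n$, the restriction of the proposed map to the fiber over $m$ is, after the canonical identifications $\left(\phi^*E\right)_m \cong E_n$ and $\left(\phi^*F\right)_m \cong F_n$, the linear map $E_n \otimes F_n \to E_n \otimes F_n$ induced by the bilinear map $(e,f) \mapsto e \otimes_N f$. By the universal property of the tensor product of vector spaces this is well-defined, and it is evidently the identity, hence a linear isomorphism. Consequently the bundle map is well-defined independently of the decomposition of a tensor into simple tensors, sends the fiber over $m$ to the fiber over $m$, and is linear and bijective on each fiber; so it is a bundle morphism over $\Id_M$ whose set-theoretic inverse is the obvious map $\left(m, e \otimes_N f\right) \mapsto (m,e) \otimes_M (m,f)$.

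It remains to establish smoothness of the map and of its inverse, which I would do by exhibiting the map as the identity in suitable local trivializations. Let $p \in M$ be arbitrary and choose an open set $V \subseteq N$ containing $\phi(p)$ over which both $E$ and $F$ trivialize linearly, say by $\Psi^E$ and $\Psi^F$; put $U := \phi^{-1}(V)$, a neighborhood of $p$. Using the trivialization of a tensor product bundle from the proof of (\ref{proposition_tensor_product_bundle}) in its shared-base-space form, together with the pullback trivialization from the proof of (\ref{proposition_pullback_bundle}), both $\phi^*E \otimes_M \phi^*F$ and $\phi^*\left(E \otimes_N F\right)$ acquire trivializations over $U$ with typical fiber $\mathcal{E} \otimes \mathcal{F}$. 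Chasing a simple tensor $(m,e) \otimes_M (m,f)$ with $m \in U$ through each side, I expect both composites to yield $\left(m,\ \left(\pr_2 \circ \Psi^E\right)(e) \otimes \left(\pr_2 \circ \Psi^F\right)(f)\right)$, so that the proposed map is literally $\Id_{U \times (\mathcal{E} \otimes \mathcal{F})}$ in these coordinates; in particular it restricts to a diffeomorphism over $U$. Covering $M$ by such sets $U$ and combining with the fiberwise linearity established above shows the map is a smooth vector bundle isomorphism over $\Id_M$.

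The only real work is the bookkeeping in the third step: one must keep straight how the ``full'' tensor product trivialization of (\ref{proposition_tensor_product_bundle}) composes with the pullback trivialization of (\ref{proposition_pullback_bundle}), and confirm that the two natural orders ``pull back, then tensor'' and ``tensor, then pull back'' induce the \emph{same} map into $U \times (\mathcal{E} \otimes \mathcal{F})$. I anticipate no genuine obstacle — the bilinearity encoded in the universal property does the conceptual work — but care is needed not to conflate the canonical identification $\left(\phi^*E\right)_m \cong E_{\phi(m)}$ with an equality when chasing elements.
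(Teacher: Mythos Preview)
Your proposal is correct and follows essentially the same approach as the paper: both invoke the universal property of the tensor product for well-definedness and argue fiberwise that the map is a linear isomorphism. The paper's fiberwise argument is phrased slightly differently---it shows the map kills no nonzero simple tensor and then appeals to a basis of simple tensors plus a dimension count---whereas you recognize the fiberwise map directly as the identity under the canonical identifications $(\phi^*E)_m\cong E_{\phi(m)}$, which is arguably cleaner. You are also more thorough than the paper on smoothness: the paper simply asserts ``the map is clearly smooth,'' while you outline the trivialization chase showing the map is the identity in coordinates built from (\ref{proposition_pullback_bundle}) and (\ref{proposition_tensor_product_bundle}). That extra care is warranted but not a different method.
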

\begin{proof}
Let $c$ denote the above map. The well-definedness of $c$ comes
from the universal mapping property on multilinear forms which induces
a linear map on a corresponding tensor product. If $c\left(\left(m,e\right)\otimes_{M}\left(m,f\right)\right)=0$,
then $e\otimes_{N}f=0,$ which implies that $e=0$ or $f=0$, and
therefore that $\left(m,e\right)\otimes_{M}\left(m,f\right)=0$. Because
there exists a basis for $\left(\phi^{*}E\otimes_{M}\phi^{*}F\right)_{m}$
consisting only of simple tensors, this implies that $c$ is injective,
and by a dimensionality argument, that $c$ is an isomorphism. The
map is clearly smooth and respects the fiber structures of its domain
and codomain. Thus $c$ is a smooth vector bundle isomorphism.
\end{proof}
The contravariance of pullback and its naturality with respect to
tensor product are two essential properties which provide some of
the flexibility and precision of the strongly typed tensor formalism
described in this paper. This will become quite apparent in Part \ref{part:Riemannian-Calculus-of-Variations}.\\

\begin{rem}[Tensor field formulation of smooth vector bundle morphisms]
 \label{remark:SVB-morphisms-as-tensor-fields} A particularly useful
application of pullback bundles is in forming a rich type system for
smooth vector bundle morphisms. This approach was inspired by \citep[pg. 11]{Xin}.
Let $\pi^{E}\colon E\to M$ and $\pi^{F}\colon F\to N$ be smooth
vector bundles, and let $\phi\colon M\to N$ be smooth. Consider $\Hom_{\phi}\left(E,F\right)$,
i.e. the space of smooth vector bundle morphisms over the map $\phi$.
There is a natural identification with another space which lets the
base map $\phi$ play a more direct role in the space's type. In particular,
\begin{eqnarray*}
\Hom_{\phi}\left(E,F\right) & \cong & \Hom_{\Id_{M}}\left(E,\phi^{*}F\right),\\
A & \mapsto & \pi^{E}\times_{E}A,\\
\pr_{2}^{M\times F}\circ B & \mapsfrom & B.
\end{eqnarray*}
This particular identification of smooth vector bundle morphisms over
$\phi$ can now be directly translated into the tensor field formalism,
analogously to (\ref{def:linear_maps_as_tensors}). 
\begin{eqnarray*}
\Gamma\left(\phi^{*}F\otimes_{M}E^{*}\right) & \to & \Hom_{\Id_{M}}\left(E,\phi^{*}F\right),\\
A & \mapsto & \left(e\mapsto A\cdot_{E}e\right).
\end{eqnarray*}
The inverse image of $B\in\Hom_{\Id_{M}}\left(E,\phi^{*}F\right)$
is given locally; let $\left(e_{i}\right)$ and $\left(f_{i}\right)$
denote local frames for $E$ and $F$ in neighborhoods $U\subseteq M$
and $V\subseteq N$ respectively, with $\phi\left(U\right)\subseteq V$,
and let $\left(e^{i}\right)$ and $\left(f^{i}\right)$ denote their
dual coframes. Then the tensor field corresponding to $B$ is given
locally in $U$ by $B_{j}^{i}\,\phi^{*}f_{i}\otimes_{M}e^{j}$, where
$B_{j}^{i}:=\phi^{*}df^{i}\circ B\circ e_{j}\in C^{\infty}\left(U,\mathbb{R}\right)$.

Quantifying smooth vector bundle morphisms as the tensor fields lends
itself naturally to doing calculus on vector and tensor bundles, as
the relevant derivatives (covariant derivatives) take the form of
tensor fields. The type information for a particular vector bundle
morphism is encoded in the relevant tensor bundle.
\end{rem}

\section{Tangent Map as a Tensor Field \label{sec:Tangent-Map-as-Tensor-Field}}

This section deals specifically with the tangent map operator by using
concepts from Section \ref{sec:Strongly-Typed-Tensor-Field} and Section
\ref{sec:Pullback-Bundles} to place it in a strongly typed setting
and to prepare to unify a few seemingly disparate concepts and notation
for some tangible benefit (in particular, see Section \ref{sec:Curvature-and-Commutation-of-Derivatives}).\\

Given a smooth map $\phi\colon M\to N$, its tangent map $T\phi\colon TM\to TN$
is a smooth vector bundle morphism over $\phi$, so by (\ref{remark:SVB-morphisms-as-tensor-fields}),
is naturally identified with a tensor field 
\[
\nnabla^{M\to N}\phi\in\Gamma\left(\phi^{*}TN\otimes_{M}T^{*}M\right),
\]
which may be denoted by $\nnabla\phi$ where type pedantry is deemed
unnecessary. This construction is known as a \textbf{two-point tensor
field} \citep[pg. 70]{Marsden&Hughes}. The inscribed $\circ$ symbol
in $\nnabla$ is used to denote that this is a nonlinear derivative,
thereby distinguishing it from a linear covariant derivative. 
\begin{rem}[Generalized covariant derivative]
 \label{rem:generalized-covariant-derivative} The well-known one-to-one
correspondence between linear connections and linear covariant derivatives
\citep[pg. 520]{JeffMLee} generalizes to a one-to-one correspondence
between Ehresmann connections and a generalized notion of covariant
derivative. To give a partial definition for the purposes of utility,
a \textbf{generalized covariant derivative} on a smooth {[}fiber{]}
bundle $F\to N$ is a map $\nabla$ on $\Gamma\left(F\right)$ such
that $\nabla\sigma\in\Gamma\left(\sigma^{*}VF\otimes_{N}T^{*}N\right)$
for each $\sigma\in\Gamma\left(F\right)$. The space of maps $C^{\infty}\left(M,N\right)$
is naturally identified as $\Gamma\left(N\rtriv M\right)$, and there
is a natural Ehresmann connection on the bundle $N\rtriv M$, whose
corresponding covariant derivative is the tangent map operator. This
is the subject of another of the author's papers and will not be discussed
here further. This is mentioned here to incorporate linear covariant
derivatives (to be introduced and discussed in Section \ref{sec:Linear-Covariant-Derivatives})
and the tangent map operator (a nonlinear covariant derivative) under
the single category ``covariant derivative''.\\

\end{rem}
There is a subtle issue regarding construction of the cotangent map
of $\phi$ which is handled easily by the tensor field construction.
In particular, while the cotangent map $T^{*}\phi$ is the pointwise
adjoint of the tangent map $T\phi$, i.e. for each $p\in M$, $T_{p}\phi\colon T_{p}M\to T_{\phi\left(p\right)}N$
is linear and $T_{p}^{*}\phi\colon T_{\phi\left(p\right)}^{*}N\to T_{p}^{*}M$
is the adjoint of $T_{p}\phi$, it does not follow that $T^{*}\phi\in\Hom\left(T^{*}N,T^{*}M\right)$,
being some sort of ``total adjoint'' of $T\phi\in\Hom\left(TM,TN\right)$.
The obstruction is due to the fact that $\phi$ may not be surjective,
so there may be some fiber $T_{q}^{*}N$ that is not of the form $T_{\phi\left(p\right)}^{*}N$,
and therefore the domain could not be all of $T^{*}N$. Furthermore,
even if $\phi$ were surjective, if it weren't also injective, say
$\phi\left(p_{0}\right)=\phi\left(p_{1}\right)$ for some distinct
$p_{0},p_{1}\in M$, then $T_{\phi\left(p_{0}\right)}^{*}N=T_{\phi\left(p_{1}\right)}^{*}N$,
and $T_{p_{0}}M\neq T_{p_{1}}M$, so the action on the fiber $T_{\phi\left(p_{0}\right)}^{*}N$
is not well-defined.

In the tensor field parlance, the cotangent map $T^{*}\phi$ simply
takes the form
\[
\left(\nnabla\phi\right)^{\left(1\,2\right)}\in\Gamma\left(T^{*}M\otimes_{M}\phi^{*}TN\right).
\]
The permutation superscript $\left(1\,2\right)$ is used here instead
of $*$ to distinguish it notationally from pullback notation, which
will be necessary in later calculations. The key concept is that the
tensor field $\left(\nnabla\phi\right)^{\left(1\,2\right)}$ encodes
the base map $\phi$; the basepoint $p\in M$ is part of the domain
$\phi^{*}T^{*}N$ itself.\\

The chain rule in the tensor field formalism makes use of the bundle
pullback. If $\psi\colon L\to M$ is smooth, then
\[
\nnabla^{L\to N}\left(\phi\circ\psi\right)=\psi^{*}\nnabla^{M\to N}\phi\cdot_{\psi^{*}TM}\nnabla^{L\to M}\psi.
\]
Because $\nnabla\psi\in\Gamma\left(\psi^{*}TM\otimes_{L}T^{*}L\right)$,
to form a well-defined natural pairing, the use of the pullback 
\[
\psi^{*}\nnabla\phi\in\Gamma\left(\psi^{*}\left(\phi^{*}TN\otimes_{M}T^{*}M\right)\right)=\Gamma\left(\psi^{*}\phi^{*}TN\otimes_{L}\psi^{*}T^{*}M\right)=\Gamma\left(\left(\phi\circ\psi\right)^{*}TN\otimes_{L}\psi^{*}T^{*}M\right)
\]
is necessary (instead of just $\nnabla\phi\in\Gamma\left(\phi^{*}TN\otimes_{M}T^{*}M\right)$).\\

Sometimes it is useful to discard some type information and write
$\nnabla\phi\in\Gamma_{\phi\times_{M}\Id_{M}}\left(TN\otimes_{N\times M}T^{*}M\right)$,
i.e. $\nnabla\phi\colon M\to TN\otimes_{N\times M}T^{*}M$ such that
$\left(\pi_{N}^{TN}\otimes_{N\times M}\pi_{M}^{T^{*}M}\right)\circ\nnabla\phi=\phi\times_{M}\Id_{M}$.
This is easily done by the canonical fiber projeciton available to
all pullback bundle constructions; $\phi^{*}TN\otimes_{M}T^{*}M\cong\left(\phi\times_{M}\Id_{M}\right)^{*}\left(TN\otimes_{N\times M}T^{*}M\right)$,
and the canonical fiber projection is
\[
\rho_{TN\otimes_{N\times M}T^{*}M}^{\left(\phi\times_{M}\Id_{M}\right)^{*}\left(TN\otimes_{N\times M}T^{*}M\right)}\colon\left(\phi\times_{M}\Id_{M}\right)^{*}\left(TN\otimes_{N\times M}T^{*}M\right)\to TN\otimes_{N\times M}T^{*}M,
\]
as defined in (\ref{cor:pullback-fiber-projection}). The granularity
of the type system should reflect the weight of the calculations being
performed. For demonstration of contrasting situations, see the discussion
at the beginning of Section \ref{sec:Linear-Covariant-Derivatives}
and the computation of the first variation in (\ref{thm:first_variation_of_L}).\\

It is important to have notation which makes the distinction between
the smooth vector bundle morphism formalism and the tensor field formalism,
because it may sometimes be necessary to mix the two, though this
paper will not need this. An added benefit to the tensor field formulation
of tangent maps is that certain notions regarding derivatives can
be conceptually and notationally combined, for example in Section
\ref{sec:Curvature-and-Commutation-of-Derivatives}.

\section{Linear Covariant Derivatives\label{sec:Linear-Covariant-Derivatives}}

As will be shown in the following discussion, a linear covariant derivative
(commonly referred to in the standard literature without the ``linear''
qualifier) provides a way to generalize the notion in elementary calculus
of the differential of a vector-valued function. The linear covariant
derivative interacts naturally with the notion of the pullback bundle,
and this interaction leads naturally to what could be called a covariant
derivative chain rule, which provides a crucial tool for the tensor
calculus computations seen later.\\

Let $V$ and $W$ be finite-dimensional vector spaces let $U\subseteq V$
be open, and let $\phi\colon U\to W$ be differentiable. Recall from
elementary calculus the differential $D\phi\colon U\to W\otimes V^{*}$
(essentially matrix-valued). There is no base map information encoded
in $D\phi$ (i.e. $\phi$ can't be recovered from $D\phi$ alone),
it contains only derivative information. The vector space structure
of $V$ and $W$ allows the trivializations $TU\cong V\rtriv U$ and
$TW\cong W\rtriv W$, where the first factors are the base spaces
and the second factors are the fibers (see (\ref{prop:trivial_bundle})).
The tangent map $\nnabla^{U\to W}\phi\colon U\to TW\otimes_{W\times U}T^{*}U$
(see Section \ref{sec:Tangent-Map-as-Tensor-Field}) has a codomain
that can be trivialized similarly;
\[
TW\otimes_{W\times U}T^{*}U\cong\left(W\rtriv W\right)\otimes_{W\times U}\left(V^{*}\rtriv U\right)\cong\left(W\otimes V^{*}\right)\rtriv\left(W\times U\right).
\]
Because $\left(W\otimes V^{*}\right)\rtriv\left(W\times U\right)$,
as a set, is a direct product, it can be decomposed into two factors.
Letting $\pr_{1}$ and $\pr_{2}$ be the projections onto the first
and second factors respectively,
\[
\pr_{1}\circ\nnabla\phi\colon U\to W\otimes V^{*}\mbox{ and }\pr_{2}\circ\nnabla\phi\colon U\to W\times U.
\]
The map $\pr_{2}\circ\nnabla\phi$ is the element of $\Gamma\left(W\rtriv U\right)$
identified with the base map $\phi$ itself; $\pr_{W}^{W\times U}\circ\pr_{2}\circ\nnabla\phi=\phi$.
This base map information is discarded in defining the differential
of $\phi$ as $D\phi:=\pr_{1}\circ\nnabla\phi$; the fiber portion
of $\nnabla\phi$. This construction relies critically on the natural
isomorphism $TW\cong W\rtriv W$ for a vector space $W$. 

An analogous construction shows that the differential $D\phi$ of
a map $\phi$ is well-defined even when its domain is a manifold.
However, when the codomain of a map $\phi$ is only a manifold, there
does not in general exist a natural trivialization of its tangent
bundle (in contrast to the vector space case), and therefore $D\phi$
can't be defined without additional structure. A linear covariant
derivative provides the missing structure.\\

For the remainder of this section, let $\pi\colon E\to N$ define
a smooth vector bundle having rank $r$.\\

A linear covariant derivative on $E$ provides a means of taking derivatives
of sections of $E$ (i.e. maps $\sigma\colon N\to E$ such that $\pi\circ\sigma=\Id_{N}$)
without passing to a higher tangent bundle as would happen under the
tangent map functor (i.e. if $\sigma\in\Gamma\left(E\right)$ then
$T\sigma\colon TN\to TE$ and $\nnabla^{N\to E}\sigma\colon N\to TE\otimes_{E\times N}T^{*}N$).
A linear covariant derivative provides an effective ``trivialization''
of $TE$ analogous to the trivialization $TW\cong W\rtriv W$ as discussed
above, discarding all but the ``fiber'' portion of the derivative,
allowing the construction of an object known as the total linear covariant
derivative analogous to the differential $D\phi$ as discussed above.

The notion of a linear covariant derivative on a vector bundle is
arguably the crucial element of differential geometry%
\footnote{The \emph{Fundamental Lemma of Riemannian Geometry} establishes the
existence of the Levi-Civita connection\citep[pg. 68]{RiemannianLee},
which is a linear covariant derivative satisfying certain naturality
properties.%
}. In particular, this operator implements the product rule property
common to anything that can be called a derivation -- a property which
is particularly conducive to the operation of tensor calculus. The
total linear covariant derivative of a vector field (i.e. section
of a vector bundle) allows the generalization of many constructions
in elementary calculus to the setting of smooth vector bundles equipped
with linear covariant derivatives. For example, the divergence $\Div X:=\tr DX$
of a vector field $X$ on $\mathbb{R}^{n}$ generalizes to the divergence
$\Div X:=\tr\nabla X$ of a vector field $X$ on $N$, which has an
analogous divergence theorem among other qualitative similarities.\\

\begin{rem}[Natural linear covariant derivative on trivial line bundle]
 \label{remark:covariant_derivative_on_trivial_line_bundle} Before
making the general definition for the linear covariant derivative,
a natural linear covariant derivative will be introduced. With $N$
denoting a smooth manifold as before, if $f\in C^{\infty}\left(N,\mathbb{R}\right)$,
then $df\in\Gamma\left(T^{*}N\right)$ is the \textbf{differential}
of $f$. Let 
\[
\lnabla^{N\to\mathbb{R}}f:=df.
\]
Because $C^{\infty}\left(N,\mathbb{R}\right)$ is naturally identified
with $\Gamma\left(\mathbb{R}\rtriv N\right)$, this is essentially
the natural linear covariant derivative on the trivial line bundle
$\mathbb{R}\rtriv N$. Note that there is an associated product rule;
if $f,g\in C^{\infty}\left(N,\mathbb{R}\right)$, then $fg\in C^{\infty}\left(N,\mathbb{R}\right)$,
and
\[
\lnabla^{N\to\mathbb{R}}\left(fg\right)=d\left(fg\right)=g\, df+f\, dg=g\lnabla^{N\to\mathbb{R}}f+f\lnabla^{N\to\mathbb{R}}g.
\]
When clear from context, the superscript decoration can be omitted
and the derivative denoted as $\lnabla f$. \end{rem}
\begin{defn}[Linear covariant derivative]
 \label{definition_linear_covariant_derivative} A \textbf{linear
covariant derivative} on a vector bundle defined by $\pi\colon E\to N$
is an $\mathbb{R}$-linear map $\lnabla^{E}\colon\Gamma\left(E\right)\to\Gamma\left(E\otimes_{N}T^{*}N\right)$
satisfying the product rule
\begin{equation}
\lnabla^{E}\left(f\otimes_{N}\sigma\right)=\sigma\otimes_{N}\lnabla^{N\to\mathbb{R}}f+f\otimes_{N}\lnabla^{E}\sigma,\label{eq:covariant_derivative_function_product_rule}
\end{equation}
where $f\in C^{\infty}\left(N,\mathbb{R}\right)$ and $\sigma\in\Gamma\left(E\right)$.
The switch in order in the first term of the expression is necessary
to form a tensor field of the correct type, $\Gamma\left(E\otimes_{N}T^{*}N\right)$.
If $\sigma\in\Gamma\left(E\right)$, then the expression $\lnabla^{E}\sigma$
is known as the \textbf{total {[}linear{]} covariant derivative }of
$\sigma$. If $\lnabla^{E}\sigma=0$ {[}in a subset $U\subseteq N${]},
then $\sigma$ is said to be \textbf{parallel} {[}on $U${]}. The
``linear'' qualifier is implied in standard literature and is therefore
often omitted.
\end{defn}
The inscribed $\shortmid$ in $\lnabla$ is to indicate that the covariant
derivative is linear, and can be omitted when clear from context,
or when it is unnecessary to distinguish it from the nonlinear tangent
map operator whose decorated symbol is $\nnabla$. For the remainder
of this section, this distinction will not be necessary, so an undecorated
$\nabla$ will be used.\\

For $V\in\Gamma\left(TN\right)$, it is customary to denote $\nabla^{E}\sigma\cdot V$
by $\nabla_{V}^{E}\sigma$, where $V$ indicates the ``directional''
component of the derivative. Following this convention, the product
rule can be written in a form where the product rule is more obvious;
\[
\nabla_{V}^{E}\left(f\otimes_{N}\sigma\right)=\nabla_{V}^{N\to\mathbb{R}}f\otimes_{N}\sigma+f\otimes_{N}\nabla_{V}^{E}\sigma.
\]

A covariant derivative is a local operator with respect to the base
space $N$; if $p\in N$, then $\left(\nabla^{E}\sigma\right)\left(p\right)$
depends only on the restriction of $\sigma$ to an arbitrarily small
neighborhood of $p$ \citep[pg. 50]{RiemannianLee}, and therefore
the restriction $\nabla^{E\mid_{U}}\colon\Gamma_{U}\left(E\right)\to\Gamma_{U}\left(E\otimes_{N}T^{*}N\right)$
makes sense, allowing calculations using local expressions. Furthermore,
a covariant derivative can be constructed locally and glued together
under certain conditions. See \citep[pg. 503]{JeffMLee} for more
on this, and as a reference for general theory on bundles, covariant
derivatives, and connections.\\

Linear covariant derivatives on several vector bundle constructions
will now be developed. In analogy to defining a linear map by its
action on a generating subset (e.g. a basis or a dense subspace) and
then extending using the linear structure, Lemma (\ref{lemma_covariant_derivative_construction})
allows a covariant derivative to be defined on a generating subset
(which can be chosen to make the defining expression particularly
natural) and then extending. In this case, the relevant space is the
space of sections of the vector bundle, which is a module over the
ring of smooth functions on a manifold, and the extension process
is done via linearity and the product rule (see (\ref{definition_linear_covariant_derivative})).
This approach will allow the local trivialization implementation details
to be hidden within the proof of Lemma (\ref{lemma_covariant_derivative_construction})
-- an example of information hiding -- so that constructions of covariant
derivatives can proceed clearly by focusing only on the natural properties
of the relevant objects and then invoking the lemma to do the ``dirty''
work (see (\ref{proposition_pullback_covariant_derivative}) and (\ref{proposition_induced_covariant_derivatives_on_sum_and_product_bundles})).\\

A bit of useful notation will be introduced to simplify the next definition.
If $G\subseteq\Gamma$ is a subset of a $C^{\infty}\left(N,\mathbb{R}\right)$-module
$\Gamma$ whose elements are functions on $N$ (and therefore have
a notion of restriction to a subset) and $U\subseteq N$ is open,
then let $G_{U}$ denote the set of restrictions of the elements of
$G$ to the set $U$. Note that $G_{U}\subseteq\Gamma_{U}$ by construction.
\begin{defn}[Finitely generating subset]
 Say that a subset of a module \textbf{finitely generates} the module
if the subset contains a finite set of generators for the module.
\end{defn}

\begin{defn}[Locally finitely generating subset]
 \label{definition_locally_finite_generating_subset} If $\Gamma$
is a $C^{\infty}\left(N,\mathbb{R}\right)$-module and $G\subseteq\Gamma$,
then $G$ is said to be a \textbf{locally finitely generating subset
of $\Gamma$} if each point $q\in N$ has a neighborhood $U\subseteq N$
for which $G_{U}$ finitely generates $\Gamma_{U}$.
\end{defn}
The space of sections of a vector bundle is the archetype for the
above definition. The locally trivial nature of $\pi\colon E\to N$
allows local frames to be chosen in a neighborhood of each point of
$N$, from which global smooth sections (though not necessarily a
global frame) can be made using a partition of unity subordinate to
the trivializing neighborhoods. The set of such global sections forms
a locally finite generating subset of $\Gamma\left(E\right)$.
\begin{lem}
\label{lemma_existence_of_local_frame} If $G$ is a locally finitely
generating subset of $\Gamma\left(E\right)$, then each point in $N$
has a neighborhood $U\subseteq N$ and $e_{1},\dots,e_{r}\in G_{U}$
such that $e_{1},\dots,e_{r}$ forms a frame for $\Gamma_{U}\left(E\right)$.
In other words, a local frame can be chosen out of $G$ near each
point in $N$. \end{lem}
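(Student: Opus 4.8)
The plan is to work locally and exploit the definition of a locally finitely generating subset together with a linear-algebra observation about the rank of matrices of smooth functions. Fix a point $q \in N$. By the definition of a locally finitely generating subset (\ref{definition_locally_finite_generating_subset}), there is a neighborhood $W$ of $q$ and finitely many elements $g_1,\dots,g_k \in G$ whose restrictions $g_1|_W,\dots,g_k|_W$ generate $\Gamma_W(E)$ as a $C^\infty(W,\mathbb{R})$-module. Since $\pi\colon E\to N$ has rank $r$, I may also shrink $W$ so that $E|_W$ is trivial and admits an honest smooth local frame $s_1,\dots,s_r \in \Gamma_W(E)$ (not necessarily coming from $G$). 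The goal is to extract $r$ of the $g_i|_U$, on a possibly smaller neighborhood $U \ni q$, that form a frame.

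**The linear-algebra step.** Because the $g_i|_W$ generate $\Gamma_W(E)$, in particular each frame element $s_j$ can be written as a $C^\infty(W,\mathbb{R})$-combination of the $g_i|_W$; equivalently, at every point $p \in W$ the vectors $g_1(p),\dots,g_k(p)$ span the fiber $E_p$, so the $r \times k$ matrix whose columns are the coordinate vectors of $g_i(p)$ with respect to the frame $s_1,\dots,s_r$ has rank $r$ at every $p$. Evaluating at $p = q$, some $r$ columns are linearly independent; relabel so that $g_1(q),\dots,g_r(q)$ are linearly independent in $E_q$. The $r \times r$ minor formed by these columns is a smooth real-valued function on $W$ that is nonzero at $q$, hence nonzero on some neighborhood $U \subseteq W$ of $q$. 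On $U$ the vectors $g_1(p),\dots,g_r(p)$ are therefore linearly independent in $E_p$ for every $p$, so $e_1 := g_1|_U,\dots,e_r := g_r|_U$ is a frame for $\Gamma_U(E)$, and each $e_i \in G_U$ by construction.

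**Remarks on obstacles.** The only step requiring care is the passage from "the $g_i|_W$ generate the module $\Gamma_W(E)$" to "the $g_i(p)$ span the fiber $E_p$ for each $p \in W$." This follows because evaluation at $p$ is a surjective $C^\infty(W,\mathbb{R})$-module (indeed $\mathbb{R}$-algebra) homomorphism $\Gamma_W(E) \to E_p$ sending a section to its value, so it carries a generating set to a spanning set; alternatively, one can invoke the existence of bump functions to realize any fiber vector as the value at $p$ of a global-on-$W$ section. Everything else is standard continuity of determinants and the fact that a smooth function nonzero at a point is nonzero nearby, so I expect no genuine difficulty here — the lemma is essentially the statement that a generating set of a locally free module of rank $r$ contains a local basis, specialized to sections of a vector bundle.
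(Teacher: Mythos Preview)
Your proof is correct and follows essentially the same approach as the paper: pick finitely many generators near $q$, observe that their values span the fiber $E_q$, extract $r$ of them that are linearly independent at $q$, and use that linear independence is an open condition to get a frame on a smaller neighborhood. The only cosmetic difference is that you detect linear independence via a nonvanishing $r\times r$ minor in a local trivialization, whereas the paper phrases the same openness condition via the nonvanishing of the wedge product $g_1\wedge\dots\wedge g_r$; these are equivalent.
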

\begin{proof}
Let $q\in N$ and let $V\subseteq N$ be a neighborhood of $q$ for
which $G_{V}=\left\{ g_{1},\dots,g_{\ell}\right\} $ finitely generates
$\Gamma_{V}\left(E\right)$ (here, $\ell\geq r$, recalling that $r=\rank E$).
Without loss of generality, let $g_{1}\left(q\right),\dots,g_{r}\left(q\right)$
be linearly independent (this is possible because $\left\{ g_{1}\left(q\right),\dots,g_{\ell}\left(q\right)\right\} $
spans the vector space $E_{q}$). Because $g_{i}$ is continuous for
each $i$ and the linear independence of the sections $g_{1},\dots,g_{r}$
is an open condition (defined by $L^{-1}\left(\mathbb{R}\backslash\left\{ 0\right\} \right)$
where $L\colon N\to\bigwedge^{r}E_{q},\, p\mapsto g_{1}\left(p\right)\wedge\dots\wedge g_{r}\left(p\right)$),
there is a neighborhood $U\subseteq V$ of $q$ for which $\left\{ g_{1}\left(p\right),\dots,g_{r}\left(p\right)\right\} $
is a linearly independent set for each $p\in U$. Finally, letting
$e_{i}:=g_{i}\mid_{U}$ for $i\in\left\{ 1,\dots,r\right\} $, the
sections $e_{1},\dots,e_{r}\in G_{U}$ form a frame for $\Gamma_{U}\left(E\right)$. 
\end{proof}
The following lemma shows that defining a covariant derivative on
a locally finitely generating subset of the space of sections of a
vector bundle is sufficient to uniquely define a covariant derivative
on the whole space. The particular generating subset can be chosen
so the covariant derivative has a particularly natural expression
within that subset.
\begin{lem}[Linear covariant derivative construction]
 \label{lemma_covariant_derivative_construction} Let $G$ be a locally
finite generating subset of $\Gamma\left(E\right)$. If $\nabla^{G}\colon G\to\Gamma\left(E\otimes_{N}T^{*}N\right)$
satisfies the linear covariant derivative axioms%
\footnote{What is meant by this is that the product rule must only be satisfied
on $\lambda\otimes_{N}g$ if $\lambda g\in G$, where $\lambda\in C^{\infty}\left(N,\mathbb{R}\right)$
and $g\in G$.%
}, then there is a unique linear covariant derivative $\nabla^{E}\colon\Gamma\left(E\right)\to\Gamma\left(E\otimes_{N}T^{*}N\right)$
whose restriction to $G$ is $\nabla^{G}$. \end{lem}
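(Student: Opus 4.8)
The plan is to construct $\nabla^E$ locally using frames drawn from $G$, verify that it is independent of the choices made, and then check that the local pieces agree on overlaps so that they glue to a globally defined operator satisfying the axioms, with uniqueness following from the local construction being forced.

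First I would fix a point $q \in N$ and invoke Lemma~\ref{lemma_existence_of_local_frame} to obtain a neighborhood $U$ and sections $e_1,\dots,e_r \in G_U$ forming a frame for $\Gamma_U(E)$. Any $\sigma \in \Gamma(E)$ can then be written on $U$ as $\sigma\mid_U = \sigma^i \otimes_N e_i$ with $\sigma^i \in C^\infty(U,\mathbb{R})$ uniquely determined by $\sigma^i = e^i \cdot_E \sigma$ (using the dual coframe). The only possible definition compatible with $\mathbb{R}$-linearity and the product rule~\eqref{eq:covariant_derivative_function_product_rule} is
\[
\nabla^E\sigma\mid_U := \sum_{i=1}^r \left( e_i \otimes_N \lnabla^{N\to\mathbb{R}}\sigma^i + \sigma^i \otimes_N \nabla^G e_i \right),
\]
where I must be slightly careful that $\nabla^G e_i$ makes sense --- strictly $\nabla^G$ is defined on global elements of $G$, but since a covariant derivative is a local operator with respect to the base (as noted in the paragraph following Definition~\ref{definition_linear_covariant_derivative}), restricting to $U$ is legitimate; alternatively $e_i$ already is the restriction of a global element of $G$. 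This formula immediately gives existence of a candidate and, by construction, shows that $\nabla^E$ is forced on $U$, hence uniqueness once well-definedness is established.

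The main steps after that are three independence/compatibility checks. (1) \textbf{Independence of the frame.} If $\tilde e_1,\dots,\tilde e_r \in G_U$ is another frame on $U$, write $\tilde e_j = A_j^i e_i$ for an invertible matrix-valued $A \in C^\infty(U,GL_r)$; expanding $\nabla^G \tilde e_j$ via the hypothesis that $\nabla^G$ satisfies the product rule on products $\lambda \otimes_N g$ landing in $G$ --- here one uses that the component expansion only invokes the axioms on elements actually in $G$, which is exactly the content of the footnote to the lemma --- and comparing the two resulting expressions for $\nabla^E\sigma\mid_U$, the $GL_r$-cocycle terms cancel by the Leibniz rule for $\lnabla^{N\to\mathbb{R}}$ applied to the entries of $A$ and $A^{-1}$. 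This is the step I expect to be the main obstacle, since one must track the transformation of $\nabla^G e_i$ carefully and the hypothesis on $\nabla^G$ is only a restricted product rule; the key is that $\sigma^i e_i = \tilde\sigma^j \tilde e_j$ forces the component transformation $\tilde\sigma^j = (A^{-1})^j_i \sigma^i$, and the two Leibniz expansions then match term by term. (2) \textbf{Independence of point / gluing.} Two neighborhoods $U, U'$ with frames from $G$ both yield, by step (1), the \emph{same} operator on $U \cap U'$ (since on the overlap each restricts to a frame and step (1) shows the value is frame-independent), so the local operators patch to a well-defined $\nabla^E : \Gamma(E) \to \Gamma(E \otimes_N T^*N)$. (3) \textbf{Verification of axioms and restriction to $G$.} The $\mathbb{R}$-linearity and the product rule~\eqref{eq:covariant_derivative_function_product_rule} for $\nabla^E$ are checked on an arbitrary $U$ from the defining formula: linearity is clear, and for $f \in C^\infty(N,\mathbb{R})$ one expands $(f\sigma)^i = f\sigma^i$, applies the Leibniz rule for $\lnabla^{N\to\mathbb{R}}$, and collects terms to get $\sigma \otimes_N \lnabla^{N\to\mathbb{R}}f + f \otimes_N \nabla^E\sigma$. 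Finally, for $g \in G$, choosing on a neighborhood a frame from $G$ that includes... one cannot always do that, so instead one writes $g\mid_U = g^i e_i$ and uses that $\nabla^G$ itself already satisfies the product rule on the relevant combinations to see $\nabla^E g\mid_U = \nabla^G g\mid_U$; since this holds near every point and both sides are sections, $\nabla^E\mid_G = \nabla^G$. Uniqueness is then immediate: any linear covariant derivative agreeing with $\nabla^G$ on $G$ must, by linearity and the product rule, be given on each frame neighborhood by the forced formula above, hence equals $\nabla^E$.
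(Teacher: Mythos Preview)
Your proposal is correct and follows essentially the same route as the paper: invoke Lemma~\ref{lemma_existence_of_local_frame} to obtain a local frame drawn from $G$, define $\nabla^{E}$ by the forced product-rule formula, and verify frame-independence via the transition-matrix computation (the paper writes $f_i=\Psi_i^j e_j$ and expands exactly as you outline). You are somewhat more thorough than the paper in explicitly separating out the gluing on overlaps, the verification of the covariant-derivative axioms, and the restriction-to-$G$ step, all of which the paper dispatches in a sentence or simply asserts; your flagging of the subtlety that the hypothesis gives only a \emph{restricted} product rule on $\nabla^{G}$ is apt, since the paper's frame-independence computation silently uses it in the same way you do.
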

\begin{proof}
If $q\in N$, then by (\ref{lemma_existence_of_local_frame}) there
exists a neighborhood $U\subseteq N$ of $q$ for which there are
$e_{1},\dots,e_{r}\in G_{U}$ forming a frame for $E\mid_{U}$. If
$\sigma\in\Gamma\left(E\right)$, then $\sigma\mid_{U}=\sigma^{i}e_{i}$
for some $\sigma^{1},\dots,\sigma^{r}\in C^{\infty}\left(U,\mathbb{R}\right)$
(specifically, $\sigma^{i}=e^{i}\cdot_{E}\sigma\mid_{U}$, where $e^{1},\dots,e^{r}\in\Gamma_{U}\left(E^{*}\right)$
denotes the dual coframe of $e_{1},\dots,e_{r}$). Define $\nabla^{E}\colon\Gamma\left(E\right)\to\Gamma\left(E\otimes_{N}T^{*}N\right)$
locally on $\Gamma_{U}\left(E\right)$ so as to satisfy the product
rule
\[
\nabla^{E}\left(\sigma\mid_{U}\right):=e_{i}\otimes_{N}\nabla^{N\to\mathbb{R}}\sigma^{i}+\sigma^{i}\otimes_{N}\nabla^{G}e_{i}.
\]
To show well-definedness, let $f_{1},\dots,f_{r}\in G_{U}$ be another
frame for $E\mid_{U}$. Then $\sigma=\tau^{i}f_{i}$ for some $\tau^{1},\dots,\tau^{r}\in C^{\infty}\left(U,\mathbb{R}\right)$.
Let $\Psi\colon\Gamma_{U}\left(E\right)\to\Gamma_{U}\left(E\right)$
be the unique smooth vector bundle isomorphism such that $f_{i}=\Psi\cdot_{E}e_{i}$.
Writing $\Psi$ and $\Psi^{-1}$ with respect to the frame $\left(e_{i}\right)$
as $\Psi_{j}^{i}e_{i}\otimes e^{j}$ and $\left(\Psi^{-1}\right)_{j}^{i}e_{i}\otimes e^{j}$
respectively, it follows that $f_{i}=\Psi_{i}^{j}e_{j}$ and $\tau^{i}=\sigma^{j}\left(\Psi^{-1}\right)_{j}^{i}$.
Then
\begin{align*}
\nabla^{E}\left(\tau^{i}f_{i}\right)={} & f_{i}\otimes_{N}\nabla^{N\to\mathbb{R}}\tau^{i}+\tau^{i}\otimes_{N}\nabla^{G}f_{i}\\
={} & \Psi_{i}^{j}e_{j}\otimes_{N}\nabla^{N\to\mathbb{R}}\left(\sigma^{k}\left(\Psi^{-1}\right)_{k}^{i}\right)+\sigma^{j}\left(\Psi^{-1}\right)_{j}^{i}\otimes_{N}\nabla^{G}\left(\Psi_{i}^{k}e_{k}\right)\\
={} & \Psi_{i}^{j}e_{j}\left(\Psi^{-1}\right)_{k}^{i}\otimes_{N}\nabla^{N\to\mathbb{R}}\sigma^{k}+\Psi_{i}^{j}e_{j}\sigma^{k}\otimes_{N}\nabla^{N\times\mathbb{R}}\left(\Psi^{-1}\right)_{k}^{i}\\
{} & +\sigma^{j}\left(\Psi^{-1}\right)_{j}^{i}e_{k}\otimes_{N}\nabla^{N\to\mathbb{R}}\Psi_{i}^{k}+\sigma^{j}\left(\Psi^{-1}\right)_{j}^{i}\Psi_{i}^{k}\otimes\nabla^{G}e_{k}\\
={} & \delta_{k}^{j}e_{j}\otimes_{N}\nabla^{N\to\mathbb{R}}\sigma^{k}+\sigma^{j}\delta_{j}^{k}\otimes\nabla^{G}e_{k}+\sigma^{\ell}e_{k}\otimes_{N}\nabla^{N\to\mathbb{R}}\left(\Psi_{i}^{k}\left(\Psi^{-1}\right)_{\ell}^{i}\right)\\
={} & \nabla^{E}\left(\sigma^{i}e_{i}\right).
\end{align*}
The last equality follows because $\Psi_{i}^{k}\left(\Psi^{-1}\right)_{\ell}^{i}=\delta_{\ell}^{k}$,
which is a constant function, so $\nabla^{N\to\mathbb{R}}\left(\Psi_{i}^{k}\left(\Psi^{-1}\right)_{\ell}^{i}\right)=0$.
Thus the expression defining $\nabla^{E}$ doesn't depend on the choice
of local frame. This establishes the well-definedness of $\nabla^{E}$.

Clearly the restriction of $\nabla^{E}$ to $G$ is $\nabla^{G}$.
This establishes the claim of existence. Uniqueness follows from the
fact that $\nabla^{E}$ is defined in terms of the maps $\nabla^{N\to\mathbb{R}}$
and $\nabla^{G}$. 
\end{proof}
Lemma (\ref{lemma_covariant_derivative_construction}) is used in
the proof of the following proposition to allow a natural formulation
of the pullback covariant derivative with respect to a natural locally
finite generating subset of $\Gamma\left(\phi^{*}E\right)$, in which
the relevant derivative has a natural chain rule.
\begin{prop}[Pullback covariant derivative]
 \label{proposition_pullback_covariant_derivative} If $\phi\colon M\to N$
is smooth and $\nabla^{E}$ is a covariant derivative on $E$, then
there is a unique covariant derivative $\nabla^{\phi^{*}E}$ on $\phi^{*}E$
satisfying the chain rule
\[
\nabla^{\phi^{*}E}\phi^{*}e=\phi^{*}\nabla^{E}e\cdot_{\phi^{*}TN}\nnabla^{M\to N}\phi
\]
for all $e\in\Gamma\left(E\right)$. \end{prop}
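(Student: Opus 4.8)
The plan is to apply Lemma (\ref{lemma_covariant_derivative_construction}) to the subset $G := \{\phi^{*}e \mid e \in \Gamma(E)\}$ of $\Gamma(\phi^{*}E)$, equipped with the assignment $\nabla^{G}\phi^{*}e := \phi^{*}\nabla^{E}e \cdot_{\phi^{*}TN} \nnabla^{M\to N}\phi$. This takes values in $\Gamma(\phi^{*}E \otimes_{M} T^{*}M)$: since bundle pullback commutes with tensor product and with duals, $\phi^{*}\nabla^{E}e \in \Gamma(\phi^{*}E \otimes_{M} \phi^{*}T^{*}N)$, and the $\phi^{*}T^{*}N$ factor pairs against the $\phi^{*}TN$ factor of $\nnabla^{M\to N}\phi \in \Gamma(\phi^{*}TN \otimes_{M} T^{*}M)$. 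The first thing to establish is that $G$ is locally finitely generating. Given $p \in M$, choose $V \ni \phi(p)$ over which $E$ trivializes with frame $f_{1},\dots,f_{r}$, and multiply by a bump function equal to $1$ on a smaller neighborhood $V'$ of $\phi(p)$ and supported in $V$ to obtain global sections $\tilde f_{i} \in \Gamma(E)$; then on $U := \phi^{-1}(V')$ the sections $\phi^{*}\tilde f_{1},\dots,\phi^{*}\tilde f_{r} \in G_{U}$ restrict, by (\ref{cor:pullback-fiber-projection}), to a local frame for $\phi^{*}E|_{U}$, hence generate $\Gamma_{U}(\phi^{*}E)$. (This is essentially the content of (\ref{lemma_representation_of_sections_of_a_pullback_bundle}).)

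The main point, and the main obstacle, is well-definedness of $\nabla^{G}$, since $e \mapsto \phi^{*}e$ is far from injective. I would show that $\phi^{*}e = 0$ forces $\phi^{*}\nabla^{E}e \cdot_{\phi^{*}TN} \nnabla^{M\to N}\phi = 0$, working locally and pointwise. Near $\phi(p)$ expand $e = e^{i}f_{i}$ in a local frame; $\phi^{*}e = 0$ says $e^{i}\circ\phi \equiv 0$ on $\phi^{-1}(V)$. Using $\nabla^{E}(e^{i}f_{i}) = f_{i}\otimes_{N}\nabla^{N\to\mathbb{R}}e^{i} + e^{i}\otimes_{N}\nabla^{E}f_{i}$ together with locality of $\nabla^{E}$, pulling back and contracting against $\nnabla^{M\to N}\phi$ kills the second group of terms because $e^{i}\circ\phi \equiv 0$, and kills the first because $\phi^{*}\nabla^{N\to\mathbb{R}}e^{i} \cdot_{\phi^{*}TN} \nnabla^{M\to N}\phi = \nabla^{M\to\mathbb{R}}(e^{i}\circ\phi) = 0$ — the identity $\nnabla^{M\to\mathbb{R}}(g\circ\phi) = \phi^{*}\nnabla^{N\to\mathbb{R}}g \cdot_{\phi^{*}TN} \nnabla^{M\to N}\phi$ being the chain rule for tangent maps from Section \ref{sec:Tangent-Map-as-Tensor-Field} together with $\nnabla^{\cdot\to\mathbb{R}} = \lnabla^{\cdot\to\mathbb{R}} = d$. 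That $\nabla^{G}$ is $\mathbb{R}$-linear is immediate from $\mathbb{R}$-linearity of $\nabla^{E}$, of pullback of sections, and of tensor contraction.

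It remains to verify the product rule in the weak sense required by the lemma: if $\lambda \in C^{\infty}(M,\mathbb{R})$, $e,e' \in \Gamma(E)$, and $\lambda\,\phi^{*}e = \phi^{*}e'$, then $\nabla^{G}\phi^{*}e' = \phi^{*}e \otimes_{M} \nabla^{M\to\mathbb{R}}\lambda + \lambda\,\nabla^{G}\phi^{*}e$. The same local-frame computation handles it: near $p$ the hypothesis reads $\lambda\,(e^{i}\circ\phi) = e'^{i}\circ\phi$, and expanding $\phi^{*}\nabla^{E}e' \cdot_{\phi^{*}TN} \nnabla^{M\to N}\phi = \phi^{*}f_{i}\otimes_{M} d(e'^{i}\circ\phi) + (e'^{i}\circ\phi)\,\bigl(\phi^{*}\nabla^{E}f_{i} \cdot_{\phi^{*}TN} \nnabla^{M\to N}\phi\bigr)$ via the chain rule for $d$, then substituting $e'^{i}\circ\phi = \lambda\,(e^{i}\circ\phi)$ and applying $d\bigl(\lambda\,(e^{i}\circ\phi)\bigr) = (e^{i}\circ\phi)\,d\lambda + \lambda\,d(e^{i}\circ\phi)$, regroups everything into $\phi^{*}e \otimes_{M} d\lambda + \lambda\,\bigl(\phi^{*}\nabla^{E}e \cdot_{\phi^{*}TN} \nnabla^{M\to N}\phi\bigr)$. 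Lemma (\ref{lemma_covariant_derivative_construction}) then yields a unique linear covariant derivative $\nabla^{\phi^{*}E}$ restricting to $\nabla^{G}$ on $G$, which is precisely the asserted chain rule; and any covariant derivative obeying that chain rule agrees with $\nabla^{G}$ on the locally finitely generating set $G$, hence coincides with $\nabla^{\phi^{*}E}$ by the uniqueness clause of the lemma.
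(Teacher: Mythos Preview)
Your proof is correct and follows the same strategy as the paper: take $G=\{\phi^{*}e\mid e\in\Gamma(E)\}$, define $\nabla^{G}$ by the chain-rule formula, and invoke Lemma~(\ref{lemma_covariant_derivative_construction}). You are in fact more careful than the paper on two points: you verify well-definedness of $\nabla^{G}$ explicitly via a local-frame argument (the paper asserts it in one line), and for the product rule you check directly that $\lambda\,\phi^{*}e=\phi^{*}e'$ implies the required identity, whereas the paper claims that $\lambda\,\phi^{*}e\in G$ forces $\lambda=\phi^{*}\mu$ and then uses the product rule for $\nabla^{E}$ on $\mu\otimes_{N}e$---a slicker computation, but the ``only if'' part of that claim is not literally true (take $e=0$), so your route is the cleaner one.
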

\begin{proof}
Let $G:=\left\{ \sigma\in\Gamma\left(\phi^{*}E\right)\mid\sigma=\phi^{*}e\mbox{ for some }e\in\Gamma\left(E\right)\right\} $,
noting that a local frame $e_{1},\dots,e_{\rank E}\in\Gamma_{U}\left(E\right)$
over open set $U\subseteq N$ induces a local frame $\phi^{*}e_{1},\dots,\phi^{*}e_{\rank E}\in\Gamma_{\phi^{-1}\left(U\right)}\left(\phi^{*}E\right)$,
so $G$ is a locally finite generating subset of $\Gamma\left(\phi^{*}E\right)$.
Define
\begin{eqnarray*}
\nabla^{G}\colon G & \to & \Gamma\left(\phi^{*}E\otimes_{M}T^{*}N\right),\\
\phi^{*}e & \mapsto & \phi^{*}\nabla^{E}e\cdot_{\phi^{*}TN}\nnabla^{M\to N}\phi.
\end{eqnarray*}
The well-definedness and $\mathbb{R}$-linearity of $\nabla^{G}$
comes from that of $\nabla^{E}$. For the product rule, if $\lambda\in C^{\infty}\left(M,\mathbb{R}\right)$
and $e\in\Gamma\left(E\right)$, then the product $\lambda\otimes_{M}\phi^{*}e$
is an element of $G$ if and only if $\lambda=\phi^{*}\mu$ for some
$\mu\in C^{\infty}\left(N,\mathbb{R}\right)$, in which case, $\lambda\otimes_{M}\phi^{*}e$
$=\phi^{*}\mu\otimes_{M}\phi^{*}e$ $=\phi^{*}\left(\mu\otimes_{N}e\right)$.
Then it follows that
\begin{align*}
\nabla^{G}\left(\lambda\otimes_{M}\phi^{*}e\right) & =\nabla^{G}\phi^{*}\left(\mu\otimes_{N}e\right)\\
 & =\phi^{*}\nabla^{E}\left(\mu\otimes_{N}e\right)\cdot_{\phi^{*}TN}\nnabla^{M\to N}\phi\\
 & =\phi^{*}\left(e\otimes_{N}\nabla^{N\to\mathbb{R}}\mu+\mu\otimes_{N}\nabla^{E}e\right)\cdot_{\phi^{*}TN}\nnabla^{M\to N}\phi\\
 & =\phi^{*}\left(e\otimes_{N}\nabla^{N\to\mathbb{R}}\mu\right)\cdot_{\phi^{*}TN}\nnabla^{M\to N}\phi+\phi^{*}\left(\mu\otimes_{N}\nabla^{E}e\right)\cdot_{\phi^{*}TN}\nnabla^{M\to N}\phi\\
 & =\phi^{*}e\otimes_{M}\left(\phi^{*}\nabla^{N\to\mathbb{R}}\mu\cdot_{\phi^{*}TN}\nnabla^{M\to N}\phi\right)+\phi^{*}\mu\otimes_{M}\left(\phi^{*}\nabla^{E}e\cdot_{\phi^{*}TN}\nnabla^{M\to N}\phi\right)\\
 & =\phi^{*}e\otimes_{M}\nabla^{M\to\mathbb{R}}\phi^{*}\mu+\phi^{*}\mu\otimes_{M}\nabla^{G}\phi^{*}e\\
 & =\phi^{*}e\otimes_{M}\nabla^{M\to\mathbb{R}}\lambda+\lambda\otimes_{M}\nabla^{G}\phi^{*}e,
\end{align*}
which is exactly the required product rule. By (\ref{lemma_covariant_derivative_construction}),
there exists a unique covariant derivative $\nabla^{\phi^{*}E}$ on
$\phi^{*}E$ whose restriction to $G$ is $\nabla^{G}$. 
\end{proof}
The full notation $\nabla^{\phi^{*}E}$ is often cumbersome, so it
may be denoted by $\nabla^{\phi}$ when the pulled-back bundle is
clear from context.
\begin{rem}
\label{remark:pullback_derivative_captures_fiber_variation}There
is an important feature of a pullback covariant derivative in the
case that pullback map is not an immersion; the pullback covariant
derivative may be nonzero even where the pullback map is singular.
This fact can be obscured by a certain abuse of notation which often
comes in the expression of the geodesic equations in differential
geometry (see (\ref{example:geodesic_equation})). An example will
illustrate this point. 

\begin{figure}
\centering
\def\svgwidth{\columnwidth}
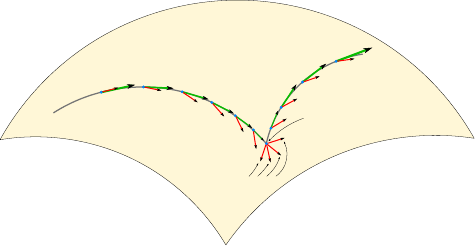

\caption{\label{fig:covariant-deriv-along-singular-path} A picture of the
manifold $M$, path $\theta$, and vector fields $\theta^{\prime}$
and $\Theta$. The blue dots represent $\theta\left(t\right)$ at
certain points $t\in\mathbb{R}$, while the green and red arrows represent
$\theta^{\prime}\left(t\right)$ and $\Theta\left(t\right)$ at at
these points respectively. Note that $\Theta$ is a unit-length vector
field along $\theta$ and varies within $I$, whereas $\theta^{\prime}$
is a vector field along $\theta$ that vanishes within $I$.}

\end{figure}

Let $\nabla^{TM}$ be a covariant derivative on $\pi_{M}^{TM}\colon TM\to M$.
Let $\Theta\colon\mathbb{R}\to TM$ be a unit-length vector field
which describes the location of a person (the basepoint) and direction
s/he is looking (the fiber portion) with respect to time (let $\mathbb{R}$
have standard coordinate $t$). Define $\theta\colon\mathbb{R}\to M$
by $\theta:=\pi_{M}^{TM}\circ\Theta$, so that $\theta$ is the base
map of $\Theta$, i.e. $\theta$ has discarded the direction information
and only encodes the location information. Say that for some closed
interval $I\subseteq\mathbb{R}$, $\frac{d\theta}{dt}\mid_{I}$ is
identically zero (and so is not an immersion), but that $\frac{d\Theta}{dt}\mid_{I}$
is nonvanishing; see Figure \ref{fig:covariant-deriv-along-singular-path}.
Mathematically, this means that during this time, $\Theta$ is varying
only within a single fiber of $TM$. Physically, this means that during
this time, the person is standing still but the direction s/he is
looking is changing. Passing to a higher tangent space is often undesirable
(note that $\frac{d\Theta}{dt}$ takes values in $TTM$), so to avoid
this, a covariant derivative is used. In order to be meaningful, the
covariant derivative must capture this fiber-only variation.

Because $\Theta$ is a vector field along $\theta$, it can be written
as $\Theta\in\Gamma\left(\theta^{*}TM\right)$, and the covariant
derivative on $TM$ induces a pullback covariant derivative on $\theta^{*}TM$,
which has base space $\mathbb{R}$. In other words, $\theta^{*}TM$
is parameterized by time. Then $\nabla_{\frac{d}{dt}}^{\theta^{*}TM}\Theta\in\Gamma\left(\theta^{*}TM\right)$
is the desired covariant derivative of $\Theta$ with respect to time.
A coordinate-based calculation will be made to make completely obvious
why this pullback covariant derivative captures the desired information.
Let $\left(x^{i}\right)$ be local coordinates on $M$ and, for simplicity,
assume that the image of $\theta$ lies entirely within this coordinate
chart. Because $\left(\partial_{i}\right)$ is a local frame for $TM$,
$\left(\theta^{*}\partial_{i}\right)$ is a local frame for $\theta^{*}TM$,
by (\ref{lemma_representation_of_sections_of_a_pullback_bundle})
and $\Theta\in\Gamma\left(\theta^{*}TM\right)$ can be written locally
as $\Theta\left(t\right)=\Theta^{i}\left(t\right)\,\left(\theta^{*}\partial_{i}\right)\left(t\right)$
for some functions $\left(\Theta^{i}\colon\mathbb{R}\to\mathbb{R}\right)$.
Then
\begin{align*}
\nabla_{\frac{d}{dt}}^{\theta^{*}TM}\Theta & =\nabla_{\frac{d}{dt}}^{\theta^{*}TM}\left(\Theta^{i}\,\theta^{*}\partial_{i}\right)\\
 & =\left(\nabla_{\frac{d}{dt}}\Theta^{i}\right)\theta^{*}\partial_{i}+\Theta^{i}\nabla_{\frac{d}{dt}}^{\theta^{*}TM}\theta^{*}\partial_{i}\\
 & =\frac{d\Theta^{i}}{dt}\,\theta^{*}\partial_{i}+\Theta^{i}\,\theta^{*}\nabla^{TM}\partial_{i}\cdot_{\theta^{*}TM}\nnabla^{\mathbb{R}\to M}\theta\cdot_{T\mathbb{R}}\frac{d}{dt}\\
 & =\frac{d\Theta^{i}}{dt}\,\theta^{*}\partial_{i}+\Theta^{i}\,\theta^{*}\nabla^{TM}\partial_{i}\cdot_{\theta^{*}TM}\frac{d\theta}{dt}.
\end{align*}
Note that $\nnabla^{\mathbb{R}\to M}\theta\in\Gamma\left(\theta^{*}TM\right)$.
Within the interval $I$, $\frac{d\theta}{dt}$ vanishes, so the second
term vanishes on $I$. However, because $\Theta$ is varying in a
fiber-only direction within $I$, the basepoint is not changing and
$\frac{d\Theta^{i}}{dt}\theta^{*}\partial_{i}$ can be identified
with an elementary vector space derivative (the fiber is a vector
space and so an elementary derivative is well-defined there). This
fiber-direction derivative is nonvanishing by assumption, so $\nabla_{\frac{d}{dt}}^{\theta^{*}TM}\Theta$
is nonvanishing on $I$ as desired.\\

\end{rem}
Introducing a bit of natural notation which will be helpful for the
next result, if $X\in\Gamma\left(E\right)$ and $Y\in\Gamma\left(F\right)$,
then define $X\oplus Y\equiv X\oplus_{M\times N}Y\in\Gamma\left(E\oplus_{M\times N}F\right)$
and $X\otimes Y\equiv X\otimes_{M\times N}Y\in\Gamma\left(E\otimes_{M\times N}F\right)$
by
\[
\left(X\oplus_{M\times N}Y\right)\left(p,q\right):=X\left(p\right)\oplus Y\left(q\right)\qquad\mbox{and}\qquad\left(X\otimes_{M\times N}Y\right)\left(p,q\right):=X\left(p\right)\otimes Y\left(q\right)
\]
for each $\left(p,q\right)\in M\times N$.
\begin{prop}[Induced covariant derivatives on $E\oplus_{M\times N}F$ and $E\otimes_{M\times N}F$]
 \label{proposition_induced_covariant_derivatives_on_sum_and_product_bundles}
If $\nabla^{E}$ and $\nabla^{F}$ are covariant derivatives on $E$
and $F$ respectively, then there are unique covariant derivatives
\[
\nabla^{E\oplus_{M\times N}F}\colon\Gamma\left(E\oplus_{M\times N}F\right)\to\Gamma\left(\left(E\oplus_{M\times N}F\right)\otimes_{M\times N}\left(T^{*}M\oplus_{M\times N}T^{*}N\right)\right)
\]
and 
\[
\nabla^{E\otimes_{M\times N}F}\colon\Gamma\left(E\otimes_{M\times N}F\right)\to\Gamma\left(\left(E\otimes_{M\times N}F\right)\otimes_{M\times N}\left(T^{*}M\oplus_{M\times N}T^{*}N\right)\right)
\]
on $E\oplus F$ and $E\otimes F$ respectively, satisfying the sum
rule
\[
\nabla_{u\oplus v}^{E\oplus F}\left(X\oplus Y\right)=\nabla_{u}^{E}X\oplus\nabla_{v}^{F}Y
\]
and the product rule 
\[
\nabla_{u\oplus v}^{E\otimes F}\left(X\otimes Y\right)=\nabla_{u}^{E}X\otimes Y+X\otimes\nabla_{v}^{F}Y,
\]
respectively, where $X\in\Gamma\left(E\right)$, $Y\in\Gamma\left(F\right)$,
and $u\oplus v\in TM\oplus TN$. Here, $TM\oplus TN\to M\times N$
(and its dual) is used instead of the isomorphic vector bundle $T\left(M\times N\right)\to M\times N$
(and its dual). \end{prop}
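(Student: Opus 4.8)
The plan is to obtain both covariant derivatives by applying the linear covariant derivative construction lemma (\ref{lemma_covariant_derivative_construction}) to two natural generating sets, exactly as was done for the pullback covariant derivative in (\ref{proposition_pullback_covariant_derivative}). For the first bundle I would use $G_\oplus := \{\, X \oplus_{M\times N} Y \mid X \in \Gamma(E),\ Y \in \Gamma(F)\,\}$ and for the second $G_\otimes := \{\, X \otimes_{M\times N} Y \mid X \in \Gamma(E),\ Y \in \Gamma(F)\,\}$. The first step is to check that these are locally finitely generating. Given a point of $M\times N$, pick a local frame $e_1,\dots,e_r$ for $E$ over a neighborhood $U$ of its first coordinate and a local frame $f_1,\dots,f_s$ for $F$ over a neighborhood $V$ of its second; extending each section to a global one by a bump function, the resulting global elements of $G_\oplus$ restrict near the chosen point to the frame $\{e_i\oplus_{M\times N}0\}\cup\{0\oplus_{M\times N}f_j\}$ for $E\oplus_{M\times N}F$, while the global elements $e_i\otimes_{M\times N}f_j$ of $G_\otimes$ restrict to a frame for $E\otimes_{M\times N}F$. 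Since such product neighborhoods cover $M\times N$, both sets are locally finitely generating.

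Next I would define $\nabla^{G_\oplus}$ and $\nabla^{G_\otimes}$ directly by the asserted formulas. Using the identification $T^*(M\times N)\cong T^*M\oplus_{M\times N}T^*N$, the canonical isomorphism $\pr_1^*E\otimes_{M\times N}\pr_2^*F\cong E\otimes_{M\times N}F$, and the canonical inclusions of the pullback bundles $\pr_1^*(E\otimes_M T^*M)$, $\pr_2^*(F\otimes_N T^*N)$, $\pr_1^*T^*M$, $\pr_2^*T^*N$ into the relevant total spaces, set $\nabla^{G_\oplus}(X\oplus_{M\times N}Y):=\pr_1^*\nabla^E X\oplus_{M\times N}\pr_2^*\nabla^F Y$ and $\nabla^{G_\otimes}(X\otimes_{M\times N}Y):=\pr_1^*\nabla^E X\otimes_{M\times N}\pr_2^*Y+\pr_1^*X\otimes_{M\times N}\pr_2^*\nabla^F Y$. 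That these are well-defined smooth tensor fields of the stated type and are $\mathbb{R}$-linear is immediate from the corresponding properties of $\nabla^E$ and $\nabla^F$ and the bilinearity of $\oplus_{M\times N}$ and $\otimes_{M\times N}$. By the footnote to (\ref{lemma_covariant_derivative_construction}) the Leibniz axiom need only be checked on scalar multiples that remain in the generating set; these are precisely $\lambda=\pr_1^*\mu$ and $\lambda=\pr_2^*\nu$ with $\mu\in C^\infty(M,\mathbb{R})$ and $\nu\in C^\infty(N,\mathbb{R})$, since for instance $\pr_1^*\mu\cdot(X\otimes_{M\times N}Y)=(\mu X)\otimes_{M\times N}Y$, and there the required identity collapses to the Leibniz rule for $\nabla^E$ (or $\nabla^F$) together with the chain rule $\nabla^{M\times N\to\mathbb{R}}(\pr_1^*\mu)=\pr_1^*(d\mu)\cdot_{\pr_1^*TM}\nnabla^{M\times N\to M}\pr_1$. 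Lemma (\ref{lemma_covariant_derivative_construction}) then produces unique linear covariant derivatives $\nabla^{E\oplus_{M\times N}F}$ and $\nabla^{E\otimes_{M\times N}F}$ restricting to $\nabla^{G_\oplus}$ and $\nabla^{G_\otimes}$.

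Finally I would recover the proposition's directional sum and product rules from the equalities $\nabla|_{G_\oplus}=\nabla^{G_\oplus}$ and $\nabla|_{G_\otimes}=\nabla^{G_\otimes}$: contracting the defining expressions against $u\oplus v\in TM\oplus_{M\times N}TN$ and using that $\nnabla^{M\times N\to M}\pr_1$ restricts to the identity on the $TM$-summand of $T(M\times N)$ and annihilates the $TN$-summand (and symmetrically for $\pr_2$) collapses all the pullbacks and leaves exactly $\nabla_u^E X\oplus\nabla_v^F Y$ and $\nabla_u^E X\otimes Y+X\otimes\nabla_v^F Y$; uniqueness is inherited from the uniqueness clause of (\ref{lemma_covariant_derivative_construction}), since each rule pins the derivative down on the corresponding generating set. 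I expect the only genuine difficulty to be organizational rather than conceptual: keeping the type bookkeeping straight — the identification $T(M\times N)\cong TM\oplus_{M\times N}TN$, the pullback bundles $\pr_1^*E$, $\pr_2^*F$, $\pr_1^*T^*M$, $\pr_2^*T^*N$, and their canonical inclusions — and, the one delicate point, being careful that the Leibniz axiom is demanded only on the sparse family of scalar multiples that stay inside $G_\oplus$ and $G_\otimes$, so that the failure of these sets to be $C^\infty(M\times N,\mathbb{R})$-submodules causes no inconsistency.
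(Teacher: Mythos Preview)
Your approach is essentially identical to the paper's: the same generating sets, the same invocation of Lemma~(\ref{lemma_covariant_derivative_construction}), and the same directional unwinding at the end. One small correction to your Leibniz check: the scalar multiples that stay in $G_\oplus$ are only the \emph{constants} (if $\lambda\cdot(X\oplus Y)=X'\oplus Y'$ with $X,Y$ nonvanishing, then $\lambda$ must be independent of both factors), while for $G_\otimes$ the relevant $\lambda$ are the full products $\pr_1^*\mu\cdot\pr_2^*\nu$, not just the two pullback cases separately---the paper verifies the product rule for this general form, and your two special cases alone do not quite cover it.
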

\begin{proof}
Suppressing the pedantic use of the $M\times N$ subscript to avoid
unnecessary notational overload, the set $G:=\left\{ e\oplus f\mid e\in\Gamma\left(E\right),\, f\in\Gamma\left(F\right)\right\} $
is a locally finite generator of $\Gamma\left(E\oplus F\right)$,
since local frames for $E\oplus F$ take the form $\left\{ e_{i}\oplus0,0\oplus f_{j}\right\} $,
where $\left\{ e_{i}\right\} $ and $\left\{ f_{j}\right\} $ are
local frames for $E$ and $F$ respectively. Define 
\begin{eqnarray*}
\nabla^{G}\colon G & \to & \Gamma\left(\left(E\oplus F\right)\otimes\left(T^{*}M\oplus T^{*}N\right)\right),\\
X\oplus Y & \mapsto & \left(u\oplus v\mapsto\nabla_{u}^{E}X\oplus\nabla_{v}^{F}Y\right),\mbox{ where }u\oplus v\in TM\oplus TN.
\end{eqnarray*}
This map is well-defined and $\mathbb{R}$-linear by construction,
since the connections $\nabla^{E}$ and $\nabla^{F}$ are well-defined
and $\mathbb{R}$-linear. If $\lambda\in C^{\infty}\left(M\times N,\mathbb{R}\right)$,
$X\in\Gamma\left(E\right)$, and $Y\in\Gamma\left(F\right)$, then
the product $\lambda\otimes\left(X\oplus Y\right)$ is in $G$ (i.e.
has the form $\overline{X}\oplus\overline{Y}$ for some $\overline{X}\in\Gamma\left(E\right)$
and $\overline{Y}\in\Gamma\left(F\right)$) if and only if $\lambda$
is constant. Thus the product rule (restricted to elements of $G$)
reduces to $\mathbb{R}$-linearity, which is already satisfied. By
(\ref{lemma_covariant_derivative_construction}), there exists a unique
connection $\nabla^{E\oplus F}$ on $E\oplus F$ whose restriction
to $G$ is $\nabla^{G}$.

Similarly, the set $H:=\left\{ e\otimes f\mid e\in\Gamma\left(E\right),\, f\in\Gamma\left(F\right)\right\} $
is a locally finite generator of $\Gamma\left(E\otimes F\right)$,
since local frames for $E\otimes F$ take the form $\left\{ e_{i}\otimes f_{j}\right\} $,
where $\left\{ e_{i}\right\} $ and $\left\{ f_{j}\right\} $ are
local frames for $E$ and $F$ respectively. Define
\begin{eqnarray*}
\nabla^{H}\colon H & \to & \Gamma\left(\left(E\otimes F\right)\otimes\left(T^{*}M\oplus T^{*}N\right)\right),\\
X\otimes Y & \mapsto & \left(u\oplus v\mapsto\nabla_{u}^{E}X\otimes Y+X\otimes\nabla_{v}^{F}Y\right),\mbox{ where }u\oplus v\in TM\oplus TN.
\end{eqnarray*}
This map is well-defined and $\mathbb{R}$-linear by construction,
since the connections $\nabla^{E}$ and $\nabla^{F}$ are well-defined
and $\mathbb{R}$-linear. For the product rule, with $\lambda\in C^{\infty}\left(M\times N,\mathbb{R}\right)$,
$X\in\Gamma\left(E\right)$, and $Y\in\Gamma\left(F\right)$, the
product $\lambda\otimes\left(X\otimes Y\right)$ is in $H$ if and
only if there exist $\mu\in C^{\infty}\left(M,\mathbb{R}\right)$
and $\nu\in C^{\infty}\left(N,\mathbb{R}\right)$ such that $\lambda=\mu\otimes\nu\in\left(\mathbb{R}\rtriv M\right)\otimes\left(\mathbb{R}\rtriv N\right)$
(noting that then $\lambda\otimes_{M\times N}\left(X\otimes Y\right)$
$=\left(\mu\otimes\nu\right)\otimes_{M\times N}\left(X\otimes Y\right)$
$=\left(\mu\otimes_{M}X\right)\otimes\left(\nu\otimes_{N}Y\right)$).
In this case, with $u\oplus v\in TM\oplus TN$,
\begin{align*}
 & \nabla_{u\oplus v}^{H}\left(\lambda\otimes_{M\times N}\left(X\otimes Y\right)\right)\\
={} & \nabla_{u\oplus v}^{H}\left(\left(\mu\otimes\nu\right)\otimes_{M\times N}\left(X\otimes Y\right)\right)\\
={} & \nabla_{u\oplus v}^{H}\left(\left(\mu\otimes_{M}X\right)\otimes\left(\nu\otimes_{N}Y\right)\right)\\
={} & \nabla_{u}^{E}\left(\mu\otimes_{M}X\right)\otimes\left(\nu\otimes_{N}Y\right)+\left(\mu\otimes_{M}X\right)\otimes\nabla_{v}^{F}\left(\nu\otimes_{N}Y\right)\\
={} & \left(\nabla_{u}^{M\to\mathbb{R}}\mu\otimes_{M}X\right)\otimes\left(\nu\otimes_{N}Y\right)+\left(\mu\otimes_{M}\nabla_{u}^{E}X\right)\otimes\left(\nu\otimes_{N}Y\right)\\
 & +\left(\mu\otimes_{M}X\right)\otimes\left(\nabla_{v}^{N\to\mathbb{R}}\nu\otimes_{N}Y\right)+\left(\mu\otimes_{M}X\right)\otimes\left(\nu\otimes_{N}\nabla_{v}^{F}Y\right)\\
={} & \left(\nabla_{u}^{M\to\mathbb{R}}\mu\otimes\nu+\mu\otimes\nabla_{v}^{N\to\mathbb{R}}\nu\right)\otimes_{M\times N}\left(X\otimes Y\right)+\lambda\otimes_{M\times N}\left(\nabla_{u}^{E}X\otimes Y+X\otimes\nabla_{v}^{F}Y\right)\\
={} & \nabla_{u\oplus v}^{M\times N\to\mathbb{R}}\lambda\otimes_{M\times N}\left(X\otimes Y\right)+\lambda\otimes_{M\times N}\nabla_{u\oplus v}^{H}\left(X\otimes Y\right),
\end{align*}
which is exactly the required product rule. By (\ref{lemma_covariant_derivative_construction}),
there exists a unique connection $\nabla^{E\otimes F}$ on $E\otimes F$
whose restriction to $H$ is $\nabla^{H}$. \end{proof}
\begin{rem}[Naturality of the covariant derivatives on $E\oplus_{M\times N}F$
and $E\otimes_{M\times N}F$]
 Letting $\pr_{i}:=\pr_{i}^{M\times N}$ ($i\in\left\{ 1,2\right\} $)
for brevity, the maps
\begin{eqnarray*}
\xi\colon E\oplus_{M\times N}F & \to & \pr_{1}^{*}E\oplus_{M\times N}\pr_{2}^{*}F,\\
e\oplus f & \mapsto & \left(\left(\pi^{E}\oplus\pi^{F}\right)\left(e\oplus f\right),e\right)\oplus_{M\times N}\left(\left(\pi^{E}\oplus\pi^{F}\right)\left(e\oplus f\right),f\right)
\end{eqnarray*}
and 
\begin{eqnarray*}
\psi\colon E\otimes_{M\times N}F & \to & \pr_{1}^{*}E\otimes_{M\times N}\pr_{2}^{*}F,\\
e\otimes f & \mapsto & \left(\left(\pi^{E}\otimes\pi^{F}\right)\left(e\otimes f\right),e\right)\otimes_{M\times N}\left(\left(\pi^{E}\otimes\pi^{F}\right)\left(e\otimes f\right),f\right),
\end{eqnarray*}
each extended linearly to the rest of their domains, are easily shown
to be smooth vector bundle isomorphisms over $\Id_{M\times N}$. Then
\[
\nabla_{z}^{E\oplus F}\left(X\oplus Y\right)=\xi^{-1}\left(\nabla_{z}^{\pr_{1}^{*}E\oplus_{M\times N}\pr_{2}^{*}F}\xi\left(X\oplus Y\right)\right)
\]
and 
\[
\nabla_{z}^{E\otimes F}\left(X\otimes Y\right)=\psi^{-1}\left(\nabla_{z}^{\pr_{1}^{*}E\otimes_{M\times N}\pr_{2}^{*}F}\psi\left(X\otimes Y\right)\right)
\]
for all $X\in\Gamma\left(E\right)$, $Y\in\Gamma\left(F\right)$,
and $z\in T\left(M\times N\right)$, showing that the connections
on $E\oplus F$ and $E\otimes F$ are $\xi$ and $\psi$-related to
the naturally induced connections on $\pr_{1}^{*}E\oplus\pr_{2}^{*}F$
and $\pr_{1}^{*}E\otimes_{M\times N}\pr_{2}^{*}F$ respectively, and
are therefore in this sense natural. The sum $X\oplus Y\in\Gamma\left(E\oplus F\right)$
and product $X\otimes Y\in\Gamma\left(E\otimes F\right)$ correspond
to $\pr_{1}^{*}X\oplus_{M\times N}\pr_{2}^{*}Y$ and $\pr_{1}^{*}X\otimes_{M\times N}\pr_{2}^{*}Y\in\Gamma\left(\pr_{1}^{*}E\otimes_{M\times N}\pr_{2}^{*}F\right)$
under $\xi$mathn$\psi$ respectively. \\

Many important tensor constructions involve permutations. An extremely
useful property of these permutations is that they commute with the
covariant derivatives induced by the covariant derivatives on the
tensor bundle factors, making them natural operators in the setting
of covariant tensor calculus.\end{rem}
\begin{prop}[Transposition tensor fields are parallel]
 \label{prop:transposition_tensor_fields_are_parallel} Let $E_{1},E_{2},E_{3},E_{4}$
be smooth vector bundles over $M$ having covariant derivatives $\nabla^{E_{1}},\nabla^{E_{2}},\nabla^{E_{3}},\nabla^{E_{4}}$
respectively, let $A:=E_{1}\otimes_{M}E_{2}\otimes_{M}E_{3}\otimes_{M}E_{4}$
and $B:=E_{1}\otimes_{M}E_{3}\otimes_{M}E_{2}\otimes_{M}E_{4}$, and
let $\nabla^{A}$ and $\nabla^{B}$ denote the induced covariant derivatives. 

If $\left(2\,3\right)\in\Gamma\left(A^{*}\otimes_{M}B\right)$ denotes
the tensor field which maps $e_{1}\otimes_{M}\otimes e_{2}\otimes_{M}e_{3}\otimes_{M}e_{4}\in A$
to $e_{1}\otimes_{M}e_{3}\otimes_{M}e_{2}\otimes_{M}e_{4}\in B$ (i.e.
$\left(2\,3\right)$ transposes the second and third factors), then
$\left(2\,3\right)$ is a parallel tensor field with respect to the
covariant derivative induced on the vector bundle $A^{*}\otimes_{M}B\to M$,
i.e. $\nabla^{A^{*}\otimes_{M}B}\left(2\,3\right)=0$. \end{prop}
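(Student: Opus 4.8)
The plan is to reduce the claim to a cancellation of connection $1$-forms in a local frame, using that covariant derivatives are local operators (cf.\ \citep[pg.~50]{RiemannianLee}). Fix $p\in M$, choose local frames $\bigl(e^{(k)}_{a}\bigr)_{a}$ for each $E_{k}$ over a common neighborhood $U$ of $p$, and let $\bigl(\varepsilon^{(k)}_{a}\bigr)_{a}$ be the dual coframes of $E_{k}^{*}$. These induce a frame $E_{abcd}:=e^{(1)}_{a}\otimes_{M}e^{(2)}_{b}\otimes_{M}e^{(3)}_{c}\otimes_{M}e^{(4)}_{d}$ of $A\mid_{U}$, with dual coframe $E^{abcd}$, and a frame $H_{acbd}:=e^{(1)}_{a}\otimes_{M}e^{(3)}_{c}\otimes_{M}e^{(2)}_{b}\otimes_{M}e^{(4)}_{d}$ of $B\mid_{U}$. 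In these frames the transposition tensor field has constant components, namely $(2\,3)=\sum_{abcd}E^{abcd}\otimes_{M}H_{acbd}$.

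Next I would differentiate. Write $\nabla^{E_{k}}e^{(k)}_{a}=e^{(k)}_{b}\otimes_{M}\omega^{(k)}{}^{b}_{a}$ for the connection $1$-forms $\omega^{(k)}{}^{b}_{a}\in\Gamma(T^{*}U)$; dually $\nabla^{E_{k}^{*}}\varepsilon^{(k)}_{a}=-\omega^{(k)}{}^{a}_{b}\otimes_{M}\varepsilon^{(k)}_{b}$. Applying the product rule for the induced connection on a tensor product of vector bundles over $M$ (the same-base analogue of (\ref{proposition_induced_covariant_derivatives_on_sum_and_product_bundles}), together with the induced connection on a dual bundle), one expands $\nabla^{A^{*}}E^{abcd}$ and $\nabla^{B}H_{acbd}$ each into four $\omega$-terms, the $A^{*}$-terms carrying a minus sign. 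One more application of the product rule gives
\[
\nabla^{A^{*}\otimes_{M}B}(2\,3)=\sum_{abcd}\Bigl[\bigl(\nabla^{A^{*}}E^{abcd}\bigr)\otimes_{M}H_{acbd}+E^{abcd}\otimes_{M}\bigl(\nabla^{B}H_{acbd}\bigr)\Bigr]
\]
up to the transposition that places the differential $T^{*}M$-slot in the position dictated by the declared type.

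Then I would exhibit the cancellation. Because $B$ is $A$ with its second and third factors interchanged, its connection $1$-forms are the very same $\omega^{(k)}$'s carried on reindexed slots; hence for each $k$ the term in which $\nabla^{E_{k}}$ hits the $B$-factor of $(2\,3)$ (carrying $+\omega^{(k)}$) cancels, after relabeling the summation indices, the term in which the dual connection hits the corresponding $A^{*}$-factor (carrying $-\omega^{(k)}$). Summing over $k=1,\dots,4$ yields $\nabla^{A^{*}\otimes_{M}B}(2\,3)=0$ on $U$, and hence on all of $M$.

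I expect the main obstacle to be purely bookkeeping: one must be scrupulous about which tensor slot the derivative's $T^{*}M$-factor occupies, about the transpositions needed to normalize every intermediate expression to the type $\Gamma(A^{*}\otimes_{M}B\otimes_{M}T^{*}M)$, and about the index relabeling and the duality sign that together make the cancellation manifest. A more conceptual alternative — which, however, presupposes a product rule for the tensor-field contraction $\cdot_{A}$ and compatibility of the induced connections with the natural pairing — is to first prove $\nabla^{B}\bigl(X^{(2\,3)}\bigr)=\bigl(\nabla^{A}X\bigr)^{(2\,3)}$ for all $X\in\Gamma(A)$ (by the iterated product rule, checked on simple sections $x_{1}\otimes_{M}x_{2}\otimes_{M}x_{3}\otimes_{M}x_{4}$ and matched term-by-term), and then read off $\bigl(\nabla^{A^{*}\otimes_{M}B}(2\,3)\bigr)\cdot_{A}X=0$ for every $X$, whence $\nabla^{A^{*}\otimes_{M}B}(2\,3)=0$ by nondegeneracy of the natural pairing.
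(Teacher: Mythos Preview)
Your primary argument (local frames and connection $1$-forms) is correct, and your alternative at the end is essentially what the paper does. The paper contracts $\nabla_X^{A^*\otimes_M B}(2\,3)$ against an arbitrary simple section $e_1\otimes_M e_2\otimes_M e_3\otimes_M e_4$ using the product rule for the natural pairing, obtaining $\nabla_X^B(e_1\otimes_M e_3\otimes_M e_2\otimes_M e_4)$ minus the four-term product-rule expansion of $\nabla_X^A(e_1\otimes_M e_2\otimes_M e_3\otimes_M e_4)$ acted on by $(2\,3)$; these visibly coincide, so the contraction vanishes, and nondegeneracy finishes. This is exactly your ``conceptual alternative,'' and the compatibility of the induced connection with the natural pairing that you flag as a prerequisite is precisely what the paper invokes (implicitly) in its first line.

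The trade-off: your frame computation is more explicit about where the signs and indices go, at the cost of introducing connection forms and a summation that then has to be unwound; the paper's argument stays coordinate-free and is shorter, but leans on the product rule for $\cdot_{A^*}$ without stating it separately. Since the paper's whole ethos is to avoid local trivializations, its route is the more thematically consistent one here --- and you already identified it.
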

\begin{proof}
Let $X\in\Gamma\left(TM\right)$. Then
\begin{align*}
 & \left(e_{1}\otimes_{M}e_{2}\otimes_{M}e_{3}\otimes_{M}e_{4}\right)\cdot_{A^{*}}\nabla_{X}^{A^{*}\otimes_{M}B}\left(2\,3\right)\\
={} & \nabla_{X}^{B}\left(\left(e_{1}\otimes_{M}e_{2}\otimes_{M}e_{3}\otimes_{M}e_{4}\right)\cdot_{A^{*}}\left(2\,3\right)\right)-\nabla_{X}^{A}\left(e_{1}\otimes_{M}e_{2}\otimes_{M}e_{3}\otimes_{M}e_{4}\right)\cdot_{A^{*}}\left(2\,3\right)\\
={} & \nabla_{X}^{B}\left(e_{1}\otimes_{M}e_{3}\otimes_{M}e_{3}\otimes_{M}e_{4}\right)\\
 & -\nabla_{X}^{E_{1}}e_{1}\otimes_{M}e_{3}\otimes_{M}e_{2}\otimes_{M}e_{4}\\
 & -e_{1}\otimes_{M}\nabla_{X}^{E_{3}}e_{3}\otimes_{M}e_{2}\otimes_{M}e_{4}\\
 & -e_{1}\otimes_{M}e_{3}\otimes_{M}\nabla_{X}^{E_{2}}e_{2}\otimes_{M}e_{4}\\
 & -e_{1}\otimes_{M}e_{3}\otimes_{M}e_{2}\otimes_{M}\nabla_{X}^{E_{4}}e_{4}\\
={} & \nabla_{X}^{B}\left(e_{1}\otimes_{M}e_{3}\otimes_{M}e_{3}\otimes_{M}e_{4}\right)-\nabla_{X}^{B}\left(e_{1}\otimes_{M}e_{3}\otimes_{M}e_{3}\otimes_{M}e_{4}\right)\\
={} & 0.
\end{align*}
Because $X$ is arbitrary, this shows that $\left(e_{1}\otimes_{M}e_{2}\otimes_{M}e_{3}\otimes_{M}e_{4}\right)\cdot_{A^{*}}\nabla^{A^{*}\otimes_{M}B}\left(2\,3\right)=0$.
This extends linearly to general tensors, so $\nabla^{A^{*}\otimes_{M}B}\left(2\,3\right)=0$,
as desired.
\end{proof}
The fact that all transposition tensor fields are parallel implies
that all permutation tensor fields are parallel, since every permutation
is just the product of transpositions. This gives as an easy corollary
that a covariant derivative operation commutes with a permutation
operation, which has quite a succinct statement using the permutation
superscript notation.
\begin{cor}[Permutation tensor fields are parallel]
 \label{cor:permutation_tensor_fields_are_parallel} Let $E_{1},\dots,E_{k}$
be smooth vector bundles over $M$ each having a covariant derivative,
and let $A:=E_{1}\otimes_{M}\dots\otimes_{M}E_{k}$ and $B:=E_{\sigma^{-1}\left(1\right)}\otimes_{M}\dots\otimes_{M}E_{\sigma^{-1}\left(k\right)}$.
If $\sigma\in S_{k}$ is interpreted as the tensor field in $\Gamma\left(A^{*}\otimes_{M}B\right)$
which maps $e_{1}\otimes_{M}\dots\otimes_{M}e_{k}$ to $e_{\sigma^{-1}\left(1\right)}\otimes_{M}\dots\otimes_{M}e_{\sigma^{-1}\left(k\right)}$,
then $\sigma$ is a parallel tensor field. Stated using the superscript
notation, with $X\in\Gamma\left(TM\right)$ and $a\in\Gamma\left(A\right)$,
\[
\nabla_{X}^{B}a^{\sigma}=\left(\nabla_{X}^{A}a\right)^{\sigma}.
\]
\end{cor}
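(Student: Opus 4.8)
The plan is to reduce the general permutation case to the transposition case already established in Proposition \ref{prop:transposition_tensor_fields_are_parallel}, using the fact that every $\sigma \in S_k$ factors as a product of transpositions of adjacent positions, together with the observation that a contraction of parallel tensor fields is again parallel.

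First I would record the meaning of the superscript notation: for $a \in \Gamma(A)$ the tensor field $a^\sigma \in \Gamma(B)$ is nothing but the contraction $a \cdot_{A^*} \sigma$, where $\sigma \in \Gamma(A^* \otimes_M B)$ is the permutation tensor field. The covariant derivatives $\nabla^A$, $\nabla^B$ and the induced covariant derivative $\nabla^{A^* \otimes_M B}$ all arise from iterating the construction of Proposition \ref{proposition_induced_covariant_derivatives_on_sum_and_product_bundles}, so the product rule for contraction applies:
\[
\nabla_X^B (a \cdot_{A^*} \sigma) = (\nabla_X^A a) \cdot_{A^*} \sigma + a \cdot_{A^*} \nabla_X^{A^* \otimes_M B} \sigma
\]
for every $X \in \Gamma(TM)$. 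Hence the displayed identity $\nabla_X^B a^\sigma = (\nabla_X^A a)^\sigma$ is equivalent to $\nabla^{A^* \otimes_M B} \sigma = 0$, i.e.\ to $\sigma$ being parallel, and it suffices to prove this.

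Next I would write $\sigma = \tau_m \circ \cdots \circ \tau_1$, each $\tau_\ell$ a transposition of two adjacent positions, and set $B_0 := A$, with $B_\ell$ obtained from $B_{\ell-1}$ by swapping the corresponding adjacent factors, so that $B_m = B$. Each $\tau_\ell$, viewed as a tensor field in $\Gamma(B_{\ell-1}^* \otimes_M B_\ell)$, is parallel by Proposition \ref{prop:transposition_tensor_fields_are_parallel}, where the ``padding'' bundles $E_1$ and $E_4$ there are taken to be the (possibly trivial or composite) tensor products of the factors lying before and after the two being swapped; the full tensor product construction and its associativity make this grouping legitimate. The composite permutation tensor field $\sigma$ is then the iterated contraction $\tau_1 \cdot \tau_2 \cdots \tau_m$ of these transposition tensor fields, read in the order dictated by the right-action convention, and a short induction on $\ell$ using the product rule above shows that a contraction of parallel tensor fields is parallel; applying this $m$ times yields $\nabla^{A^* \otimes_M B} \sigma = 0$.

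I expect the main obstacle to be bookkeeping rather than conceptual: matching the left-to-right reading of cycle products with the order of the iterated contraction, and confirming that the adjacent-transposition version of Proposition \ref{prop:transposition_tensor_fields_are_parallel} really does follow from the stated four-factor form by the grouping described (this uses associativity of $\otimes_M$ and the naturality of the induced covariant derivative under regrouping). As an alternative sidestepping the factorization, one can copy the computation in the proof of Proposition \ref{prop:transposition_tensor_fields_are_parallel} verbatim for a general $\sigma$: expanding $(e_1 \otimes_M \cdots \otimes_M e_k) \cdot_{A^*} \nabla_X^{A^* \otimes_M B} \sigma$ as $\nabla_X^B\bigl((e_1 \otimes_M \cdots \otimes_M e_k) \cdot_{A^*} \sigma\bigr) - \bigl(\nabla_X^A (e_1 \otimes_M \cdots \otimes_M e_k)\bigr) \cdot_{A^*} \sigma$ and using the product rule on each term, one finds the two sums over the $k$ differentiated factors coincide after relabeling positions via $\sigma$, so the difference vanishes; linearity in $a$ then finishes the proof.
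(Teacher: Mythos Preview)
Your proposal is correct and follows essentially the same route as the paper: factor $\sigma$ into transpositions, invoke Proposition~\ref{prop:transposition_tensor_fields_are_parallel} for each, and use the product rule on the contraction to conclude both that $\sigma$ is parallel and that the superscript identity holds. The paper's proof is terser and does not spell out the adjacent-transposition bookkeeping or the alternative direct computation you sketch at the end, but the argument is the same.
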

\begin{proof}
This follows from the fact that $\sigma$ can be written as the product
of transpositions; $\nabla_{X}\sigma=0$ because of the product rule
and because each transposition is parallel. The claim regarding commutation
with the superscript permutation follows easily from its definition.
\[
\nabla_{X}^{B}a^{\sigma}=\nabla_{X}^{B}\left(a\cdot_{A^{*}}\sigma\right)=a\cdot_{A^{*}}\nabla_{X}^{A^{*}\otimes_{M}B}\sigma+\nabla_{X}^{A}a\cdot_{A^{*}}\sigma=\left(\nabla_{X}^{A}a\right)^{\sigma},
\]
using the fact that $\nabla_{X}^{A^{*}\otimes_{M}B}\sigma=0$, since
$\sigma$ is a parallel tensor field.
\end{proof}

\section{Decomposition of $\pi_{E}^{TE}\colon TE\to E$\label{sec:Decomposition-of-TE}}

In using the calculus of variations on a manifold $M$ where the Lagrangian
is a function of $TM$ (this form of Lagrangian is ubiquitous in mechanics),
taking the first variation involves passing to $TTM$. Without a way
to decompose variations into more tractable components, the standard
integration-by-parts trick \citep[pg. 16]{Giaquinta&Hildebrandt}
can't be applied. The notion of a local trivialization of $TTM$ via
choice of coordinates on $M$ is one way to provide such a decomposition.
A coordinate chart $\left(U,\phi\colon U\to\mathbb{R}^{n}\right)$
on $M$ establishes a locally trivializing diffeomorphism $TTU\cong\phi\left(U\right)\times\mathbb{R}^{n}\times\mathbb{R}^{n}\times\mathbb{R}^{n}$.
However such a trivialization imposes an artificial additive structure
on $TTU$ depending on the {[}non-canonical{]} choice of coordinates,
only gives a local formulation of the relevant objects, and the ensuing
coordinate calculations don't give clear insight into the geometric
structure of the problem. The notion of the linear connection remedies
this.\\

A \textbf{linear connection} on the vector bundle $\pi\colon E\to M$
is a subbundle $H\to E$ of $\pi_{E}^{TE}\colon TE\to E$ such that
$TE=H\oplus_{E}VE$ and $T\lambda_{a}\cdot H_{x}=H_{ax}$ for all
$a\in\mathbb{R}\backslash\left\{ 0\right\} $ and $x\in E$, where
$\lambda_{a}\colon E\to E,\, e\mapsto ae$ is the scalar multiplication
action of $a$ on $E$ \citep[pg. 512]{JeffMLee}. The bundle $H\to E$
may also be called a \textbf{horizontal space} of the vector bundle
$\pi_{E}^{TE}\colon TE\to E$ (``a'' is used instead of ``the''
because a choice of $H\to E$ is generally non-unique). For convenience,
define $h:=\nnabla\pi\in\Gamma\left(\pi^{*}TM\otimes_{E}T^{*}E\right),$
noting then that $VE=\ker h$.

A linear connection can equivalently be specified by what is known
as a connection map; essentially a projection onto the vertical bundle.
This is a slightly more active formulation than just the specification
of a horizontal space, as a covariant derivative can be defined directly
in terms of the connection map -- see \citep[pg. 518]{JeffMLee},
\citep[pg. 128]{Ebin&Marsden}, \citep[pg. 173]{Eliasson}, and \citep[pg. 208]{Michor}.
\begin{prop}[Connection map formulation of a linear connection]
 If $v\in\Gamma\left(\pi^{*}E\otimes_{E}T^{*}E\right)$ (i.e. $v\colon TE\to E$
is a smooth vector bundle morphism over $\pi$) is a left-inverse
for $\iota_{VE}^{\pi^{*}E}\in\Gamma\left(VE\otimes_{E}\pi^{*}E^{*}\right)$
that is equivariant with respect to $T\lambda_{a}$ and $\lambda_{a}$
(i.e. $v\cdot T\lambda_{a}=\pi^{*}\lambda_{a}\cdot v$) \citep[pg. 245]{Michor},
then $H:=\ker v\leq TE$ defines a linear connection on the vector
bundle $\pi\colon E\to M$. Such a map $v$ is called the \textbf{connection
map} associated to $H$. Conversely, given a linear connection $H$,
there is exactly one connection map defining $H$ in the stated sense. \end{prop}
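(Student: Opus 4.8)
The plan is to establish the claimed bijection between connection maps and linear connections by verifying the two directions separately, both of which are essentially exercises in linear algebra carried out fiberwise over $E$, using the canonical splitting provided by the vertical lift $\iota_{VE}^{\pi^*E}$ from (\ref{prop:vertical-bundle-as-pullback}).

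\emph{First direction: a connection map yields a linear connection.} Suppose $v\in\Gamma\left(\pi^*E\otimes_E T^*E\right)$ is a left inverse for $\iota_{VE}^{\pi^*E}$ that is equivariant under $T\lambda_a$ and $\lambda_a$. I would first observe that since $v$ is a smooth vector bundle morphism $TE\to\pi^*E$ over $\pi$ which restricts to an isomorphism on each $V_eE$ (being left-inverse to the fiberwise isomorphism $\iota_{VE}^{\pi^*E}$), its kernel $H:=\ker v$ is a smooth subbundle of $TE$ of rank $\dim M$ (the corank equals $\rank\pi^*E=\rank E$). Next I would check $TE=H\oplus_E VE$: at each $e\in E$, the composite $\iota_{VE}^{\pi^*E}\circ v\colon T_eE\to V_eE$ is idempotent (because $v$ restricted to $V_eE$ inverts $\iota_{VE}^{\pi^*E}$, so $v\circ\iota_{VE}^{\pi^*E}\circ v=v$), hence it is a projection onto $V_eE$ with kernel $H_e$, giving the direct sum decomposition. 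Finally, the homogeneity condition $T\lambda_a\cdot H_x=H_{ax}$ follows from the equivariance $v\cdot T\lambda_a=\pi^*\lambda_a\cdot v$: if $w\in H_x$, i.e. $v(w)=0$, then $v\left(T\lambda_a\cdot w\right)=\pi^*\lambda_a\cdot v(w)=0$, so $T\lambda_a\cdot H_x\subseteq H_{ax}$, and applying the same argument to $a^{-1}$ gives equality.

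\emph{Second direction: uniqueness and existence of a connection map for a given linear connection.} Given a linear connection $H\leq TE$ with $TE=H\oplus_E VE$, let $p_V\colon TE\to VE$ be the associated projection (smooth, since the splitting is a smooth bundle decomposition). Define $v:=\iota_{\pi^*E}^{VE}\circ p_V\in\Gamma\left(\pi^*E\otimes_E T^*E\right)$, using the inverse vertical lift from (\ref{prop:vertical-bundle-as-pullback}). Then $v\circ\iota_{VE}^{\pi^*E}=\iota_{\pi^*E}^{VE}\circ p_V\circ\iota_{VE}^{\pi^*E}=\iota_{\pi^*E}^{VE}\circ\iota_{VE}^{\pi^*E}=\Id$, so $v$ is a left inverse as required. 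The equivariance $v\cdot T\lambda_a=\pi^*\lambda_a\cdot v$ I would deduce from the homogeneity property of $H$ together with the fact that $T\lambda_a$ preserves $VE$ and acts there compatibly with $\lambda_a$ under the vertical lift identification (this is a direct check that $T\lambda_a$ restricted to $V_eE\cong E_{\pi(e)}$ is just $\lambda_a$ on the fiber). For uniqueness, if $v'$ is any connection map with $\ker v'=H$, then $v'$ and $v$ agree on $VE$ (both inverting $\iota_{VE}^{\pi^*E}$) and both vanish on $H$, so they agree on $TE=H\oplus_E VE$.

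The main obstacle I anticipate is the bookkeeping around the equivariance condition: verifying that $T\lambda_a$ acts on the vertical bundle $V_eE$ as scalar multiplication by $a$ under the identification $V_eE\cong E_{\pi(e)}$ from (\ref{prop:vertical-bundle-as-pullback}) requires unwinding the definition of $\iota_{VE}^{\pi^*E}$ as $\left(x,y\right)\mapsto\delta_\epsilon\left(x+\epsilon y\right)$ and computing $T\lambda_a\cdot\delta_\epsilon\left(x+\epsilon y\right)=\delta_\epsilon\left(a x+\epsilon\, a y\right)$, which lands in $V_{ax}E$ and corresponds to $ay$; this is elementary but is the one place where the specific structure of the vertical lift, rather than just abstract bundle theory, is used, and it is what forces the homogeneity axiom of a linear connection to match the equivariance axiom of a connection map. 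Everything else reduces to the standard fact that a direct sum decomposition of a vector bundle is equivalent to the idempotent projection onto one summand.
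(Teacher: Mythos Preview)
Your proposal is correct and follows essentially the same approach as the paper: both directions rely on the left-inverse condition forcing $H\cap VE=0$ and a rank count (you phrase this via the idempotent $\iota_{VE}^{\pi^*E}\circ v$, the paper via intersection-plus-dimension), the equivariance argument for $T\lambda_a\cdot H=H$ is identical, and the converse uses the same candidate $v=\iota_{\pi^*E}^{VE}\circ p_V$ with uniqueness coming from agreement on each summand. If anything you supply more detail than the paper, which dispatches the equivariance verification for the constructed $v$ with ``it is easily shown.''
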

\begin{proof}
That $v$ is a left-inverse for $\iota_{VE}^{\pi^{*}E}$ implies that
$v$ has full rank, so $H:=\ker v$ defines a subbundle of $\pi_{E}^{TE}\colon TE\to E$
having the same rank as $TM$. Because $v$ is smooth, $H$ is a smooth
subbundle. Furthermore, the condition implies that $V_{e}E\cap H_{e}=\left\{ 0\right\} $
for each $e\in E$, and therefore $TE=H\oplus_{E}VE$ by a rank-counting
argument.

If $x\in TE$ and $a\in\mathbb{R}\backslash\left\{ 0\right\} $, then
$v\cdot T\lambda_{a}\cdot x=\pi^{*}\lambda_{a}\cdot v\cdot x$, which
equals zero if and only if $v\cdot x=0$, i.e. if and only if $x\in H$.
Thus $T\lambda_{a}\cdot H=H$. This establishes $H\to E$ as a linear
connection. 

Conversely, if $H$ is a linear connection and $v_{1}$ and $v_{2}$
are connection maps for $H$, then $v_{1}\cdot_{TE}\iota_{VE}^{\pi^{*}E}=\Id_{\pi^{*}E}=v_{2}\cdot_{TE}\iota_{VE}^{\pi^{*}E}$.
Then because the image of $\iota_{VE}^{\pi^{*}E}$ is all of $VE$,
it follows that $v_{1}\mid_{VE}=v_{2}\mid_{VE}$. Since $v_{1}\mid_{H}=0=v_{2}\mid_{H}$
by definition, and since $TE=H\oplus_{E}VE$, this shows that $v_{1}=v_{2}$.
Uniqueness of connection maps has been established. To show existence,
define $v:=\iota_{\pi^{*}E}^{VE}\cdot_{VE}\pr_{VE}\in\Gamma\left(\pi^{*}E\otimes_{E}T^{*}E\right)$,
where $\pr_{VE}\colon H\oplus_{E}VE\to VE$ be the canonical projection,
recalling that $H\oplus_{E}VE=TE$. It is easily shown that $v$ is
a connection map for $H$.\end{proof}
\begin{prop}[Decomposing $\pi_{E}^{TE}\colon TE\to E$]
 \label{proposition_decomposing_TE} If $v\in\Gamma\left(\pi^{*}E\otimes_{E}T^{*}E\right)$
is a connection map, then 
\begin{eqnarray}
h\oplus_{E}v\colon TE & \to & \pi^{*}TM\oplus_{E}\pi^{*}E\label{eq:TE_decomposition}
\end{eqnarray}
is a smooth vector bundle isomorphism over $\Id_{E}$. See Figure
\ref{fig:horiz-vert-decomp}.\end{prop}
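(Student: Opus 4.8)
The plan is to verify that $h\oplus_E v$ is a smooth vector bundle morphism over $\Id_E$ which is a linear isomorphism on each fiber; a fiberwise-isomorphic smooth vector bundle morphism over the identity is automatically a smooth vector bundle isomorphism, since in local trivializations its fiberwise inverse is given by matrix inversion, which is smooth.

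First I would observe that the two factors already have the right type to be assembled over $\Id_E$. By $(\ref{remark:SVB-morphisms-as-tensor-fields})$, the tensor field $h=\nnabla^{E\to M}\pi\in\Gamma\left(\pi^{*}TM\otimes_{E}T^{*}E\right)$ is a smooth vector bundle morphism $TE\to\pi^{*}TM$ over $\Id_E$, and by hypothesis the connection map $v\in\Gamma\left(\pi^{*}E\otimes_{E}T^{*}E\right)$ is a smooth vector bundle morphism $TE\to\pi^{*}E$ over $\Id_E$. Hence their full direct sum $h\oplus_E v\colon TE\to\pi^{*}TM\oplus_E\pi^{*}E$ is a smooth vector bundle morphism over $\Id_E$, acting fiberwise by $X\mapsto h(X)\oplus v(X)$. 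Next comes a rank count: the fibers of $\pi_{E}^{TE}\colon TE\to E$ have dimension $\dim E=\dim M+\rank E$, while the fibers of $\pi^{*}TM\oplus_E\pi^{*}E\to E$ have dimension $\dim M+\rank E$ as well, since a pullback preserves rank and the ranks of a Whitney sum add. So it suffices to check fiberwise injectivity. Fix $e\in E$ and $X\in T_{e}E$ with $h(X)=0$ and $v(X)=0$. Since $VE=\ker h$, the first equation gives $X\in V_{e}E$; since $H=\ker v$ is by construction the connection cut out by $v$, the second gives $X\in H_{e}$. But $TE=H\oplus_{E}VE$ forces $H_{e}\cap V_{e}E=\{0\}$, so $X=0$. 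Thus $h\oplus_E v$ is fiberwise injective, hence a fiberwise isomorphism by the rank count, hence a smooth vector bundle isomorphism over $\Id_E$.

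There is essentially no hard analytic content here; the only point demanding care is the bookkeeping that $h$ and $v$ genuinely are morphisms \emph{over $\Id_E$}, so that $h\oplus_E v$ lands in the Whitney sum $\pi^{*}TM\oplus_E\pi^{*}E$ over $E$ rather than in some construction over $M$ — and this is exactly the identification recorded in $(\ref{remark:SVB-morphisms-as-tensor-fields})$ together with the pullback reinterpretation of the tangent map $T\pi$ as $\nnabla\pi$. The substantive ingredient, the splitting $TE=H\oplus_{E}VE$ with $\ker h=VE$ and $\ker v=H$, has already been established in the preceding discussion of linear connections and connection maps.
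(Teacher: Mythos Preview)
Your proof is correct and follows essentially the same approach as the paper: both use the splitting $TE=H\oplus_E VE$ with $H=\ker v$ and $VE=\ker h$ to deduce fiberwise injectivity, then invoke the rank count to conclude fiberwise isomorphism. You spell out the dimension count and the smoothness-of-inverse argument more explicitly than the paper does, but the substance is identical.
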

\begin{proof}
Because $TE=H\oplus_{E}VE$, and $H=\ker v$ and $VE=\ker h$, the
fiber-wise restriction 
\begin{eqnarray*}
h\oplus_{E}v\mid_{T_{e}E}\colon T_{e}E & \to & \left(\pi^{*}TM\oplus_{E}\pi^{*}E\right)_{e}\cong T_{\pi\left(e\right)}M\oplus E_{\pi\left(e\right)}
\end{eqnarray*}
is a linear isomorphism for each $e\in E$. The map is a smooth vector
bundle morphism over $\Id_{E}$ by construction. It is therefore a
smooth vector bundle isomorphism over $\Id_{E}$.
\end{proof}
\begin{figure}
\centering
\def\svgwidth{\columnwidth}
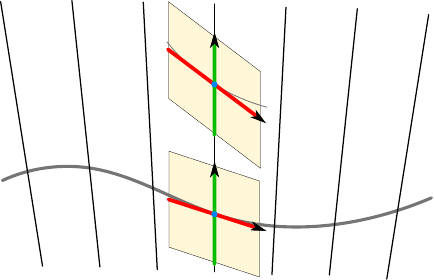

\caption{\label{fig:horiz-vert-decomp} A diagram representing the decomposition
of $TE\to E$ into horizontal and vertical subbundles. The vertical
lines represent individual fibers of $E$, while $p\in M$, $e_{p}\in E_{p}$,
$0_{p}\in E_{p}$ denotes the zero vector of $E_{p}$, and $ZE$ denotes
the zero subbundle of $E$; $ZE\cong M$. By the equivariance property
of the linear connection, $ZE$ is a submanifold of $E$ which is
entirely horizontal (its tangent space is entirely composed of horizontal
vectors). The tangent spaces $T_{0_{p}}E$ and $T_{e_{p}}E$ are drawn;
green arrows representing the vertical subspaces (``along'' the
fibers), red arrows representing the horizontal subspaces. Finally,
$c$ is a horizontal curve passing through $e_{p}$. }
\end{figure}

\begin{rem}[Linear connection/covariant derivative correspondence]
 \label{remark:connection_derivative_correspondence} Given a covariant
derivative $\nabla^{E}$ on a smooth vector bundle $\pi\colon E\to M$,
there is a naturally induced linear connection, defined via the connection
map 
\begin{eqnarray}
v\colon TE & \to & E,\label{eq:vertical_projection_from_covariant_derivative}\\
\delta_{\epsilon}\Theta & \mapsto & \nabla_{\delta_{\epsilon}}^{\left(\pi\circ\Theta\right)^{*}E}\Theta,\nonumber 
\end{eqnarray}
where $\Theta\colon I\to E$ is a variation of $\theta\in E$. Here,
$\nabla^{\left(\pi\circ\Theta\right)^{*}E}$ denotes the pullback
of the covariant derivative $\nabla^{E}$ through the map $\pi\circ\Theta$
(see (\ref{proposition_pullback_covariant_derivative})). Conceptually,
all $v$ does is replace an ordinary derivative ($\delta_{\epsilon}$)
with the corresponding covariant one ($\nabla_{\delta_{\epsilon}}^{\left(\pi\circ\Theta\right)^{*}E}$).

Conversely, given a connection map $v\in\Gamma\left(\pi^{*}E\otimes_{E}T^{*}E\right)$
for a linear connection $H\to E$, there is a naturally induced covariant
derivative $\nabla^{E}$ on the smooth vector bundle $\pi\colon E\to M$,
defined by
\begin{eqnarray*}
\nabla^{E}\colon\Gamma\left(E\right) & \to & \Gamma\left(E\otimes_{M}T^{*}M\right),\\
\sigma & \mapsto & \sigma^{*}v\cdot_{\sigma^{*}TE}\nnabla^{M\to E}\sigma.
\end{eqnarray*}
The scaling equivariance of $v$ is critical for showing that this
map actually defines a covariant derivative. Full type safety should
be observed here; by the contravariance of the pullback of bundles
(see (\ref{prop:pullback_is_contravariant_functor})), $\sigma^{*}\pi^{*}E\cong\left(\pi\circ\sigma\right)^{*}E=\Id_{M}^{*}E\cong E$,
so
\[
\sigma^{*}v\in\Gamma\left(\sigma^{*}\left(\pi^{*}E\otimes_{E}T^{*}E\right)\right)\cong\Gamma\left(\sigma^{*}\pi^{*}E\otimes_{M}\sigma^{*}T^{*}E\right)\cong\Gamma\left(E\otimes_{M}\sigma^{*}T^{*}E\right),
\]
and therefore $\sigma^{*}v\cdot\nnabla\sigma\in\Gamma\left(E\otimes_{M}T^{*}M\right)$
as desired. This connection map construction of a covariant derivative
gives (\ref{proposition_pullback_covariant_derivative}) as an immediate
consequence via the chain rule for the tangent map.\\

\end{rem}
The following construction is an abstraction of taking partial derivatives
of a function, inspired by \citep[pg. 277]{Marsden&Hughes}. Instead
of taking partial derivatives with respect to individual coordinates,
partial covariant derivatives along distributions over the base manifold
are formed, where the distributions (subbundles) decompose the base
manifold's tangent bundle into a direct sum. Such a construction conveniently
captures the geometry of maps with respect to the geometry of its
domain.
\begin{prop}[Partial covariant derivatives]
 \label{prop:partial-covariant-derivatives} Let $L\in C^{\infty}\left(M,\mathbb{R}\right)$,
and for each $i\in\left\{ 1,\dots,n\right\} $ let $F_{i}\to M$ be
a smooth vector bundle. If, for each $i\in\left\{ 1,\dots,n\right\} $,
$c_{i}\in\Gamma\left(F_{i}\otimes_{M}T^{*}M\right)$ such that $c_{1}\oplus_{M}\dots\oplus_{M}c_{n}\in\Gamma\left(\left(F_{1}\oplus_{M}\dots\oplus_{M}F_{n}\right)\otimes_{M}T^{*}M\right)$
is a smooth vector bundle isomorphism over $\Id_{E}$, then there
exist unique sections $L_{,c_{i}}\in\Gamma\left(F_{i}^{*}\right)$
for each $i\in\left\{ 1,\dots,n\right\} $ such that
\[
\nabla^{M\to\mathbb{R}}L=L_{,c_{1}}\cdot_{F_{1}}c_{1}+\dots+L_{,c_{n}}\cdot_{F_{n}}c_{n}.
\]
This decomposition of $\nabla L$ provides what will be called \textbf{partial
covariant derivatives} of $L$ (with respect to the given decomposition). \end{prop}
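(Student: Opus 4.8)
The plan is to reduce the statement to the observation that precontraction with the given bundle isomorphism is a linear bijection on the relevant spaces of sections, so that the $L_{,c_i}$ are obtained simply by inverting that isomorphism and reading off components.

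First I would write $F := F_1 \oplus_M \dots \oplus_M F_n$ and regard $c := c_1 \oplus_M \dots \oplus_M c_n \in \Gamma(F \otimes_M T^*M)$, via the tensor-field formulation of smooth vector bundle morphisms (\ref{remark:SVB-morphisms-as-tensor-fields}), as the smooth vector bundle morphism $TM \to F$ over $\Id_M$ which by hypothesis is an isomorphism; let $c^{-1} \in \Gamma(TM \otimes_M F^*)$ be its inverse morphism, so that $c^{-1} \cdot_F c = \Id_{TM}$ and $c \cdot_{TM} c^{-1} = \Id_F$. I would also fix the canonical identification $F^* \cong F_1^* \oplus_M \dots \oplus_M F_n^*$ (the dual of a direct sum is the direct sum of the duals), under which $\Gamma(F^*) \cong \Gamma(F_1^*) \oplus \dots \oplus \Gamma(F_n^*)$; for $\beta \in \Gamma(F^*)$ write $\beta = \beta_1 \oplus_M \dots \oplus_M \beta_n$ for its components.

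The one purely formal lemma I would record is that, for each $\beta \in \Gamma(F^*)$,
\[
\beta \cdot_F c = \sum_{i=1}^n \beta_i \cdot_{F_i} c_i
\]
in $\Gamma(T^*M)$; this follows from bilinearity of the fiber pairing and the definition of the direct sum, since evaluating both sides at $v \in T_p M$ gives $\beta_p(c_p(v)) = \sum_i (\beta_i)_p((c_i)_p(v))$, using $c_p(v) = ((c_1)_p(v), \dots, (c_n)_p(v))$. With this in hand, I would set $\alpha := \nabla^{M\to\mathbb{R}}L \cdot_{TM} c^{-1} \in \Gamma(F^*)$ and let $L_{,c_i} \in \Gamma(F_i^*)$ be its $i$-th component. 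Then, by associativity of contraction,
\[
\alpha \cdot_F c = \bigl( \nabla^{M\to\mathbb{R}}L \cdot_{TM} c^{-1} \bigr) \cdot_F c = \nabla^{M\to\mathbb{R}}L \cdot_{TM} \bigl( c^{-1} \cdot_F c \bigr) = \nabla^{M\to\mathbb{R}}L,
\]
which, combined with the formal lemma, is precisely $\nabla^{M\to\mathbb{R}}L = \sum_i L_{,c_i} \cdot_{F_i} c_i$, giving existence. For uniqueness, given two such families $L_{,c_i}$ and $\widetilde{L}_{,c_i}$, I would assemble $\alpha, \widetilde\alpha \in \Gamma(F^*)$ from their components; the formal lemma gives $\alpha \cdot_F c = \nabla^{M\to\mathbb{R}}L = \widetilde\alpha \cdot_F c$, and contracting on the right with $c^{-1}$ (using $c \cdot_{TM} c^{-1} = \Id_F$) yields $\alpha = \widetilde\alpha$, hence $L_{,c_i} = \widetilde{L}_{,c_i}$ for all $i$. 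Equivalently and more compactly: $\beta \mapsto \beta \cdot_F c$ is a linear bijection $\Gamma(F^*) \to \Gamma(T^*M)$ with inverse $\gamma \mapsto \gamma \cdot_{TM} c^{-1}$, so $\nabla^{M\to\mathbb{R}}L$ has a unique preimage, whose components are the $L_{,c_i}$.

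I do not expect a deep obstacle. The only thing requiring care is the bookkeeping of the canonical identifications — that $(F_1 \oplus_M \dots \oplus_M F_n)^*$ splits as $F_1^* \oplus_M \dots \oplus_M F_n^*$ compatibly with the pairing, that $\Gamma$ of a direct sum is the direct sum of the section spaces, and that the contraction $\cdot_F$ restricted to matching summands reproduces the individual $\cdot_{F_i}$ — all of which are routine consequences of the constructions in Sections \ref{sec:Bundle-Constructions} and \ref{sec:Strongly-Typed-Tensor-Field}; the substantive input, namely that $c$ is invertible, is supplied by hypothesis.
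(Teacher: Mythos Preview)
Your proof is correct and follows essentially the same approach as the paper: assemble the $c_i$ into the isomorphism $c$, invert it, and read off the $L_{,c_i}$ as the components of $\nabla L \cdot_{TM} c^{-1}$. The paper's version is just a terser chain of equivalences, while you spell out the direct-sum bookkeeping (the ``formal lemma'' and the splitting of $F^*$) more explicitly.
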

\begin{proof}
The following equivalences provide a formula for directly defining
$L_{,c_{1}},\dots,L_{,c_{n}}$.
\begin{align*}
\nabla L & =L_{,c_{1}}\cdot_{F_{1}}c_{1}+\dots+L_{,c_{n}}\cdot_{F_{n}}c_{n}\\
\iff\nabla L & =\left(L_{,c_{1}}\oplus_{M}\dots\oplus_{M}L_{,c_{n}}\right)\cdot_{F_{1}\oplus_{M}\dots\oplus_{M}F_{n}}\left(c_{1}\oplus_{M}\dots\oplus_{M}c_{n}\right)\\
\iff\nabla L\cdot_{TM}\left(c_{1}\oplus_{M}\dots\oplus_{M}c_{n}\right)^{-1} & =L_{,c_{1}}\oplus_{M}\dots\oplus_{M}L_{,c_{n}}.
\end{align*}
Existence and uniqueness is therefore proven.\end{proof}
\begin{cor}[Horizontal/vertical derivatives]
 \label{cor:horizontal_vertical_derivatives} Let $h:=\nnabla\pi\in\Gamma\left(\pi^{*}TM\otimes_{E}T^{*}E\right)$
as before. If $v\in\Gamma\left(\pi^{*}E\otimes_{E}T^{*}E\right)$
is a connection map, and if $L\colon E\to\mathbb{R}$ is smooth, then
there exist unique $L_{,h}\in\Gamma\left(\pi^{*}T^{*}M\right)$ and
$L_{,v}\in\Gamma\left(\pi^{*}E^{*}\right)$ such that $\nabla L=L_{,h}\cdot_{\pi^{*}TM}h+L_{,v}\cdot_{\pi^{*}E}v$.
\end{cor}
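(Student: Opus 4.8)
The plan is to obtain this as an immediate special case of Proposition (\ref{prop:partial-covariant-derivatives}), applied with the base manifold taken to be the total space $E$ rather than $M$. Concretely, I would invoke that proposition with $n=2$, with the two smooth vector bundles over $E$ being $F_{1}:=\pi^{*}TM\to E$ and $F_{2}:=\pi^{*}E\to E$, and with the two tensor fields being $c_{1}:=h=\nnabla\pi\in\Gamma\left(\pi^{*}TM\otimes_{E}T^{*}E\right)$ and $c_{2}:=v\in\Gamma\left(\pi^{*}E\otimes_{E}T^{*}E\right)$. The sole hypothesis to verify is that $c_{1}\oplus_{E}c_{2}=h\oplus_{E}v\colon TE\to\pi^{*}TM\oplus_{E}\pi^{*}E$ is a smooth vector bundle isomorphism over $\Id_{E}$; but this is exactly the content of Proposition (\ref{proposition_decomposing_TE}), which establishes the horizontal/vertical decomposition of $TE$ determined by the connection map $v$ (using $VE=\ker h$ and $H=\ker v$ with $TE=H\oplus_{E}VE$).

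Granting that, Proposition (\ref{prop:partial-covariant-derivatives}) yields unique sections $L_{,h}\in\Gamma\bigl((\pi^{*}TM)^{*}\bigr)$ and $L_{,v}\in\Gamma\bigl((\pi^{*}E)^{*}\bigr)$ satisfying $\nabla L=L_{,h}\cdot_{\pi^{*}TM}h+L_{,v}\cdot_{\pi^{*}E}v$, where $\nabla L=\lnabla^{E\to\mathbb{R}}L=dL$ is the natural differential of $L$ on the manifold $E$, for which no extra choice of structure is needed. The remaining step is purely typographical: using the naturality of the bundle pullback with respect to the dual construction (a companion to the fact, recorded earlier, that pullback commutes with tensor products), one has canonical isomorphisms $(\pi^{*}TM)^{*}\cong\pi^{*}T^{*}M$ and $(\pi^{*}E)^{*}\cong\pi^{*}E^{*}$, under which $L_{,h}\in\Gamma\left(\pi^{*}T^{*}M\right)$ and $L_{,v}\in\Gamma\left(\pi^{*}E^{*}\right)$, as claimed. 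Uniqueness of $L_{,h}$ and $L_{,v}$ is inherited directly from the uniqueness clause of Proposition (\ref{prop:partial-covariant-derivatives}).

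There is essentially no real obstacle here; the argument is bookkeeping, and the only points requiring care are (i) correctly recognizing that Proposition (\ref{prop:partial-covariant-derivatives}) is being applied over the base $E$, so that its hypothesis ``$c_{1}\oplus\dots\oplus c_{n}$ is an isomorphism over the identity'' is precisely the statement of Proposition (\ref{proposition_decomposing_TE}) rather than a statement about $M$, and (ii) the identification $(\pi^{*}W)^{*}\cong\pi^{*}W^{*}$ for $W=TM$ and $W=E$, which is what converts the generic output $\Gamma(F_{i}^{*})$ of the partial-derivative proposition into the types $\Gamma\left(\pi^{*}T^{*}M\right)$ and $\Gamma\left(\pi^{*}E^{*}\right)$ appearing in the statement. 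One could alternatively give a self-contained proof by mimicking the inversion argument in the proof of Proposition (\ref{prop:partial-covariant-derivatives}), i.e. defining $L_{,h}\oplus_{E}L_{,v}:=\nabla L\cdot_{TE}(h\oplus_{E}v)^{-1}$ directly, but invoking the existing proposition is cleaner.
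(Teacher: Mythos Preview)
Your proposal is correct and is exactly the intended approach: the paper states this as an unproved corollary immediately following Proposition~(\ref{prop:partial-covariant-derivatives}), so the implicit proof is precisely to instantiate that proposition with $n=2$, $c_{1}=h$, $c_{2}=v$ over the base $E$, using Proposition~(\ref{proposition_decomposing_TE}) to supply the isomorphism hypothesis. Your remarks on the type identifications $(\pi^{*}TM)^{*}\cong\pi^{*}T^{*}M$ and $(\pi^{*}E)^{*}\cong\pi^{*}E^{*}$ are the only bookkeeping the paper leaves implicit.
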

It should be noted that the basepoint-preserving issue discussed in
Section \ref{sec:Tangent-Map-as-Tensor-Field} plays a role in choosing
to use the tensor field formulation of $h\colon TE\to TM$ and $v\colon TE\to E$.
In particular, without preserving the basepoint (via the $\pi$-pullback
of $TM$ and $E$ to form $h\in\Gamma\left(\pi^{*}TM\otimes_{E}T^{*}E\right)$
and $v\in\Gamma\left(\pi^{*}E\otimes_{E}T^{*}E\right)$), the map
$h\oplus_{E}v$ would not be a smooth bundle isomorphism, and the
horizontal and vertical derivatives would be maps of the form $L_{,h}\colon E\to T^{*}M$
and $L_{,v}\colon E\to E^{*}$, but that, critically, are \emph{not}
sections of smooth vector bundles, and can only claim to be smooth
{[}fiber{]} bundle morphisms. Derivative trivializations will be central
in calculating the first and second variations of an energy functional
having Lagrangian $L$ (see (\ref{thm:first_variation_of_L}) and
(\ref{thm:second_variation_of_L})).

\section{Curvature and Commutation of Derivatives \label{sec:Curvature-and-Commutation-of-Derivatives}}

A ubiquitous consideration in mathematics is to determine when two
operations commute. In the setting of tensor calculus, this often
manifests itself in determining the commutativity (or lack thereof)
of two covariant derivatives. Here, ``covariant derivatives'' may
refer to both linear covariant derivatives and the tangent map operator
(see (\ref{rem:generalized-covariant-derivative})). This unified
categorization of derivatives will now be leveraged to show that certain
fiber bundles are flat (in a sense analogous to the vanishing of a
curvature endomorphism) with respect to particular covariant derivatives.
This reduces the work often done showing commutativity of derivatives
in the derivation of the first variation of a function in the calculus
of variations to the simple statement that a particular tensor field
is symmetric, which is comes as a corollary to the aforementioned
flatness.

In this section, the symbol $\nabla$ may denote $\nnabla$ or $\lnabla$,
depending on context. This eases the expression of repeated covariant
derivatives, such as the covariant Hessian of a section (see below),
and is an example of telescoping notation as discussed in Section
\ref{sec:Strongly-Typed-Linear-Algebra}.\\

If $\pi\colon E\to M$ defines a smooth {[}fiber{]} bundle whose space
of sections $\Gamma\left(E\right)$ has two repeated covariant derivatives
defined and if $\nabla^{TM}$ is a symmetric linear covariant derivative
(meaning $\nabla_{X}Y-\nabla_{Y}X=\left[X,Y\right]$ for $X,Y\in\Gamma\left(TM\right)$),
then the tensor contraction
\[
\nabla^{2}\sigma:\left(X\otimes_{M}Y-Y\otimes_{M}X\right)
\]
is an expression measuring the non-commutativity of the $X$ and $Y$
derivatives of $\sigma$. The quantity $\nabla^{2}\sigma$ will be
called the \textbf{covariant Hessian} of $\sigma$, because it generalizes
the Hessian of elementary calculus; it contains only second-derivative
information, and in the special case seen below, it is symmetric in
the argument components. It should be noted that if $F\to M$ is the
vector bundle such that $\nabla\sigma\in\Gamma\left(F\otimes_{M}T^{*}M\right)$,
then $\nabla^{2}\sigma\in\Gamma\left(F\otimes_{M}T^{*}M\otimes_{M}T^{*}M\right)$.
Intentionally leaving the $\nabla$ and $\cdot$ symbols undecorated
in preference of contextual interpretation, unwinding the expression
above gives
\begin{align*}
\nabla^{2}\sigma:\left(X\otimes_{M}Y-Y\otimes_{M}X\right) & =\nabla_{Y}\nabla\sigma\cdot X-\nabla_{X}\nabla\sigma\cdot Y\\
 & =\nabla_{Y}\nabla_{X}\sigma-\nabla\sigma\cdot\nabla_{Y}X-\nabla_{X}\nabla_{Y}\sigma+\nabla\sigma\cdot\nabla_{X}Y\\
 & =-\nabla_{X}\nabla_{Y}\sigma+\nabla_{Y}\nabla_{X}\sigma+\nabla\sigma\cdot\left[X,Y\right]\\
 & =-\nabla_{X}\nabla_{Y}\sigma+\nabla_{Y}\nabla_{X}\sigma+\nabla_{\left[X,Y\right]}\sigma,
\end{align*}
which is syntactically identical to the common definition for the
{[}Riemannian{]} curvature endomorphism $R\left(X,Y\right)\sigma$.
In the traditional setting, where $\nabla^{E}$ is a linear covariant
derivative on vector bundle $E$, the curvature endomorphism takes
the form of a tensor field $R^{E}\in\Gamma\left(E\otimes_{M}E^{*}\otimes_{M}T^{*}M\otimes_{M}T^{*}M\right)$.
In this setting however, because $\nabla^{E}$ may be nonlinear, such
a tensorial formulation doesn't generally exist. Instead, 
\[
R^{E}\left(X,Y\right):=-\nabla_{X}\nabla_{Y}^{E}+\nabla_{Y}\nabla_{X}^{E}+\nabla_{\left[X,Y\right]}^{E}
\]
defines a second-order covariant differential operator (``covariant''
meaning tensorial in the $X$ and $Y$ components). Put differently,
\[
R^{E}\left(X,Y\right)\sigma=\nabla^{2}\sigma:\left(X\otimes_{M}Y-Y\otimes_{M}X\right),
\]
which will be called the (possibly nonlinear)\textbf{ curvature operator},
measures the non-commutativity of the $X$ and $Y$ derivatives of
$\sigma$. If $R^{E}$ is identically zero, then the bundle $E$ is
said to be \textbf{flat} with respect to the relevant connections/covariant
derivatives.

There are two particularly important instances of flat bundles. The
first is the trivial line bundle defined by $\pi^{\mathbb{R}\rtriv S}$
(whose space of smooth sections, as discussed in Section \ref{sec:Bundle-Constructions},
is naturally identified with $C^{\infty}\left(S,\mathbb{R}\right)$).
In this case, $\nabla^{S\to\mathbb{R}}f\in\Gamma\left(T^{*}S\right)$,
and $\nabla^{2}f\equiv\lnabla^{T^{*}S}\lnabla^{S\to\mathbb{R}}f\in\Gamma\left(T^{*}S\otimes_{S}T^{*}S\right)$
is the object referred to in most literature as the covariant Hessian
of $f$. Here, $R^{S\to\mathbb{R}}\left(X,Y\right)f$ is a real-valued
function on $S$.
\begin{prop}[Symmetry of covariant Hessian on functions]
 \label{prop:symmetry_of_covariant_hessian_on_functions} Let $S$
be a smooth manifold and let $\nabla^{TS}$ be a symmetric covariant
derivative. If $f\in C^{\infty}\left(S,\mathbb{R}\right)$, then $\nabla^{2}f\in\Gamma\left(T^{*}S\otimes_{S}T^{*}S\right)$
is a symmetric tensor field (i.e. it has a $\left(1\,2\right)$ symmetry).
Here, the covariant derivative on $C^{\infty}\left(S,\mathbb{R}\right)$
is $\nabla^{S\to\mathbb{R}}$ as defined above.\end{prop}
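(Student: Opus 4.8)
The plan is to reduce the symmetry of $\nabla^2 f$ to the symmetry hypothesis on $\nabla^{TS}$ by unwinding the covariant Hessian exactly as in the motivating computation preceding this proposition. First I would fix arbitrary vector fields $X, Y \in \Gamma(TS)$ and pair $\nabla^2 f$ against $X \otimes_S Y - Y \otimes_S X$. By the computation displayed in Section~\ref{sec:Curvature-and-Commutation-of-Derivatives}, applied with $E$ the trivial line bundle $\mathbb{R} \rtriv S$ and $\sigma = f$, this pairing equals the curvature operator $R^{S\to\mathbb{R}}(X,Y)f = -\nabla_X \nabla_Y f + \nabla_Y \nabla_X f + \nabla_{[X,Y]} f$, where all the derivatives here are just ordinary directional derivatives of real-valued functions (since $\nabla^{S\to\mathbb{R}} g = dg$ for $g \in C^\infty(S,\mathbb{R})$).

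The key step is then to show this expression vanishes. I would compute $\nabla_X \nabla_Y f = \nabla_X (df \cdot_{TS} Y) = X(Yf)$ and similarly $\nabla_Y \nabla_X f = Y(Xf)$, so that $-\nabla_X\nabla_Y f + \nabla_Y\nabla_X f = -X(Yf) + Y(Xf) = -(XY - YX)f = -[X,Y]f$. On the other hand $\nabla_{[X,Y]} f = df \cdot_{TS} [X,Y] = [X,Y]f$. Adding these gives $R^{S\to\mathbb{R}}(X,Y)f = 0$. Hence $\nabla^2 f : (X \otimes_S Y - Y \otimes_S X) = 0$ for all $X, Y$, which is precisely the statement that $\nabla^2 f$ is invariant under the transposition $(1\,2)$, i.e.\ it is a symmetric tensor field.

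The one place where the symmetry hypothesis on $\nabla^{TS}$ is actually used is in the intermediate expansion: unwinding $\nabla^2 f : (X \otimes_S Y - Y \otimes_S X)$ via the product rule for the induced covariant derivative on $T^*S \otimes_S T^*S$ produces the terms $\nabla f \cdot \nabla_X Y - \nabla f \cdot \nabla_Y X$, and rewriting this as $\nabla f \cdot (\nabla_X Y - \nabla_Y X) = \nabla f \cdot [X,Y] = \nabla_{[X,Y]} f$ is exactly where symmetry ($\nabla_X Y - \nabla_Y X = [X,Y]$) enters. So I would present the argument by quoting the already-displayed unwinding (which assumed a symmetric $\nabla^{TS}$) and then observing that, in the line-bundle case, the resulting curvature operator is identically zero by the Lie bracket computation above.

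I do not anticipate a serious obstacle here: the proposition is essentially a specialization of the general discussion already given in the section, and the only content beyond that is the elementary fact that iterated directional derivatives of a scalar function have commutator $[X,Y]f$. The main thing to be careful about is bookkeeping with the type-decorated pairings $\cdot_{TS}$ versus $\cdot_{T^*S}$ and making sure the product rule is applied to the correct induced connection on $T^*S \otimes_S T^*S$; but these are routine within the formalism established in the paper. One could alternatively phrase the whole proof as: by Proposition~\ref{prop:transposition_tensor_fields_are_parallel}-style reasoning the statement is local, so pick a coordinate frame and verify the classical identity $\partial_i \partial_j f - \Gamma_{ij}^k \partial_k f$ is symmetric in $i,j$ precisely when $\Gamma_{ij}^k = \Gamma_{ji}^k$; but the coordinate-free route above is cleaner and more in the spirit of the paper.
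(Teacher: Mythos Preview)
Your proposal is correct and follows essentially the same route as the paper's proof: both pair $\nabla^2 f$ against $X\otimes_S Y - Y\otimes_S X$, expand via the product rule (using the symmetry of $\nabla^{TS}$ to turn $\nabla_X Y - \nabla_Y X$ into $[X,Y]$), and then observe that the remaining terms cancel by the definition of the Lie bracket acting on the scalar $\nabla f\cdot X$. The only cosmetic difference is that the paper carries out the unwinding inline rather than first packaging it as $R^{S\to\mathbb{R}}(X,Y)f$ and then evaluating; the content is identical.
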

\begin{proof}
Let $X,Y\in\Gamma\left(TS\right)$. Recall that $\nabla f\equiv df\in\Gamma\left(T^{*}S\right)$.
Then
\begin{align*}
 & \nabla^{2}f:\left(X\otimes_{S}Y-Y\otimes_{S}X\right)\\
={} & \nabla_{Y}\nabla f\cdot X-\nabla_{X}\nabla f\cdot Y\\
={} & \nabla_{Y}\left(\nabla f\cdot X\right)-\nabla f\cdot\nabla_{Y}X-\nabla_{X}\left(\nabla f\cdot Y\right)+\nabla f\cdot\nabla_{X}Y\\
={} & \nabla\left(\nabla f\cdot X\right)\cdot Y-\nabla\left(\nabla f\cdot Y\right)\cdot X+\nabla f\cdot\left[X,Y\right] & \mbox{(by symmetry of \ensuremath{\nabla^{TS}})}\\
={} & -\nabla f\cdot\left[X,Y\right]+\nabla f\cdot\left[X,Y\right] & \mbox{(by definition of \ensuremath{\left[X,Y\right]})}\\
={} & 0.
\end{align*}
Because $X\otimes_{S}Y$ is pointwise-arbitrary in $TS\otimes_{S}TS$,
this shows that $\nabla^{2}f$ is symmetric. Equivalently stated,
$R^{S\to\mathbb{R}}$ is identically zero, and therefore the relevant
bundle is flat.
\end{proof}
The second important case involves the nonlinear covariant derivative
$\nabla^{M\to S}$ on $C^{\infty}\left(M,S\right)$. Here, if $\phi\in C^{\infty}\left(M,S\right)$,
then 
\[
\nabla^{2}\phi\equiv\lnabla^{\phi^{*}TS\otimes_{M}T^{*}M}\nnabla^{M\to S}\phi\in\Gamma\left(\phi^{*}TS\otimes_{M}T^{*}M\otimes_{M}T^{*}M\right),
\]
so $R^{M\to S}\left(X,Y\right)\phi\in\Gamma\left(\phi^{*}TS\right)$. 
\begin{prop}[Symmetry of covariant Hessian on maps]
 \label{prop:symmetry_of_covariant_hessian_on_maps} Let $M$ and
$S$ be smooth manifolds and let $\nabla^{TM}$ and $\nabla^{TS}$
be symmetric covariant derivatives. If $\phi\in C^{\infty}\left(M,S\right)$,
then $\nabla^{2}\phi\in\Gamma\left(\phi^{*}TS\otimes_{M}T^{*}M\otimes_{M}T^{*}M\right)$
is a tensor field which is symmetric in the two $T^{*}M$ components
(i.e. it has a $\left(2\,3\right)$ symmetry). Here, the covariant
derivative on $C^{\infty}\left(M,S\right)$ is $\nnabla^{M\to S}$
as defined above.\end{prop}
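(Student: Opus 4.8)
The plan is to follow the proof of Proposition~\ref{prop:symmetry_of_covariant_hessian_on_functions} as closely as possible. Since the asserted $\left(2\,3\right)$-symmetry of $\nabla^{2}\phi$ is a fibre-wise condition it suffices to check it near an arbitrary point of $M$, and since $X\otimes_{M}Y$ is pointwise-arbitrary in $TM\otimes_{M}TM$ it is enough to show $\nabla^{2}\phi:\left(X\otimes_{M}Y-Y\otimes_{M}X\right)=0$ for all $X,Y\in\Gamma\left(TM\right)$. Unwinding the iterated covariant derivative convention and applying the product rule for the pairing $\cdot_{TM}$ (exactly as in the curvature discussion preceding Proposition~\ref{prop:symmetry_of_covariant_hessian_on_functions}) gives
\[
\nabla^{2}\phi:\left(X\otimes_{M}Y-Y\otimes_{M}X\right)=\nabla_{Y}^{\phi^{*}TS}\left(\nnabla\phi\cdot X\right)-\nabla_{X}^{\phi^{*}TS}\left(\nnabla\phi\cdot Y\right)+\nnabla\phi\cdot\left(\nabla_{X}^{TM}Y-\nabla_{Y}^{TM}X\right),
\]
and by symmetry of $\nabla^{TM}$ the last term is $\nnabla\phi\cdot\left[X,Y\right]$. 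Thus the proposition reduces to the single identity
\[
\nabla_{X}^{\phi^{*}TS}\left(\nnabla\phi\cdot Y\right)-\nabla_{Y}^{\phi^{*}TS}\left(\nnabla\phi\cdot X\right)=\nnabla\phi\cdot\left[X,Y\right],
\]
i.e.\ the symmetry of the pullback covariant derivative of the tangent map.

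To prove that identity I would pass to a coordinate chart $\left(y^{\alpha}\right)$ on $S$ around $\phi\left(p\right)$, with coordinate frame $\left(\partial_{\alpha}\right)$ and coframe $\left(dy^{\alpha}\right)$, and set $\phi^{\alpha}:=y^{\alpha}\circ\phi$. The chain rule for the tangent map (equivalently Lemma~\ref{lemma_representation_of_sections_of_a_pullback_bundle} together with $\phi^{*}dy^{\alpha}\cdot_{\phi^{*}TS}\nnabla\phi=\nnabla^{M\to\mathbb{R}}\phi^{\alpha}=d\phi^{\alpha}$) shows that locally $\nnabla\phi=\phi^{*}\partial_{\alpha}\otimes_{M}d\phi^{\alpha}$, hence $\nnabla\phi\cdot Y=\left(d\phi^{\alpha}\cdot Y\right)\phi^{*}\partial_{\alpha}$. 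Expanding $\nabla_{X}^{\phi^{*}TS}\left(\nnabla\phi\cdot Y\right)$ by the product rule and using Proposition~\ref{proposition_pullback_covariant_derivative} in the form $\nabla_{X}^{\phi^{*}TS}\phi^{*}\partial_{\alpha}=\left(d\phi^{\beta}\cdot X\right)\phi^{*}\left(\nabla_{\partial_{\beta}}^{TS}\partial_{\alpha}\right)$, the antisymmetrization in $X\leftrightarrow Y$ breaks into two pieces: the part coming from differentiating the scalars $d\phi^{\alpha}\cdot Y$ collapses to $\left(d\phi^{\alpha}\cdot\left[X,Y\right]\right)\phi^{*}\partial_{\alpha}=\nnabla\phi\cdot\left[X,Y\right]$ (the mixed second derivatives of $\phi^{\alpha}$ cancelling, just as in the function case), and the remaining part is $\sum_{\alpha,\beta}\left(d\phi^{\beta}\cdot X\right)\left(d\phi^{\alpha}\cdot Y\right)\phi^{*}\left(\nabla_{\partial_{\beta}}^{TS}\partial_{\alpha}-\nabla_{\partial_{\alpha}}^{TS}\partial_{\beta}\right)$, which vanishes because $\nabla^{TS}$ is symmetric and $\left[\partial_{\beta},\partial_{\alpha}\right]=0$. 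This gives the identity, and therefore $\nabla^{2}\phi:\left(X\otimes_{M}Y-Y\otimes_{M}X\right)=-\nnabla\phi\cdot\left[X,Y\right]+\nnabla\phi\cdot\left[X,Y\right]=0$; equivalently $R^{M\to S}\left(X,Y\right)\phi\equiv0$, so the relevant bundle is flat with respect to these covariant derivatives. (An equivalent, slightly more structural variant avoids the reduction: in the same local frame, $\nabla^{2}\phi=\left(\nabla^{\phi^{*}TS}\phi^{*}\partial_{\alpha}\right)\otimes_{M}d\phi^{\alpha}+\phi^{*}\partial_{\alpha}\otimes_{M}\nabla^{2}\phi^{\alpha}$ via the product rule of Proposition~\ref{proposition_induced_covariant_derivatives_on_sum_and_product_bundles}, up to reordering the newly introduced $T^{*}M$ slot; the second summand is $\left(2\,3\right)$-symmetric by Proposition~\ref{prop:symmetry_of_covariant_hessian_on_functions} applied to each $\phi^{\alpha}$, and the first equals $\phi^{*}\left(\nabla_{\partial_{\beta}}^{TS}\partial_{\alpha}\right)\otimes_{M}d\phi^{\beta}\otimes_{M}d\phi^{\alpha}$, which is $\left(2\,3\right)$-symmetric after swapping the summation indices $\alpha,\beta$ and invoking torsion-freeness of $\nabla^{TS}$.)

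The one genuinely new ingredient compared to Proposition~\ref{prop:symmetry_of_covariant_hessian_on_functions} is the torsion-free hypothesis on $\nabla^{TS}$; it enters through the pullback covariant derivative $\nabla^{\phi^{*}TS}$ buried inside the outer derivative $\lnabla^{\phi^{*}TS\otimes_{M}T^{*}M}$ of the covariant Hessian, and the key step is to expose that dependence using the chain rule of Proposition~\ref{proposition_pullback_covariant_derivative}. I expect the main obstacle to be purely clerical: tracking which of the two $T^{*}M$ factors is which as the product rule introduces a new cotangent slot, and checking that the directional-derivative and natural-pairing conventions place the contractions so that the symmetric quantity supplied by torsion-freeness, namely $\nabla_{\partial_{\beta}}^{TS}\partial_{\alpha}$ (symmetric in $\alpha,\beta$), is exactly the one that appears.
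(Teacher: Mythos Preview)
Your argument is correct, but it takes a different route from the paper's. You pass to a coordinate frame $\left(\partial_{\alpha}\right)$ on $S$ and compute directly, using the chain rule of Proposition~\ref{proposition_pullback_covariant_derivative} to expand $\nabla_{X}^{\phi^{*}TS}\phi^{*}\partial_{\alpha}$; torsion-freeness of $\nabla^{TS}$ then enters via the symmetry of $\nabla_{\partial_{\beta}}^{TS}\partial_{\alpha}$ in $\alpha,\beta$. The paper instead stays coordinate-free: it pairs $R^{M\to S}\left(X,Y\right)\phi$ against an arbitrary $\phi^{*}\nabla f$ for $f\in C^{\infty}\left(S,\mathbb{R}\right)$, uses the chain rule $\phi^{*}\nabla f\cdot\nnabla\phi=\nabla\phi^{*}f$ to rewrite everything in terms of scalar functions on $M$, and then invokes Proposition~\ref{prop:symmetry_of_covariant_hessian_on_functions} applied to $f$ on $S$ (this is where torsion-freeness of $\nabla^{TS}$ is used) together with the Lie-bracket identity for $\phi^{*}f$ on $M$. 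Your approach is the classical computation and makes the role of the torsion tensor of $\nabla^{TS}$ completely explicit; the paper's approach is more in keeping with its coordinate-free philosophy and reduces the map case cleanly to the already-proved function case, at the cost of a slightly more opaque unwinding via test covectors.
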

\begin{proof}
Let $X,Y\in\Gamma\left(TM\right)$ and $f\in C^{\infty}\left(S,\mathbb{R}\right)$,
so that $\phi^{*}\nabla f\in\Gamma\left(\phi^{*}TS\right)$. Then
\begin{align*}
\phi^{*}\nabla f\cdot_{\phi^{*}TS}R^{M\to S}\left(X,Y\right)\phi={} & \phi^{*}\nabla f\cdot\left(-\nabla_{X}\nabla_{Y}\phi+\nabla_{Y}\nabla_{X}\phi+\nabla_{\left[X,Y\right]}\phi\right)\\
={} & -\nabla_{X}\left(\phi^{*}\nabla f\cdot\nabla\phi\cdot Y\right)+\nabla_{X}\phi^{*}\nabla f\cdot\nabla\phi\cdot Y\\
 & +\nabla_{Y}\left(\phi^{*}\nabla f\cdot\nabla\phi\cdot X\right)-\nabla_{Y}\phi^{*}\nabla f\cdot\nabla\phi\cdot X\\
 & +\phi^{*}\nabla f\cdot\nabla\phi\cdot\left[X,Y\right]\\
={} & -\nabla_{X}\left(\nabla\phi^{*}f\cdot Y\right)+\nabla_{Y}\left(\nabla\phi^{*}f\cdot X\right)+\nabla\phi^{*}f\cdot\left[X,Y\right]\\
 & +\left(\phi^{*}\nabla^{2}f\cdot\nabla\phi\cdot X\right)\cdot\nabla\phi\cdot Y-\left(\phi^{*}\nabla^{2}f\cdot\nabla\phi\cdot Y\right)\cdot\nabla\phi\cdot X\\
={} & -\nabla\left(\nabla\phi^{*}f\cdot Y\right)\cdot X+\nabla\left(\nabla\phi^{*}f\cdot X\right)\cdot Y+\nabla\phi^{*}f\cdot\left[X,Y\right]\\
 & -\phi^{*}\nabla^{2}f:\left(\left(\nabla\phi\cdot X\right)\otimes_{M}\left(\nabla\phi\cdot Y\right)-\left(\nabla\phi\cdot Y\right)\otimes_{M}\left(\nabla\phi\cdot X\right)\right).
\end{align*}
By definition, $-\nabla\left(\nabla\phi^{*}f\cdot Y\right)\cdot X+\nabla\left(\nabla\phi^{*}f\cdot X\right)\cdot Y=-\nabla\phi^{*}f\cdot\left[X,Y\right]$,
which cancels out the other term. By (\ref{prop:symmetry_of_covariant_hessian_on_functions}),
$\nabla^{2}f$ is symmetric, so the final term is zero. Because $\phi^{*}\nabla f$
is pointwise-arbitrary in $\phi^{*}T^{*}S$ and $X$ and $Y$ are
pointwise-arbitrary in $TM$, this shows that $R^{M\to S}$ is identically
zero, so the bundle defined by $\pi_{M}^{S\times M}\colon S\times M\to M$,
whose space of sections is identified with $C^{\infty}\left(M,S\right)$,
is flat, and therefore $\nabla^{2}\phi$ is symmetric in its two $T^{*}M$
components.
\end{proof}
The construction used in (\ref{prop:partial-covariant-derivatives})
can be applied to nonlinear as well as linear covariant derivatives
to considerable advantage. For example, if $\psi\colon M\times N\to L$,
where $M,N,L$ are smooth manifolds and $p_{M}:=\pr_{1}^{M\times N}$
and $p_{N}:=\pr_{2}^{M\times N}$, then define $\psi_{,M}\in\Gamma\left(\psi^{*}TL\otimes_{M\times N}p_{M}^{*}T^{*}M\right)$
and $\psi_{,N}\in\Gamma\left(\psi^{*}TL\otimes_{M\times N}p_{N}^{*}T^{*}N\right)$
by 
\[
\nnabla\psi=\nnabla^{M\times N\to L}\psi=\psi_{,M}\cdot_{p_{M}^{*}TM}\nnabla p_{M}+\psi_{,N}\cdot_{p_{N}^{*}TN}\nnabla p_{N}.
\]
This gives a convenient way to express partial covariant derivatives,
which will be used heavily in Part \ref{part:Riemannian-Calculus-of-Variations}
in calculating the first and second variations of an energy functional.
Note that in this parlance, $\psi_{,\left(M\times N\right)}$ is the
full tangent map $\nnabla\psi$.

Defining second partial covariant derivatives $\psi_{,MM}$, $\psi_{,MN}$,
$\psi_{,NM}$ and $\psi_{,NN}$ by
\begin{align*}
\nabla\psi_{,M} & =\psi_{,MM}\cdot\nnabla p_{M}+\psi_{,MN}\cdot\nnabla p_{N}\mbox{ and}\\
\nabla\psi_{,N} & =\psi_{,NM}\cdot\nnabla p_{M}+\psi_{,NN}\cdot\nnabla p_{N},
\end{align*}
the symmetry of the covariant Hessian of $\psi$ can be used to show
various symmetries these second derivatives. 
\begin{prop}[Symmetries of partial covariant derivatives]
  \label{prop:symmetries-of-partial-covariant-derivatives} With
$\psi$ and its second partial covariant derivatives as above, 
\[
\psi_{,MM}\in\Gamma\left(\psi^{*}TL\otimes_{M\times N}p_{M}^{*}T^{*}M\otimes_{M\times N}p_{M}^{*}T^{*}M\right)
\]
and $\psi_{,NN}$ (having analogous type) are $\left(2\,3\right)$-symmetric
(i.e. $\left(\psi_{,MM}\right)^{\left(2\,3\right)}=\psi_{,MM}$ and
$\left(\psi_{,NN}\right)^{\left(2\,3\right)}=\psi_{,NN}$) and the
mixed, second partial covariant derivatives 
\begin{align*}
\psi_{,MN} & \in\Gamma\left(\psi^{*}TL\otimes_{M\times N}p_{M}^{*}T^{*}N\otimes_{M\times N}p_{N}^{*}T^{*}N\right)\mbox{ and}\\
\psi_{,NM} & \in\Gamma\left(\psi^{*}TL\otimes_{M\times N}p_{N}^{*}T^{*}N\otimes_{M\times N}p_{M}^{*}T^{*}M\right)
\end{align*}
are mutually $\left(2\,3\right)$-symmetric (i.e. $\psi_{,MN}=\left(\psi_{,NM}\right)^{\left(2\,3\right)}$).\end{prop}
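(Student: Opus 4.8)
The plan is to express the covariant Hessian $\nabla^{2}\psi$ in terms of the four second partial covariant derivatives and then read off all three symmetry statements from the $(2\,3)$-symmetry of $\nabla^{2}\psi$ furnished by (\ref{prop:symmetry_of_covariant_hessian_on_maps}). Throughout I equip $T(M\times N)\cong TM\oplus_{M\times N}TN$ with the covariant derivative induced from $\nabla^{TM}$ and $\nabla^{TN}$ via (\ref{proposition_induced_covariant_derivatives_on_sum_and_product_bundles}); since $\nabla^{TM}$ and $\nabla^{TN}$ are symmetric and the torsion of the induced derivative splits along the two summands, this derivative is symmetric, so (\ref{prop:symmetry_of_covariant_hessian_on_maps}) does apply to $\psi$ regarded as a map out of $M\times N$.

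The first step is to note that $\nnabla p_{M}$ and $\nnabla p_{N}$ are parallel tensor fields. Under the identification $T^{*}(M\times N)\cong p_{M}^{*}T^{*}M\oplus_{M\times N}p_{N}^{*}T^{*}N$, the two-point tensor field $\nnabla p_{M}\in\Gamma(p_{M}^{*}TM\otimes_{M\times N}T^{*}(M\times N))$ is $p_{M}^{*}(\Id_{TM})\oplus 0$, where $\Id_{TM}\in\Gamma(TM\otimes_{M}T^{*}M)$ is the identity endomorphism field. The identity endomorphism field is parallel for any linear covariant derivative (apply the product rule to $\Id_{TM}\cdot_{TM}X=X$), pullback preserves parallelism by the chain rule (\ref{proposition_pullback_covariant_derivative}), and the summand $p_{M}^{*}T^{*}M$ is preserved by the induced connection on $T^{*}(M\times N)$; hence $\nabla\nnabla p_{M}=0$, and symmetrically $\nabla\nnabla p_{N}=0$.

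The second step is to expand $\nabla^{2}\psi=\nabla(\nnabla\psi)$ using $\nnabla\psi=\psi_{,M}\cdot_{p_{M}^{*}TM}\nnabla p_{M}+\psi_{,N}\cdot_{p_{N}^{*}TN}\nnabla p_{N}$ and the product rule for covariant derivatives; by Step 1 the terms carrying $\nabla\nnabla p_{M}$ and $\nabla\nnabla p_{N}$ drop out, leaving $\nabla^{2}\psi=(\nabla\psi_{,M})\cdot\nnabla p_{M}+(\nabla\psi_{,N})\cdot\nnabla p_{N}$, and substituting the defining equations for $\psi_{,MM},\psi_{,MN},\psi_{,NM},\psi_{,NN}$ then yields, for tangent vectors $w_{1}=u_{1}\oplus v_{1}$ and $w_{2}=u_{2}\oplus v_{2}$ in a common fiber of $T(M\times N)$,
\[
\nabla^{2}\psi\cdot^{2}(w_{1}\otimes w_{2})=\psi_{,MM}\cdot(u_{1}\otimes u_{2})+\psi_{,MN}\cdot(u_{1}\otimes v_{2})+\psi_{,NM}\cdot(v_{1}\otimes u_{2})+\psi_{,NN}\cdot(v_{1}\otimes v_{2}).
\]
To turn this into a genuine pointwise identity one picks local extensions of $w_{1},w_{2}$ that are parallel at the point in question (so that the tensoriality of $\nabla^{2}\psi$ lets one discard derivatives of the extensions) and uses $\nnabla p_{M}\cdot(u\oplus v)=u$ and $\nnabla p_{N}\cdot(u\oplus v)=v$.

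Finally, (\ref{prop:symmetry_of_covariant_hessian_on_maps}) says the right-hand side above is invariant under $w_{1}\leftrightarrow w_{2}$. As $u_{1},u_{2}$ range freely over $T_{p}M$ and $v_{1},v_{2}$ over $T_{q}N$ independently, the four terms can be isolated: taking $v_{1}=v_{2}=0$ gives $(\psi_{,MM})^{(2\,3)}=\psi_{,MM}$, taking $u_{1}=u_{2}=0$ gives $(\psi_{,NN})^{(2\,3)}=\psi_{,NN}$, and taking $v_{1}=0$ and $u_{2}=0$ gives $\psi_{,MN}\cdot(u_{1}\otimes v_{2})=\psi_{,NM}\cdot(v_{2}\otimes u_{1})=(\psi_{,NM})^{(2\,3)}\cdot(u_{1}\otimes v_{2})$ for all $u_{1},v_{2}$, whence $\psi_{,MN}=(\psi_{,NM})^{(2\,3)}$. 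I expect the main obstacle to be the bookkeeping in the second step — keeping the tensor slots straight through the product rule, verifying that the cross terms with $\nabla\nnabla p_{M}$, $\nabla\nnabla p_{N}$ genuinely vanish, and justifying the pointwise contraction formula via parallel local extensions; once that display is established, extracting the three symmetries is routine.
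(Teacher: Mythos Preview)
Your proof is correct and follows essentially the same route as the paper's: both reduce the claim to the $(2\,3)$-symmetry of $\nabla^{2}\psi$ from (\ref{prop:symmetry_of_covariant_hessian_on_maps}), expand $\nabla^{2}\psi$ against a pair of tangent vectors into the four partial-derivative terms, and then isolate each symmetry by restricting the vectors to lie in appropriate summands of $TM\oplus TN$. The only notable difference is that you explicitly justify the expansion by proving $\nabla\nnabla p_{M}=\nabla\nnabla p_{N}=0$ (via the identification of $\nnabla p_{M}$ with a pullback of $\Id_{TM}$), whereas the paper simply writes the expansion down; the paper does establish the parallelism of $\nnabla p_{M}$ and $\nnabla p_{N}$ elsewhere (in the proof of (\ref{prop:evaluation_of_trivialized_derivative}), via $\nnabla p_{M}\oplus\nnabla p_{N}=\Id_{TM\oplus TN}$), so your extra care here is filling in a step the paper leaves implicit at this point.
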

\begin{proof}
Let $X,Y\in\Gamma\left(TM\oplus TN\right)$. If $Tp_{N}\cdot X=0$
and $Tp_{M}\cdot Y=0$, then
\begin{align*}
0={} & \nabla^{2}\psi:\left(X\otimes_{M\times N}Y-Y\otimes_{M\times N}X\right)\mbox{ (by (\ref{prop:symmetry_of_covariant_hessian_on_maps}))}\\
={} & \psi_{,MM}:\left(\nnabla p_{M}\cdot X\otimes_{M\times N}\nnabla p_{M}\cdot Y-\nnabla p_{M}\cdot Y\otimes_{M\times N}\nnabla p_{M}\cdot X\right)\\
 & +\psi_{,MN}:\left(\nnabla p_{M}\cdot X\otimes_{M\times N}\nnabla p_{N}\cdot Y-\nnabla p_{M}\cdot Y\otimes_{M\times N}\nnabla p_{N}\cdot X\right)\\
 & +\psi_{,NM}:\left(\nnabla p_{N}\cdot X\otimes_{M\times N}\nnabla p_{M}\cdot Y-\nnabla p_{N}\cdot Y\otimes_{M\times N}\nnabla p_{M}\cdot X\right)\\
 & +\psi_{,NN}:\left(\nnabla p_{N}\cdot X\otimes_{M\times N}\nnabla p_{N}\cdot Y-\nnabla p_{N}\cdot Y\otimes_{M\times N}\nnabla p_{N}\cdot X\right)\\
={} & \psi_{,MN}:\left(\nnabla p_{M}\cdot X\otimes_{M\times N}\nnabla p_{N}\cdot Y-\nnabla p_{N}\cdot Y\otimes_{M\times N}\nnabla p_{M}\cdot X\right)\\
={} & \left(\psi_{,MN}-\left(\psi_{,NM}\right)^{\left(2\,3\right)}\right):\left(\nnabla p_{M}\cdot X\otimes_{M\times N}\nnabla p_{N}\cdot Y\right).
\end{align*}
Because $\nnabla p_{M}\cdot X$ and $\nnabla p_{N}\cdot Y$ are pointwise-arbitrary
in $p_{M}^{*}TM$ and $p_{N}^{*}TN$ respectively, this implies that
$\psi_{,MN}=\left(\psi_{,NM}\right)^{\left(2\,3\right)}$. Analogous
calculations (setting $\nnabla p_{M}\cdot X=0$ and $\nnabla p_{M}\cdot Y=0$
and then separately setting $\nnabla p_{N}\cdot X=0$ and $\nnabla p_{N}\cdot Y=0$)
show that $\psi_{,MM}=\left(\psi_{,MM}\right)^{\left(2\,3\right)}$
and $\psi_{,NN}=\left(\psi_{,NN}\right)^{\left(2\,3\right)}$.
\end{proof}
There are two final results regarding the second covariant derivative
that will be especially useful in the calculation of the first and
second variations of an energy functional (see (\ref{thm:first_variation_of_L})
and (\ref{thm:second_variation_of_L_alternate})).
\begin{prop}[Chain rule for covariant Hessian]
 \label{prop:chain_rule_for_covariant_hessian} Let $\pi\colon E\to N$
define a bundle having a first and second covariant derivative (i.e.
a section of $E$ can be covariantly differentiated twice). If $\phi\colon M\to N$
and $e\in\Gamma\left(E\right)$, then
\[
\nabla^{2}\phi^{*}e=\phi^{*}\nabla^{2}e:_{\phi^{*}TN}\left(\nnabla\phi\boxtimes_{M}\nnabla\phi\right)+\phi^{*}\nabla e\cdot_{\phi^{*}TN}\nabla\nnabla\phi.
\]
\end{prop}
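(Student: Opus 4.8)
The plan is to start from the pullback covariant derivative chain rule (\ref{proposition_pullback_covariant_derivative}), namely $\nabla^{\phi^{*}E}\phi^{*}e = \phi^{*}\nabla e \cdot_{\phi^{*}TN}\nnabla\phi$, and apply a second covariant derivative, using a Leibniz rule for the contraction operation together with a second application of the chain rule. The relevant product rule, $\nabla(S\cdot_{F}T) = \nabla S\cdot_{F}T + S\cdot_{F}\nabla T$ (up to moving the freshly created cotangent slot into place), holds because the contraction $\cdot_{F}$ is a tensor product followed by a canonical trace, and covariant differentiation commutes with both — this is where the parallelism of permutation tensor fields (\ref{cor:permutation_tensor_fields_are_parallel}) and of the trace is used. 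The bookkeeping is cleanest if one contracts everything against a pointwise-arbitrary pair $X\otimes_{M}Y$ with $X,Y\in\Gamma(TM)$, so that the iterated-derivative convention $\nabla^{2}\sigma\cdot^{2}(X\otimes_{M}Y) = \nabla_{Y}\nabla_{X}\sigma - \nabla_{\nabla_{Y}X}\sigma$ can be used throughout and no permutation superscripts need be carried along.

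First I would record the directional form of the chain rule, $\nabla_{X}^{\phi^{*}E}\phi^{*}e = \phi^{*}\nabla e\cdot_{\phi^{*}TN}\nnabla_{X}\phi$, where $\nnabla_{X}\phi := \nnabla\phi\cdot_{TM}X\in\Gamma(\phi^{*}TN)$, and, applying (\ref{proposition_pullback_covariant_derivative}) to the section $\nabla e$ of the bundle $E\otimes_{N}T^{*}N$ (so that $\nabla^{2}e = \nabla^{E\otimes_{N}T^{*}N}\nabla^{E}e$), the identity $\nabla_{Y}^{\phi^{*}(E\otimes_{N}T^{*}N)}\phi^{*}\nabla e = \phi^{*}\nabla^{2}e\cdot_{\phi^{*}TN}\nnabla_{Y}\phi$. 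Differentiating $\nabla_{X}^{\phi^{*}E}\phi^{*}e$ in the direction $Y$ via the Leibniz rule for $\cdot_{\phi^{*}TN}$ produces $(\nabla_{Y}^{\phi^{*}(E\otimes_{N}T^{*}N)}\phi^{*}\nabla e)\cdot_{\phi^{*}TN}\nnabla_{X}\phi$ plus $\phi^{*}\nabla e\cdot_{\phi^{*}TN}(\nabla_{Y}\nnabla_{X}\phi)$. Substituting the second chain-rule identity into the first term gives $(\phi^{*}\nabla^{2}e\cdot_{\phi^{*}TN}\nnabla_{Y}\phi)\cdot_{\phi^{*}TN}\nnabla_{X}\phi$, which is exactly $\bigl(\phi^{*}\nabla^{2}e :_{\phi^{*}TN}(\nnabla\phi\boxtimes_{M}\nnabla\phi)\bigr)\cdot^{2}(X\otimes_{M}Y)$ — the $(2\,3)$ transposition in the definition of $\boxtimes$ being precisely what aligns the two $\phi^{*}T^{*}N$ slots of $\phi^{*}\nabla^{2}e$ with the $X$- and $Y$-directions in the right order. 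For the second term I would expand $\nabla_{Y}\nnabla_{X}\phi = \nabla^{2}\phi\cdot^{2}(X\otimes_{M}Y) + \nnabla_{\nabla_{Y}X}\phi$ (another instance of the iterated-derivative identity, with $\nabla^{2}\phi = \nabla\nnabla\phi$), so that it becomes $\bigl(\phi^{*}\nabla e\cdot_{\phi^{*}TN}\nabla\nnabla\phi\bigr)\cdot^{2}(X\otimes_{M}Y) + \phi^{*}\nabla e\cdot_{\phi^{*}TN}\nnabla_{\nabla_{Y}X}\phi$.

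Finally I would assemble, using $\nabla^{2}\phi^{*}e\cdot^{2}(X\otimes_{M}Y) = \nabla_{Y}\nabla_{X}\phi^{*}e - \nabla_{\nabla_{Y}X}\phi^{*}e$ and the directional chain rule once more in the form $\nabla_{\nabla_{Y}X}\phi^{*}e = \phi^{*}\nabla e\cdot_{\phi^{*}TN}\nnabla_{\nabla_{Y}X}\phi$: the stray term $\phi^{*}\nabla e\cdot_{\phi^{*}TN}\nnabla_{\nabla_{Y}X}\phi$ cancels against $\nabla_{\nabla_{Y}X}\phi^{*}e$, leaving exactly $\bigl(\phi^{*}\nabla^{2}e :_{\phi^{*}TN}(\nnabla\phi\boxtimes_{M}\nnabla\phi) + \phi^{*}\nabla e\cdot_{\phi^{*}TN}\nabla\nnabla\phi\bigr)\cdot^{2}(X\otimes_{M}Y)$. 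Since $X\otimes_{M}Y$ is pointwise-arbitrary in $TM\otimes_{M}TM$ and both sides are genuine tensor fields in $\Gamma(\phi^{*}TN\otimes_{M}T^{*}M\otimes_{M}T^{*}M)$, equality of the tensor fields follows. The main obstacle I anticipate is not any single step but the slot-tracking: one must verify that the Leibniz rule for $\cdot_{\phi^{*}TN}$ distributes the new cotangent slot correctly (which is where (\ref{cor:permutation_tensor_fields_are_parallel}) is genuinely invoked) and that the position of the $(2\,3)$ permutation in $\boxtimes$ matches the order in which $X$ and $Y$ enter the two $\phi^{*}T^{*}N$ slots — the directional computation is arranged precisely to make this check routine.
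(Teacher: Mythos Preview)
Your argument is correct and follows the same strategy as the paper: apply the pullback chain rule to get $\nabla\phi^{*}e=\phi^{*}\nabla e\cdot_{\phi^{*}TN}\nnabla\phi$, differentiate once more using the Leibniz rule for $\cdot_{\phi^{*}TN}$, and invoke the chain rule again on $\phi^{*}\nabla e$. The only tactical difference is that the paper contracts against a single vector field $X$ in the outermost $T^{*}M$ slot, computing $\nabla^{2}\phi^{*}e\cdot X=\nabla_{X}\bigl(\phi^{*}\nabla e\cdot_{\phi^{*}TN}\nnabla\phi\bigr)$ directly as an element of $\Gamma\bigl(\phi^{*}E\otimes_{M}T^{*}M\bigr)$, so no $\nabla_{\nabla_{Y}X}$ correction terms ever appear and there is nothing to cancel; your two-vector-field version is slightly longer for exactly that reason but is otherwise the same proof.
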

\begin{proof}
Let $X\in\Gamma\left(TM\right)$. Then
\begin{align*}
\nabla^{2}\phi^{*}e\cdot X & =\nabla_{X}\nabla^{\phi^{*}E}\phi^{*}e\\
 & =\nabla_{X}\left(\phi^{*}\nabla^{E}e\cdot_{\phi^{*}TN}\nnabla\phi\right)\\
 & =\nabla_{X}\left(\phi^{*}\nabla e\right)\cdot_{\phi^{*}TN}\nnabla\phi+\phi^{*}\nabla e\cdot_{\phi^{*}TN}\nabla_{X}\nnabla\phi\\
 & =\left(\phi^{*}\nabla^{2}e\cdot_{\phi^{*}TN}\nnabla\phi\cdot X\right)\cdot_{\phi^{*}TN}\nnabla\phi+\phi^{*}\nabla e\cdot_{\phi^{*}TN}\nabla_{X}\nnabla\phi\\
 & =\left[\phi^{*}\nabla^{2}e:_{\phi^{*}TN}\left(\nnabla\phi\boxtimes_{M}\nnabla\phi\right)+\phi^{*}\nabla e\cdot_{\phi^{*}TN}\nabla\nnabla\phi\right]\cdot X.
\end{align*}
Because $X$ is pointwise-arbitrary in $TM$, this establishes the
desired equality.\end{proof}
\begin{prop}[Pullback curvature endomorphism]
 \label{prop:pullback_curvature_endomorphism} Let $\pi\colon E\to N$
define a vector bundle having first and second covariant derivatives.
If $\phi\colon M\to N$, then $R^{\phi^{*}TN}=\phi^{*}R^{TN}:_{\phi^{*}TN}\left(\nnabla\phi\boxtimes_{M}\nnabla\phi\right)$.\end{prop}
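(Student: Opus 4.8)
The plan is to reduce the claim to two results already in hand: the chain rule for the covariant Hessian, (\ref{prop:chain_rule_for_covariant_hessian}), and the $(2\,3)$-symmetry of the covariant Hessian of a map, (\ref{prop:symmetry_of_covariant_hessian_on_maps}). First I would recall from Section \ref{sec:Curvature-and-Commutation-of-Derivatives} that, since $\nabla^{\phi^{*}TN}$ is a \emph{linear} covariant derivative, the curvature operator $R^{\phi^{*}TN}$ is an honest tensor field in $\Gamma(\phi^{*}TN\otimes_{M}\phi^{*}T^{*}N\otimes_{M}T^{*}M\otimes_{M}T^{*}M)$, computed by $R^{\phi^{*}TN}(X,Y)\tau=\nabla^{2}\tau:(X\otimes_{M}Y-Y\otimes_{M}X)$ for $X,Y\in\Gamma(TM)$ and $\tau\in\Gamma(\phi^{*}TN)$. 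Likewise the right-hand side $\phi^{*}R^{TN}:_{\phi^{*}TN}(\nnabla\phi\boxtimes_{M}\nnabla\phi)$ is manifestly a tensor field of the same type. Hence it suffices to check the identity after contracting with pointwise-arbitrary $X,Y\in\Gamma(TM)$ and with sections of the form $\tau=\phi^{*}e$, $e\in\Gamma(TN)$, since such pullback sections form a locally finitely generating subset of $\Gamma(\phi^{*}TN)$ (this is exactly the subset used in the proof of (\ref{proposition_pullback_covariant_derivative})).

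So I would fix $e\in\Gamma(TN)$ and $X,Y\in\Gamma(TM)$ and apply (\ref{prop:chain_rule_for_covariant_hessian}) to $\phi^{*}e$:
\[
\nabla^{2}\phi^{*}e=\phi^{*}\nabla^{2}e:_{\phi^{*}TN}(\nnabla\phi\boxtimes_{M}\nnabla\phi)+\phi^{*}\nabla e\cdot_{\phi^{*}TN}\nabla\nnabla\phi .
\]
Contracting both sides with the antisymmetrizer $X\otimes_{M}Y-Y\otimes_{M}X$ in the two $T^{*}M$ slots, the ``cross'' term becomes $\phi^{*}\nabla e\cdot_{\phi^{*}TN}(\nabla\nnabla\phi:(X\otimes_{M}Y-Y\otimes_{M}X))$; but $\nabla\nnabla\phi=\nabla^{2}\phi$ is symmetric in precisely those two $T^{*}M$ slots by (\ref{prop:symmetry_of_covariant_hessian_on_maps}), so this term vanishes identically. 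For the remaining term, unwinding the parallel tensor product (Definition \ref{def:parallel_tensor_product}) gives $(\nnabla\phi\boxtimes_{M}\nnabla\phi):(X\otimes_{M}Y)=(\nnabla\phi\cdot X)\otimes_{\phi^{*}TN}(\nnabla\phi\cdot Y)$, so what survives is
\[
\phi^{*}\nabla^{2}e:_{\phi^{*}TN}\bigl((\nnabla\phi\cdot X)\otimes(\nnabla\phi\cdot Y)-(\nnabla\phi\cdot Y)\otimes(\nnabla\phi\cdot X)\bigr)=\phi^{*}\bigl(R^{TN}(\nnabla\phi\cdot X,\nnabla\phi\cdot Y)e\bigr),
\]
using once more the defining relation $R^{TN}(U,V)e=\nabla^{2}e:(U\otimes_{N}V-V\otimes_{N}U)$, pulled back along $\phi$ (with $U=\nnabla\phi\cdot X$, $V=\nnabla\phi\cdot Y$ sections of $\phi^{*}TN$).

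The final bookkeeping step is to recognize the last expression as $(\phi^{*}R^{TN}:_{\phi^{*}TN}(\nnabla\phi\boxtimes_{M}\nnabla\phi))(X,Y)\phi^{*}e$: reading $R^{TN}$ as the endomorphism-valued curvature tensor field, its pullback $\phi^{*}R^{TN}$ has two $\phi^{*}T^{*}N$ input slots, and contracting these with $\nnabla\phi\boxtimes_{M}\nnabla\phi$ produces exactly a tensor field of type $\phi^{*}TN\otimes_{M}\phi^{*}T^{*}N\otimes_{M}T^{*}M\otimes_{M}T^{*}M$, which when evaluated on $(X,Y)$ and $\phi^{*}e$ gives $\phi^{*}(R^{TN}(\nnabla\phi\cdot X,\nnabla\phi\cdot Y)e)$. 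Since $X$, $Y$ and $e$ were arbitrary and $\{\phi^{*}e\}$ is locally finitely generating, the two tensor fields agree, establishing $R^{\phi^{*}TN}=\phi^{*}R^{TN}:_{\phi^{*}TN}(\nnabla\phi\boxtimes_{M}\nnabla\phi)$. I expect the only genuine obstacle to be the slot-matching in this last step — keeping straight which pair of slots the $:_{\phi^{*}TN}$ contraction uses and which pair the antisymmetrization uses, so that the symmetry of $\nabla^{2}\phi$ applies to the correct pair — and the diagrammatic calculus of Section \ref{sec:Strongly-Typed-Linear-Algebra} is the tool I would use to make this fully rigorous. Everything else is forced by the two cited propositions. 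The same argument proves the analogous statement for a general vector bundle $E\to N$ in place of $TN$, which is presumably what the proposition intends.
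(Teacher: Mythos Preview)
Your proposal is correct and follows essentially the same route as the paper: apply the chain rule for the covariant Hessian (\ref{prop:chain_rule_for_covariant_hessian}) to $\phi^{*}e$, kill the $\phi^{*}\nabla e\cdot\nabla\nnabla\phi$ term using the $(2\,3)$-symmetry of $\nabla\nnabla\phi$ from (\ref{prop:symmetry_of_covariant_hessian_on_maps}), and identify what remains with $\phi^{*}R^{TN}$ evaluated on $(\nnabla\phi\cdot X,\nnabla\phi\cdot Y)$. The paper phrases the last reduction step via pointwise-arbitrariness of $\phi^{*}Z$ rather than your locally-finitely-generating argument, but these are equivalent here since $R^{\phi^{*}TN}$ is tensorial.
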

\begin{proof}
Note that $R^{\phi^{*}TN}\in\Gamma\left(\phi^{*}TN\otimes_{M}\phi^{*}T^{*}N\otimes_{M}T^{*}M\otimes_{M}T^{*}M\right)$.
Let $X,Y\in\Gamma\left(TM\right)$ and let $Z\in\Gamma\left(TN\right)$,
so that $\phi^{*}Z\in\Gamma\left(\phi^{*}TN\right)$. Then
\begin{align*}
 & \left(\Id_{\phi^{*}TN}\otimes_{M}\phi^{*}Z\right)\cdot_{\phi^{*}TN\otimes_{M}\phi^{*}T^{*}N}R^{\phi^{*}TN}:_{TM}\left(X\otimes_{M}Y\right)\\
={} & R^{\phi^{*}TN}\left(X,Y\right)\left(\phi^{*}Z\right)\\
={} & \nabla^{2}\phi^{*}Z:_{TM}\left(X\wedge_{M}Y\right)\\
={} & \phi^{*}\nabla^{2}Z:_{\phi^{*}TN}\left(\nnabla\phi\boxtimes_{M}\nnabla\phi\right):_{TM}\left(X\wedge_{M}Y\right)\\
 & +\phi^{*}\nabla Z\cdot_{\phi^{*}TN}\nabla\nnabla\phi:_{TM}\left(X\wedge_{M}Y\right) & \mbox{(by (\ref{prop:chain_rule_for_covariant_hessian}))}\\
={} & \phi^{*}\nabla^{2}Z:_{\phi^{*}TN}\left(\left(\nnabla\phi\cdot X\right)\wedge_{M}\left(\nnabla\phi\cdot Y\right)\right)+\phi^{*}\nabla Z\cdot_{\phi^{*}TN}0 & \mbox{(by symmetry of \ensuremath{\nabla\nnabla\phi})}\\
={} & \left(\phi^{*}\left(\left(\Id_{TN}\otimes_{N}Z\right)\cdot_{TN\otimes_{N}T^{*}N}R^{TN}\right)\right):_{\phi^{*}TN}\left(\left(\nnabla\phi\cdot X\right)\otimes_{M}\left(\nnabla\phi\cdot Y\right)\right)\\
={} & \left(\Id_{\phi^{*}TN}\otimes_{M}\phi^{*}Z\right)\cdot_{\phi^{*}TN\otimes_{M}\phi^{*}T^{*}N}\phi^{*}R^{TN}:_{\phi^{*}TN}\left(\nnabla\phi\boxtimes_{M}\nnabla\phi\right):_{TM}\left(X\otimes_{M}Y\right),
\end{align*}
and because $X,Y$ and $\phi^{*}Z$ are pointwise-arbitrary in their
respective spaces, this establishes the desired equality. 
\end{proof}
A common operation is to evaluate a covariant derivative along a single
tangent vector. One can express a single tangent vector as a section
of a particular pullback bundle, the map being the constant map evaluating
to the basepoint of the vector. This allows the richly-typed formalism
of pullback bundles to be used to evaluate derivatives at a point,
particularly noting that this safely deals with the overloading of
the natural pairing operator $\cdot$ (see Section \ref{sec:Strongly-Typed-Tensor-Field}).
\begin{prop}[Evaluation commutes with non-involved derivatives]
 \label{prop:evaluation_commutes_with_noninvolved_derivatives} Let
$A$ and $B$ be smooth manifolds and let $\sigma\in\Gamma\left(E\right)$
for some smooth bundle $E\to A\times B$ having a covariant derivative
$\nabla^{E}$. If $b\in B$ and the map $z\colon A\to A\times B,\, a\mapsto\left(a,b\right)$
represents evaluation at $b$, then
\[
z^{*}\left(\sigma_{,A}\right)=\left(z^{*}\sigma\right)_{,A},
\]
i.e. evaluation in $B$ commutes with a derivative along $A$.\end{prop}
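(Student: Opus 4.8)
The plan is to reduce the identity to the chain rule for the pullback covariant derivative (Proposition~\ref{proposition_pullback_covariant_derivative}), the chain rule for the tangent map (Section~\ref{sec:Tangent-Map-as-Tensor-Field}), and the two elementary facts that $\nnabla^{A\to A}\Id_{A}$ is the identity tensor field and that the tangent map of a constant map vanishes. First I would note that, since $z^{*}E$ has base space $A$ with no further product structure, the decomposition underlying the notation $\left(z^{*}\sigma\right)_{,A}$ is the trivial one-term decomposition $TA = TA$ with distinguished section $\nnabla^{A\to A}\Id_{A}$; hence $\left(z^{*}\sigma\right)_{,A} = \nabla^{z^{*}E}\left(z^{*}\sigma\right)$, the full pullback covariant derivative. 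Writing $p_{A}:=\pr_{1}^{A\times B}$ and $p_{B}:=\pr_{2}^{A\times B}$, observe that $p_{A}\circ z = \Id_{A}$ and $p_{B}\circ z = c$, where $c\colon A\to B$ is the constant map with value $b$.

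Then I would apply Proposition~\ref{proposition_pullback_covariant_derivative} with the smooth map $z$ and the section $\sigma\in\Gamma\left(E\right)$ to get
\[
\nabla^{z^{*}E}\left(z^{*}\sigma\right) = z^{*}\nabla^{E}\sigma\cdot_{z^{*}T\left(A\times B\right)}\nnabla^{A\to A\times B}z .
\]
Substituting the defining decomposition $\nabla^{E}\sigma = \sigma_{,A}\cdot_{p_{A}^{*}TA}\nnabla p_{A} + \sigma_{,B}\cdot_{p_{B}^{*}TB}\nnabla p_{B}$ and using that bundle pullback is compatible with sums, tensor products, and contractions (the proposition that bundle pullback commutes with tensor product, together with the contravariant functoriality of Proposition~\ref{prop:pullback_is_contravariant_functor}), the right-hand side becomes, after commuting the contractions by associativity,
\[
z^{*}\sigma_{,A}\cdot_{z^{*}p_{A}^{*}TA}\left(z^{*}\nnabla p_{A}\cdot_{z^{*}T\left(A\times B\right)}\nnabla^{A\to A\times B}z\right) + z^{*}\sigma_{,B}\cdot_{z^{*}p_{B}^{*}TB}\left(z^{*}\nnabla p_{B}\cdot_{z^{*}T\left(A\times B\right)}\nnabla^{A\to A\times B}z\right).
\]
By the chain rule for tangent maps, $z^{*}\nnabla p_{A}\cdot_{z^{*}T\left(A\times B\right)}\nnabla^{A\to A\times B}z = \nnabla^{A\to A}\left(p_{A}\circ z\right) = \nnabla^{A\to A}\Id_{A}$, which under the canonical identification $z^{*}p_{A}^{*}TA\cong\left(p_{A}\circ z\right)^{*}TA = \Id_{A}^{*}TA\cong TA$ is the identity tensor field on $TA$; and $z^{*}\nnabla p_{B}\cdot_{z^{*}T\left(A\times B\right)}\nnabla^{A\to A\times B}z = \nnabla^{A\to B}\left(p_{B}\circ z\right) = \nnabla^{A\to B}c = 0$ since $c$ is constant. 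Thus the second summand vanishes and the first collapses to $z^{*}\sigma_{,A}$, yielding $\nabla^{z^{*}E}\left(z^{*}\sigma\right) = z^{*}\left(\sigma_{,A}\right)$, which is the claim.

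The only real work, and the main obstacle, is type-bookkeeping: one must verify that the canonical pullback identifications ($z^{*}p_{A}^{*}TA\cong TA$ coming from $p_{A}\circ z = \Id_{A}$, the naturality of pullback with respect to $\otimes$, and the associativity of contraction) fit together so that the displayed cancellations are type-correct and not merely formally plausible. This is routine in the strongly typed formalism but must be spelled out; I would handle it by invoking contravariant functoriality of pullback (Proposition~\ref{prop:pullback_is_contravariant_functor}) and the naturality of pullback with respect to the tensor product, exactly as these identifications are used throughout Part~\ref{part:Mathematical-Setting}. No permutation tensor fields or curvature terms enter, so nothing deeper than these two compatibilities is needed.
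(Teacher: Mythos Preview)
Your proposal is correct and follows essentially the same approach as the paper's proof: both apply the pullback covariant derivative chain rule to $z^{*}\sigma$, substitute the partial decomposition $\nabla^{E}\sigma=\sigma_{,A}\cdot\nnabla p_{A}+\sigma_{,B}\cdot\nnabla p_{B}$, and observe that the $B$-component vanishes because $p_{B}\circ z$ is constant while the $A$-component collapses via $p_{A}\circ z=\Id_{A}$. The only cosmetic difference is that the paper contracts against a test vector field $X\in\Gamma(TA)$ throughout and then invokes nondegeneracy, whereas you work directly at the level of tensor fields using the tangent-map chain rule; the underlying logic is identical.
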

\begin{proof}
Let $X\in\Gamma\left(TA\right)$, and let $p_{A}:=\pr_{1}^{A\times B}$
and $p_{B}:=\pr_{2}^{A\times B}$. Then
\begin{align*}
\left(z^{*}\sigma\right)_{,A}\cdot X & =\nabla^{z^{*}E}z^{*}\sigma\cdot X\\
 & =z^{*}\nabla^{E}\sigma\cdot_{z^{*}\left(TA\oplus TB\right)}\nnabla z\cdot_{TA}X\\
 & =z^{*}\left(\nabla^{E}\sigma\cdot_{TA\oplus TB}p_{A}^{*}X\right)\\
 & =z^{*}\left(\sigma_{,A}\cdot_{p_{A}^{*}TA}\nnabla p_{A}\cdot_{TA\oplus TB}p_{A}^{*}X\right) & \mbox{(since \ensuremath{\nnabla p_{B}\cdot_{TA\oplus TB}p_{A}^{*}X=0})}\\
 & =z^{*}\sigma_{,A}\cdot_{TA}X & \mbox{(since \ensuremath{z^{*}p_{A}^{*}X=\left(p_{A}\circ z\right)^{*}X=\Id_{A}^{*}X=X}),}
\end{align*}
and because $X$ is pointwise-arbitrary in $TM$, this implies that
$z^{*}\sigma_{,A}=\left(z^{*}\sigma\right)_{,A}$ as desired.\end{proof}
\begin{prop}
\label{prop:evaluation_of_trivialized_derivative} Let $A,B,C$ be
smooth manifolds, let $\psi\colon A\times B\to C$ be smooth, let
$p_{A}:=\pr_{1}^{A\times B}$ and $p_{B}:=\pr_{2}^{A\times B}$, and
let $X,Y\in\Gamma\left(TA\oplus TB\right)$. If $\nnabla p_{B}\cdot X=0$
and $\nnabla p_{A}\cdot Y=0$, then
\[
\psi_{,AB}:\left(\left(\nnabla p_{A}\cdot X\right)\otimes_{A\times B}\left(\nnabla p_{B}\cdot Y\right)\right)=\nabla_{Y}^{\psi^{*}TC}\nabla_{X}^{A\times B\to C}\psi.
\]
\end{prop}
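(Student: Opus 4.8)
The plan is to unwind both sides using the relations that define the partial covariant derivatives together with the two kernel hypotheses, reducing the claim to the vanishing of a single spurious term. First I would rewrite the right-hand side. By the relation $\nnabla\psi=\psi_{,A}\cdot_{p_A^*TA}\nnabla p_A+\psi_{,B}\cdot_{p_B^*TB}\nnabla p_B$ defining $\psi_{,A},\psi_{,B}$, contraction with $X$ gives $\nabla_X^{A\times B\to C}\psi=\psi_{,A}\cdot_{p_A^*TA}(\nnabla p_A\cdot X)+\psi_{,B}\cdot_{p_B^*TB}(\nnabla p_B\cdot X)$, and the hypothesis $\nnabla p_B\cdot X=0$ deletes the second summand, leaving $\nabla_X\psi=\psi_{,A}\cdot_{p_A^*TA}(\nnabla p_A\cdot X)$.

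Next I would apply $\nabla_Y^{\psi^*TC}$ and use the product rule for tensor-field contraction (used freely, e.g.\ in the proof of Prop.~\ref{prop:chain_rule_for_covariant_hessian}):
\[
\nabla_Y\!\left(\psi_{,A}\cdot_{p_A^*TA}(\nnabla p_A\cdot X)\right)=\left(\nabla_Y\psi_{,A}\right)\cdot_{p_A^*TA}(\nnabla p_A\cdot X)+\psi_{,A}\cdot_{p_A^*TA}\nabla_Y^{p_A^*TA}(\nnabla p_A\cdot X).
\]
For the first summand, substitute $\nabla\psi_{,A}=\psi_{,AA}\cdot_{p_A^*TA}\nnabla p_A+\psi_{,AB}\cdot_{p_B^*TB}\nnabla p_B$ (the relation defining $\psi_{,AA},\psi_{,AB}$) and contract with $Y$; the hypothesis $\nnabla p_A\cdot Y=0$ deletes the $\psi_{,AA}$ summand, so $\nabla_Y\psi_{,A}=\psi_{,AB}\cdot_{p_B^*TB}(\nnabla p_B\cdot Y)$. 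Contracting against $\nnabla p_A\cdot X$ and unwinding the $\cdot^2$ (that is, $:$) convention — the outermost slot, here the $p_B^*T^*B$ factor, is paired first — shows the first summand is exactly $\psi_{,AB}:\!\left((\nnabla p_A\cdot X)\otimes_{A\times B}(\nnabla p_B\cdot Y)\right)$, the claimed left-hand side.

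Everything then hinges on the second summand $\psi_{,A}\cdot_{p_A^*TA}\nabla_Y^{p_A^*TA}(\nnabla p_A\cdot X)$ vanishing, and I expect this to be the one genuinely delicate point. The route I would take: by Lemma~\ref{lemma_representation_of_sections_of_a_pullback_bundle} applied to $\Gamma(p_A^*TA)$, locally $\nnabla p_A\cdot X$ is a $C^\infty$-combination of pullback sections $p_A^*e_i$, and the chain rule for the pullback covariant derivative (Prop.~\ref{proposition_pullback_covariant_derivative}) gives $\nabla_Y^{p_A^*TA}p_A^*e_i=p_A^*\nabla^{TA}e_i\cdot_{p_A^*TA}(\nnabla p_A\cdot Y)=0$, precisely because $\nnabla p_A\cdot Y=0$; what remains is to dispatch the leftover coefficient-derivative terms, which is where the two kernel hypotheses must be used together (equivalently, where one reduces to the case that $\nnabla p_A\cdot X$ is itself a pullback section). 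Granting this, $\nabla_Y\nabla_X\psi$ equals the first summand and the identity follows.
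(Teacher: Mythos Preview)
Your route is the paper's, run in reverse: you expand $\nabla_Y\nabla_X\psi$ and isolate the $\psi_{,AB}$ term, whereas the paper starts from $\psi_{,AB}:\bigl((\nnabla p_A\cdot X)\otimes(\nnabla p_B\cdot Y)\bigr)$ and applies a reverse product rule to reach $\nabla_Y\nabla_X\psi$. Both arguments reduce to the same crucial vanishing $\nabla_Y^{p_A^*TA}(\nnabla p_A\cdot X)=0$, but the paper handles it more directly than your local-frame expansion. It applies the product rule to the pairing itself,
\[
\nabla_Y(\nnabla p_A\cdot X)=(\nabla_Y\nnabla p_A)\cdot X+\nnabla p_A\cdot\nabla_Y X,
\]
and kills both terms separately: since $p_A\times_{A\times B}p_B=\Id_{A\times B}$ one has $\nnabla p_A\oplus\nnabla p_B=\Id_{T(A\times B)}$, which is parallel, so $\nabla\nnabla p_A=0$; and for the second term the paper invokes the product covariant derivative of Prop.~\ref{proposition_induced_covariant_derivatives_on_sum_and_product_bundles}, whose sum rule with the $B$-component of $X$ and the $A$-component of $Y$ both zero gives $\nabla_Y X=0$.

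Your leftover coefficient-derivative terms are exactly $\nnabla p_A\cdot\nabla_Y X$, so the step you flagged as delicate is precisely the paper's $\nabla_Y X=0$. Note, though, that the sum rule of Prop.~\ref{proposition_induced_covariant_derivatives_on_sum_and_product_bundles} is stated for sections of the pure form $X_A\oplus X_B$ with $X_A\in\Gamma(TA)$, $X_B\in\Gamma(TB)$; the kernel hypotheses alone only place $X$ in the $p_A^*TA$ summand, and a general section there can still vary along $B$. So your plan to ``dispatch the coefficient terms from the two kernel hypotheses together'' cannot succeed without that structural input; what is actually used is $\nabla_Y X=0$, which holds when $X$ is of pullback form $p_A^*X_A$ --- as it is in every application of this proposition in the paper.
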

\begin{proof}
The conditions $\nnabla p_{B}\cdot X=0$ and $\nnabla p_{A}\cdot Y=0$
imply that $\nabla_{Y}X=0$ in the product covariant derivative. Then
since $p_{A}\times_{A\times B}p_{B}=\Id_{A\times B}$, it follows
that 
\[
\nabla\nnabla p_{A}\oplus_{A\times B}\nabla\nnabla p_{B}=\nabla\left(\nnabla p_{A}\oplus_{A\times B}\nnabla p_{B}\right)=\nabla\nnabla\left(p_{A}\times_{A\times B}p_{B}\right)=\nabla\Id_{TA\oplus TB}=0,
\]
and therefore
\[
\nabla_{Y}\left(\nnabla p_{A}\cdot X\right)=\nabla_{Y}\nnabla p_{A}\cdot X+\nnabla p_{A}\cdot\nabla_{Y}X=0\cdot X+\nnabla p_{A}\cdot0=0.
\]
For the main calculation,
\begin{align*}
 & \psi_{,AB}:\left(\left(\nnabla p_{A}\cdot X\right)\otimes_{A\times B}\left(\nnabla p_{B}\cdot Y\right)\right)\\
={} & \left(\psi_{,AB}\cdot_{p_{B}^{*}TB}\nnabla p_{B}\cdot_{TA\oplus TB}Y\right)\cdot_{p_{A}^{*}TA}\nnabla p_{A}\cdot_{TA\oplus TB}X\\
={} & \left(\nabla_{Y}^{\psi\times_{A\times B}p_{A}}\psi_{,A}\right)\cdot_{p_{A}^{*}TA}\nnabla p_{A}\cdot_{TA\oplus TB}X & \mbox{(since \ensuremath{\nnabla p_{A}\cdot Y=0})}\\
={} & \nabla_{Y}^{\psi}\left(\psi_{,A}\cdot\nnabla p_{A}\cdot X\right)-\psi_{,A}\cdot\nabla_{Y}^{p_{A}}\left(\nnabla p_{A}\cdot X\right) & \mbox{(by reverse product rule)}\\
={} & \nabla_{Y}^{\psi^{*}TC}\nabla_{X}^{A\times B\to C}\psi & \mbox{(since \ensuremath{\nabla_{Y}\left(\nnabla p_{A}\cdot X\right)=0}),}
\end{align*}
as desired.
\end{proof}
\newpage{}

\part{Riemannian Calculus of Variations\label{part:Riemannian-Calculus-of-Variations}}

The use of the Calculus of Variations in the Riemannian setting to
develop the geodesic equations and to study harmonic maps is quite
well-established. A more general formulation is required for more
specific applications, such as continuum mechanics in Riemannian manifolds.
The tools developed in Part \ref{part:Mathematical-Setting} will
now be used to formulate the first and second variations and Euler-Lagrange
equations of an energy functional corresponding to a first-order Lagrangian.
In particular, the bundle decomposition discussed in Section \ref{sec:Decomposition-of-TE}
will be needed to employ the standard integration-by-parts trick seen
in the formulation of the analogous parts of the elementary Calculus
of Variations. The seemingly heavy and pedantic formalism built up
thus far will now show its usefulness.\\

In this part, let $\left(M,g\right)$ and $\left(S,h\right)$ be Riemannian
manifolds with $M$ compact. Calculations will be done formally in
the space $C^{\infty}\left(M,S\right)$, noting that its completion
under various norms will give various Sobolev spaces of maps from
$M$ to $S$, which are ultimately the spaces which must be considered
when finding critical points of the relevant energy functionals. See
\citep{Ebin&Marsden,Eliasson} for details on the analytical issues.
Let $dV_{g}$ denote the Riemannian volume form corresponding to metric
$g$, and let $d\overline{V}_{g}$ be the induced volume form on $\partial M$.
Let $\iota\colon\partial M\to M$ be the inclusion, and let $\nu\in\Gamma\left(\iota^{*}T^{*}M\right)$
be the unit normal covector field on $\partial M$. Let $E:=TS\otimes_{S\times M}T^{*}M$
and $\pi:=\pi_{S}^{TS}\otimes_{S\times M}\pi_{M}^{T^{*}M}$, making
$\pi\colon E\to S\times M$ a vector bundle. 

The energy functionals in this section will be assumed to have the
form
\begin{eqnarray*}
\mathcal{L}\colon C^{\infty}\left(M,S\right) & \to & \mathbb{R},\\
\phi & \mapsto & \int_{M}L\circ\nnabla\phi\, dV_{g},
\end{eqnarray*}
where $L\colon E\to\mathbb{R}$, referred to as the \textbf{Lagrangian}
of the functional, is smooth. Here, $\nnabla\phi$ could be understood
to take values either in $E=TS\otimes_{S\times M}T^{*}M$ or $\phi^{*}TS\otimes_{M}T^{*}M$.
In the former case, the composition $L\circ\nnabla\phi$ is literal,
while in the latter case, there is an implicit conversion from $\phi^{*}TS\otimes_{M}T^{*}M$
to $TS\otimes_{S\times M}T^{*}M$ via a fiber projection bundle morphism
(see (\ref{cor:pullback-fiber-projection})). Either way, $L\circ\nnabla\phi\colon M\to\mathbb{R}$.
Let $\nabla^{TS}$ and $\nabla^{TM}$ denote the respective Levi-Civita
connections, which induce a covariant derivative $\nabla^{E}$ on
$E$ (see (\ref{proposition_induced_covariant_derivatives_on_sum_and_product_bundles})).
Define the connection map $v\in\Gamma\left(\pi^{*}E\otimes_{E}T^{*}E\right)$
using $\nabla^{E}$ as in (\ref{eq:vertical_projection_from_covariant_derivative}).
For convenience, the $S\times M$ subscript will be suppressed on
the ``full'' tensor product defining $E$ from here forward.

\section{Critical Points and Variations}

One of the most pertinent properties of an energy functional is its
set of critical points. Often, the solution to a problem in physics
will take the form of minimizing a particular energy functional. Lagrangian
mechanics is the quintessintial example of this. This section will
deal with some of the main considerations regarding such critical
points.\\

Because the domain of a {[}real-valued{]} functional $\mathcal{L}$
may be a nonlinear space, the relevant first derivative is the {[}real-valued{]}
differential $d\mathcal{L}$, which is paired with the linearized
variation of a map $\phi\in C^{\infty}\left(M,S\right)$. In particular,
a \textbf{one-parameter variation} of $\phi$ is a smooth map $\Phi\colon M\times I\to S$,
where the $I$ component is the variational parameter. Letting $i$
denote the standard coordinate on $I$, the linearized variation is
then $\delta_{i}\Phi\colon M\to TS$, recalling that $\delta_{i}:=\frac{\partial}{\partial i}\mid_{i=0}$.
Because $\pi_{S}^{TS}\circ\delta_{i}\Phi=\phi$, it follows that $\delta_{i}\Phi\in\Gamma\left(\phi^{*}TS\right)$,
i.e. $\delta_{i}\Phi$ is a vector field along $\phi$. The object
$\delta_{i}\Phi$ will be called a \textbf{linearized variation}.
Call the elements of $\Gamma\left(\phi^{*}TS\right)$ \textbf{linear
variations}.
\begin{prop}[Each linear variation is a linearized variation]
 \label{prop:each_linear_variation_is_a_linearized_variation} Let
$\exp\colon U\to S$ denote the exponential map associated to $\nabla^{TS}$,
where $U\subseteq TS$ is a neighborhood of the zero bundle in $TS$
on which $\exp$ is defined, and let $\lambda\colon TS\times\mathbb{R}\to TS,\,\left(s,\epsilon\right)\mapsto\epsilon s$
denote the scalar multiplication structure on $TS$. If $A\in\Gamma\left(\phi^{*}TS\right)$
and if $\Phi\colon U\to S$ is defined by $\Phi:=\exp\circ\lambda\circ\left(A\times\Id_{I}\right)\mid_{U}$,
then $\delta_{i}\Phi=A$. In other words, every vector field over
$\phi$ is realized as the linearization of a one-parameter variation
of $\phi$. \end{prop}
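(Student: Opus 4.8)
The plan is to verify directly that the map $\Phi := \exp \circ \lambda \circ (A \times \Id_I)\mid_U$ has linearization $\delta_i \Phi = A$ by unwinding the definition of $\delta_i$ as a derivative-at-zero and applying the chain rule, then invoking the standard defining property of the exponential map $\exp\colon TS \to S$, namely that $\frac{d}{d\epsilon}\mid_{\epsilon=0} \exp(\epsilon s) = s$ for $s \in TS$. First I would fix $m \in M$, set $s := A(m) \in T_{\phi(m)}S$, and observe that the curve $i \mapsto \Phi(m,i)$ is precisely $i \mapsto \exp(i\, s)$, which is (a reparametrized piece of) the geodesic through $\phi(m)$ with initial velocity $s$. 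Differentiating at $i = 0$ then yields $s = A(m)$, as desired. Since $m$ was arbitrary, this gives $\delta_i \Phi = A$ pointwise on $M$, hence as sections of $\phi^* TS$.

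The steps in order: (1) Confirm that $\Phi$ is well-defined and smooth on the open set $U$ — here one uses that $A$ is a smooth section, $\lambda$ is smooth, and $\exp$ is smooth on a neighborhood of the zero section, so for each $m$ there is an interval $I_m \ni 0$ on which $(m,i) \mapsto \exp(i A(m))$ makes sense; shrinking $I$ if necessary gives a genuine one-parameter variation in the sense defined just before the proposition. (2) Check $\Phi(m,0) = \exp(0 \cdot A(m)) = \exp(0_{\phi(m)}) = \phi(m)$, so $\Phi$ is indeed a variation \emph{of} $\phi$. (3) Compute $\delta_i \Phi(m) = \frac{\partial}{\partial i}\mid_{i=0} \Phi(m,i) = \frac{d}{d\epsilon}\mid_{\epsilon=0} \exp(\epsilon A(m)) = A(m)$, the last equality being the first-order property of $\exp$. (4) Note $\pi_S^{TS} \circ \delta_i \Phi = \phi$ automatically, so $\delta_i \Phi \in \Gamma(\phi^* TS)$, matching the type of $A$.

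The only mild subtlety — and the closest thing to an obstacle — is the domain bookkeeping: $\exp$ is defined only on a neighborhood $U$ of the zero section, so $\lambda \circ (A \times \Id_I)$ must land in $U$, which one arranges by taking $I$ small enough (using compactness of $M$, which is assumed throughout this part, so a single $I$ works uniformly). This is purely a matter of restricting to a suitable neighborhood and does not affect the derivative computation, which is entirely local in $i$ near $0$. No deep input is needed beyond the defining ODE property of the exponential map; the result is essentially a repackaging of that property in the bundle-section language of the paper.
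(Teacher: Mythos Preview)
Your proposal is correct and follows essentially the same approach as the paper: fix a point $p\in M$, observe that $i\mapsto\Phi(p,i)=\exp(iA(p))$, and differentiate at $i=0$ using the defining first-order property of the exponential map. The paper phrases the chain-rule step in its tensor-field formalism (writing $\nnabla\exp$ and the vertical lift $\iota_{VE}^{\pi^{*}E}$ restricted to the zero subbundle) and cites the naturality property of $\exp$ from Lee, whereas you state the same fact directly as $\frac{d}{d\epsilon}\big|_{\epsilon=0}\exp(\epsilon s)=s$; your added remark on domain bookkeeping via compactness of $M$ is a welcome clarification that the paper omits.
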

\begin{proof}
The map $\Phi$ is well-defined and smooth by construction. Let $p\in M$.
Then
\begin{align*}
\left(\delta_{i}\Phi\right)\left(p\right) & =\delta_{i}\left(\Phi\left(p,i\right)\right)\\
 & =\delta_{i}\left(\exp\circ\lambda\circ\left(A\times\Id_{I}\right)\left(p,i\right)\right)\\
 & =\nnabla\exp\cdot\delta_{i}\left(\lambda\left(A\left(p\right),i\right)\right)\\
 & =\nnabla\exp\cdot\delta_{i}\left(iA\left(p\right)\right)\\
 & =\nnabla\exp\cdot\left(\iota_{VE}^{\pi^{*}E}\mid_{Z\left(\pi^{*}E\right)}\right)\cdot A\left(p\right)\\
 & =A\left(p\right),
\end{align*}
where $Z\left(\pi^{*}E\right)$ denotes the zero subbundle of $\pi^{*}E$.
The last equality follows from a naturality property of the exponential
map \citep[pg. 523]{IntroLee}.
\end{proof}
Thus each linear variation is a linearized variation, establishing
a natural identification of $T_{\phi}\left(C^{\infty}\left(M,S\right)\right)$
with $\Gamma\left(\phi^{*}TS\right)$, which will be useful when calculating
the differential of a functional on $C^{\infty}\left(M,S\right)$.
In fact, the exponential map construction in (\ref{prop:each_linear_variation_is_a_linearized_variation})
is a way to construct charts for the infinite dimensional manifold
$C^{\infty}\left(M,S\right)$ \citep[Theorem 5.2]{Eliasson}.

\section{First Variation\label{sec:First-Variation}}

This section is devoted to calculating the first variation of the
previously defined energy functional. Here is where the full richness
of the type system of the objects developed earlier in the paper will
really show their power (and arguably, necessity). While the type-specifying
notation may appear overly decorated and pedantic, subtle usage errors
can be detected and avoided by keeping track of the myriad of types
of the relevant objects through the sub/superscripts on covariant
derivatives and natural pairings; extremely complex constructions
can be made and navigated without much trouble. By contrast, performing
the ensuing calculations in coordinate trivializations would result
in an intractible proliferation of Christoffel symbols and indexed
expressions which would prove difficult to read and would be highly
prone to error.\\

Because the Lagrangian $L\colon E\to\mathbb{R}$ is defined on a vector
bundle $\pi\colon E\to S\times M$ over the product space $S\times M$,
the decomposition in (\ref{cor:horizontal_vertical_derivatives})
can be slightly refined. The projection $\pi$ can be decomposed into
the factors $\pi_{S}:=\pr_{S}^{S\times M}\circ\pi$ and $\pi_{M}:=\pr_{M}^{S\times M}\circ\pi$,
so that $\pi=\pi_{S}\times_{E}\pi_{M}$. Then $h=\nnabla\pi=\nnabla\pi_{S}\oplus_{E}\nnabla\pi_{M}$.
Let 
\[
\sigma:=\nnabla\pi_{S}\in\Gamma\left(\pi_{S}^{*}TS\otimes_{E}T^{*}E\right)\mbox{ and }\mu:=\nnabla\pi_{M}\in\Gamma\left(\pi_{M}^{*}TM\otimes_{E}T^{*}E\right).
\]
The letters \emph{sigma} and \emph{mu} have been chosen to reflect
the fact that $L_{,\sigma}\in\Gamma\left(\pi_{S}^{*}T^{*}S\right)$
and $L_{,\mu}\in\Gamma\left(\pi_{M}^{*}T^{*}M\right)$ give the ``$S$
component'' (spatial) and ``$M$ component'' (material) of the
derivative $\nabla^{E\to\mathbb{R}}L\in\Gamma\left(T^{*}E\right)$.
The connection map $v$ will be retained as is, giving $L_{,v}\in\Gamma\left(\pi^{*}E^{*}\right)$,
the ``$E$ component'' (fiber) of $\nabla^{E\to\mathbb{R}}L$. See
(\ref{remark:analogs-in-elementary-cov}) for a discussion of how
the quantities $L_{,\mu},L_{,\sigma},L_{,v}$ generalize the analogous
structures in the elementary treatment of the calculus of variations.

Because a one-parameter variation of $\phi\in C^{\infty}\left(M,S\right)$
has the form $\Phi\colon M\times I\to S$ but the energy functional
$\mathcal{L}$ involves only the $M$ derivative of its argument,
the partial tangent map must be used here. For the purposes of calculating
the first and second variations, $\mathcal{L}$ must be written as
\[
\mathcal{L}\left(\phi\right):=\int_{M}L\circ\phi_{,M}\, dV_{g}.
\]

\begin{thm}[First variation of $\mathcal{L}$]
 \label{thm:first_variation_of_L} Let $\mathcal{L}$, $L$, $\sigma$,
$\mu$, $v$ and $\nu$ all be defined as above. If $\phi\in C^{\infty}\left(M,S\right)$
and $A\in\Gamma\left(\phi^{*}TS\right)$, then
\[
d\mathcal{L}\left(\phi\right)\cdot A=\int_{M}A\cdot_{\phi^{*}T^{*}S}\left(\phi_{,M}^{*}L_{,\sigma}-\Div_{M}\left(\phi_{,M}^{*}L_{,v}\right)\right)\, dV_{g}+\int_{\partial M}A\cdot_{\phi^{*}T^{*}S}\phi_{,M}^{*}L_{,v}\cdot_{T^{*}M}\nu\, d\overline{V}_{g}.
\]

\end{thm}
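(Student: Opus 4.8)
The plan is to compute $d\mathcal{L}(\phi)\cdot A$ by differentiating under the integral sign along a one-parameter variation $\Phi\colon M\times I\to S$ realizing $A$ as its linearized variation (such a $\Phi$ exists by \prettyref{prop:each_linear_variation_is_a_linearized_variation}), then to massage the integrand into the stated form using the horizontal/vertical decomposition of $\nabla^{E\to\mathbb{R}}L$ from \prettyref{cor:horizontal_vertical_derivatives} (refined here into the $\sigma$, $\mu$, $v$ pieces), and finally to apply the divergence theorem to produce the boundary term. Concretely, first I would write $\mathcal{L}(\Phi(\cdot,i)) = \int_M L\circ\Phi_{,M}\,dV_g$ and observe that since $dV_g$ does not depend on $i$, we have $d\mathcal{L}(\phi)\cdot A = \int_M \delta_i(L\circ\Phi_{,M})\,dV_g$. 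The chain rule for the tangent map gives $\delta_i(L\circ\Phi_{,M}) = \nabla^{E\to\mathbb{R}}L \cdot_{TE} \delta_i(\Phi_{,M})$, evaluated along $\phi_{,M} = z^*\Phi_{,M}$ where $z\colon M\to M\times I$ is the evaluation-at-zero map.

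Next I would decompose $\delta_i(\Phi_{,M})\in TE$ using the isomorphism $h\oplus_E v\colon TE\to\pi^*TM\oplus_E\pi^*E$ of \prettyref{proposition_decomposing_TE}. The key observations are: (i) $\mu\cdot\delta_i(\Phi_{,M}) = \nnabla\pi_M\cdot\delta_i(\Phi_{,M})$ measures the change of the $M$-basepoint, which is zero since $\Phi_{,M}$ lives over the fixed factor $M$ (the variation is in the $S$ and fiber directions only), so the $L_{,\mu}$ term drops out; (ii) $\sigma\cdot\delta_i(\Phi_{,M}) = \nnabla\pi_S\cdot\delta_i(\Phi_{,M})$ is the change of the $S$-basepoint, which is exactly $A = \delta_i\Phi$; and (iii) $v\cdot\delta_i(\Phi_{,M})$ is the fiber (vertical) component, which by the connection-map definition \prettyref{eq:vertical_projection_from_covariant_derivative} equals the covariant derivative $\nabla^{\phi^*TS\otimes_M T^*M}_{\delta_i}\Phi_{,M}$. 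By \prettyref{prop:evaluation_commutes_with_noninvolved_derivatives} (commuting evaluation at $i=0$ with the $M$-derivative), this vertical piece is $\nabla^{\phi^*E}A$, i.e. the total covariant derivative of the linear variation $A$. So the integrand becomes
\[
\delta_i(L\circ\Phi_{,M}) = A\cdot_{\phi^*T^*S}\phi_{,M}^*L_{,\sigma} + \left(\phi_{,M}^*L_{,v}\right)\cdot_{\phi^*E}\nabla^{\phi^*E}A,
\]
where the pullbacks $\phi_{,M}^*L_{,\sigma}$ and $\phi_{,M}^*L_{,v}$ encode evaluation of the derivative trivializations along $\phi_{,M}$, living in $\Gamma(\phi^*T^*S)$ and $\Gamma(\phi^*E\otimes_M\cdots)$ respectively with the appropriate types after the fiber-projection identification $\phi^*E\cong\phi^*TS\otimes_M T^*M$.

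The final step is the integration by parts. Writing the second term as a contraction of $\phi_{,M}^*L_{,v}\in\Gamma(\phi^*TS\otimes_M TM)$ (after raising via the identification and viewing $T^*M$ appropriately) against $\nabla^{\phi^*TS}A$, I would apply the product rule for $\Div_M$ together with the divergence theorem on the compact manifold $(M,g)$ with boundary: $\int_M \Div_M(\xi)\,dV_g = \int_{\partial M}\xi\cdot_{T^*M}\nu\,d\overline{V}_g$. Taking $\xi$ to be the contraction $A\cdot_{\phi^*T^*S}\phi_{,M}^*L_{,v}\in\Gamma(TM)$, the product rule splits $\Div_M\xi$ into the term $A\cdot(-\Div_M(\phi_{,M}^*L_{,v}))$ plus $(\phi_{,M}^*L_{,v})\cdot\nabla A$, and rearranging yields exactly the claimed formula with the bulk integrand $A\cdot_{\phi^*T^*S}(\phi_{,M}^*L_{,\sigma} - \Div_M(\phi_{,M}^*L_{,v}))$ and the boundary integrand $A\cdot_{\phi^*T^*S}\phi_{,M}^*L_{,v}\cdot_{T^*M}\nu$. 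The main obstacle I anticipate is purely bookkeeping: keeping the types straight through the fiber-projection identifications (distinguishing $E = TS\otimes_{S\times M}T^*M$ from $\phi^*TS\otimes_M T^*M$, and tracking where the basepoint lives) so that the pullback $\phi_{,M}^*(-)$ operations are well-typed and the contraction $\cdot_{\phi^*E}$ against $\nabla^{\phi^*E}A$ correctly splits as a divergence — this is precisely the place where the strongly typed formalism earns its keep, and where a coordinate computation would drown in Christoffel symbols.
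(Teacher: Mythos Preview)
Your proposal is correct and follows essentially the same approach as the paper: choose a variation realizing $A$, differentiate under the integral, decompose $\delta_i\Phi_{,M}$ via $\sigma\oplus\mu\oplus v$, then integrate by parts and apply the divergence theorem. One small sharpening: in step (iii), going from $v\cdot\delta_i\Phi_{,M}=\nabla_{\delta_i}\Phi_{,M}$ to $(\delta_i\Phi)_{,M}=\nabla^{\phi^*TS}A$ requires commuting the $I$- and $M$-covariant derivatives, which is the symmetry of mixed partials from \prettyref{prop:symmetries-of-partial-covariant-derivatives} (equivalently \prettyref{prop:symmetry_of_covariant_hessian_on_maps}); \prettyref{prop:evaluation_commutes_with_noninvolved_derivatives} is then used only to pass the $z^*$ evaluation across the remaining $M$-derivative, so you should cite both.
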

The expression above is often called the \textbf{first variation}
of $\mathcal{L}$. A type analysis here gives $\phi_{,M}^{*}L_{,\sigma}\in\Gamma\left(\phi^{*}T^{*}S\right)$
and $\phi_{,M}^{*}L_{,v}\in\Gamma\left(\phi^{*}T^{*}S\otimes_{M}TM\right)$.
Recall that because the domain of $\phi$ is $M$, $\nnabla\phi\equiv\phi_{,M}$.
\begin{proof}
Supporting calculations will be made below in lemmas. Let $\Phi\colon M\times I\to S$
be as in (\ref{prop:each_linear_variation_is_a_linearized_variation}),
so that $\delta_{i}\Phi=A$. For tidiness, let $\mathbf{L}_{,\sigma}:=\phi_{,M}^{*}L_{,\sigma}$
and $\mathbf{L}_{,v}:=\phi_{,M}^{*}L_{,v}$. Then
\begin{align*}
d\mathcal{L}\left(\phi\right)\cdot A & =d\mathcal{L}\left(\phi\right)\cdot\delta_{i}\Phi\\
 & =\delta_{i}\left(\mathcal{L}\left(\Phi\right)\right)\\
 & =\int_{M}\delta_{i}\left(L\circ\Phi_{,M}\right)\, dV_{g}\\
 & =\int_{M}\mathbf{L}_{,\sigma}\cdot_{\phi^{*}TS}A+\mathbf{L}_{,v}\cdot_{\phi^{*}TS\otimes_{M}T^{*}M}\nabla^{\phi^{*}TS}A\, dV_{g} & \mbox{(by (\ref{lem:first-variation-calculation-1}))}\\
 & =\int_{M}A\cdot_{\phi^{*}T^{*}S}\left(\mathbf{L}_{,\sigma}-\Div_{M}\mathbf{L}_{,V}\right)+\Div_{M}\left(A\cdot_{\phi^{*}T^{*}S}\mathbf{L}_{,V}\right)\, dV_{g} & \mbox{(by (\ref{lem:first-variation-calculation-1}))}\\
 & =\int_{M}A\cdot_{\phi^{*}T^{*}S}\left(\mathbf{L}_{,\sigma}-\Div_{M}\mathbf{L}_{,V}\right)\, dV_{g}+\int_{\partial M}A\cdot_{\phi^{*}T^{*}S}\mathbf{L}_{,V}\cdot_{T^{*}M}\nu\, d\overline{V}_{g} & \mbox{(divergence theorem),}
\end{align*}
as desired. 

As for the types of\textbf{ }$\phi_{,M}^{*}L_{,\sigma}$ and $\phi_{,M}^{*}L_{,v}$,
the contravariance of bundle pullback allows significant simplification.
Because $L_{,\sigma}\in\Gamma\left(\pi_{S}^{*}T^{*}S\right)$ and
$L_{,v}\in\Gamma\left(\pi^{*}E^{*}\right)$,
\begin{align*}
\phi_{,M}^{*}L_{,\sigma} & \in\Gamma\left(\phi_{,M}^{*}\pi_{S}^{*}T^{*}S\right)=\Gamma\left(\left(\pi_{S}\circ\phi_{,M}\right)^{*}T^{*}S\right)=\Gamma\left(\phi^{*}T^{*}S\right)\mbox{ and}\\
\phi_{,M}^{*}L_{,v} & \in\Gamma\left(\phi_{,M}^{*}\pi^{*}E\right)=\Gamma\left(\left(\pi\circ\phi_{,M}\right)^{*}\left(T^{*}S\otimes TM\right)\right)=\Gamma\left(\phi^{*}T^{*}S\otimes_{M}TM\right).
\end{align*}
\\

The supporting calculations follow. Define $z\colon M\to M\times I,\, m\mapsto\left(m,0\right)$
for purposes of evaluation of $i=0$ via precomposition as in (\ref{prop:evaluation_commutes_with_noninvolved_derivatives}).
Then $\delta_{i}$ is a section of a pullback bundle; $\delta_{i}=z^{*}\partial_{i}\in\Gamma\left(z^{*}\left(TM\oplus TI\right)\right)$.
It should be noted that $\Phi\circ z=\phi$ by definition, and that
$z^{*}\Phi_{,M}=\left(z^{*}\Phi\right)_{,M}=\phi_{,M}$ by (\ref{prop:evaluation_commutes_with_noninvolved_derivatives}).\end{proof}
\begin{lem}
\label{lem:first-variation-calculation-1} Let $L$, $\Phi$, $A$,
$\sigma$, and $v$ be as in Theorem \ref{thm:first_variation_of_L}.
The variational derivative of $L\circ\Phi_{,M}$ decomposes in terms
of the partial covariant derivatives $\mathbf{L}_{,\sigma}$ and $\mathbf{L}_{,v}$
and the linearized variation $A$; 
\[
\delta_{i}\left(L\circ\Phi_{,M}\right)=\phi_{,M}^{*}L_{,\sigma}\cdot_{\phi^{*}TS}\delta_{i}\Phi+\phi_{,M}^{*}L_{,v}\cdot_{\left(\phi\times_{M}\Id_{M}\right)^{*}E}\nabla^{\phi}\delta_{i}\Phi.
\]
The integration-by-parts trick as in the derivation of the first variation
in elementary calculus of variations generalizes to the covariant
setting;

\[
\mathbf{L}_{,\sigma}\cdot_{\phi^{*}TS}A+\mathbf{L}_{,v}\cdot_{\phi^{*}TS\otimes_{M}T^{*}M}\nabla^{\phi}A=A\cdot_{\phi^{*}T^{*}S}\left(\mathbf{L}_{,\sigma}-\Div_{M}\mathbf{L}_{,v}\right)+\Div_{M}\left(A\cdot_{\phi^{*}T^{*}S}\mathbf{L}_{,v}\right).
\]
\end{lem}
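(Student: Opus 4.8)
The plan is to establish the two asserted identities separately, since the first is a pointwise chain-rule computation and the second is an integration-by-parts rearrangement.

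\textbf{First identity.} The goal is to compute $\delta_i(L\circ\Phi_{,M})$. The plan is to write $\delta_i = z^*\partial_i$ as a section of a pullback bundle (as in the proof of Theorem~\ref{thm:first_variation_of_L}), so that $\delta_i(L\circ\Phi_{,M}) = z^*\bigl(\nabla^{M\times I\to\mathbb{R}}(L\circ\Phi_{,M})\bigr)\cdot\partial_i$. Then I would apply the chain rule for the (nonlinear) covariant derivative together with the decomposition of $\nabla^{E\to\mathbb{R}}L$ provided by (\ref{cor:horizontal_vertical_derivatives}), refined as in the discussion preceding the theorem into the $\sigma$, $\mu$, $v$ components: $\nabla^{E\to\mathbb{R}}L = L_{,\sigma}\cdot_{\pi_S^*TS}\sigma + L_{,\mu}\cdot_{\pi_M^*TM}\mu + L_{,v}\cdot_{\pi^*E}v$. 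Pulling this back along $\Phi_{,M}$ and pairing with $\partial_i$, the key observations are: (i) $\mu = \nnabla\pi_M$ composed with $\Phi_{,M}$ detects only the $M$-direction, and since $\delta_i$ points purely in the $I$-direction, the $\mu$-term drops out; (ii) the $\sigma$-term yields $\phi_{,M}^*L_{,\sigma}\cdot_{\phi^*TS}\delta_i\Phi$ once one identifies $\sigma\cdot_{TE}(\Phi_{,M})_*\delta_i$ with $\delta_i\Phi\in\Gamma(\phi^*TS)$ (using $\pi_S\circ\Phi_{,M} = \Phi$ and $\Phi\circ z = \phi$); (iii) the $v$-term, using the definition of the connection map $v$ in (\ref{eq:vertical_projection_from_covariant_derivative}) — namely that $v$ replaces the ordinary derivative $\delta_i$ with the pullback covariant derivative — produces exactly $\phi_{,M}^*L_{,v}\cdot_{(\phi\times_M\Id_M)^*E}\nabla^{\phi^*TS}\delta_i\Phi$. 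The main obstacle here is bookkeeping the pullback-bundle type identifications carefully so that each term lands in the correct space; (\ref{prop:evaluation_commutes_with_noninvolved_derivatives}) (giving $z^*\Phi_{,M} = \phi_{,M}$) and contravariance of pullback (\ref{prop:pullback_is_contravariant_functor}) are the tools that make this go through cleanly.

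\textbf{Second identity.} This is the covariant integration-by-parts rearrangement. The plan is to start from the product rule for the divergence applied to the tensor field $A\cdot_{\phi^*T^*S}\mathbf{L}_{,v}\in\Gamma(TM)$ (here $\mathbf{L}_{,v} = \phi_{,M}^*L_{,v}\in\Gamma(\phi^*T^*S\otimes_M TM)$, so contracting with $A\in\Gamma(\phi^*TS)$ over $\phi^*T^*S$ gives a genuine vector field on $M$). Expanding $\Div_M(A\cdot_{\phi^*T^*S}\mathbf{L}_{,v}) = \tr_{TM}\nabla^{TM\otimes\cdots}(A\cdot\mathbf{L}_{,v})$ and using the product rule for the induced covariant derivative — the natural pairing $\cdot_{\phi^*T^*S}$ is parallel because permutation/contraction tensor fields are parallel (\ref{cor:permutation_tensor_fields_are_parallel}) — yields $\Div_M(A\cdot_{\phi^*T^*S}\mathbf{L}_{,v}) = (\nabla^{\phi^*TS}A)\cdot_{\phi^*TS\otimes_M T^*M}\mathbf{L}_{,v} + A\cdot_{\phi^*T^*S}\Div_M\mathbf{L}_{,v}$, where $\Div_M\mathbf{L}_{,v}\in\Gamma(\phi^*T^*S)$ denotes the trace of $\nabla^{\phi}\mathbf{L}_{,v}$ over the $TM$ slot. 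Rearranging this equation gives $\mathbf{L}_{,v}\cdot\nabla^\phi A = \Div_M(A\cdot\mathbf{L}_{,v}) - A\cdot\Div_M\mathbf{L}_{,v}$; adding $\mathbf{L}_{,\sigma}\cdot_{\phi^*TS}A = A\cdot_{\phi^*T^*S}\mathbf{L}_{,\sigma}$ to both sides (using $\alpha\cdot_{V}v = v\cdot_{V^*}\alpha$, the reflexivity identity from Section~\ref{sec:Strongly-Typed-Linear-Algebra}) produces exactly the claimed equation. The only subtlety — and the place to be careful — is matching the two contraction conventions for $\mathbf{L}_{,v}$: on the left it pairs a $T^*M$ against $\nabla^\phi A$'s $T^*M$-slot via $\cdot_{\phi^*TS\otimes_M T^*M}$ (a $2$-fold contraction), while in $\Div_M(A\cdot\mathbf{L}_{,v})$ one first contracts the $\phi^*TS$/$\phi^*T^*S$ pair and then traces the $TM$/$T^*M$ pair; checking these agree is a short tensor manipulation using associativity of $\cdot$ and parallelism of the contraction tensors. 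No new ideas are needed beyond the product rule and the parallelism of natural pairings already established.
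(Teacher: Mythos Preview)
Your overall approach matches the paper's: chain rule plus the $\sigma/\mu/v$ decomposition of $\nabla L$ for the first identity, and the product rule for $\Div_M$ for the second. The second identity is handled exactly as the paper does (the paper runs the product rule in reverse starting from the left side, you expand $\Div_M(A\cdot\mathbf{L}_{,v})$ and rearrange; these are the same computation read in opposite directions).

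There is, however, a small but genuine gap in your step (iii). Applying the connection map $v$ to $\delta_i\Phi_{,M}$ gives, by (\ref{eq:vertical_projection_from_covariant_derivative}), the covariant derivative $\nabla_{\delta_i}^{(\pi\circ\Phi_{,M})^*E}\Phi_{,M}$, i.e.\ the $I$-derivative of $\Phi_{,M}$ --- schematically $\Phi_{,MI}$ evaluated at $i=0$. What you want in the answer is $\nabla^{\phi^*TS}\delta_i\Phi = (\delta_i\Phi)_{,M}$, i.e.\ $\Phi_{,IM}$ evaluated at $i=0$. Passing from one to the other requires the commutation of the mixed partial covariant derivatives, which is \emph{not} automatic from the definition of $v$ alone; it uses (\ref{prop:symmetries-of-partial-covariant-derivatives}) (or equivalently the symmetry of the covariant Hessian (\ref{prop:symmetry_of_covariant_hessian_on_maps})), together with (\ref{prop:evaluation_commutes_with_noninvolved_derivatives}) to push the evaluation $z^*$ past the $M$-derivative. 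The paper isolates exactly this computation as a separate lemma (\ref{lem:first-variation-calculation-2}). Your sketch cites (\ref{prop:evaluation_commutes_with_noninvolved_derivatives}) but not the commutation step, so as written the $v$-term is not yet justified.
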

\begin{proof}
A wonderful string of equalities follows.
\begin{align*}
 & \delta_{i}\left(L\circ\Phi_{,M}\right)\\
={} & z^{*}\nabla^{M\times I\to\mathbb{R}}\left(L\circ\Phi_{,M}\right)\cdot_{z^{*}\left(TM\oplus TI\right)}\delta_{i} & \mbox{ (here, \ensuremath{\delta_{i}=z^{*}\partial_{i}})}\\
={} & z^{*}\Phi_{,M}^{*}\nabla^{E\to\mathbb{R}}L\cdot_{z^{*}\Phi_{,M}^{*}TE}z^{*}\nnabla^{M\times I\to E}\Phi_{,M}\cdot_{z^{*}\left(TM\oplus TI\right)}\delta_{i} & \mbox{(chain rule)}\\
={} & \phi_{,M}^{*}\left(L_{,\sigma}\cdot_{\pi_{S}^{*}TS}\sigma+L_{,\mu}\cdot_{\pi_{M}^{*}TM}\mu+L_{,v}\cdot_{\pi^{*}E}v\right)\cdot_{\phi_{,M}^{*}TE}\delta_{i}\Phi_{,M} & \mbox{(by (\ref{prop:partial-covariant-derivatives}) and because \ensuremath{\Phi_{,M}\circ z=\phi_{,M}})}\\
={} & \phi_{,M}^{*}L_{,\sigma}\cdot_{\phi_{,M}^{*}\pi_{S}^{*}TS}\phi_{,M}^{*}\sigma\cdot_{\phi_{,M}^{*}TE}\delta_{i}\Phi_{,M}\\
 & +\phi_{,M}^{*}L_{,\mu}\cdot_{\phi_{,M}^{*}\pi_{M}^{*}TM}\phi_{,M}^{*}\mu\cdot_{\phi_{,M}^{*}TE}\delta_{i}\Phi_{,M}\\
 & +\phi_{,M}^{*}L_{,v}\cdot_{\phi_{,M}^{*}\pi^{*}E}\phi_{,M}^{*}v\cdot_{\phi_{,M}^{*}TE}\delta_{i}\Phi_{,M}\\
={} & \phi_{,M}^{*}L_{,\sigma}\cdot_{\phi^{*}TS}\delta_{i}\Phi+\phi_{,M}^{*}L_{,v}\cdot_{\left(\phi\times_{M}\Id_{M}\right)^{*}E}\nabla^{\phi}\delta_{i}\Phi & \mbox{(by (\ref{lem:first-variation-calculation-2}))}
\end{align*}
Note that by (\ref{prop:pullback_is_contravariant_functor}), $\Phi_{,M}^{*}\pi_{S}^{*}TS=\left(\pi_{S}\circ\Phi_{,M}\right)^{*}TS=\Phi^{*}TS$,
$\Phi_{,M}^{*}\pi_{M}^{*}TM=\left(\pi_{M}\circ\Phi_{,M}\right)^{*}TM=\left(\pr_{M}^{M\times I}\right)^{*}TM$
and $\Phi_{,M}^{*}\pi^{*}E=\left(\pi\circ\Phi_{,M}\right)^{*}E=\left(\Phi\times_{M\times I}\pr_{M}^{M\times I}\right)^{*}E$.
Replacing $\delta_{i}\Phi$ with $A$ gives
\[
\delta_{i}\left(L\circ\Phi_{,M}\right)=\mathbf{L}_{,\sigma}\cdot_{\phi^{*}TS}A+\mathbf{L}_{,v}\cdot_{\phi^{*}TS\otimes_{M}T^{*}M}\nabla^{\phi}A,
\]
establishing the first equality.

For the second, 
\begin{align*}
 & \mathbf{L}_{,\sigma}\cdot_{\phi^{*}TS}A+\mathbf{L}_{,v}\cdot_{\phi^{*}TS\otimes_{M}T^{*}M}\nabla^{\phi}A\\
={} & \mathbf{L}_{,\sigma}\cdot_{\phi^{*}TS}A+\tr_{T^{*}M}\left(\mathbf{L}_{,v}\cdot_{\phi^{*}TS}\nabla^{\phi}A\right) & \mbox{(tracing \ensuremath{TM}separately)}\\
={} & A\cdot_{\phi^{*}T^{*}S}\mathbf{L}_{,\sigma}+\tr_{T^{*}M}\left(\nabla\left(\mathbf{L}_{,v}\cdot_{\phi^{*}TS}A\right)-\left(\nabla^{\phi\times_{M}\Id_{M}}\mathbf{L}_{,v}\right)\cdot_{\phi^{*}TS}A\right) & \mbox{(reverse product rule)}\\
={} & A\cdot_{\phi^{*}T^{*}S}\mathbf{L}_{,\sigma}-A\cdot_{\phi^{*}T^{*}S}\tr_{T^{*}M}\nabla^{\phi\times_{M}\Id_{M}}\mathbf{L}_{,v}+\tr_{T^{*}M}\nabla\left(A\cdot_{\phi^{*}T^{*}S}\mathbf{L}_{,v}\right) & \mbox{(\ensuremath{\cdot_{\phi^{*}TS}}commutes with \ensuremath{\tr_{T^{*}M}})}\\
={} & A\cdot_{\phi^{*}T^{*}S}\left(\mathbf{L}_{,\sigma}-\Div_{M}\mathbf{L}_{,v}\right)+\Div_{M}\left(A\cdot_{\phi^{*}T^{*}S}\mathbf{L}_{,v}\right) & \mbox{(definition of \ensuremath{\Div_{M}}).}
\end{align*}
Note that $\mathbf{L}_{,v}\in\Gamma\left(\phi^{*}T^{*}S\otimes_{M}TM\right)$,
so $\Div_{M}\mathbf{L}_{,v}\in\Gamma\left(\phi^{*}T^{*}S\right)$
and $A\cdot_{\phi^{*}T^{*}S}\mathbf{L}_{,v}\in\Gamma\left(TM\right)$.\end{proof}
\begin{lem}
\label{lem:first-variation-calculation-2} The variation $\delta_{i}\Phi_{,M}$
decomposes as follows.
\begin{align*}
\phi_{,M}^{*}\sigma\cdot_{\phi_{,M}^{*}TE}\delta_{i}\Phi_{,M} & =\delta_{i}\Phi\in\Gamma\left(\phi^{*}TS\right),\\
\phi_{,M}^{*}\mu\cdot_{\phi_{,M}^{*}TE}\delta_{i}\Phi_{,M} & =0\in\Gamma\left(TM\right),\\
\phi_{,M}^{*}v\cdot_{\phi_{,M}^{*}TE}\delta_{i}\Phi_{,M} & =\nabla^{\phi^{*}TS}\delta_{i}\Phi\in\Gamma\left(\phi^{*}TS\otimes_{M}T^{*}M\right).
\end{align*}
\end{lem}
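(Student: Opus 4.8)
The plan is to unwind each of the three claimed identities by tracing how $\Phi_{,M}$ sits inside the bundle $E = TS \otimes_{S \times M} T^*M$ and how its variation $\delta_i \Phi_{,M}$ decomposes under the connection-induced splitting $h \oplus_E v \colon TE \to \pi^*TS \otimes \pi^*TM \oplus_E \pi^*E$ from (\ref{proposition_decomposing_TE}), refined into the three pieces $\sigma = \nnabla \pi_S$, $\mu = \nnabla \pi_M$, $v$. The key observation driving all three computations is that $\Phi_{,M}$ is a section of $E$ over $M \times I$, so its tangent map/variation $\delta_i \Phi_{,M}$ is an element of $\Phi_{,M}^* TE$, and pairing it against the pulled-back projections $\phi_{,M}^*\sigma$, $\phi_{,M}^*\mu$, $\phi_{,M}^*v$ simply reads off the $TS$-basepoint variation, the $TM$-basepoint variation, and the fiber variation respectively.

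First I would handle the $\sigma$ identity: $\phi_{,M}^*\sigma \cdot \delta_i \Phi_{,M}$ applies the pulled-back tangent map of $\pi_S$ to the variation $\delta_i \Phi_{,M}$, which by functoriality of the tangent map (the chain rule in tensor-field form from Section \ref{sec:Tangent-Map-as-Tensor-Field}) equals $\delta_i(\pi_S \circ \Phi_{,M})$. But $\pi_S \circ \Phi_{,M} = \pi_S^E \circ \nnabla \Phi = \Phi$ viewed as the base-space projection onto $S$, whose $i$-variation is exactly $\delta_i \Phi \in \Gamma(\phi^*TS)$. Second, the $\mu$ identity is similar but yields zero: $\phi_{,M}^*\mu \cdot \delta_i \Phi_{,M} = \delta_i(\pi_M \circ \Phi_{,M})$, and $\pi_M \circ \Phi_{,M} = \pr_M^{M \times I}$ as a map $M \times I \to M$, which is constant in $i$, so its variational derivative vanishes. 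Third, and this is the substantive one, the $v$ identity: by the definition of the connection map $v$ in (\ref{eq:vertical_projection_from_covariant_derivative}), applying $v$ to a variation $\delta_i \Theta$ of a section $\Theta$ replaces the ordinary derivative $\delta_i$ with the pullback covariant derivative $\nabla^{(\pi \circ \Theta)^* E}_{\delta_i} \Theta$. Here $\Theta = \Phi_{,M} = \nnabla^{M \times I \to S}\Phi$ pulled into $E$, so $v$ extracts $\nabla^{\phi^*TS \otimes_M T^*M}_{\delta_i}(\nnabla\Phi)$; then I would invoke the symmetry of mixed partial covariant derivatives (the commutation $\nabla_{\delta_i} \nnabla_M \Phi = \nabla_M(\nabla_{\delta_i}\Phi)$, which follows from (\ref{prop:symmetry_of_covariant_hessian_on_maps}) / (\ref{prop:evaluation_of_trivialized_derivative}) applied to $\Phi$ on $M \times I$) to rewrite this as $\nabla^{\phi^*TS}(\delta_i \Phi) \in \Gamma(\phi^*TS \otimes_M T^*M)$, which is precisely the claim.

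The main obstacle I anticipate is bookkeeping the pullback types carefully: $\delta_i \Phi_{,M}$ naturally lives in $z^*\Phi_{,M}^* TE = \phi_{,M}^* TE$ after precomposing with the evaluation-at-zero map $z$, and one must repeatedly use contravariance of pullback (\ref{prop:pullback_is_contravariant_functor}) — e.g. $\Phi_{,M}^* \pi_S^* TS = (\pi_S \circ \Phi_{,M})^* TS = \Phi^* TS$, which after $z^*$ becomes $\phi^* TS$ — to see that the pairings land in the asserted bundles. The conceptual content is minimal once the connection-map definition of $v$ and the covariant-derivative commutation are in hand; the risk is purely in mismatching a subscript on a natural pairing or a pullback map, so I would write the chain of type identifications explicitly at each step rather than suppressing them. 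I would also note up front that $\nnabla\Phi$ is being silently converted between its $E$-valued and $\Phi^*TS \otimes_{M \times I} T^*M$-valued forms via the fiber projection $\rho$ of (\ref{cor:pullback-fiber-projection}), exactly as flagged in the setup at the start of Part \ref{part:Riemannian-Calculus-of-Variations}, so that the statement $\phi_{,M}^*v \cdot \delta_i\Phi_{,M} = \nabla^{\phi^*TS}\delta_i\Phi$ typechecks.
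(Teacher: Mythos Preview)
Your proposal is correct and follows essentially the same route as the paper: the $\sigma$ and $\mu$ components are handled via the chain rule $\delta_i(\pi_S\circ\Phi_{,M})=\delta_i\Phi$ and $\delta_i(\pi_M\circ\Phi_{,M})=\delta_i\pr_M^{M\times I}=0$, and the $v$ component is obtained by invoking the connection-map definition (\ref{eq:vertical_projection_from_covariant_derivative}) to get $\nabla_{\partial_i}\Phi_{,M}$ and then commuting the $M$ and $I$ derivatives via the symmetry of mixed partials (the paper uses (\ref{prop:symmetries-of-partial-covariant-derivatives}) and then (\ref{prop:evaluation_commutes_with_noninvolved_derivatives}) for the $z^*$-evaluation, exactly as you outline). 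The only difference is that the paper makes the $v$-step fully explicit by pairing against an arbitrary $Y\in\Gamma(TM)$ before and after applying $z^*$, whereas you summarize that computation; the underlying argument is identical.
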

\begin{proof}
This calculation determines the $\sigma$ component of $\delta_{i}\Phi_{,M}$.
\begin{align*}
\phi_{,M}^{*}\sigma\cdot_{\phi_{,M}^{*}TE}\delta_{i}\Phi_{,M} & =\phi_{,M}^{*}\nnabla\pi_{S}\cdot_{\phi_{,M}^{*}TE}\delta_{i}\Phi_{,M}\\
 & =\delta_{i}\left(\pi_{S}\circ\Phi_{,M}\right)\\
 & =\delta_{i}\left(\pr_{S}^{S\times M}\circ\pi\circ\Phi_{,M}\right)\\
 & =\delta_{i}\Phi\in\Gamma\left(z^{*}\Phi^{*}TS\right)\cong\Gamma\left(\phi^{*}TS\right).
\end{align*}
This calculation determines the $\mu$ component of $\delta_{i}\Phi_{,M}$.
\begin{align*}
\phi_{,M}^{*}\mu\cdot_{\phi_{,M}^{*}TE}\delta_{i}\Phi_{,M} & =\phi_{,M}^{*}\nnabla\pi_{M}\cdot_{\phi_{,M}^{*}TE}\delta_{i}\Phi_{,M}\\
 & =\delta_{i}\left(\pi_{M}\circ\Phi_{,M}\right)\\
 & =\delta_{i}\left(\pr_{M}^{S\times M}\circ\pi\circ\Phi_{,M}\right)\\
 & =\delta_{i}\pr_{M}^{M\times I}\\
 & =0\in\Gamma\left(z^{*}\left(\pr_{M}^{M\times I}\right)^{*}TM\right)\cong\Gamma\left(TM\right).
\end{align*}
The last equality follows from the fact that $\pr_{M}^{M\times I}$
does not depend on the $i$ coordinate.

This calculation determines the $v$ component of $\delta_{i}\Phi_{,M}$.
Let $p_{M}:=\pr_{M}^{M\times I}$ and $p_{I}:=\pr_{I}^{M\times I}$.
The left-hand side of the third equality claimed in the lemma will
be examined before evaluating at $i=0$; 
\[
\Phi_{,M}^{*}v\cdot_{\Phi_{,M}^{*}TE}\partial_{i}\Phi_{,M}=\nabla_{p_{I}^{*}\partial_{i}}^{\left(\pi\circ\Phi_{,M}\right)^{*}E}\Phi_{,M}=\nabla_{p_{I}^{*}\partial_{i}}^{\left(\Phi\times_{M\times I}p_{M}\right)^{*}\left(TS\otimes T^{*}M\right)}\Phi_{,M}\in\Gamma\left(\Phi^{*}TS\otimes_{M\times I}p_{M}^{*}T^{*}M\right).
\]
Let $Y\in\Gamma\left(TM\right)$, noting that $p_{M}^{*}Y\in\Gamma\left(p_{M}^{*}TM\right)$.
Then
\begin{align*}
 & \left(\Phi_{,M}^{*}v\cdot_{\Phi_{,M}^{*}TE}\partial_{i}\Phi_{,M}\right)\cdot_{p_{M}^{*}TM}p_{M}^{*}Y\\
 & =\nabla_{p_{I}^{*}\partial_{i}}^{\Phi^{*}TS\otimes_{M\times I}p_{M}^{*}T^{*}M}\Phi_{,M}\cdot_{p_{M}^{*}TM}p_{M}^{*}Y\\
 & =\left(\Phi_{,MI}\cdot_{p_{I}^{*}TI}p_{I}^{*}\partial_{i}\right)\cdot_{p_{M}^{*}TM}p_{M}^{*}Y\\
 & =\left(\Phi_{,IM}\cdot_{p_{M}^{*}TM}p_{M}^{*}Y\right)\cdot_{p_{I}^{*}TI}p_{I}^{*}\partial_{i} & \mbox{(by (\ref{prop:symmetries-of-partial-covariant-derivatives}))}\\
 & =\left(\Phi_{,I}\cdot_{p_{I}^{*}TI}p_{I}^{*}\partial_{i}\right)_{,M}\cdot_{p_{M}^{*}TM}p_{M}^{*}Y-\Phi_{,I}\cdot_{p_{I}^{*}TI}\left(\left(p_{I}^{*}\partial_{i}\right)_{,M}\cdot_{p_{M}^{*}TM}p_{M}^{*}Y\right)\\
 & =\left(\partial_{i}\Phi\right)_{,M}\cdot_{p_{M}^{*}TM}p_{M}^{*}Y & \mbox{(since \ensuremath{p_{I}^{*}\partial_{i}}doesn't depend on \ensuremath{M}).}
\end{align*}
Recall that $\Id_{M}=p_{M}\circ z$ and that the pullback of bundles
is contravariant. Then evaluating at $i=0$ via pullback by $z$ renders
\begin{align*}
 & \left(\phi_{,M}^{*}v\cdot_{\phi_{,M}^{*}TE}\delta_{i}\Phi_{,M}\right)\cdot_{TM}Y\\
 & =\left(\left(\Phi_{,M}\circ z\right)^{*}v\cdot_{\left(\Phi_{,M}\circ z\right)^{*}TE}z^{*}\partial_{i}\Phi_{,M}\right)\cdot_{\left(p_{M}\circ z\right)^{*}TM}\left(p_{M}\circ z\right)^{*}Y\\
 & =\left(z^{*}\Phi_{,M}^{*}v\cdot_{z^{*}\Phi_{,M}^{*}TE}z^{*}\partial_{i}\Phi_{,M}\right)\cdot_{z^{*}p_{M}^{*}TM}z^{*}p_{M}^{*}Y\\
 & =z^{*}\left(\left(\Phi_{,M}^{*}v\cdot_{\Phi_{,M}^{*}TE}\partial_{i}\Phi_{,M}\right)\cdot_{p_{M}^{*}TM}p_{M}^{*}Y\right)\\
 & =z^{*}\left(\left(\partial_{i}\Phi\right)_{,M}\cdot_{p_{M}^{*}TM}p_{M}^{*}Y\right)\\
 & =z^{*}\left(\partial_{i}\Phi\right)_{,M}\cdot_{z^{*}p_{M}^{*}TM}z^{*}p_{M}^{*}Y\\
 & =\left(z^{*}\partial_{i}\Phi\right)_{,M}\cdot_{\left(p_{M}\circ z\right)^{*}TM}\left(p_{M}\circ z\right)^{*}Y & \mbox{(by (\ref{prop:evaluation_commutes_with_noninvolved_derivatives}))}\\
 & =\left(\delta_{i}\Phi\right)_{,M}\cdot_{TM}Y\\
 & =\nabla^{\phi^{*}TS}\delta_{i}\Phi\cdot_{TM}Y
\end{align*}
The last equality is because $\delta_{i}\Phi\in\Gamma\left(\phi^{*}TS\right)$,
which is a bundle over $M$, and therefore $\left(\delta_{i}\Phi\right)_{,M}$
is the total covariant derivative. Because $Y$ is pointwise-arbitrary
in $TM$, this implies that $\phi_{,M}^{*}\cdot_{\phi_{,M}^{*}TI}\delta_{i}\Phi_{,M}=\nabla^{\phi^{*}TS}\delta_{i}\Phi$,
i.e. the variational derivative $\delta_{i}$ commutes with the first
material derivative, just as in the analogous situation in elementary
calculus of variations.\end{proof}
\begin{cor}[Euler-Lagrange equations]
 \label{cor:euler_lagrange_equations} If $\phi\in C^{\infty}\left(M,S\right)$
is a critical point of $\mathcal{L}$ (i.e. if $d\mathcal{L}\left(\phi\right)\cdot A=0$
for all $A\in\Gamma\left(\phi^{*}TS\right)$), then
\begin{align*}
\phi_{,M}^{*}L_{,\sigma}-\Div_{M}\left(\phi_{,M}^{*}L_{,v}\right) & =0\mbox{ on }M,\\
\phi_{,M}^{*}L_{,v}\cdot_{TM}\nu & =0\mbox{ on }\partial M.
\end{align*}
These are called the \textbf{Euler-Lagrange equations} for the energy
functional $\mathcal{L}$. Recall that because the domain of $\phi$
is $M$, $\nnabla\phi\equiv\phi_{,M}$.\end{cor}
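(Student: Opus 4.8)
The plan is to derive the Euler--Lagrange equations directly from the first variation formula in Theorem~\ref{thm:first_variation_of_L} by a standard ``fundamental lemma of the calculus of variations'' argument, adapted to the strongly typed tensor setting. By hypothesis, $\phi$ is a critical point, so for every $A \in \Gamma\left(\phi^{*}TS\right)$ we have
\[
0 = d\mathcal{L}\left(\phi\right)\cdot A = \int_{M} A\cdot_{\phi^{*}T^{*}S}\left(\phi_{,M}^{*}L_{,\sigma}-\Div_{M}\left(\phi_{,M}^{*}L_{,v}\right)\right)\, dV_{g} + \int_{\partial M} A\cdot_{\phi^{*}T^{*}S}\phi_{,M}^{*}L_{,v}\cdot_{T^{*}M}\nu\, d\overline{V}_{g}.
\]
The goal is to show that this forces the interior integrand factor $\phi_{,M}^{*}L_{,\sigma}-\Div_{M}\left(\phi_{,M}^{*}L_{,v}\right) \in \Gamma\left(\phi^{*}T^{*}S\right)$ to vanish on $M$ and the boundary factor $\phi_{,M}^{*}L_{,v}\cdot_{T^{*}M}\nu \in \Gamma_{\partial M}\left(\phi^{*}T^{*}S\right)$ to vanish on $\partial M$.

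First I would handle the interior equation by testing against variations $A$ that are compactly supported in the interior $M \setminus \partial M$, so that the boundary integral drops out and we are left with $\int_{M} A\cdot_{\phi^{*}T^{*}S}\,\omega\, dV_{g} = 0$ for all such $A$, where $\omega := \phi_{,M}^{*}L_{,\sigma}-\Div_{M}\left(\phi_{,M}^{*}L_{,v}\right)$. The argument here is local: if $\omega$ were nonzero at some interior point $p$, then since $\Gamma\left(\phi^{*}TS\right) \cong T_{\phi}\left(C^{\infty}\left(M,S\right)\right)$ by Proposition~\ref{prop:each_linear_variation_is_a_linearized_variation}, every smooth section of $\phi^{*}TS$ is an admissible variation; choosing $A$ to be a bump-function multiple of the metric dual (via $h$) of $\omega$ near $p$ makes the integrand pointwise nonnegative and strictly positive near $p$, contradicting the vanishing of the integral. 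Hence $\omega = 0$ on the interior, and by continuity on all of $M$.

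Next I would address the boundary equation. Knowing $\omega = 0$ on $M$, the first variation reduces to $0 = \int_{\partial M} A\cdot_{\phi^{*}T^{*}S}\,\eta\, d\overline{V}_{g}$ for all $A \in \Gamma\left(\phi^{*}TS\right)$, where $\eta := \phi_{,M}^{*}L_{,v}\cdot_{T^{*}M}\nu$. Since the restriction $A|_{\partial M} = \iota^{*}A$ ranges over all of $\Gamma\left(\iota^{*}\phi^{*}TS\right)$ (any section on the boundary extends to a global section, and all global sections are admissible), the same localized bump-function argument on the closed manifold $\partial M$ forces $\eta = 0$ on $\partial M$, using the induced metric and the nondegeneracy of the natural pairing $\cdot_{\phi^{*}T^{*}S}$ fiberwise.

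The main obstacle, and the only place requiring care, is the passage from ``the paired integral vanishes for all test sections'' to ``the cofactor tensor field vanishes pointwise'': one must confirm that the space of admissible variations is rich enough --- specifically that it contains all smooth sections of $\phi^{*}TS$ (interior) and all smooth sections of $\iota^{*}\phi^{*}TS$ (boundary), including appropriately localized ones --- and that one may legitimately pick the test section to be (a smooth bump times) the metric-raised dual of the cofactor, so that the integrand is a nonnegative function with a strictly positive value somewhere. Both points are routine given Proposition~\ref{prop:each_linear_variation_is_a_linearized_variation} and the existence of smooth bump functions on manifolds, so I would state them briefly rather than belabor them.
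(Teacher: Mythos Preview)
Your proposal is correct and follows the same approach as the paper: both deduce the Euler--Lagrange equations from Theorem~\ref{thm:first_variation_of_L} via the Fundamental Lemma of the Calculus of Variations. The only difference is that the paper dispatches the argument in one line by citing the Fundamental Lemma from \cite[pg.~16]{Giaquinta&Hildebrandt}, whereas you spell out the standard two-step bump-function argument (interior first, then boundary) explicitly.
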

\begin{proof}
This follows trivially from (\ref{thm:first_variation_of_L}) and
the Fundamental Lemma of the Calculus of Variations \citep[pg. 16]{Giaquinta&Hildebrandt}.
\end{proof}
It should be noted that the boundary Euler-Lagrange equation is due
to the fact that the admissible variations are entirely unrestricted.
If, for example, the class of maps being considered had fixed boundary
data, then any variation would vanish at the boundary, and there would
be no boundary Euler-Lagrange equation; this is typically how geodesics
and harmonic maps are formulated.\\

\begin{rem}[Analogs in elementary calculus of variations]
 \label{remark:analogs-in-elementary-cov} The quantities $L_{,\mu},L_{,\sigma},L_{,v}$
generalize the quantities $\frac{\partial L}{\partial x},\frac{\partial L}{\partial z},\frac{\partial L}{\partial p}$
respectively of the elementary treatment of the calculus of variations
for energy functional
\[
\left(f\colon U\to\mathbb{R}^{n}\right)\mapsto\int_{U}L\left(x,f\left(x\right),Df\left(x\right)\right)\, dx,
\]
where $U\subset\mathbb{R}^{m}$ is compact and $U\times\mathbb{R}^{n}\times\mathbb{R}^{m\times n}\ni\left(x,z,p\right)\mapsto L\left(x,z,p\right)$
is the Lagrangian. Here, $\frac{\partial L}{\partial x}\colon U\times\mathbb{R}^{n}\times\mathbb{R}^{m\times n}\to\mathbb{R}^{m},$
$\frac{\partial L}{\partial z}\colon U\times\mathbb{R}^{n}\times\mathbb{R}^{m\times n}\to\mathbb{R}^{n},$
and $\frac{\partial L}{\partial p}\colon U\times\mathbb{R}^{n}\times\mathbb{R}^{m\times n}\to\mathbb{R}^{m\times n}$
decompose the total derivative $dL$ and are defined by the relation
\[
dL\left(x,z,p\right)\cdot\left(u,v,w\right)=\frac{\partial L}{\partial x}\left(x,z,p\right)\cdot u+\frac{\partial L}{\partial z}\left(x,z,p\right)\cdot v+\frac{\partial L}{\partial p}\left(x,z,p\right):w
\]
for $u\in\mathbb{R}^{m},$ $v\in\mathbb{R}^{n},$ and $w\in\mathbb{R}^{m\times n}$.
The Euler-Lagrange equation in this setting is
\[
\left(\frac{\partial L}{\partial z}-\Div_{U}\frac{\partial L}{\partial p}\right)\left(x,f\left(x\right),Df\left(x\right)\right)=0\mbox{ for }x\in U,
\]
noting that the left hand side of the equation takes values in $\mathbb{R}^{n}$.\\

\end{rem}
In most situations involving simpler calculations, it is desirable
and acceptable to dispense with the highly decorated notation and
use trimmed-town, context-dependent notation, leaving off type-specifying
sub/superscripts when clear from context. 
\begin{prop}[Conserved quantity]
 \label{prop:conserved-quantity} If $M$ is a real interval, $\phi\in C^{\infty}\left(M,S\right)$
satisfies the Euler-Lagrange equation, and $L_{,\mu}=0$, then 
\[
H:=\left(\nnabla\phi\right)^{*}L_{,v}\cdot_{\phi^{*}TS\otimes_{M}T^{*}M}\nnabla\phi-\left(\nnabla\phi\right)^{*}L\in C^{\infty}\left(M,\mathbb{R}\right)
\]
is constant. If $L$ is kinetic minus potential energy, then $H$
is kinetic plus potential energy (the total energy), and is referred
to as the Hamiltonian.\end{prop}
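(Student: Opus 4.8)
The plan is to show that the one-form $\nabla^{M\to\mathbb{R}}H\in\Gamma(T^{*}M)$ vanishes identically; since a compact real interval is connected, this forces $H$ to be constant. Throughout, write $\nnabla\phi\equiv\phi_{,M}$ and set $\mathbf{L}_{,\sigma}:=\phi_{,M}^{*}L_{,\sigma}$, $\mathbf{L}_{,\mu}:=\phi_{,M}^{*}L_{,\mu}$, $\mathbf{L}_{,v}:=\phi_{,M}^{*}L_{,v}$, so that $H=\mathbf{L}_{,v}\cdot_{\phi^{*}TS\otimes_{M}T^{*}M}\nnabla\phi-(\nnabla\phi)^{*}L$ with both summands in $C^{\infty}(M,\mathbb{R})$.

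First I would differentiate $(\nnabla\phi)^{*}L=L\circ\phi_{,M}$ along $M$ by the chain rule, exactly as in the opening lines of (\ref{lem:first-variation-calculation-1}) but with the $M$-derivative in place of the variational one: $\nabla((\nnabla\phi)^{*}L)=\phi_{,M}^{*}(\nabla^{E\to\mathbb{R}}L)\cdot_{\phi_{,M}^{*}TE}\nnabla\phi_{,M}$. Decomposing $\nabla^{E\to\mathbb{R}}L=L_{,\sigma}\cdot\sigma+L_{,\mu}\cdot\mu+L_{,v}\cdot v$ as in (\ref{cor:horizontal_vertical_derivatives}) and pulling back, I need the three components of $\nnabla\phi_{,M}$, which are computed just as in (\ref{lem:first-variation-calculation-2}): $\phi_{,M}^{*}\sigma\cdot\nnabla\phi_{,M}=\nnabla(\pi_{S}\circ\phi_{,M})=\nnabla\phi$ (since $\pi_{S}\circ\phi_{,M}=\phi$), $\phi_{,M}^{*}\mu\cdot\nnabla\phi_{,M}=\nnabla(\pi_{M}\circ\phi_{,M})=\nnabla\Id_{M}=\Id_{TM}$, and $\phi_{,M}^{*}v\cdot\nnabla\phi_{,M}$ is the covariant derivative of $\phi_{,M}$ regarded as a section of $(\phi\times_{M}\Id_{M})^{*}E=\phi^{*}TS\otimes_{M}T^{*}M$, i.e. $\nabla^{2}\phi$, by the connection-map description of $\nabla^{E}$ in (\ref{remark:connection_derivative_correspondence}). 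Using the hypothesis $L_{,\mu}=0$ to kill the middle term,
\[
\nabla((\nnabla\phi)^{*}L)=\mathbf{L}_{,\sigma}\cdot_{\phi^{*}TS}\nnabla\phi+\mathbf{L}_{,v}\cdot_{E}\nabla^{2}\phi.
\]

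Next I would apply the product rule for the natural pairing to the first term of $H$, using that $\mathbf{L}_{,v}\in\Gamma(\phi^{*}T^{*}S\otimes_{M}TM)$ is paired fully against $\nnabla\phi\in\Gamma(\phi^{*}TS\otimes_{M}T^{*}M)$ and that $\nabla\nnabla\phi=\nabla^{2}\phi$:
\[
\nabla\left(\mathbf{L}_{,v}\cdot\nnabla\phi\right)=\left(\nabla\mathbf{L}_{,v}\right)\cdot\nnabla\phi+\mathbf{L}_{,v}\cdot_{E}\nabla^{2}\phi.
\]
Subtracting the two displays, the two occurrences of $\mathbf{L}_{,v}\cdot_{E}\nabla^{2}\phi$ cancel — each contracts $\mathbf{L}_{,v}$ against the same $\phi^{*}TS\otimes_{M}T^{*}M$ factor of $\nabla^{2}\phi\in\Gamma(\phi^{*}TS\otimes_{M}T^{*}M\otimes_{M}T^{*}M)$ and leaves the remaining (derivative) $T^{*}M$ slot free — so $\nabla H=\left(\nabla\mathbf{L}_{,v}\right)\cdot\nnabla\phi-\mathbf{L}_{,\sigma}\cdot_{\phi^{*}TS}\nnabla\phi$.

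To finish, invoke the Euler--Lagrange equation (\ref{cor:euler_lagrange_equations}), which gives $\mathbf{L}_{,\sigma}=\Div_{M}\mathbf{L}_{,v}=\tr_{T^{*}M}(\nabla\mathbf{L}_{,v})$, so that $\nabla H$ becomes the difference of two contractions of $\nabla\mathbf{L}_{,v}\in\Gamma(\phi^{*}T^{*}S\otimes_{M}TM\otimes_{M}T^{*}M)$ against $\nnabla\phi$: in one, the $TM$ slot of $\nabla\mathbf{L}_{,v}$ pairs with the $T^{*}M$ slot of $\nnabla\phi$ and the derivative slot is left free; in the other, the $TM$ slot pairs with the derivative slot of $\nabla\mathbf{L}_{,v}$ and the $T^{*}M$ slot of $\nnabla\phi$ is left free. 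The single place the hypothesis that $M$ is one-dimensional enters is the observation that these two contractions agree: picking a local nonvanishing $e\in\Gamma(TM)$ with dual coframe $e^{*}$, every $TM$ factor in sight is a multiple of $e$ and every $T^{*}M$ factor a multiple of $e^{*}$, and $e^{*}(e)=1$ makes the two reorderings identical. Hence $\nabla H=0$ and $H$ is constant. I expect the main obstacle to be the slot-bookkeeping in the two cancellations (the $\nabla^{2}\phi$ terms, then the one-dimensional identification); the key conceptual point, which makes everything fit together, is that the $v$-component of $\nnabla\phi_{,M}$ is precisely $\nabla^{2}\phi$.
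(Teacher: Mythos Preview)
Your argument is correct and follows essentially the same route as the paper's proof: differentiate each summand of $H$, observe that the $\mathbf{L}_{,v}\cdot\nabla^{2}\phi$ terms cancel, and then use the Euler--Lagrange equation together with one-dimensionality of $M$ to kill what remains. The only cosmetic difference is that the paper fixes the standard coordinate $t$ from the outset (writing $\nnabla\phi=\phi'\otimes_{M}dt$ and computing $\nabla_{\frac{d}{dt}}H$ directly), so that the one-dimensional ``slot'' identification you isolate at the end is built in from the start; your version postpones this and works with the total derivative until the last step, which makes the role of the hypothesis $\dim M=1$ more explicit.
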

\begin{proof}
Let $t$ be the standard real coordinate. Note that because $M$ is
a real interval, $\nnabla\phi=\phi^{\prime}\otimes_{M}dt$. Terms
appearing in the derivative of $H$ can be simplified as follows.
Note the repeated $\nnabla$ derivatives; $\nnabla\phi\colon M\to\phi^{*}TS\otimes_{M}T^{*}M$
but $\nnabla\nnabla\phi\colon M\to\left(\nnabla\phi\right)^{*}T\left(\phi^{*}TS\otimes_{M}T^{*}M\right)$
lands in a higher tangent space. 
\begin{align*}
\left(\nnabla\phi\right)^{*}\sigma\cdot\nnabla\nnabla\phi\cdot\frac{d}{dt} & =\left(\nnabla\phi\right)^{*}\nnabla\pi_{S}\cdot\nnabla\nnabla\phi\cdot\frac{d}{dt}=\frac{d}{dt}\left(\pi_{S}\circ\nnabla\phi\right)=\phi^{\prime},\\
\left(\nnabla\phi\right)^{*}v\cdot\nnabla\nnabla\phi\cdot\frac{d}{dt} & =\left(\nnabla\phi\right)^{*}v\cdot\frac{d}{dt}\nnabla\phi=\nabla_{\frac{d}{dt}}\nnabla\phi,\\
\nabla_{\frac{d}{dt}}\left(\nnabla\phi\right)^{*}L & =\left(\nnabla\phi\right)^{*}\nabla L\cdot\nnabla\nnabla\phi\cdot\frac{d}{dt}\\
 & =\left(\nnabla\phi\right)^{*}L_{,\sigma}\cdot\phi^{\prime}+\left(\nnabla\phi\right)^{*}L_{,v}\cdot\nabla_{\frac{d}{dt}}\nnabla\phi,\\
\nabla_{\frac{d}{dt}}\left(\left(\nnabla\phi\right)^{*}L_{,v}:\nnabla\phi\right) & =\nabla_{\frac{d}{dt}}\left(\nnabla\phi\right)^{*}L_{,v}:\left(\phi^{\prime}\otimes_{M}dt\right)+\left(\nnabla\phi\right)^{*}L_{,v}:\nabla_{\frac{d}{dt}}\nnabla\phi\\
 & =\left(\nabla_{\frac{d}{dt}}\left(\nnabla\phi\right)^{*}L_{,v}\cdot dt\right)\cdot\phi^{\prime}+\left(\nnabla\phi\right)^{*}L_{,v}:\nabla_{\frac{d}{dt}}\nnabla\phi.
\end{align*}
Again, because $M$ is a real interval, the divergence is just the
derivative, so the Euler-Lagrange equation is
\begin{align*}
0 & =\left(\nnabla\phi\right)^{*}L_{,\sigma}-\Div_{M}\left(\left(\nnabla\phi\right)^{*}L_{,v}\right)\\
 & =\left(\nnabla\phi\right)^{*}L_{,\sigma}-\nabla\left(\nnabla\phi\right)^{*}L_{,v}:\left(dt\otimes_{M}\frac{d}{dt}\right)\\
 & =\left(\nnabla\phi\right)^{*}L_{,\sigma}-\nabla_{\frac{d}{dt}}\left(\nnabla\phi\right)^{*}L_{,v}\cdot dt,
\end{align*}
and therefore $\nabla_{\frac{d}{dt}}\left(\nnabla\phi\right)^{*}L_{,v}\cdot dt=\left(\nnabla\phi\right)^{*}L_{,\sigma}$.
Thus
\[
\nabla_{\frac{d}{dt}}H=\nabla_{\frac{d}{dt}}\left(\left(\nnabla\phi\right)^{*}L_{,v}:\nnabla\phi-\left(\nnabla\phi\right)^{*}L\right)=\left(\nabla_{\frac{d}{dt}}\left(\nnabla\phi\right)^{*}L_{,v}\cdot dt-\left(\nnabla\phi\right)^{*}L_{,\sigma}\right)\cdot\phi^{\prime}
\]
which is zero because $\phi$ satisfies the Euler-Lagrange equation.
This shows that $H$ is constant along solutions of the Euler-Lagrange
equation, and is therefore a conserved quantity. It should be noted
that this proof relies on the fact that the divergence takes a particularly
simple form when the domain $M$ is a real interval; the result does
not necessarily hold for a general choice of $M$. \end{proof}
\begin{example}[Harmonic maps]
 \label{example_harmonic_maps} Define a metric 
\[
k\in\Gamma\left(E^{*}\otimes_{S\times M}E^{*}\right)\cong\Gamma\left(\left(TS\otimes T^{*}M\right)\otimes_{S\times M}\left(TS\otimes T^{*}M\right)\right)
\]
in a manner analogous to that in (\ref{example_tensor_product_of_inner_product_spaces});
\[
k:=h\boxtimes g^{-1}.
\]
To clarify, $h\otimes g^{-1}\in\Gamma\left(\left(T^{*}S\otimes_{S}T^{*}S\right)\otimes\left(TM\otimes_{M}TM\right)\right)$,
so permuting the middle two components (as in the definition of $h\boxtimes g^{-1}$)
gives the correct type, including the necessary metric symmetry condition.
If $A\in E$, then $\left|A\right|_{k}^{2}$ is the quantity obtained
by raising/lowering the indices of $A$ and pairing it naturally with
$A$. A useful fact is that $\nabla k=0$; if $u\oplus v\in TS\oplus TM$,
then permutation commutativity (\ref{cor:permutation_tensor_fields_are_parallel})
and the product rule gives 
\[
\nabla_{u\oplus v}k=\nabla_{u\oplus v}\left(h\boxtimes g^{-1}\right)=\nabla_{u}h\boxtimes g^{-1}+h\boxtimes\nabla_{v}g^{-1},
\]
which equals zero because $h$ and $g^{-1}$ are parallel with respect
to $\nabla^{TS}$ and $\nabla^{T^{*}M}$ respectively.

With Lagrangian 
\[
L\colon E\to\mathbb{R},\, A\mapsto\frac{1}{2}\left|A\right|_{k}^{2}
\]
and energy functional 
\[
\mathcal{E}\left(\phi\right):=\int_{M}L\circ\nnabla\phi\, dV_{g}
\]
($\mathcal{E}\left(\phi\right)$ is called the \textbf{energy} of
$\phi$), the resulting Euler-Lagrange equations can be written down
after calculating $L_{,\sigma}$ and $L_{,v}$. It is worthwhile to
note that $L$ is a quadratic form $A\mapsto A:\frac{1}{2}k:A$ on
$E$, which will automatically imply that $L_{,v}\left(A\right)=A:k$.
However, the calculation showing this will be carried out for demonstration
purposes.

Let $A,B\in TS\otimes T^{*}M$. Then $\epsilon\mapsto A+\epsilon B$
is a vertical variation of $A$, since $h\left(\delta\left(A+\epsilon B\right)\right)=0$,
so
\begin{align*}
L_{,v}\left(A\right):B & =L_{,v}\left(A\right):v\cdot\delta_{\epsilon}\left(A+\epsilon B\right)\\
 & =\delta_{\epsilon}\left(L\left(A+\epsilon B\right)\right)\\
 & =\delta_{\epsilon}\left(\left(A+\epsilon B\right):\frac{1}{2}\left(\pi^{*}k\left(A+\epsilon B\right)\right):\left(A+\epsilon B\right)\right).
\end{align*}
The product rule gives three terms. The middle term is zero because
$\pi\left(A+\epsilon B\right)=\pi\left(A\right)$, and therefore does
not depend on $\epsilon$. The basepoint evaluation notation for $\pi^{*}k\left(A\right)$
will be suppressed for brevity (see Section \ref{sec:Strongly-Typed-Tensor-Field}).
Thus
\[
L_{,v}\left(A\right):B=B:\frac{1}{2}k:A+A:\frac{1}{2}k:B=A:k:B,
\]
where the last equality results from the symmetry of $k$. By the
nondegeneracy of the natural pairing on $TS\otimes T^{*}M$ (which
is denoted here by $:$), this implies that $L_{,v}\left(A\right)=A:k$.

To calculate $L_{,\sigma}$, it is sufficient (and can be easier)
to calculate $L_{,h}$, as $h=\nnabla\pi$, $\pi=\pi_{S}\times_{E}\pi_{M}$,
so $h=\sigma\oplus_{E}\mu$. Let $A\left(\epsilon\right)$ be a horizontal
curve in $E=TS\otimes T^{*}M$; this means that $v\cdot\frac{d}{d\epsilon}A=0$.
Recall that $v\cdot\frac{d}{d\epsilon}A$ is defined by $\nabla_{\frac{d}{d\epsilon}}^{\left(\pi\circ A\right)^{*}E}A$.
Then
\begin{align*}
L_{,h}\left(A\right)\cdot_{\pi^{*}\left(TS\oplus TM\right)}h\cdot_{TE}\delta_{\epsilon}A & =\left(L_{,h}\cdot_{\pi^{*}\left(TS\oplus TM\right)}h+L_{,v}\cdot_{\pi^{*}E}v\right)\cdot_{TE}\delta_{\epsilon}A\\
 & =\nabla L\cdot_{TE}\delta_{\epsilon}A\\
 & =\delta_{\epsilon}\left(L\circ A\right)\\
 & =\delta_{\epsilon}\left(A:\frac{1}{2}A^{*}\pi^{*}k:A\right).
\end{align*}
As before, the product rule gives three terms. Using the contravariance
of bundle pullback, the middle term is 
\[
\frac{1}{2}\nabla_{\delta_{\epsilon}}^{\left(\pi\circ A\right)^{*}\left(E^{*}\otimes_{S\times M}E^{*}\right)}\left(\pi\circ A\right)^{*}k=\frac{1}{2}\left(\pi\circ A\right)^{*}\nabla^{E^{*}\otimes_{S\times M}E^{*}}k\cdot\delta_{\epsilon}\left(\pi\circ A\right),
\]
which equals zero because $\nabla k=0$. Thus
\[
L_{,h}\left(A\right)\cdot h\cdot\delta_{\epsilon}A=\nabla_{\delta_{\epsilon}}A:\frac{1}{2}k:A+A:\frac{1}{2}k:\nabla_{\delta_{\epsilon}}A,
\]
which equals zero because $\nabla_{\delta_{\epsilon}}A=v\cdot\delta_{\epsilon}A=0$.
The quantity $h\cdot\delta_{\epsilon}A$ can take any value in $\pi^{*}\left(TS\oplus TM\right)$,
showing that $L_{,h}=0$. Finally, $h=\sigma\oplus_{E}\mu$ implies
that $L_{,\sigma}=0$ and $L_{,\mu}=0$. This can be understood from
the fact that $L$ depends only on the fiber values of $A$, and has
no explicit dependence on the basepoint; this relies crucially on
the fact that $\nabla k=0$.

Finally, the Euler-Lagrange equations can be written down. Recalling
that the natural trace of a tensor (used in the divergence term in
the Euler-Lagrange equation) is contraction with the appropriate identity
tensor, let $\left(e_{i}\right)$ be a local frame for $TM$ and let
$\left(e^{i}\right)$ be its dual coframe, so that $e_{i}\otimes_{M}e^{i}$
is a local expression%
\footnote{It should be noted that while $\Id_{TM}$ is being written as the
local expression $e_{i}\otimes_{M}e^{i}$, no inherently local property
is being used; this tensor decomposition is only used so that the
product rule can be used in the following calculations in a clear
way.%
} for $\Id_{TM}\in\Gamma\left(TM\otimes_{M}T^{*}M\right)$. The type-subscripted
notation will be minimized except to help clarify. On $M$:
\begin{align*}
0 & =\left(\nnabla\phi\right)^{*}L_{,\sigma}-\Div_{M}\left(\left(\nnabla\phi\right)^{*}L_{,v}\right)\\
 & =-\tr\nabla\left(\nnabla\phi:k\right)\\
 & =-\nabla_{e_{i}}\left(\nnabla\phi:k\right)\cdot_{T^{*}M}e^{i}\\
 & =-\nabla_{e_{i}}\nnabla\phi:k\cdot e^{i}-\nnabla\phi:\nabla_{e_{i}}k\cdot e^{i}.
\end{align*}
The second term vanishes because $\nabla k=0$. Unraveling the definition
of $k$ gives $\nabla_{e_{i}}\nnabla\phi:k=\phi^{*}h\cdot\nabla_{e_{i}}\nnabla\phi\cdot g^{-1}$.
Contracting both sides of the above equation with $-\phi^{*}h^{-1}$
gives
\[
0=\nabla_{e_{i}}\nnabla\phi\cdot g^{-1}\cdot e^{i}=\nabla_{e_{i}}\nnabla\phi\cdot e_{i}=\tr_{g}\nabla^{2}\phi\in\Gamma\left(\phi^{*}TS\right).
\]
The quantity $\tr_{g}\nabla^{2}\phi$ is the $g$-trace of the covariant
Hessian of $\phi$ and can rightfully be called the \textbf{covariant
Laplacian} of $\phi$ and denoted by $\Delta_{g}\phi$ (this is also
referred to as the \textbf{tension field} of $\phi$ in other literature
\citep[pg. 13]{Xin}, which is denoted $\tau\left(\phi\right)$).
Note that $\Delta_{g}\phi$ is a vector field along $\phi$. This
makes sense because $\phi$ is not necessarily a scalar function;
it takes values in $S$. In the case $S=\mathbb{R}$, $\Delta_{g}\phi$
is the ordinary covariant Laplacian on scalar functions. 

A \textbf{harmonic map} is defined as a critical point of the energy
functional $\mathcal{E}\left(\phi\right):=\int_{M}\frac{1}{2}\left|\nnabla\phi\right|_{k}^{2}\, dV_{M}$.
Assuming a fixed boundary (so that the variations vanish on the boundary)
eliminates the boundary Euler-Lagrange equation, the remaining equation
is
\[
\Delta_{g}\phi=0\mbox{ on the interior of }M,
\]
which is the generalization of Laplace's equation. Satisfying Laplace's
equation is a sufficient condition for a map to be a critical point
of the energy functional. There is an abundance of literature concerning
harmonic maps and the analysis thereof \citep{Eells&LeMaire,Giaquinta&Hildebrandt,Nishikawa,Xin}.
\end{example}

\begin{example}[The geodesic equation]
 \label{example:geodesic_equation} A fundamental problem in differential
geometry is determining length-minimizing curves between given points.
If $M$ is a bounded, real interval, and $t$ denotes the standard
real coordinate, then the length functional on curves $\phi\colon M\to S$
is $\mathcal{L}\left(\phi\right):=\int_{M}\left|\phi^{\prime}\right|_{g}\, dt$.
A topological metric $d\colon M\times M\to\mathbb{R}$ on $M$ can
be defined as 
\[
d\left(p,q\right):=\inf\left\{ \mathcal{L}\left(\phi\right)\mid\phi\mbox{ joins \ensuremath{p}to \ensuremath{q}}\right\} .
\]
It can be shown that the length functional $\mathcal{L}\left(\phi\right):=\int_{M}\left|\phi^{\prime}\right|_{h}\, dt$
and the energy functional $\mathcal{E}\left(\phi\right):=\int_{M}\frac{1}{2}\left|\phi^{\prime}\right|_{h}^{2}\, dt$
have identical minimizers. Note that $\phi^{\prime}\in\Gamma\left(\phi^{*}TS\right)$.
It is therefore sufficient to consider the analytically preferable
energy functional.

In this case, the metric $g$ on $M$ is just scalar multiplication
on $\mathbb{R}$. Because $M$ is one-dimensional and $t$ is the
standard real coordinate, $\frac{d}{dt}$ is a global, parallel orthonormal
frame for $TM$, and the $g$-trace of $\nabla^{2}\phi$ (i.e. $\Delta_{g}\phi$)
has a single term. The Euler-Lagrange equation, on the interior of
$M$, is
\[
0=\Delta_{g}\phi=\tr_{g}\nabla^{2}\phi=\nabla\nnabla\phi:\left(\frac{d}{dt},\frac{d}{dt}\right)=\nabla_{\frac{d}{dt}}\nnabla\phi\cdot\frac{d}{dt}=\nabla_{\frac{d}{dt}}\left(\nnabla\phi\cdot\frac{d}{dt}\right)-\nnabla\phi\cdot\nabla_{\frac{d}{dt}}\frac{d}{dt}.
\]
But $\nnabla\phi\cdot\frac{d}{dt}=\phi^{\prime}$ and $\nabla_{\frac{d}{dt}}\frac{d}{dt}=0$,
giving the \textbf{geodesic equation}
\[
\nabla_{\frac{d}{dt}}^{\phi^{*}TS}\phi^{\prime}=0\mbox{ on the interior of }M.
\]
This is the covariant way to state that the acceleration of $\phi$
is identically zero. The geodesic equation is commonly notated as
$0=\nabla_{\phi^{\prime}}\phi^{\prime}$, though such notation is
inaccurate because $\phi^{\prime}$ is not a vector field on $S$,
but a vector field along $\phi$, and therefore use of the pullback
covariant derivative $\nabla^{\phi^{*}TS}$ is correct (see (\ref{remark:pullback_derivative_captures_fiber_variation})).

While formulated using fixed boundary conditions ($\phi$ has $p$
and $q$ as its endpoints), the geodesic equation is a second order
ODE for which initial tangent vector conditions are sufficient to
uniquely determine a solution.
\end{example}

\section{\label{sec:Second-Variation}Second Variation}

A further consideration after finding critical points of the energy
functional $\mathcal{L}$ is determining which critical points are
extrema. This will involve calculating the second derivative of $\mathcal{L}$.
Let $C:=C^{\infty}\left(M,S\right)$, noting that $T_{\phi}C\cong\Gamma\left(\phi^{*}TS\right)$
for $\phi\in C$. The first derivative of $\mathcal{L}$ is $\nabla^{C\to\mathbb{R}}\mathcal{L}:=d\mathcal{L}$,
as seen in the previous section. The second derivative is the covariant
Hessian $\nabla^{T^{*}C}\nabla^{C\to\mathbb{R}}\mathcal{L}$, where
the covariant derivative $\nabla^{T^{*}C}$ is induced by $\nabla^{TS}$
\citep[Theorem 5.4]{Eliasson}. 

For the remainder of this section, let $I,J\subseteq\mathbb{R}$ be
neighborhoods of zero, let $i$ and $j$ be their respective standard
coordinates, and extend the existing $\delta$-style derivative-at-a-point
notation by defining $\delta_{i}:=\frac{\partial}{\partial i}\mid_{i=j=0}$,
$\delta_{j}:=\frac{\partial}{\partial j}\mid_{i=j=0}$, and evaluation
map $z\colon M\to M\times I\times J,\, m\mapsto\left(m,0,0\right)$.
Then $\delta_{i}=z^{*}\partial_{i}$ and $\delta_{j}=z^{*}\partial_{j}$;
these will be used as in the calculation of the first variation. 
\begin{thm}[Second variation of $\mathcal{L}$]
 \label{thm:second_variation_of_L} Let $\mathcal{L}$, $L$, $\sigma$,
$\mu$, $v$ and $\nu$ all be defined as above. If $\phi\in C^{\infty}\left(M,S\right)$
is a critical point of $\mathcal{L}$ and $A,B\in T_{\phi}C\cong\Gamma\left(\phi^{*}TS\right)$,
then the covariant Hessian of $\mathcal{L}$ is 
\begin{align*}
 & \nabla^{2}\mathcal{L}\left(\phi\right):_{T_{\phi}C}\left(A\otimes B\right)\\
={} & \int_{M}A\cdot_{\phi^{*}T^{*}S}\phi_{,M}^{*}L_{,\sigma\sigma}\cdot_{\phi^{*}TS}B+A\cdot_{\phi^{*}T^{*}S}\phi_{,M}^{*}L_{,\sigma v}\cdot_{\phi^{*}TS\otimes_{M}T^{*}M}\nabla^{\phi^{*}TS}B\\
 & +\nabla^{\phi^{*}TS}A\cdot_{\phi^{*}T^{*}S\otimes_{M}TM}\phi_{,M}^{*}L_{,v\sigma}\cdot_{\phi^{*}TS}B+\nabla^{\phi^{*}TS}A\cdot_{\phi^{*}T^{*}S\otimes_{M}TM}\phi_{,M}^{*}L_{,vv}\cdot_{\phi^{*}TS\otimes_{M}T^{*}M}\nabla^{\phi^{*}TS}B\\
 & -A\cdot_{\phi^{*}T^{*}S}\left(\phi_{,M}^{*}L_{,v}\cdot_{\phi^{*}TS\otimes_{M}T^{*}M}\left(\phi^{*}R^{TS}\cdot_{\phi^{*}TS}\phi_{,M}\right)\right)\cdot_{\phi^{*}TS}B\, dV_{g}.
\end{align*}
This is often called the \textbf{second variation} of $\mathcal{L}$.
Here, $R^{TS}\in\Gamma\left(TS\otimes_{S}T^{*}S\otimes_{S}T^{*}S\otimes_{S}T^{*}S\right)$
denotes the Riemannian curvature endomorphism tensor for the Levi-Civita
connection on $TS$.\end{thm}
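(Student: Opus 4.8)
\section{Proof proposal for Theorem~\ref{thm:second_variation_of_L}}

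The plan is to mirror the derivation of the first variation (\ref{thm:first_variation_of_L}), carrying a \emph{two}-parameter variation $\Phi\colon M\times I\times J\to S$ with $\delta_{i}\Phi=B$ and $\delta_{j}\Phi=A$ (such a $\Phi$ exists by applying the exponential-map construction of (\ref{prop:each_linear_variation_is_a_linearized_variation}) in both parameters). First I would record the reduction: because $\phi$ is a critical point, $\nabla^{2}\mathcal{L}(\phi):_{T_{\phi}C}(A\otimes B)=\delta_{i}\delta_{j}\big(\mathcal{L}(\Phi)\big)$. Indeed, the general Hessian identity gives the left side as $\delta_{i}\delta_{j}(\mathcal{L}(\Phi))-d\mathcal{L}(\phi)\cdot C_{0}$, where $C_{0}:=z^{*}\nabla_{\partial_{i}}\partial_{j}\Phi\in\Gamma(\phi^{*}TS)$ is the covariant second-order displacement of the variation (using the connection on $C^{\infty}(M,S)$ induced from $\nabla^{TS}$ as in \citep[Theorem 5.4]{Eliasson}, whose restriction to curves of maps is the pullback covariant derivative of (\ref{proposition_pullback_covariant_derivative})), and $d\mathcal{L}(\phi)\cdot C_{0}=0$ since $\phi$ is critical. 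So the task reduces to differentiating $L\circ\Phi_{,M}$ twice in the parameters, evaluating at $i=j=0$ via $z\colon M\to M\times I\times J,\ m\mapsto(m,0,0)$, and integrating over $M$ with $dV_{g}$.

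Next I would introduce second partial covariant derivatives of $L$: exactly as $\nabla^{E\to\mathbb{R}}L$ decomposed as $L_{,\sigma}\cdot\sigma+L_{,\mu}\cdot\mu+L_{,v}\cdot v$ in (\ref{cor:horizontal_vertical_derivatives}), decompose $\nabla L_{,\sigma},\nabla L_{,\mu},\nabla L_{,v}$ against the same $\sigma\oplus\mu\oplus v$ trivialization of $T^{*}E$, producing $L_{,\sigma\sigma},L_{,\sigma v},L_{,v\sigma},L_{,vv}$ (of the types displayed in the theorem) together with mixed $\mu$-terms. The double differentiation then proceeds as follows. By (\ref{lem:first-variation-calculation-1}) and (\ref{lem:first-variation-calculation-2}), and noting that $\pi_{M}\circ\Phi_{,M}=\pr_{M}^{M\times I\times J}$ depends on neither $I$ nor $J$, one has $\partial_{j}(L\circ\Phi_{,M})=\Phi_{,M}^{*}L_{,\sigma}\cdot\partial_{j}\Phi+\Phi_{,M}^{*}L_{,v}\cdot\nabla_{M}\partial_{j}\Phi$. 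Applying $\partial_{i}$, the covariant chain rule (\ref{proposition_pullback_covariant_derivative}), and again (\ref{lem:first-variation-calculation-2}) (the $\mu$-component of $\nabla_{\partial_{i}}\Phi_{,M}$ vanishes; its $\sigma$- and $v$-components are $\partial_{i}\Phi$ and $\nabla_{M}\partial_{i}\Phi$) yields three groups of terms: (i) four ``second-derivative-of-$L$'' terms built from $L_{,\sigma\sigma},L_{,\sigma v},L_{,v\sigma},L_{,vv}$ paired with $\partial_{i}\Phi,\partial_{j}\Phi,\nabla_{M}\partial_{i}\Phi,\nabla_{M}\partial_{j}\Phi$; (ii) the second-order displacement terms $\Phi_{,M}^{*}L_{,\sigma}\cdot\nabla_{\partial_{i}}\partial_{j}\Phi+\Phi_{,M}^{*}L_{,v}\cdot\nabla_{M}(\nabla_{\partial_{i}}\partial_{j}\Phi)$; and (iii) a commutator term $-\Phi_{,M}^{*}L_{,v}\cdot R^{\Phi^{*}TS}(\partial_{i},\cdot)\partial_{j}\Phi$, arising because $\nabla_{\partial_{i}}$ and the material derivative $\nabla_{M}$ do not commute on $\Gamma(\Phi^{*}TS)$, with $R^{\Phi^{*}TS}=\Phi^{*}R^{TS}:(\nnabla\Phi\boxtimes\nnabla\Phi)$ by (\ref{prop:pullback_curvature_endomorphism}).

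Finally I would evaluate at $z$, using (\ref{prop:evaluation_commutes_with_noninvolved_derivatives}) to convert $z^{*}\nabla_{M}\partial_{i}\Phi=\nabla^{\phi^{*}TS}B$, $z^{*}\nabla_{M}\partial_{j}\Phi=\nabla^{\phi^{*}TS}A$, $z^{*}\Phi_{,M}=\phi_{,M}$, $z^{*}(\nnabla\Phi\cdot\partial_{i})=B$, etc., and integrate. Group (i) becomes exactly the four $\phi_{,M}^{*}L_{,\bullet\bullet}$ terms of the statement (the slot conventions $A\leftrightarrow$ outer subscript, $B\leftrightarrow$ derivative subscript are what fix the order); group (iii), after contracting the curvature output and its two inputs against $\phi_{,M}^{*}L_{,v}$, $A$, and $B$, becomes precisely $-\int_{M}A\cdot_{\phi^{*}T^{*}S}\big(\phi_{,M}^{*}L_{,v}\cdot(\phi^{*}R^{TS}\cdot\phi_{,M})\big)\cdot_{\phi^{*}TS}B\,dV_{g}$; and group (ii) integrates to $\int_{M}\phi_{,M}^{*}L_{,\sigma}\cdot C_{0}+\phi_{,M}^{*}L_{,v}\cdot\nabla^{\phi^{*}TS}C_{0}\,dV_{g}=d\mathcal{L}(\phi)\cdot C_{0}=0$ by (\ref{lem:first-variation-calculation-1}) and criticality of $\phi$ (so, unlike the first variation, no integration by parts is done and no boundary term appears). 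The main obstacle is the type-bookkeeping of the penultimate step: keeping every natural-pairing subscript, permutation, and especially the sign and slot assignment in the curvature term correct (which argument of $R^{TS}$ receives the $\partial_{i}$-image, which receives the $\phi_{,M}$-image, how $L_{,v}$'s $TM$-factor contracts the dangling $T^{*}M$), and verifying that all mixed $\mu$-derivatives of $L$ drop out because they meet the vanishing $\mu$-component. The secondary subtlety is justifying the opening reduction against the infinite-dimensional connection on $C^{\infty}(M,S)$; this is purely formal once covariant differentiation there is identified with the pullback covariant derivative.
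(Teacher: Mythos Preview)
Your proposal is correct and follows essentially the same strategy as the paper: introduce a two-parameter variation, reduce the covariant Hessian of $\mathcal{L}$ to $\int_{M}\delta_{j}\partial_{i}(L\circ\Phi_{,M})\,dV_{g}$ via criticality (the displacement term $d\mathcal{L}(\phi)\cdot C_{0}$ vanishes), decompose against $\sigma\oplus\mu\oplus v$, let the $\mu$-components drop, and pick up the curvature term from commuting $\nabla_{\partial_{i}}$ with the material derivative on $\Phi^{*}TS$ using (\ref{prop:pullback_curvature_endomorphism}). The only organisational difference is that the paper packages the second differentiation of $L\circ\Phi_{,M}$ through the Hessian chain rule (\ref{prop:chain_rule_for_covariant_hessian}), yielding the two summands $\Phi_{,M}^{*}\nabla^{2}L:(\nnabla\Phi_{,M}\boxtimes\nnabla\Phi_{,M})$ and $\Phi_{,M}^{*}\nabla L\cdot\nabla\nnabla\Phi_{,M}$ which are then decomposed, whereas you iterate the first-variation product rule directly; the content is the same and the curvature appears in exactly the same place (the $v$-component of the second-order term).
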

\begin{proof}
Let $\Phi\colon M\times I\times J\to S$ be a two-parameter variation
such that $\delta_{i}\Phi=A$ and $\delta_{j}\Phi=B$ (e.g. $\Phi\left(m,i,j\right):=\exp\left(iA\left(m\right)+jB\left(m\right)\right)$).
The variation $\Phi$ can be naturally identified with a variation
$\overline{\Phi}\colon I\times J\to C,\,\left(i,j\right)\mapsto\left(m\mapsto\Phi\left(m,i,j\right)\right)$
which is more conducive to the use of $C$ as a manifold. The tensor
products in the generally infinite-dimensional $TC$ are taken formally.
Let $\overline{z}:=\left(0,0\right)\in I\times J$.

By (\ref{prop:chain_rule_for_covariant_hessian}), taking the algebra
formally in the case of infinite-rank vector bundles,
\[
\nabla^{2}\left(\mathcal{L}\circ\overline{\Phi}\right)=\overline{\Phi}^{*}\nabla^{2}\mathcal{L}:_{\overline{\Phi}^{*}TC}\left(\nnabla\overline{\Phi}\boxtimes_{I\times J}\nnabla\overline{\Phi}\right)+\overline{\Phi}^{*}\nabla\mathcal{L}\cdot_{\overline{\Phi}^{*}TC}\nabla\nnabla\overline{\Phi},
\]
so
\begin{align*}
 & \left(\nabla^{2}\mathcal{L}\circ_{C}\phi\right):_{T_{\phi}C}\left(A\otimes B\right)\\
={} & \left(\nabla^{2}\mathcal{L}\circ_{C}\phi\right):_{T_{\phi}C}\left(\delta_{i}\overline{\Phi}\otimes\delta_{j}\overline{\Phi}\right)+\left(\nabla\mathcal{L}\circ_{C}\phi\right)\cdot_{T_{\phi}C}\nabla_{\delta_{j}}^{\overline{\Phi}^{*}TC}\partial_{i}\overline{\Phi} & \mbox{(since \ensuremath{\nabla\mathcal{L}\circ_{C}\phi=0})}\\
={} & \left(\nabla^{2}\mathcal{L}\circ_{C}\overline{\Phi}\circ_{I\times J}\overline{z}\right):_{\overline{z}^{*}\overline{\Phi}^{*}TC}\left(\delta_{i}\overline{\Phi}\otimes\delta_{j}\overline{\Phi}\right)+\left(\nabla\mathcal{L}\circ_{C}\overline{\Phi}\circ_{I\times J}\overline{z}\right)\cdot_{\overline{z}^{*}\overline{\Phi}^{*}TC}\nabla_{\delta_{j}}^{\overline{\Phi}^{*}TC}\partial_{i}\overline{\Phi}\\
={} & \nabla^{2}\left(\mathcal{L}\circ_{C}\overline{\Phi}\right):_{TI\oplus TJ}\left(\delta_{i}\otimes_{I\times J}\delta_{j}\right) & \mbox{(by above)}\\
={} & \delta_{j}\partial_{i}\left(\mathcal{L}\circ_{C}\overline{\Phi}\right)\\
={} & \int_{M}\delta_{j}\partial_{i}\left(L\circ\Phi_{,M}\right)\, dV_{g}\\
={} & \int_{M}\nabla^{2}\left(L\circ\Phi_{,M}\right):_{TM\oplus TI\oplus TJ}\left(\delta_{i}\otimes_{M\times I\times J}\delta_{j}\right)\, dV_{g}\\
={} & \int_{M}A\cdot_{\phi^{*}T^{*}S}\phi_{,M}^{*}L_{,\sigma\sigma}\cdot_{\phi^{*}TS}B+A\cdot_{\phi^{*}T^{*}S}\phi_{,M}^{*}L_{,\sigma v}\cdot_{\phi^{*}TS\otimes_{M}T^{*}M}\nabla^{\phi^{*}TS}B\\
 & +\nabla^{\phi^{*}TS}A\cdot_{\phi^{*}T^{*}S\otimes_{M}TM}\phi_{,M}^{*}L_{,v\sigma}\cdot_{\phi^{*}TS}B\\
 & +\nabla^{\phi^{*}TS}A\cdot_{\phi^{*}T^{*}S\otimes_{M}TM}\phi_{,M}^{*}L_{,vv}\cdot_{\phi^{*}TS\otimes_{M}T^{*}M}\nabla^{\phi^{*}TS}B\\
 & -A\cdot_{\phi^{*}T^{*}S}\left(\phi_{,M}^{*}L_{,v}\cdot_{\phi^{*}TS\otimes_{M}T^{*}M}\left(\phi^{*}R^{TS}\cdot_{\phi^{*}TS}\phi_{,M}\right)\right)\cdot_{\phi^{*}TS}B\, dV_{g} & \mbox{(by Calculation (1)).}
\end{align*}
Supporting calculations follow.\\

Calculation (1): Abbreviate $\phi_{,M}^{*}L_{,xy}$ by $\mathbf{L}_{,xy}$.
By (\ref{prop:chain_rule_for_covariant_hessian}),
\begin{align*}
 & \nabla^{2}\left(L\circ\Phi_{,M}\right):_{TM\oplus TI\oplus TJ}\left(\delta_{i}\otimes_{M\times I\times J}\delta_{j}\right)\\
={} & \left(\left[\Phi_{,M}^{*}\nabla^{2}L:_{\Phi_{,M}^{*}TE}\left(\nnabla\Phi_{,M}\boxtimes_{M\times I\times J}\nnabla\Phi_{,M}\right)+\Phi_{,M}^{*}\nabla L\cdot_{\Phi_{,M}^{*}TE}\nabla\nnabla\Phi_{,M}\right]\circ z\right):_{z^{*}\left(TM\oplus TI\oplus TJ\right)}\left(\delta_{i}\otimes_{M}\delta_{j}\right)\\
={} & z^{*}\Phi_{,M}^{*}\nabla^{2}L:_{z^{*}\Phi_{,M}^{*}TE}\left(\delta_{i}\Phi_{,M}\otimes_{M}\delta_{j}\Phi_{,M}\right)+z^{*}\Phi_{,M}^{*}\nabla L\cdot_{z^{*}\Phi_{,M}^{*}TE}\nabla_{\delta_{j}}^{\Phi_{,M}^{*}TE}\partial_{i}\Phi_{,M}\\
 & \mbox{(by Calculation (2))}\\
={} & \mathbf{L}_{,\sigma\sigma}:_{\phi_{,M}^{*}\pi_{S}^{*}TS}\left(\delta_{i}\Phi\otimes_{M}\delta_{j}\Phi\right)+\mathbf{L}_{,\sigma v}\cdot_{\phi_{,M}^{*}\pi_{S}^{*}TS\otimes_{M}\phi_{,M}^{*}\pi^{*}E}\left(\delta_{i}\Phi\otimes_{M}\nabla^{\phi^{*}TS}\delta_{j}\Phi\right)\\
 & +\mathbf{L}_{,v\sigma}\cdot_{\phi_{,M}^{*}\pi^{*}E\otimes_{M}\phi_{,M}^{*}\pi_{S}^{*}TS}\left(\nabla^{\phi^{*}TS}\delta_{i}\Phi\otimes_{M}\delta_{j}\Phi\right)+\mathbf{L}_{,vv}:_{\phi_{,M}^{*}\pi^{*}E}\left(\nabla^{\phi^{*}TS}\delta_{i}\Phi\otimes_{M}\nabla^{\phi^{*}TS}\delta_{j}\Phi\right)\\
 & +\mathbf{L}_{,\sigma}\cdot_{\phi_{,M}^{*}\pi_{S}^{*}TS}\nabla_{\delta_{j}}^{\Phi^{*}TS}\partial_{i}\Phi+\mathbf{L}_{,v}\cdot_{\phi_{,M}^{*}\pi^{*}E}\nabla^{\phi^{*}TS}\nabla_{\delta_{j}}^{\Phi^{*}TS}\partial_{i}\Phi\\
 & +\mathbf{L}_{,v}\cdot_{\phi_{,M}^{*}\pi^{*}E}\left(\left(\Id_{\phi^{*}TS}\otimes_{M}\delta_{i}\Phi\right)\cdot_{\phi^{*}TS\otimes_{M}\phi^{*}T^{*}S}\left(\phi^{*}R^{TS}\cdot_{\phi^{*}TS}\delta_{j}\Phi\right)\cdot_{\phi^{*}TS}\phi_{,M}\right)\\
 & \mbox{(by Calculation (3)).}
\end{align*}
Note that $\nabla_{\delta_{j}}^{\Phi^{*}TS}\partial_{i}\Phi\in\Gamma\left(\phi^{*}TS\right)$,
and since $\phi$ is a critical point of $\mathcal{L}$,
\[
\int_{M}\mathbf{L}_{,\sigma}\cdot_{\phi_{,M}^{*}\pi_{S}^{*}TS}\nabla_{\delta_{j}}^{\Phi^{*}TS}\partial_{i}\Phi+\mathbf{L}_{,v}\cdot_{\phi_{,M}^{*}\pi^{*}E}\nabla^{\phi^{*}TS}\nabla_{\delta_{j}}^{\Phi^{*}TS}\partial_{i}\Phi\, dV_{g}=0.
\]
Thus
\begin{align*}
 & \int_{M}\nabla^{2}\left(L\circ\Phi_{,M}\right):_{TM\oplus TI\oplus TJ}\left(\delta_{i}\otimes_{M\times I\times J}\delta_{j}\right)\, dV_{g}\\
={} & \int_{M}\mathbf{L}_{,\sigma\sigma}:_{\phi_{,M}^{*}\pi_{S}^{*}TS}\left(\delta_{i}\Phi\otimes_{M}\delta_{j}\Phi\right)+\mathbf{L}_{,\sigma v}\cdot_{\phi_{,M}^{*}\pi_{S}^{*}TS\otimes_{M}\phi_{,M}^{*}\pi^{*}E}\left(\delta_{i}\Phi\otimes_{M}\nabla^{\phi^{*}TS}\delta_{j}\Phi\right)\\
 & +\mathbf{L}_{,v\sigma}\cdot_{\phi_{,M}^{*}\pi^{*}E\otimes_{M}\phi_{,M}^{*}\pi_{S}^{*}TS}\left(\nabla^{\phi^{*}TS}\delta_{i}\Phi\otimes_{M}\delta_{j}\Phi\right)+\mathbf{L}_{,vv}:_{\phi_{,M}^{*}\pi^{*}E}\left(\nabla^{\phi^{*}TS}\delta_{i}\Phi\otimes_{M}\nabla^{\phi^{*}TS}\delta_{j}\Phi\right)\\
 & +\delta_{i}\Phi\cdot_{\phi^{*}T^{*}S}\left(\mathbf{L}_{,v}\cdot_{\phi_{,M}^{*}\pi^{*}E}\left(\left(\phi^{*}R^{TS}\cdot_{\phi^{*}TS}\delta_{j}\Phi\right)\cdot_{\phi^{*}TS}\phi_{,M}\right)\right)\, dV_{g}\\
={} & \int_{M}A\cdot_{\phi^{*}T^{*}S}\mathbf{L}_{,\sigma\sigma}\cdot_{\phi^{*}TS}B+A\cdot_{\phi^{*}T^{*}S}\mathbf{L}_{,\sigma v}\cdot_{\phi^{*}TS\otimes_{M}T^{*}M}\nabla^{\phi^{*}TS}B\\
 & +\nabla^{\phi^{*}TS}A\cdot_{\phi^{*}T^{*}S\otimes_{M}TM}\mathbf{L}_{,v\sigma}\cdot_{\phi^{*}TS}B+\nabla^{\phi^{*}TS}A\cdot_{\phi^{*}T^{*}S\otimes_{M}TM}\mathbf{L}_{,vv}\cdot_{\phi^{*}TS\otimes_{M}T^{*}M}\nabla^{\phi^{*}TS}B\\
 & -A\cdot_{\phi^{*}T^{*}S}\left(\mathbf{L}_{,v}\cdot_{\phi^{*}TS\otimes_{M}T^{*}M}\left(\phi^{*}R^{TS}\cdot_{\phi^{*}TS}\phi_{,M}\right)\right)\cdot_{\phi^{*}TS}B\, dV_{g}\\
 & \mbox{ (by antisymmetry of curvature tensor).}
\end{align*}
\\

Calculation (2):
\begin{align*}
 & z^{*}\nabla\nnabla\Phi_{,M}:_{z^{*}\left(TM\oplus TI\oplus TJ\right)}\left(\delta_{i}\otimes_{M}\delta_{j}\right)\\
={} & z^{*}\nabla^{\Phi_{,M}^{*}TE\otimes_{M\times I\times J}\left(T^{*}M\oplus T^{*}I\oplus T^{*}J\right)}\nnabla\Phi_{,M}:_{z^{*}\left(TM\oplus TI\oplus TJ\right)}z^{*}\left(\partial_{i}\otimes_{M\times I\times J}\partial_{j}\right)\\
={} & z^{*}\left(\nabla^{\Phi_{,M}^{*}TE\otimes_{M\times I\times J}\left(T^{*}M\oplus T^{*}I\oplus T^{*}J\right)}\nnabla\Phi_{,M}:_{TM\oplus TI\oplus TJ}\left(\partial_{i}\otimes_{M\times I\times J}\partial_{j}\right)\right)\\
={} & z^{*}\left(\nabla_{\partial_{j}}^{\Phi_{,M}^{*}TE\otimes_{M\times I\times J}\left(T^{*}M\oplus T^{*}I\oplus T^{*}J\right)}\nnabla\Phi_{,M}\cdot_{TM\oplus TI\oplus TJ}\partial_{i}\right)\\
={} & z^{*}\nabla_{\partial_{j}}^{\Phi_{,M}^{*}TE}\left(\nnabla\Phi_{,M}\cdot_{TM\oplus TI\oplus TJ}\partial_{i}\right)\mbox{ (since \ensuremath{\nabla_{\partial_{j}}^{TM\oplus TI\oplus TJ}\partial_{i}=0})}\\
={} & \nabla_{\delta_{j}}^{\Phi_{,M}^{*}TE}\partial_{i}\Phi_{,M}.
\end{align*}
\\

Calculation (3): As calculated in the proof of (\ref{thm:first_variation_of_L}),
\begin{align*}
\phi_{,M}^{*}\sigma\cdot_{\phi_{,M}^{*}TE}\delta_{i}\Phi_{,M} & =\delta_{i}\Phi\in\Gamma\left(\phi^{*}TS\right),\\
\phi_{,M}^{*}\mu\cdot_{\phi_{,M}^{*}TE}\delta_{i}\Phi_{,M} & =0\in\Gamma\left(TM\right),\\
\phi_{,M}^{*}v\cdot_{\phi_{,M}^{*}TE}\delta_{i}\Phi_{,M} & =\nabla^{\phi^{*}TS}\delta_{i}\Phi\in\Gamma\left(\phi^{*}TS\otimes_{M}T^{*}M\right).
\end{align*}
Furthermore, letting $P:=\pr_{M}^{M\times I\times J}$ for brevity
and noting that $P\circ z=\Id_{M}$,
\begin{align*}
 & \phi_{,M}^{*}\sigma\cdot_{\phi_{,M}^{*}TE}\nabla_{\delta_{j}}^{\Phi_{,M}^{*}TE}\partial_{i}\Phi_{,M}\\
={} & z^{*}\nabla_{\partial_{j}}^{\Phi_{,M}^{*}\pi_{S}^{*}TS}\left(\Phi_{,M}^{*}\sigma\cdot_{\Phi_{,M}^{*}TE}\partial_{i}\Phi_{,M}\right) & \mbox{(since \ensuremath{\nabla\sigma=0})}\\
={} & z^{*}\nabla_{\partial_{j}}^{\left(\pi_{S}\circ\Phi_{,M}\right)^{*}TS}\partial_{i}\Phi & \mbox{(using calculation from (\ref{thm:first_variation_of_L}))}\\
={} & \nabla_{\delta_{j}}^{\Phi^{*}TS}\partial_{i}\Phi\in\Gamma\left(z^{*}\Phi^{*}TS\right)\cong\Gamma\left(\phi^{*}TS\right),
\end{align*}
\begin{align*}
 & \phi_{,M}^{*}\mu\cdot_{\phi_{,M}^{*}TE}\nabla_{\delta_{j}}^{\Phi_{,M}^{*}TE}\partial_{i}\Phi_{,M}\\
={} & z^{*}\nabla_{\partial_{j}}^{\Phi_{,M}^{*}\pi_{M}^{*}TM}\left(\Phi_{,M}^{*}\mu\cdot_{\Phi_{,M}^{*}TE}\partial_{i}\Phi_{,M}\right) & \mbox{(since \ensuremath{\nabla\mu=0})}\\
={} & z^{*}\nabla_{\partial_{j}}^{\left(\pi_{M}\circ\Phi_{,M}\right)^{*}TM}0 & \mbox{(using calculation from (\ref{thm:first_variation_of_L}))}\\
={} & 0\in\Gamma\left(z^{*}\left(\pi_{M}\circ\Phi_{,M}\right)^{*}TM\right)\cong\Gamma\left(z^{*}P^{*}TM\right)\cong\Gamma\left(TM\right),
\end{align*}
\begin{align*}
 & \phi_{,M}^{*}v\cdot_{\phi_{,M}^{*}TE}\nabla_{\delta_{j}}^{\Phi_{,M}^{*}TE}\partial_{i}\Phi_{,M}\\
={} & z^{*}\nabla_{\partial_{j}}^{\Phi_{,M}^{*}\pi^{*}E}\left(\Phi_{,M}^{*}v\cdot_{\Phi_{,M}^{*}TE}\partial_{i}\Phi_{,M}\right) & \mbox{(since \ensuremath{\nabla v=0})}\\
={} & z^{*}\nabla_{\partial_{j}}^{\left(\pi\circ\Phi_{,M}\right)^{*}E}\left(\partial_{i}\Phi\right)_{,M} & \mbox{(using calculation from (\ref{thm:first_variation_of_L})).}
\end{align*}
Note that 
\[
\phi_{,M}^{*}v\in\Gamma\left(\phi_{,M}^{*}\pi^{*}E\otimes_{M}\phi_{,M}^{*}T^{*}E\right)\cong\Gamma\left(\left(\phi\times_{M}\Id_{M}\right)^{*}E\otimes_{M}\phi_{,M}^{*}T^{*}E\right),
\]
and therefore 
\[
\phi_{,M}^{*}v\cdot_{\phi_{,M}^{*}TE}\nabla_{\delta_{j}}^{\Phi_{,M}^{*}TE}\partial_{i}\Phi_{,M}\in\Gamma\left(\left(\phi\times_{M}\Id_{M}\right)^{*}E\right)\cong\Gamma\left(\phi^{*}TS\otimes_{M}T^{*}M\right),
\]
so it suffices to examine its natural pairing with $TM$ elements.
Let $X\in\Gamma\left(TM\right)$, noting that $X=\Id_{M}^{*}X=z^{*}P^{*}X$
and that $P^{*}X=TP\cdot\left(X\oplus0_{TI}\oplus0_{TJ}\right)\in\Gamma\left(P^{*}TM\right)$.
Then
\begin{align*}
 & \left(\phi_{,M}^{*}v\cdot_{\phi_{,M}^{*}TE}\nabla_{\delta_{j}}^{\Phi_{,M}^{*}TE}\partial_{i}\Phi_{,M}\right)\cdot_{TM}X\\
={} & z^{*}\nabla_{\partial_{j}}^{\Phi^{*}TS\otimes_{M\times I\times J}P^{*}T^{*}M}\left(\partial_{i}\Phi\right)_{,M}\cdot_{z^{*}P^{*}TM}z^{*}P^{*}X\\
={} & z^{*}\nabla_{\partial_{j}}^{\Phi^{*}TS}\left(\left(\partial_{i}\Phi\right)_{,M}\cdot_{P^{*}TM}\nnabla P\cdot_{TM\oplus TI\oplus TJ}\left(X\oplus0_{TI}\oplus0_{TJ}\right)\right)\\
 & -z^{*}\left(\left(\partial_{i}\Phi\right)_{,M}\cdot\nabla_{\partial_{j}}^{P^{*}TM}\left(\nnabla P\cdot_{TM\oplus TI\oplus TJ}\cdot\left(X\oplus0_{TI}\oplus0_{TJ}\right)\right)\right)\\
={} & z^{*}\left(\nabla_{\partial_{j}}^{\Phi^{*}TS}\nabla_{X\oplus0_{TI}\oplus0_{TJ}}^{\Phi^{*}TS}\partial_{i}\Phi-\left(\partial_{i}\Phi\right)_{,M}\cdot0_{P^{*}TM}\right)\\
={} & z^{*}\left(\nabla_{X\oplus0_{TI}\oplus0_{TJ}}^{\Phi^{*}TS}\nabla_{\partial_{j}}^{\Phi^{*}TS}\partial_{i}\Phi+\nabla_{\left[\partial_{j},X\oplus0_{TI}\oplus0_{TJ}\right]}^{\Phi^{*}TS}\partial_{i}\Phi-R^{\Phi^{*}TS}\left(\partial_{j},X\oplus0_{TI}\oplus0_{TJ}\right)\partial_{i}\Phi\right)\\
={} & z^{*}\left(\nabla_{X\oplus0_{TI}\oplus0_{TJ}}^{\Phi^{*}TS}\nabla_{\partial_{j}}^{\Phi^{*}TS}\partial_{i}\Phi+\nabla_{0}^{\Phi^{*}TS}\partial_{i}\Phi\right)\\
 & -z^{*}\left(\left(\Id_{\Phi^{*}TS}\otimes_{M\times I\times J}\partial_{i}\Phi\right)\cdot_{\Phi^{*}TS\otimes_{M\times I\times J}\Phi^{*}T^{*}S}R^{\Phi^{*}TS}:_{TM\oplus TI\oplus TJ}\left(\partial_{j}\otimes_{M\times I\times J}\left(X\oplus0_{TI}\oplus0_{TJ}\right)\right)\right)\\
={} & \left[\nabla^{\phi^{*}TS}\nabla_{\delta_{j}}^{\Phi^{*}TS}\partial_{i}\Phi+\left(\Id_{\phi^{*}TS}\otimes_{M}\delta_{i}\Phi\right)\cdot_{\phi^{*}TS\otimes_{M}\phi^{*}T^{*}S}\left(\phi^{*}R^{TS}\cdot_{\phi^{*}TS}\delta_{j}\Phi\right)\cdot_{\phi^{*}TS}\phi_{,M}\right]\cdot_{TM}X,
\end{align*}
where the last equality follows from Calculations (4) and (5). Because
$X$ is pointwise-arbitrary in $TM$, this shows that
\[
\phi_{,M}^{*}v\cdot_{\phi_{,M}^{*}TE}\nabla_{\delta_{j}}^{\Phi_{,M}^{*}TE}\partial_{i}\Phi_{,M}=\left(\nabla_{\delta_{j}}^{\Phi^{*}TS}\partial_{i}\Phi\right)_{,M}+\left(\Id_{\phi^{*}TS}\otimes_{M}\delta_{i}\Phi\right)\cdot_{\phi^{*}TS\otimes_{M}\phi^{*}T^{*}S}\left(\phi^{*}R^{TS}\cdot_{\phi^{*}TS}\delta_{j}\Phi\right)\cdot_{\phi^{*}TS}\phi_{,M}.
\]
\\

Calculation (4):
\begin{align*}
 & z^{*}\left(\nabla_{X\oplus0_{TI}\oplus0_{TJ}}^{\Phi^{*}TS}\nabla_{\partial_{j}}^{\Phi^{*}TS}\partial_{i}\Phi+\nabla_{0}^{\Phi^{*}TS}\partial_{i}\Phi\right)\\
={} & z^{*}\left(\nabla_{\partial_{j}}^{\Phi^{*}TS}\partial_{i}\Phi\right)_{,M}\cdot_{z^{*}P^{*}TM}z^{*}P^{*}X\\
={} & \left(\nabla_{\delta_{j}}^{\Phi^{*}TS}\partial_{i}\Phi\right)_{,M}\cdot_{TM}X\mbox{ (by (\ref{prop:evaluation_commutes_with_noninvolved_derivatives}))}\\
={} & \nabla^{\phi^{*}TS}\nabla_{\delta_{j}}^{\Phi^{*}TS}\partial_{i}\Phi\cdot_{TM}X\mbox{ (because \ensuremath{\nabla_{\delta_{j}}^{\Phi^{*}TS}\partial_{i}\Phi\in\Gamma\left(z^{*}\Phi^{*}TS\right)\cong\Gamma\left(\phi^{*}TS\right)}).}
\end{align*}
\\

Calculation (5):
\begin{align*}
 & -z^{*}\left(R^{\Phi^{*}TS}:_{TM\oplus TI\oplus TJ}\left(\partial_{j}\otimes_{M\times I\times J}\left(X\oplus0_{TI}\oplus0_{TJ}\right)\right)\right)\\
={} & z^{*}\left(R^{\Phi^{*}TS}:_{TM\oplus TI\oplus TJ}\left(\left(X\oplus0_{TI}\oplus0_{TJ}\right)\otimes_{M\times I\times J}\partial_{j}\right)\right) & \mbox{(antisymmetry of \ensuremath{R^{\Phi^{*}TS}})}\\
={} & z^{*}\left(\Phi^{*}R^{TS}:_{\Phi^{*}TS}\left(\nnabla\Phi\boxtimes_{M\times I\times J}\nnabla\Phi\right):_{TM\oplus TI\oplus TJ}\left(\left(X\oplus0_{TI}\oplus0_{TJ}\right)\otimes_{M\times I\times J}\partial_{j}\right)\right) & \mbox{(by (\ref{prop:pullback_curvature_endomorphism}))}\\
={} & z^{*}\left(\Phi^{*}R^{TS}:_{\Phi^{*}TS}\left(\left(\Phi_{,M}\cdot_{P^{*}TM}P^{*}X\right)\otimes_{M\times I\times J}\partial_{j}\Phi\right)\right)\\
={} & z^{*}\left(\left(\Phi^{*}R^{TS}\cdot_{\Phi^{*}TS}\partial_{j}\Phi\right)\cdot_{\Phi^{*}TS}\Phi_{,M}\cdot_{P^{*}TM}P^{*}X\right)\\
={} & \left(z^{*}\Phi^{*}R^{TS}\cdot_{z^{*}\Phi^{*}TS}z^{*}\partial_{j}\Phi\right)\cdot_{z^{*}\Phi^{*}TS}z^{*}\Phi_{,M}\cdot_{z^{*}P^{*}TM}z^{*}P^{*}X\\
={} & \left(\phi^{*}R^{TS}\cdot_{\phi^{*}TS}\delta_{j}\Phi\right)\cdot_{\phi^{*}TS}\phi_{,M}\cdot_{TM}X.
\end{align*}
\end{proof}
\begin{thm}[Second variation of $\mathcal{L}$ (alternate form)]
 \label{thm:second_variation_of_L_alternate} Let $\mathcal{L}$,
$L$, $\sigma$, $\mu$, $v$ and $\nu$ all be defined as above.
If $\phi\in C^{\infty}\left(M,S\right)$ is a critical point of $\mathcal{L}$
and $A,B\in\Gamma\left(\phi^{*}TS\right)$, then
\begin{align*}
 & \nabla^{2}\mathcal{L}\left(\phi\right):_{T_{\phi}C}\left(A\otimes B\right)\\
={} & \int_{M}A\cdot_{\phi^{*}T^{*}S}\mathbf{L}_{,\sigma\sigma}\cdot_{\phi^{*}TS}B+A\cdot_{\phi^{*}T^{*}S}\mathbf{L}_{,\sigma v}\cdot_{\phi^{*}TS\otimes_{M}T^{*}M}\nabla^{\phi^{*}TS}B\\
 & -A\cdot_{\phi^{*}T^{*}S}\Div_{M}\mathbf{L}_{,v\sigma}\cdot_{\phi^{*}TS}B-A\cdot_{\phi^{*}T^{*}S}\mathbf{L}_{,v\sigma}\cdot_{T^{*}M\otimes_{M}\phi^{*}TS}\left(\nabla^{\phi^{*}TS}B\right)^{\left(1\,2\right)}\\
 & -A\cdot_{\phi^{*}T^{*}S}\Div_{M}\mathbf{L}_{,vv}\cdot_{\phi^{*}TS\otimes_{M}T^{*}M}\nabla^{\phi^{*}TS}B\\
 & -A\cdot_{\phi^{*}T^{*}S}\mathbf{L}_{,vv}\cdot_{T^{*}M\otimes_{M}\phi^{*}TS\otimes_{M}T^{*}M}\left(\nabla^{\phi^{*}TS\otimes_{M}T^{*}M}\nabla^{\phi^{*}TS}B\right)^{\left(1\,2\,3\right)}\\
 & -A\cdot_{\phi^{*}T^{*}S}\left(\mathbf{L}_{,v}\cdot_{\phi^{*}TS\otimes_{M}T^{*}M}\left(\phi^{*}R^{TS}\cdot_{\phi^{*}TS}\phi_{,M}\right)\right)\cdot_{\phi^{*}TS}B\, dV_{g}\\
 & +\int_{\partial M}\left(A\cdot_{\phi^{*}T^{*}S}\mathbf{L}_{,v\sigma}\cdot_{\phi^{*}TS}B\right)\cdot_{T^{*}M}\nu+\left(A\cdot_{\phi^{*}T^{*}S}\mathbf{L}_{,vv}\cdot_{\phi^{*}TS\otimes_{M}T^{*}M}\nabla^{\phi^{*}TS}B\right)\cdot_{T^{*}M}\nu\, d\overline{V}_{g}
\end{align*}
\end{thm}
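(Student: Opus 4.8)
The plan is to obtain the alternate form directly from the second-variation formula in (\ref{thm:second_variation_of_L}) by transferring the covariant derivative off the first argument in the two terms that contain $\nabla^{\phi^{*}TS}A$ — the $\mathbf{L}_{,v\sigma}$ term and the $\mathbf{L}_{,vv}$ term — using the covariant integration-by-parts device already employed for the first variation in (\ref{lem:first-variation-calculation-1}). Here $\mathbf{L}_{,xy}$ abbreviates $\phi_{,M}^{*}L_{,xy}$, as in the proof of (\ref{thm:second_variation_of_L}). The terms of (\ref{thm:second_variation_of_L}) in which $A$ appears undifferentiated, namely the $\mathbf{L}_{,\sigma\sigma}$ and $\mathbf{L}_{,\sigma v}$ terms and the curvature term, are carried over verbatim.

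First I would isolate
\[
\int_{M}\nabla^{\phi^{*}TS}A\cdot_{\phi^{*}T^{*}S\otimes_{M}TM}\mathbf{L}_{,v\sigma}\cdot_{\phi^{*}TS}B\, dV_{g},
\]
and, exactly as in (\ref{lem:first-variation-calculation-1}), trace the $TM$-slot separately and apply the reverse product rule to write the integrand as $\Div_{M}\bigl(A\cdot_{\phi^{*}T^{*}S}\mathbf{L}_{,v\sigma}\cdot_{\phi^{*}TS}B\bigr)$ minus $A$ paired against $\nabla^{\phi}\bigl(\mathbf{L}_{,v\sigma}\cdot_{\phi^{*}TS}B\bigr)$. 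The divergence theorem then yields the boundary integral $\int_{\partial M}\bigl(A\cdot_{\phi^{*}T^{*}S}\mathbf{L}_{,v\sigma}\cdot_{\phi^{*}TS}B\bigr)\cdot_{T^{*}M}\nu\, d\overline{V}_{g}$, while the product rule splits the remaining interior term into a piece with $\Div_{M}\mathbf{L}_{,v\sigma}$ contracted against $B$ and a piece with $\mathbf{L}_{,v\sigma}$ contracted against $\nabla^{\phi^{*}TS}B$. In the latter, $\nabla^{\phi^{*}TS}B\in\Gamma(\phi^{*}TS\otimes_{M}T^{*}M)$ is contracted simultaneously on the $TM$-slot of $\mathbf{L}_{,v\sigma}$ and on a $\phi^{*}T^{*}S$-slot of $A$, so to express the result via the adjacent-pairing convention of Section \ref{sec:Strongly-Typed-Tensor-Field} one must transpose its two factors; this is the origin of the superscript $(1\,2)$, and by (\ref{cor:permutation_tensor_fields_are_parallel}) the permutation tensor field is parallel, so the rearrangement commutes with every covariant derivative involved. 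The $\mathbf{L}_{,vv}$ term is treated identically: integrating $\nabla^{\phi^{*}TS}A$ by parts produces the boundary integral $\int_{\partial M}\bigl(A\cdot_{\phi^{*}T^{*}S}\mathbf{L}_{,vv}\cdot_{\phi^{*}TS\otimes_{M}T^{*}M}\nabla^{\phi^{*}TS}B\bigr)\cdot_{T^{*}M}\nu\, d\overline{V}_{g}$, a $\Div_{M}\mathbf{L}_{,vv}$ term paired with $\nabla^{\phi^{*}TS}B$, and a term in which $\mathbf{L}_{,vv}$ is paired against the iterated derivative $\nabla^{\phi^{*}TS\otimes_{M}T^{*}M}\nabla^{\phi^{*}TS}B$, whose three outer factors must be cyclically reordered to fit the convention, giving the superscript $(1\,2\,3)$.

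Finally I would assemble the pieces: the two untouched undifferentiated terms and the curvature term from (\ref{thm:second_variation_of_L}), the new interior contributions from the two integrations by parts (the $\mathbf{L}_{,v\sigma}$- and $\mathbf{L}_{,vv}$-derived divergence terms and the $(1\,2)$- and $(1\,2\,3)$-permuted pairings), and the two boundary integrals collected over $\partial M$ — which is precisely the stated formula. The main obstacle is not conceptual but bookkeeping: one must track which tensor factor each $\Div_{M}$ traces, which factor pairs with the unit normal $\nu$, and the precise permutation exponents $(1\,2)$ and $(1\,2\,3)$; the strongly typed notation together with (\ref{cor:permutation_tensor_fields_are_parallel}) is exactly what makes this routine rather than error-prone. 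One minor point deserves an explicit word: compactness of $M$ (possibly with boundary $\partial M$) is what licenses the divergence theorem here with no additional decay hypotheses.
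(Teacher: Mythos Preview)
Your proposal is correct and follows essentially the same approach as the paper: start from (\ref{thm:second_variation_of_L}), separate the $TM$ trace in the two terms carrying $\nabla^{\phi^{*}TS}A$, apply the reverse product rule and the divergence theorem to each, and identify the resulting permutation exponents $(1\,2)$ and $(1\,2\,3)$. Your additional remarks about (\ref{cor:permutation_tensor_fields_are_parallel}) and about compactness of $M$ licensing the divergence theorem are accurate side observations, but the core argument is identical to the paper's.
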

\begin{proof}
This result follows essentially from (\ref{thm:second_variation_of_L})
via several instances of integration by parts to express the integrand(s)
entirely in terms of $A$ and not its covariant derivatives. Abbreviate
$\phi_{,M}^{*}L_{,xy}$ by $\mathbf{L}_{,xy}$. Then, integrating
by parts allows the covariant derivatives of $A$ to be flipped across
the natural pairings over $\phi^{*}TS$.
\begin{align*}
 & \int_{M}\nabla^{\phi^{*}TS}A\cdot_{\phi^{*}T^{*}S\otimes_{M}TM}\mathbf{L}_{,v\sigma}\cdot_{\phi^{*}TS}B\, dV_{g}\\
={} & \int_{M}\tr_{TM}\left(\left(\nabla^{\phi^{*}TS}A\right)^{\left(1\,2\right)}\cdot_{\phi^{*}TS}\mathbf{L}_{,v\sigma}\cdot_{\phi^{*}TS}B\right)\, dV_{g} & \mbox{(\ensuremath{TM}trace is taken separately)}\\
={} & \int_{M}\tr_{TM}\left(\nabla^{TM}\left(A\cdot_{\phi^{*}T^{*}S}\mathbf{L}_{,v\sigma}\cdot_{\phi^{*}TS}B\right)\right)\\
 & -\tr_{TM}\left(A\cdot_{\phi^{*}T^{*}S}\nabla^{\phi^{*}T^{*}S\otimes_{M}TM\otimes_{M}\phi^{*}TS}\mathbf{L}_{,v\sigma}\cdot_{\phi^{*}TS}B\right)\\
 & -\tr_{TM}\left(A\cdot_{\phi^{*}T^{*}S}\mathbf{L}_{,v\sigma}\cdot_{\phi^{*}TS}\nabla^{\phi^{*}TS}B\right)\, dV_{g} & \mbox{(reverse product rule)}\\
={} & \int_{M}-A\cdot_{\phi^{*}T^{*}S}\Div_{M}\mathbf{L}_{,v\sigma}\cdot_{\phi^{*}TS}B & \mbox{(definition of divergence)}\\
 & -A\cdot_{\phi^{*}T^{*}S}\mathbf{L}_{,v\sigma}\cdot_{T^{*}M\otimes_{M}\phi^{*}TS}\left(\nabla^{\phi^{*}TS}B\right)^{\left(1\,2\right)}\, dV_{g}\\
 & +\int_{\partial M}\left(A\cdot_{\phi^{*}T^{*}S}\mathbf{L}_{,v\sigma}\cdot_{\phi^{*}TS}B\right)\cdot_{T^{*}M}\nu\, d\overline{V}_{g} & \mbox{(divergence theorem).}
\end{align*}
Similiarly, 
\begin{align*}
 & \int_{M}\nabla^{\phi^{*}TS}A\cdot_{\phi^{*}T^{*}S\otimes_{M}TM}\mathbf{L}_{,vv}\cdot_{\phi^{*}TS\otimes_{M}T^{*}M}\nabla^{\phi^{*}TS}B\, dV_{g}\\
={} & \int_{M}\tr_{TM}\left(\left(\nabla^{\phi^{*}TS}A\right)^{\left(1\,2\right)}\cdot_{\phi^{*}T^{*}S}\mathbf{L}_{,vv}\cdot_{\phi^{*}TS\otimes_{M}T^{*}M}\nabla^{\phi^{*}TS}B\right)\, dV_{g}\\
={} & \int_{M}\tr_{TM}\left(\nabla^{TM}\left(A\cdot_{\phi^{*}T^{*}S}\mathbf{L}_{,vv}\cdot_{\phi^{*}TS\otimes_{M}T^{*}M}\nabla^{\phi^{*}TS}B\right)\right)\\
 & -\tr_{TM}\left(A\cdot_{\phi^{*}T^{*}S}\nabla^{\phi^{*}T^{*}S\otimes_{M}TM\otimes_{M}\phi^{*}T^{*}S\otimes_{M}TM}\mathbf{L}_{,vv}\cdot_{\phi^{*}TS\otimes_{M}T^{*}M}\nabla^{\phi^{*}TS}B\right)\\
 & -\tr_{TM}\left(A\cdot_{\phi^{*}T^{*}S}\mathbf{L}_{,vv}\cdot_{\phi^{*}TS\otimes_{M}T^{*}M}\nabla^{\phi^{*}TS\otimes_{M}T^{*}M}\nabla^{\phi^{*}TS}B\right)\, dV_{g}\\
={} & \int_{M}-A\cdot_{\phi^{*}T^{*}S}\Div_{M}\mathbf{L}_{,vv}\cdot_{\phi^{*}TS\otimes_{M}T^{*}M}\nabla^{\phi^{*}TS}B\\
 & -A\cdot_{\phi^{*}T^{*}S}\mathbf{L}_{,vv}\cdot_{T^{*}M\otimes_{M}\phi^{*}TS\otimes_{M}T^{*}M}\left(\nabla^{\phi^{*}TS\otimes_{M}T^{*}M}\nabla^{\phi^{*}TS}B\right)^{\left(1\,2\,3\right)}\, dV_{g}\\
 & +\int_{\partial M}\left(A\cdot_{\phi^{*}T^{*}S}\mathbf{L}_{,vv}\cdot_{\phi^{*}TS\otimes_{M}T^{*}M}\nabla^{\phi^{*}TS}B\right)\cdot_{T^{*}M}\nu\, d\overline{V}_{g}.
\end{align*}
Together with (\ref{thm:second_variation_of_L}), this gives the desired
result.
\end{proof}

\section{Questions and Future Work}

This paper is a first pass at the development of a strongly-typed
tensor calculus formalism. The details of its workings are by no means
complete or fully polished, and its landscape is riddled with many
tempting rabbit holes which would certainly produce useful results
upon exploration, but which were out of the scope of a first exposition.
Here is a list of some topics which the author considers worthwhile
to pursue, and which will likely be the subject of his future work.
Hopefully some of these topics will be inspiring to other mathematicians,
and ideally will start a conversation on the subject.
\begin{itemize}
\item There refinements to be made to the type system used in this paper
in order to achieve better error-checking and possibly more insight
into the relevant objects. There are still implicit type identifications
being done (mostly the canonical identifications between different
pullback bundles).
\item The calculations done in this paper are not in an optimally polished
and refined state. With experience, certain common operations can
be identified, abstract computational rules generated for these operations,
and the relevant calculations simplified.
\item The language of Category Theory can be used to address the implicit/explicit
handling of natural type identifications, for example, the identification
used in showing the contravariance of bundle pullback; $\psi^{*}\phi^{*}F\cong\left(\phi\circ\psi\right)^{*}F$.
\item The details of the particular implementation of the pullback bundle
$\phi^{*}F$ as a submanifold of the direct product $M\times F$ are
used in this paper, but there is no reason to ``open up the box''
like this. For most purposes, the categorical definition of pullback
bundle suffices; the pullback bundle can be worked exclusively using
its projection maps $\pi_{M}^{\phi^{*}F}$ and $\rho_{F}^{\phi^{*}F}$.
In the author's experience (which occurred too late to be incorporated
into this paper), using this abstract interface cleans up calculations
involving pullback bundles significantly.
\item The type system used for any particular problem or calculation can
be enriched or simplified to adjust to the level of detail appropriate
for the situation. For example, if $\gamma\in C^{\infty}\left(\mathbb{R},M\right)$,
then $\nnabla\gamma\in\Gamma\left(\gamma^{*}TM\otimes_{\mathbb{R}}T^{*}\mathbb{R}\right)$,
but if $t$ is the standard coordinate on $\mathbb{R}$, then $\nnabla\gamma=\gamma^{\prime}\otimes_{\mathbb{R}}dt$,
where $\gamma^{\prime}\in\Gamma\left(\gamma^{*}TM\right)$ is given
by $\nnabla\gamma\cdot\frac{d}{dt}$. This ``primed'' derivative
has a simpler type than the total derivative, and would presumably
lead to simplier calculations (e.g. in (\ref{prop:conserved-quantity}).
This ``primed'' derivative could also be used in the derivation
of the first and second variations. While this would simplify the
type system, it would diversify the notation and make the computational
system less regularized. However, some situations may benefit overall
from this.
\item The notion of strong typing comes from computer programming languages.
The human-driven type-checking which is facilitated by the pedantically
decorated notation in this paper can be done by computer by implementing
the objects and operations of this tensor calculus formalism in a
strongly typed language such as Haskell. This would be a step toward
automated calculation checking, and could be considered a step toward
automated proof checking from the top down (as opposed to from the
bottom up, using a system such as the Coq Proof Assistant).
\item Is there some sort of completeness result about the calculational
tools and type system in this paper? In other words, is it possible
to accomplish ``everything'' in a global, coordinate-free way using
a certain set of tools, such as pullback bundles, covariant derivatives,
chain rules, permutations, evaluation-by-pullback?
\item The alternate form of the second variation (see (\ref{thm:second_variation_of_L_alternate}))
can be used to form a generalized Jacobi field equation for a particular
energy functional. Analysis of this equation and its solutions may
give insights analogous to the standard (geodesic-based) Jacobi field
equation.
\end{itemize}

\section*{Acknowledgements}

I would like to express my gratitude to the ARCS (Achievement Rewards
for College Scientists) Foundation for their having awarded me a 2011-2012
ARCS Fellowship, and for their generous efforts to promote excellence
in young scientists. I would like to thank my advisor Debra Lewis
for trusting in my abilities and providing me with the freedom in
which the creative endeavor that this paper required could flourish.
I would like to thank David DeConde for the invaluable conversations
at the Octagon in which imagination, creativity, and exploration were
gladly fostered. Thanks to Chris Shelley for showing me how to create
the tensor diagrams using Tikz. Finally, I would like to thank both
Debra and David for their help in editing this paper.

\nocite{Dodson&Radivoiovici,Ebin&Marsden,Palais,Xin}

\bibliographystyle{plain}
\bibliography{/Users/vdods/files/git/documents/school/references}

\begin{thebibliography}{10}

\bibitem{Cardelli}
Luca Cardelli.
\newblock Typeful programming.
\newblock 1991 (revised 1993).
\newblock Available online at
  ftp://gatekeeper.research.compaq.com/pub/DEC/SRC/research-reports/SRC-045.pdf.

\bibitem{Eells&LeMaire}
Conference Board of the Mathematical Sciences Regional Conference Series in
  Mathematics.
\newblock {\em Selected Topics in Harmonic Maps}, number~50. American
  Mathematical Society, 1983.

\bibitem{Dodson&Radivoiovici}
C.T.J. Dodson and M.S. Radivoiovici.
\newblock Second-order tangent structures.
\newblock {\em International Journal of Theoretical Physics}, 21(2):151--161,
  1982.

\bibitem{Ebin&Marsden}
David~G. Ebin and Jerrold~E. Marsden.
\newblock Groups of diffeomorphisms and the motion of an incompressible fluid.
\newblock {\em The Annals of Mathematics, Second Series}, 92(1):102--163, 1970.

\bibitem{Eliasson}
Halldor~I. Eliasson.
\newblock Geometry of manifolds of maps.
\newblock {\em J. Differential Geometry}, 1(2), 1967.

\bibitem{Giaquinta&Hildebrandt}
Mariano Giaquinta and Stefan Hildebrandt.
\newblock {\em Calculus of Variations I}.
\newblock Springer-Verlag, 1996.

\bibitem{Kolar&Michor&Slovak}
Ivan Kol{\'a}r, Peter~W. Michor, and Jan Slov{\'a}k.
\newblock {\em Natural Operations in Differential Geometry}, volume 434.
\newblock Springer Verlag, 1993.
\newblock This is an online book which can be found at
  http://www.mat.univie.ac.at/~michor/listpubl.html.

\bibitem{JeffMLee}
Jeffrey~M. Lee.
\newblock {\em Manifolds and Differential Geometry}, volume 107.
\newblock American Mathematical Society, 2009.

\bibitem{RiemannianLee}
John~M. Lee.
\newblock {\em Riemannian Manifolds: An Introduction to Curvature}, volume 176.
\newblock Springer Verlag, 1997.

\bibitem{IntroLee}
John~M. Lee.
\newblock {\em Introduction to Smooth Manifolds}, volume 218.
\newblock Springer Verlag, 2006.

\bibitem{Marsden&Hughes}
Jerrold~E. Marsden and Thomas J.~R. Hughes.
\newblock {\em Mathematical Foundations of Elasticity}.
\newblock Prentice Hall, Inc., 1983.

\bibitem{Michor}
Peter~W. Michor.
\newblock {\em Topics in Differential Geometry}, volume~93.
\newblock American Mathematical Society, 2008.

\bibitem{Miller}
George~A. Miller.
\newblock The magical number seven, plus or minus two: Some limits on our
  capacity for processing information.
\newblock {\em American Psychological Association}, 101(2):343--352, 1955.

\bibitem{Nishikawa}
Seiki Nishikawa.
\newblock {\em Vartiational Problems in Geometry}, volume 205.
\newblock American Mathematical Society, 2002.

\bibitem{Palais}
Richard~S. Palais.
\newblock {\em Foundations of Global Non-Linear Analysis}.
\newblock W.A. Benjamin, Inc., 1968.

\bibitem{Parnas}
David Parnas.
\newblock On the criteria to be used in decomposing systems into modules.
\newblock {\em Communications of the ACM}, 15(12):1053--1058, 1972.

\bibitem{Penrose}
Roger Penrose.
\newblock {\em The Road to Reality}.
\newblock Vintage Books, 2004.

\bibitem{Raymond}
Eric~S. Raymond.
\newblock {\em The Art of Unix Programming}.
\newblock Pearson Education, Inc., 2003.
\newblock This is an online book which can be found at
  http://www.faqs.org/docs/artu/index.html.

\bibitem{WalterODEs}
Wolfgang Walter.
\newblock {\em Ordinary Differential Equations}, volume 182.
\newblock Springer Verlag, 1998.

\bibitem{Xin}
Yuanlong Xin.
\newblock {\em Geometry of Harmonic Maps}, volume~23.
\newblock Birkh{\"a}user, 1996.

\end{thebibliography}

\end{document}